\newtheorem{thm}{Theorem}[section]
\newtheorem{lemma}[thm]{Lemma}
\newtheorem{cor}[thm]{Corollary}
\newtheorem{prop}[thm]{Proposition}
\newtheorem{conj}[thm]{Conjecture}
\newtheorem{prob}[thm]{Problem}
\newtheorem{Definition}[thm]{Definition}
\newenvironment{definition}
  {\begin{Definition}\rm}{\end{Definition}}
\newtheorem{Example}[thm]{Example}
\newenvironment{example}
  {\begin{Example}\rm}{\end{Example}}
\newtheorem{Remark}[thm]{Remark}
\newenvironment{remark}
  {\begin{Remark}\rm}{\end{Remark}}
\numberwithin{equation}{section}
\def\SetFancyGraph {
	\SetVertexMath
	\GraphInit[vstyle=Art]
	\SetUpVertex[MinSize=2pt]
	\SetVertexLabel
	\tikzset{VertexStyle/.style = {shape = circle,shading = ball,ball color = black,inner sep = 1.5pt}}
	\SetUpEdge[color=black]
	\tikzset{->-/.style={decoration={ markings, mark=at position 0.8 with {\arrow{>}}},postaction={decorate}}}
	\tikzset{->--/.style={decoration={ markings, mark=at position 0.55 with {\arrow{>}}},postaction={decorate}}}
}
\apptocmd{\sloppy}{\hbadness 10000\relax}{}{}
\newcommand{\emailhref}[1]{\email{\href{#1}{#1}}}
\title[Minuscule doppelg\"{a}ngers, the CDE property, and rowmotion]{Minuscule doppelg\"{a}ngers, the coincidental down-degree expectations property, and rowmotion}
\author[S. Hopkins]{Sam Hopkins}\emailhref{shopkins@umn.edu}
\address{School of Mathematics, University of Minnesota, Minneapolis, MN 55455}
\date{\today}
\subjclass[2010]{06A07, 06A11, 05E18} 
\keywords{Coincidental down-degree expectations (CDE), doppelg\"{a}ngers, $P$-partitions, rowmotion, homomesy, cyclic sieving phenomenon, minuscule posets, root posets}
\begin{document}

\begin{abstract}
We relate Reiner, Tenner, and Yong's \emph{coincidental down-degree expectations (CDE)} property of posets to the \emph{minuscule doppelg\"{a}nger pairs} studied by Hamaker, Patrias, Pechenik, and Williams. Via this relation, we put forward a series of conjectures which suggest that the minuscule doppelg\"{a}nger pairs behave ``as if'' they had isomorphic comparability graphs, even though they do not. We further explore the idea of minuscule doppelg\"{a}nger pairs pretending to have isomorphic comparability graphs by considering the \emph{rowmotion} operator on order ideals. We conjecture that the members of a minuscule doppelg\"{a}nger pair behave the same way under rowmotion, as they would if they had isomorphic comparability graphs. Moreover, we conjecture that these pairs continue to behave the same way under the \emph{piecewise-linear} and \emph{birational} liftings of rowmotion introduced by Einstein and Propp. This conjecture motivates us to study the \emph{homomesies} (in the sense of Propp and Roby) exhibited by birational rowmotion. We establish the birational analog of the antichain cardinality homomesy for the major examples of posets known or conjectured to have finite birational rowmotion order (namely: \emph{minuscule posets} and \emph{root posets of coincidental type}).
\end{abstract}

\maketitle

\section{Introduction} \label{sec:intro}

Let $P$ be a finite poset. The \emph{down-degree} of $p\in P$ is the number of elements of $P$ which $p$ covers. Consider two probability distributions on $P$: the uniform distribution; and the distribution where $p\in P$ occurs proportional to the number of maximal chains of $P$ containing $p$. We say that $P$ has the \emph{coincidental down-degree expectations (CDE)} property if the expected value of the down-degree statistic is the same for these two distributions. We also say that $P$ \emph{is CDE} for short.

Most posets are not CDE, but, perhaps surprisingly, many posets of interest in algebraic combinatorics are CDE. In~\cite{reiner2018poset}, Reiner, Tenner, and Yong introduced the CDE property and explained its connection to the theory of symmetric functions, tableaux, reduced words, et cetera. They proved a number of results about CDE posets, and also made a number of intriguing conjectures concerning the CDE property and related matters. A steady stream of subsequent work~\cite{hopkins2017cde, rush2016minuscule, fan2019proof, hopkins2018vexillary} resolved most of these conjectures (always in the affirmative), to the point where there now remains only one conjecture from~\cite{reiner2018poset} which has not been resolved. The sole remaining conjecture is that the distributive lattice of order ideals of the ``trapezoid'' poset~$T_{k,n}$ is CDE. The trapezoid poset $T_{3,7}$ is depicted on the right in Figure~\ref{fig:rect_trap}. In this paper, we do not resolve this final conjecture of Reiner-Tenner-Yong, but we do situate it in what we believe is the correct context, and suggest a program which could resolve it. 

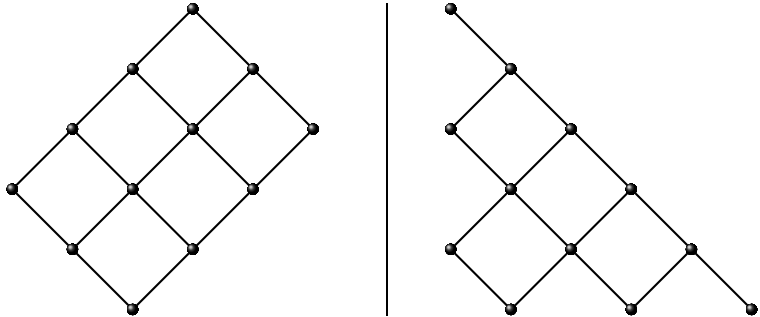
\begin{figure}
\begin{center}
 \begin{tikzpicture}[scale=0.8]
	\SetFancyGraph
	\Vertex[NoLabel,x=0,y=0]{1}
	\Vertex[NoLabel,x=1,y=1]{2}
	\Vertex[NoLabel,x=2,y=2]{3}
	\Vertex[NoLabel,x=3,y=3]{4}
	\Vertex[NoLabel,x=-1,y=1]{5}
	\Vertex[NoLabel,x=0,y=2]{6}
	\Vertex[NoLabel,x=1,y=3]{7}
	\Vertex[NoLabel,x=2,y=4]{8}
	\Vertex[NoLabel,x=-2,y=2]{9}
	\Vertex[NoLabel,x=-1,y=3]{10}
	\Vertex[NoLabel,x=0,y=4]{11}
	\Vertex[NoLabel,x=1,y=5]{12}
	\Edges[style={thick}](1,2)
	\Edges[style={thick}](2,3)
	\Edges[style={thick}](3,4)
	\Edges[style={thick}](1,5)
	\Edges[style={thick}](2,6)
	\Edges[style={thick}](3,7)
	\Edges[style={thick}](4,8)
	\Edges[style={thick}](5,6)
	\Edges[style={thick}](6,7)
	\Edges[style={thick}](7,8)
	\Edges[style={thick}](5,9)
	\Edges[style={thick}](6,10)
	\Edges[style={thick}](7,11)
	\Edges[style={thick}](8,12)
	\Edges[style={thick}](9,10)
	\Edges[style={thick}](10,11)
	\Edges[style={thick}](11,12)
\end{tikzpicture} \qquad \vline \qquad \begin{tikzpicture}[scale=0.8]
	\SetFancyGraph
	\Vertex[NoLabel,x=0,y=0]{1}
	\Vertex[NoLabel,x=2,y=0]{2}
	\Vertex[NoLabel,x=4,y=0]{3}
	\Vertex[NoLabel,x=-1,y=1]{4}
	\Vertex[NoLabel,x=1,y=1]{5}
	\Vertex[NoLabel,x=3,y=1]{6}
	\Vertex[NoLabel,x=0,y=2]{7}
	\Vertex[NoLabel,x=2,y=2]{8}
	\Vertex[NoLabel,x=-1,y=3]{9}
	\Vertex[NoLabel,x=1,y=3]{10}
	\Vertex[NoLabel,x=0,y=4]{11}
	\Vertex[NoLabel,x=-1,y=5]{12}
	\Edges[style={thick}](1,4)
	\Edges[style={thick}](1,5)
	\Edges[style={thick}](2,5)
	\Edges[style={thick}](2,6)
	\Edges[style={thick}](3,6)
	\Edges[style={thick}](4,7)
	\Edges[style={thick}](5,7)
	\Edges[style={thick}](5,8)
	\Edges[style={thick}](6,8)
	\Edges[style={thick}](7,9)
	\Edges[style={thick}](7,10)
	\Edges[style={thick}](8,10)
	\Edges[style={thick}](9,11)
	\Edges[style={thick}](10,11)
	\Edges[style={thick}](11,12)
\end{tikzpicture} 
\end{center}

\caption{On the left: the $3 \times 4$ rectangle. On the right: the trapezoid $T_{3,7}$.} \label{fig:rect_trap}
\end{figure}

Our starting point is the observation that the trapezoid $T_{k,n}$ is the doppelg\"{a}nger of the $k \times (n-k)$ rectangle. The $3 \times 4$ rectangle is depicted on the left in Figure~\ref{fig:rect_trap}. Here two posets  are said to be \emph{doppelg\"{a}ngers} if they have the same number of $P$-partitions of height $\ell$ for all $\ell\geq 1$; that is, $P$ and $Q$ are doppelg\"{a}ngers if the number of order-preserving maps from $P$ to the chain of length~$\ell$ is the same as the number of order-preserving maps from $Q$ to the chain of length~$\ell$ for all~$\ell\geq 1$. This terminology derives from Hamaker, Patrias, Pechenik, and Williams~\cite{hamaker2018doppelgangers}, who highlighted several pairs of doppelg\"{a}ngers which arise in the $K$-theoretic Schubert calculus of minuscule flag varieties. In particular, they gave a bijective proof that these pairs are doppelg\"{a}ngers using a $K$-theoretic version of jeu-de-taquin. The rectangle and trapezoid pair is a prominent example of such a \emph{minuscule doppelg\"{a}nger pair}. In each minuscule doppelg\"{a}nger pair, one poset is a \emph{minuscule poset} (like the rectangle), and the other is a \emph{root poset of coincidental type} or an order filter in one of these root posets (like the trapezoid).

The distributive lattice of order ideals of the rectangle is the most fundamental example of a CDE poset. Indeed, this example precedes the work of Reiner-Tenner-Yong. It was first discovered in the context of the algebraic geometry of curves: establishing that the distributive lattice of order ideals of the rectangle is CDE was the key combinatorial result that Chan, L\'{o}pez Mart\'{i}n, Pflueger, and Teixidor i Bigas~\cite{chan2018genera} needed to reprove a product formula for the genus of one-dimensional Brill-Noether loci. Later, Chan, Haddadan, Hopkins, and Moci~\cite{chan2017expected} generalized the work of~\cite{chan2018genera} by introducing the ``toggle perspective.'' The toggle perspective, based on the notion of ``toggling'' elements into and out of order ideals, is the principal tool we have for establishing that posets are CDE. Essentially all interesting examples of posets known to be CDE can be proved to be CDE via the toggle perspective (see~\cite{hopkins2017cde, rush2016minuscule, hopkins2018vexillary}). For example, as we will see later, the toggle perspective can be used to prove that all distributive lattices corresponding to minuscule posets and to root posets of coincidental type are CDE. (The distributive lattice corresponding to an arbitrary root poset need not be CDE.)

Already in~\cite[Example 4.7]{hopkins2017cde} it was observed that the toggle perspective cannot be applied to the trapezoid. This is because, when it works, the toggle perspective proves something stronger than that a distributive lattice is CDE: it proves that the lattice in question is \emph{toggle CDE (tCDE)}. And the distributive lattice of order ideals of the trapezoid is not tCDE! So some approach beyond the toggle perspective is needed to handle the trapezoid. We suggest such an approach: extend the jeu-de-taquin style bijection of Hamaker et al.~\cite{hamaker2018doppelgangers} from $P$-partitions to \emph{set-valued} $P$-partitions. That is, we conjecture that the minuscule doppelg\"{a}nger pairs don't just have the same structure of $P$-partitions, they in fact have the same structure of set-valued $P$-partitions.

This conjecture asserts that the minuscule doppelg\"{a}nger pairs are similar in ways beyond those which are implied by their being doppelg\"{a}ngers, but which would follow from their having isomorphic comparability graphs. In other words, the minuscule doppelg\"{a}nger pairs behave ``as if'' they had isomorphic comparability graphs. However, for the most part, the minuscule doppelg\"{a}nger pairs do not actually have isomorphic comparability graphs. (Stanley~\cite{stanley1986two} proved that posets with isomorphic comparability graphs are necessarily doppelg\"{a}ngers, but the converse is not true.)

We also put forward another related conjecture which says that the minuscule doppelg\"{a}nger pairs have the same $P$-partition down-degree generating functions. This need not happen for arbitrary doppelg\"{a}ngers, but does happen for posets with isomorphic comparability graphs, and hence is another way in which the minuscule doppelg\"{a}nger pairs behave ``as if'' they had isomorphic comparability graphs.

\begin{prob} \label{prob:main}
Give a conceptual explanation for why the minuscule doppelg\"{a}nger pairs behave ``as if'' they had isomorphic comparability graphs.
\end{prob}

In the second half of this paper we further explore this idea of minuscule doppelg\"{a}nger pairs pretending to have isomorphic comparability graphs by considering rowmotion. \emph{Rowmotion} is an invertible operator acting on the set of order ideals of any poset. It has been the subject of a significant amount of research, with a renewed interest especially in the last ten years~\cite{brouwer1974period, fonderflaass1993orbits, cameron1995orbits, panyushev2009orbits, armstrong2013uniform, striker2012promotion}. The poset on which the action of rowmotion has been studied the most is the rectangle.

We conjecture that the members of a minuscule doppelg\"{a}nger pair have the same orbit structure under rowmotion, and that they have the same down-degree orbit averages. Again, this is not something that automatically happens for doppelg\"{a}ngers; but, as we explain below, it does occur for posets with isomorphic comparability graphs. This suggests further study of rowmotion could help to address Problem~\ref{prob:main}.

Originally the major goal in studying rowmotion was to understand its orbit structure and, in particular, compute its order. More recently, another goal has been to exhibit homomesies for rowmotion. We recall this terminology from Propp and Roby~\cite{propp2015homomesy}: a combinatorial statistic $f$ on a set $X$ is said to be \emph{homomesic} with respect to the action of an invertible operator $\Psi$ on $X$ if the average value of $f$ along every $\Psi$-orbit of $X$ is the same. Homomesies of a dynamical system are in some sense ``dual'' to invariant quantities.

An important example of a statistic that often is homomesic with respect to the action of rowmotion is the \emph{antichain cardinality} statistic, which is in fact the same as down-degree in the lattice of order ideals. (This example of homomesy traces back to a series of influential conjectures of Panyushev~\cite{panyushev2009orbits} and was the main motivating example for Propp-Roby's introduction of the homomesy paradigm.) Whenever one shows that a distributive lattice is tCDE, it follows that the antichain cardinality statistic is homomesic with respect to the action of rowmotion on this lattice. This is a fundamental connection between the study of CDE posets and the study of homomesies for rowmotion.

Rowmotion has been extensively investigated for both minuscule posets and root posets. For instance, Rush and Shi~\cite{rush2013orbits} proved that the action of rowmotion on the set of order ideals of a minuscule poset exhibits the \emph{cyclic sieving phenomenon}. We will explain the cyclic sieving phenomenon precisely later, but in short this means that the order and orbit structure of rowmotion for minuscule posets are completely understood. And even before the work of Rush-Shi, Armstrong, Stump, and Thomas~\cite{armstrong2013uniform} showed that rowmotion acting on the set of order ideals of a root poset (not necessarily of coincidental type) exhibits the cyclic sieving phenomenon. But, as far as we know, before our work it was an open problem to describe the orbit structure of rowmotion acting on the order ideals of the trapezoid, or even to compute its order.\footnote{Very recently, the part of our conjecture concerning the orbit structure of rowmotion for minuscule doppelg\"{a}nger pairs was resolved in the affirmative; see Remark~\ref{rem:reu}.}

Minuscule posets are also known to exhibit the antichain cardinality homomesy for rowmotion~\cite{rush2015orbits, rush2016minuscule} (one way to see this is via the aforementioned ``tCDE implies antichain cardinality homomesy'' fact). Thus, another consequence of our conjecture would be that both members of a minuscule doppelg\"{a}nger pair exhibit this homomesy.

After considering rowmotion of order ideals, we go on to study the way our favorite families of posets (minuscule posets, root posets, and the trapezoid) behave under the \emph{piecewise-linear} and \emph{birational} liftings of rowmotion introduced by Einstein and Propp~\cite{einstein2013combinatorial, einstein2014piecewise}.

We conjecture that the minuscule doppelg\"{a}nger pairs continue to behave the same way with respect to piecewise-linear and birational rowmotion. For most posets, these invertible operators do not even have finite order, but (by results of Grinberg and Roby~\cite{grinberg2016birational1, grinberg2015birational2}) for minuscule posets they do. On the other hand, showing that the trapezoid has finite piecewise-linear and birational rowmotion order remains a significant open problem.

By restricting piecewise-linear rowmotion to the rational points in the order polytope of a poset $P$, we obtain an action of rowmotion on the set of $P$-partitions of height $\ell$. We conjecture that for a minuscule poset $P$ this action of rowmotion on $P$-partitions exhibits cyclic sieving. This cyclic sieving conjecture extends the result of Rush-Shi, and is known to be true in the case of the rectangle by work of Rhoades~\cite{rhoades2010cyclic}. The orbit structure of rowmotion acting on the $P$-partitions of an arbitrary root poset seems unpredictable. But, at least according to our conjectures, for root posets of coincidental type we again get a cyclic sieving result. This is yet another way in which minuscule posets and root posets of coincidental type are well-behaved.

In focusing on the trapezoid in this paper, we have emphasized the limitations of the toggle perspective. But at the end of the paper we also offer a new demonstration of the robustness of the toggle perspective: we show that the ``tCDE implies antichain cardinality homomesy for rowmotion'' fact extends to the piecewise-linear and birational levels. In this way, we obtain birational homomesies for the major examples of posets known or conjectured to have finite birational rowmotion order (namely: minuscule posets and root posets of coincidental type). Furthermore, one part of our conjecture that the minuscule doppelg\"{a}nger pairs behave the same way under birational rowmotion asserts that both members of a minuscule doppelg\"{a}nger pair should exhibit the birational antichain cardinality homomesy.

One surprising aspect of our proof of the birational rowmotion antichain cardinality homomesy for tCDE distributive lattices is that it is an instance where we can deduce a certain result at the piecewise-linear and birational levels from the combinatorial analog of the result in question. If an identity holds at the birational level, then via tropicalization it holds at the piecewise-linear level, and via specialization it also holds at the combinatorial level. Normally implications do not go in the other direction; but in this case we show that they do. 

Here is the structure of the rest of the paper: Section~\ref{sec:background} contains background material; Section~\ref{sec:cde} discusses the CDE property; and Section~\ref{sec:rowmotion} discusses rowmotion.

\medskip

\noindent {\bf Acknowledgements}: I thank Vic Reiner for many useful discussions throughout, and in particular for helpful references. I thank Nathan Williams for useful comments about doppelg\"{a}ngers and bijections, and for making me aware of the paper~\cite{ceballos2014subword}. I thank Joel Kamnitzer, Brendon Rhoades, and Hugh Thomas for useful comments about Conjecture~\ref{conj:minuscule_row_cyc_siev}. Hugh Thomas informed me that he and his collaborators were also thinking about cyclic sieving for minuscule $P$-partitions. I thank Jim Propp for informing me that he had independently and earlier (around 2016) conjectured the Type~A case of Conjecture~\ref{conj:root_poset_cyc_siev}. I thank David Einstein for explaining to me the bounded nature of piecewise-linear rowmotion (see Remark~\ref{rem:bounded}). I thank Bruce Westbury for explaining some possible connections to crystals and cactus group actions (see Remark~\ref{rem:invariant_tensors}). Finally, I thank the anonymous referee for careful attention to the manuscript and useful comments. I was supported by NSF grant~$\#1802920$. Sage mathematics software~\cite{sagemath, Sage-Combinat} was indispensable for testing conjectures. The Sage code used for these tests is available upon request and is included with the arXiv submission of this paper. 

\section{Background} \label{sec:background}

In this section we review the background we will need to state the conjectures and prove the results of the next two sections.

\subsection{Posets, distributive lattices, and doppelg\"{a}ngers} \label{sec:doppelganger_defs}

We assume the reader is familiar with standard terminology and notation for posets as described in, e.g.,~\cite[Chapter 3]{stanley2012ec1}. Throughout the paper {\bf all posets are assumed to be finite}. We represent posets via their Hasse diagrams. We use the notation $[n]\coloneqq \{1,2,\ldots,n\}$, which we also view as a chain poset with the obvious total order.

Since we will be dealing so often with distributive lattices, let us briefly review these. Let $P$ be a poset. We recall that an \emph{order ideal} of $P$ is a subset $I\subseteq P$ for which $y \in I$ and $x \in P$ with $x \leq y$ implies $x \in I$. (A subset $F\subseteq P$ satisfying the dual condition, that $x \in F$ and $y \in P$ with $x \leq y$ implies $y \in F$, is called an \emph{order filter} of $P$.) The set of order ideals of $P$ partially ordered by containment is denoted $J(P)$. This poset~$J(P)$ is in fact a \emph{distributive lattice}, and every finite distributive lattice arises as $J(P)$ for some (unique up to isomorphism) poset $P$: we can recover $P$ from $J(P)$ as the set of ``join-irreducible elements.''

Recall that a poset is \emph{graded of rank $r$} if every maximal chain has length $r$. (We use the convention that the length of a chain $p_0 < p_1 < \cdots < p_\ell$ is $\ell$.) We denote the rank of a graded poset~$P$ by $r(P)$. For a graded poset $P$ we also use $r\colon P \to \mathbb{N}$ to denote the \emph{rank function}: the unique function with $r(q) = r(p)+1$ if $q$ covers $p$, normalized so that minimal elements have rank~$0$. Note that with this normalization convention maximal elements have rank $r(P)$. Distributive lattices are always graded, and~$r(I)=\#I$ for all~$I \in J(P)$. 

A \emph{$P$-partition of height $\ell$} is a weakly order-preserving map $T\colon P \to \{0,1,\ldots,\ell\}$, that is, one for which $p \leq q \in P$ implies $T(p)\leq T(q)$.\footnote{Traditionally (as in~\cite[Chapter 3]{stanley2012ec1}) a $P$-partition is defined to be order-reversing rather than order-preserving; but we follow~\cite{hamaker2018doppelgangers} here in defining it to be order-preserving.} We denote the set of $P$-partitions of height $\ell$ by $\mathrm{PP}^{\ell}(P)$. 

Note that there is a natural bijection between $P$-partitions $T\in\mathrm{PP}^{\ell}(P)$ of height~$\ell$ and nested sequences $I_0 \subseteq I_1 \subseteq \cdots \subseteq I_{\ell-1} \in J(P)$ of $\ell$ order ideals of $P$ given by setting $I_i \coloneqq  T^{-1}(\{0,1,\ldots,i\})$. In other words, $P$-partitions of height~$\ell$ are the same as multichains of $J(P)$ of length $(\ell-1)$. In particular, $P$-partitions of height~$1$ are exactly the same thing as order ideals.

It is well-known that the number $\#\mathrm{PP}^{\ell}(P)$ of $P$-partitions of height $\ell$ is given by a polynomial $\Omega_P(\ell)$ in $\ell$ called the \emph{order polynomial} of $P$.\footnote{The conventional indexing (as in~\cite[Chapter 3]{stanley2012ec1}) would define the order polynomial to be what is in our notation~$\Omega_P(\ell-1)$, but this is immaterial; again, we follow~\cite{hamaker2018doppelgangers}.} The degree of $\Omega_P(\ell)$ is $\#P$ and the leading coefficient is the number of linear extensions of $P$ divided by $\#P!$.

We now come to one of the central definitions in the paper:

\begin{definition}
We say that posets $P$ and $Q$ are \emph{doppelg\"{a}ngers} if $\Omega_P(\ell)=\Omega_Q(\ell)$.
\end{definition}

In some sense the study of doppelg\"{a}ngers goes back to Stanley's introduction of the order polynomial~\cite{stanley1972ordered} (see also the contemporaneous work of Johnson~\cite{johnson1971real}). It was also Stanley in his ``Two poset polytopes'' paper~\cite{stanley1986two} who gave the first interesting sufficient condition for posets to be doppelg\"{a}ngers; we will discuss this in a moment. The ``doppelg\"{a}nger'' terminology, however, is much more recent, first appearing in~\cite{hamaker2018doppelgangers}. Another recent paper concerning doppelg\"{a}ngers is~\cite{browning2017doppelgangers}.

Let us also mention that there are a number of other well-studied problems which are closely related to the problem of understanding when posets are doppelg\"{a}ngers. One is understanding when posets have the same quasisymmetric $P$-partition (or $(P,\omega)$-partition) generating function. This problem has received some attention lately~\cite{mcnamara2014equality, liu2018ppartition}; it is a generalization of the problem of understanding equality of skew Schur functions, which has also received significant attention~\cite{billera2006decomposable, reiner2007coincidences, mcnamara2009towards, mcnamara2014comparing}. Another related problem is: when (and why) do two posets with the same number of elements have the same number of order ideals? For instance, the coincidence of the number of alternating sign matrices and of totally symmetric self-complementary plane partitions can be cast in this language~\cite{striker2011unifying}. 

There is no known exact criterion for when two posets are doppelg\"{a}ngers. In terms of necessary conditions, we have the following (see~\cite[\S3.15]{stanley2012ec1}):

\begin{prop} \label{prop:doppelganger_basics}
Let $P$ and $Q$ be doppelg\"{a}ngers. Then:
\begin{itemize}
\item $\#P=\#Q$;
\item the number of linear extensions of $P$ is the number of linear extensions of $Q$;
\item the length of the longest chain of $P$ is the length of the longest chain of $Q$;
\item $P$ is graded if and only if $Q$ is graded.
\end{itemize}
\end{prop}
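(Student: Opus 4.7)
The plan is to read each of the four invariants off from features of the polynomial $\Omega_P(\ell)$ directly.  Two of them are already flagged in the excerpt: $\#P$ is the degree of $\Omega_P$, and $\#P!$ times the leading coefficient equals the number $e(P)$ of linear extensions of $P$.  Hence $\Omega_P=\Omega_Q$ at once yields $\#P=\#Q$ and $e(P)=e(Q)$.

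For the third bullet my plan is to pass to the strict order polynomial $\bar\Omega_P(\ell)$, counting strictly order-preserving maps $P\to\{0,1,\ldots,\ell\}$.  Stanley's reciprocity theorem gives $\bar\Omega_P(\ell)=(-1)^{\#P}\Omega_P(-\ell-1)$, so $\bar\Omega_P$ is a doppelg\"anger invariant.  The length $r$ of the longest chain in $P$ is then the smallest $\ell\geq 0$ with $\bar\Omega_P(\ell)>0$: a strict map needs $\ell+1$ distinct values along any chain, so $\ell\geq r$ is necessary; and the map $p\mapsto a(p)$, where $a(p)$ is the length of the longest chain in $P$ from a minimal element up to $p$, is a strict map into $\{0,\ldots,r\}$, showing $\bar\Omega_P(r)\geq 1$.

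Gradedness is the subtlest of the four bullets, and I expect it to be the main obstacle.  A first instinct is that $\bar\Omega_P(r)=1$ should detect it, but one can construct small non-graded posets (containing a ``short'' maximal chain embedded alongside longer ones, in such a way that every element still lies on some chain of length $r$) for which $\bar\Omega_P(r)=1$, so that approach fails.  Instead I plan to use the $h^*$-polynomial $h^*_P(z)$ of the order polytope $\mathcal O(P)$, defined by $\sum_{\ell\geq 0}\Omega_P(\ell)z^\ell=h^*_P(z)/(1-z)^{\#P+1}$; this is manifestly a doppelg\"anger invariant.  By a theorem of Hibi, $h^*_P(z)$ is palindromic if and only if $P$ is graded (equivalently, $\mathcal O(P)$ is Gorenstein).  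Since palindromicity of $h^*_P$ is visible from $\Omega_P$, gradedness is then a doppelg\"anger invariant as well.  The other three bullets are either direct inspection of the polynomial or short chain-counting arguments, so beyond correctly invoking Hibi's theorem I anticipate no further difficulty.
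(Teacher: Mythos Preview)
Your proposal is correct. The paper itself does not supply a proof of this proposition; it simply refers the reader to \cite[\S3.15]{stanley2012ec1}. What you have written is essentially a fleshed-out version of the arguments one finds there: the first two bullets from the degree and leading coefficient of $\Omega_P$, the third from order-polynomial reciprocity (the smallest nonvanishing value of the strict order polynomial), and the fourth from Hibi's characterization of when the order polytope is Gorenstein via palindromicity of the $h^*$-polynomial.

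One small caution: the paper uses a shifted indexing convention for $\Omega_P$ (maps into $\{0,1,\ldots,\ell\}$ rather than $\{1,\ldots,\ell\}$; see the footnote in \S\ref{sec:doppelganger_defs}), so the precise form of the reciprocity identity is $\bar\Omega_P(\ell)=(-1)^{\#P}\Omega_P(-\ell-2)$ rather than the formula you wrote. This does not affect your argument, since all you need is that $\bar\Omega_P$ is determined by $\Omega_P$.
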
 

What about sufficient conditions for posets to be doppelg\"{a}ngers? As mentioned in~\cite{hamaker2018doppelgangers}, it is trivial that a poset $P$ and its dual poset~$P^{*}$ are doppelg\"{a}ngers. But what is not mentioned in~\cite{hamaker2018doppelgangers} is that there is a nontrivial extension of this ``a poset and its dual are doppelg\"{a}ngers'' observation based on the notion of comparability graphs. 

The \emph{comparability graph} of a poset $P$, denoted $\mathrm{com}(P)$, is the (undirected, simple) graph with vertices the elements of $P$ and with $p,q \in P$ joined by an edge if and only if~$p$ and $q$ are comparable (i.e., either $p\leq q$ or $q\leq p$). Clearly a poset is not determined by its comparability graph: for instance $P$ and $P^*$ have isomorphic comparability graphs. But nevertheless, the comparability graph does contain a lot of information about the poset, and especially starting in the 1970s there was significant interest in studying \emph{comparability invariants} of posets, i.e., properties of posets that depend only on the comparability graph. 

It turns out that the order polynomial is a comparability invariant.

\begin{thm} \label{thm:com_graph_dop}
Let $P$ and $Q$ be posets with $\mathrm{com}(P)\simeq\mathrm{com}(Q)$. Then $P$ and $Q$ are doppelg\"{a}ngers.
\end{thm}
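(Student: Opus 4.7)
My plan is to follow Stanley's ``Two poset polytopes'' approach: recast the order polynomial as an Ehrhart polynomial, and then exhibit a manifestly comparability-invariant polytope with the same Ehrhart polynomial.

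First, I would observe that under the natural identification of a $P$-partition $T \in \mathrm{PP}^{\ell}(P)$ with a lattice point $f\colon P \to \{0,1,\ldots,\ell\}$ of the $\ell$-dilate of the \emph{order polytope}
$$\mathcal{O}(P) := \{f\colon P \to [0,1] : p \leq q \text{ in } P \Rightarrow f(p) \leq f(q)\},$$
the order polynomial satisfies $\Omega_P(\ell) = |\ell\,\mathcal{O}(P) \cap \mathbb{Z}^P|$; that is, $\Omega_P$ is the Ehrhart polynomial of $\mathcal{O}(P)$. Next I would introduce Stanley's \emph{chain polytope}
$$\mathcal{C}(P) := \Bigl\{g\colon P \to \mathbb{R}_{\geq 0} : \textstyle\sum_{x \in C} g(x) \leq 1 \text{ for every chain } C \subseteq P\Bigr\},$$
and note that its vertices are exactly the indicator functions of antichains of $P$, which coincide with the indicator functions of independent sets of $\mathrm{com}(P)$; hence $\mathcal{C}(P)$, and therefore its Ehrhart polynomial, depends only on the comparability graph.

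The crux of the argument is Stanley's \emph{transfer map} $\Phi\colon \mathcal{O}(P) \to \mathcal{C}(P)$ given by $(\Phi f)(x) := f(x) - \max_{y \lessdot x} f(y)$ (with the maximum over the empty set taken to be $0$), whose inverse sends $g$ to $(\Phi^{-1}g)(x) := \max\{g(x_1) + \cdots + g(x_k) : x_1 < \cdots < x_k = x\}$ ranging over chains of $P$ ending at $x$. Both $\Phi$ and $\Phi^{-1}$ are piecewise-linear with integer coefficients, so they preserve integrality; consequently $\Phi$ restricts to a bijection between $\ell\,\mathcal{O}(P) \cap \mathbb{Z}^P$ and $\ell\,\mathcal{C}(P) \cap \mathbb{Z}^P$ for every $\ell \geq 0$. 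Combining this with the previous paragraph yields $\Omega_P(\ell) = |\ell\,\mathcal{C}(P) \cap \mathbb{Z}^P|$, and the right-hand side depends only on $\mathrm{com}(P)$.

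The main technical obstacle is verifying that $\Phi$ is a well-defined bijection preserving integrality: this is standard but requires a careful induction on the height of elements, checking that the defining inequalities of $\mathcal{C}(P)$ are satisfied by $\Phi f$ and that applying $\Phi^{-1}$ recovers $f$. An alternative route, avoiding polytopes entirely, would be to invoke Gallai's theorem that posets sharing a comparability graph are related by a sequence of \emph{module reversals} and then construct a direct bijection $\mathrm{PP}^{\ell}(P) \to \mathrm{PP}^{\ell}(Q)$ for a single module reversal by reflecting the values on the module within the interval between their minimum and maximum; but the polytope proof is conceptually cleaner and yields the stronger statement that the Ehrhart polynomials agree as functions, not merely at positive integers.
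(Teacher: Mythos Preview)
Your proposal is correct and follows essentially the same approach as the paper: Stanley's transfer map between the order polytope and chain polytope, combined with the observation that the chain polytope depends only on the comparability graph. You also sketch the paper's second proof via dualizing autonomous subsets (your ``module reversals''), so both of the paper's arguments are present in your write-up.
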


Theorem~\ref{thm:com_graph_dop} was first proved by Stanley~\cite{stanley1986two}. 

Let us explain two different proofs of Theorem~\ref{thm:com_graph_dop}, as both will be useful for us in our further study of graphs with isomorphic comparability graphs. First let us explain Stanley's original argument, using polytopes. We use $\mathbb{R}^P$ to denote the real vector space of functions $P\to \mathbb{R}$. The \emph{order polytope} of a poset~$P$, denoted $\mathcal{O}(P)$ is the polytope in $\mathbb{R}^{P}$ defined by the inequalities:
\begin{align*}
0 \leq f(p) &\leq 1 &\textrm{ for all $p\in P$};\\
f(p) &\leq f(q) &\textrm{ for all $p\leq q\in P$}.
\end{align*}
The \emph{chain polytope} of $P$, denoted $\mathcal{C}(P)$, is the polytope in $\mathbb{R}^P$ defined by the inequalities:
\begin{align*}
0 \leq f(p) &\leq 1 &\textrm{ for all $p\in P$};\\
\sum_{p\in C} f(p) &\leq 1 &\textrm{ for any chain $C$ of $P$}.
\end{align*}
Stanley proved~\cite[Corollary 1.3, Theorem 2.2]{stanley1986two} that the vertices of the order polytope are the indicator functions of order filters and the vertices of the chain polytope are the indicator functions of antichains. And he defined a \emph{transfer map}\footnote{This transfer map is slightly different than the one in~\cite{stanley1986two}: the difference is essentially given by replacing $P$ by $P^*$. The differences are immaterial and this definition of $\phi$ is cleaner for our later applications.} $\phi\colon \mathbb{R}^{P} \to \mathbb{R}^P$ by
\[\phi(f)(p) = \begin{cases} 1-f(p) &\textrm{if $p$ is maximal in $P$}; \\ \mathrm{min}\{f(q)\colon \textrm{$q\in P$ covers $p$}\} - f(p) &\textrm{otherwise}.\end{cases} \]
Stanley~\cite[Theorem 3.2]{stanley1986two} proved two important things about $\phi$:
\begin{itemize}
\item $\phi$ is bijection from $\mathcal{O}(P)$ to $\mathcal{C}(P)$ (and in fact it is easy to write down the inverse explicitly);
\item $\phi(\frac{1}{\ell}\mathbb{Z}^P \cap \mathcal{O}(P)) = \frac{1}{\ell}\mathbb{Z}^P \cap \mathcal{C}(P)$ for all $\ell \geq 1$.
\end{itemize}
With the transfer map in hand, it is easy to prove Theorem~\ref{thm:com_graph_dop}. Indeed, first note that $\mathrm{PP}^{\ell}(P)$ is in bijection with $\frac{1}{\ell}\mathbb{Z}^P\cap\mathcal{O}(P)$ via the map $T \mapsto \frac{1}{\ell}T$. (In other words $\Omega_P(\ell)$ is the \emph{Ehrhart polynomial} of $\mathcal{O}(P)$.) But then observe that $\mathcal{C}(P)$ only depends on the comparability graph of~$P$. So the transfer map tells us that $\#\mathrm{PP}^{\ell}(P)$ only depends on the comparability graph of $P$ as well.

Next let us explain a different way to prove Theorem~\ref{thm:com_graph_dop} (which Stanley also explained in his original paper~\cite{stanley1986two}). This second approach is based on the fact that there is a useful exact criterion for two posets to have isomorphic comparability graphs. To explain this criterion we need a little terminology. Let $P$ be a poset. A subset $A\subseteq P$ is said to be \emph{autonomous} if each element in $P\setminus A$ has the same order relation to all the elements in~$A$; that is, if
\[\textrm{($x \leq y$ if and only if $x' \leq y$) and ($y \leq x$ if and only if $y \leq x'$) for $x,x' \in A$, $y \in P\setminus A$}.\]
Let $A\subseteq P$ be an autonomous subset. When we say that $Q$ is \emph{obtained from $P$ by dualizing $A$} we mean the obvious thing: that $Q$ is a poset with the same set of elements as $P$ and
\begin{itemize}
\item $x \leq_Q y$ if and only if $x \leq_P y$ for $x,y \in P\setminus A$;
\item ($x \leq_Q y$ if and only if $x \leq_P y$) and ($y \leq_Q x$ if and only if $y \leq_P x$) for $x \in P\setminus A$ and $y \in A$;
\item $x \leq_Q y$ if and only if $y \leq_P x$ for $x,y \in A$.
\end{itemize} 
We can now state the condition for posets to have isomorphic comparability graphs.

\begin{lemma} \label{lem:com_graph_criterion}
Posets $P$ and $Q$ satisfy $\mathrm{com}(P)\simeq\mathrm{com}(Q)$ if and only if there is a sequence of posets $P=P_0,P_1,\ldots,P_k=Q$ such that $P_{i}$ is obtained from $P_{i-1}$ by dualizing an autonomous subset of $P_{i-1}$ for $1\leq i\leq k$.
\end{lemma}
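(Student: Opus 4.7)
The plan is to handle the two directions separately: the ``if'' direction is a direct check from the definition of autonomous, while the ``only if'' direction is essentially Gallai's classical theorem on transitive orientations of comparability graphs, which I would prove by induction on $\#P$ using the modular (autonomous) decomposition.

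For the easy direction, suppose $A\subseteq P$ is autonomous and $Q$ is obtained from $P$ by dualizing $A$. To show $\mathrm{com}(P)=\mathrm{com}(Q)$ as graphs on the common vertex set, I would check edge-by-edge: for $\{x,y\}\subseteq A$ the two orders agree up to reversal (so the pair is comparable in $P$ iff in $Q$); for $\{x,y\}\subseteq P\setminus A$ the orders literally agree; and for $x\in A$, $y\in P\setminus A$ the defining autonomy conditions ``$x\leq y$ iff $x'\leq y$'' and ``$y\leq x$ iff $y\leq x'$'' say exactly that the mixed edges are preserved under the reversal inside $A$.

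For the hard direction, fix a graph isomorphism $\mathrm{com}(P)\simeq\mathrm{com}(Q)$ and use it to identify the vertex sets. Since autonomy is a property of the comparability graph only (the definition involves only which pairs are comparable), the autonomous subsets of $P$ and $Q$ coincide. I would induct on $\#P$ and split on whether $P$ has a nontrivial autonomous subset. \emph{Case 1:} there is an autonomous $A\subseteq P$ with $1<\#A<\#P$. Then $P|_A$ and $Q|_A$ have the same comparability graph, so by induction $Q|_A$ is reached from $P|_A$ by a sequence of dualizations of autonomous subsets of $A$; these subsets remain autonomous in $P$, and the corresponding dualizations of $P$ fix the relations on $P\setminus A$ and between $P\setminus A$ and $A$. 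After applying them, the analogous induction on the quotients $P/A$ and $Q/A$ (where $A$ is collapsed to a single element), each lifted dualization being of an autonomous subset of $P$ that is either disjoint from $A$ or contains $A$, completes this case. \emph{Case 2:} $P$ is prime, meaning its only autonomous subsets are singletons and $P$ itself. Here I would invoke the structural fact that any two transitive orientations of a prime comparability graph differ by a global reversal; thus $Q\in\{P,P^*\}$, and $P^*$ is reached in one step by dualizing the (autonomous) set $P$.

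The main obstacle is exactly the prime case: ruling out ``exotic'' transitive orientations of a prime comparability graph. I would either cite Gallai's original paper (or a modern exposition such as Trotter's or Spinrad's) for this, or, if a self-contained proof is desired, sketch it via the $\Gamma$-relation on edges of $\mathrm{com}(P)$: two edges are $\Gamma$-equivalent when forced to have the same relative orientation in any transitive orientation, and the $\Gamma$-classes are known to correspond precisely to the maximal autonomous subsets, so primeness collapses the number of transitive orientations to two. All other ingredients (the preservation of autonomy under graph isomorphism and the lifting of dualizations from $A$ and $P/A$ to $P$) are straightforward bookkeeping.
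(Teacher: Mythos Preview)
The paper does not prove this lemma; it simply cites \cite{dreesen1985comparability}, \cite{kelly1986invariants}, and \cite[Chapter~3, exercise~143(a)]{stanley2012ec1}. So you are going well beyond what the paper does by sketching an argument, and your overall strategy (modular decomposition plus Gallai's theorem on prime graphs) is indeed the standard one.

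That said, there is a genuine gap. Your claim that ``autonomy is a property of the comparability graph only (the definition involves only which pairs are comparable)'' is false: the defining condition ``$x\le y$ iff $x'\le y$'' distinguishes the \emph{direction} of comparability, not just its presence. Concretely, for the chain $P\colon a<b<c$ and the chain $Q\colon b<a<c$, we have $\mathrm{com}(P)=\mathrm{com}(Q)=K_3$, yet $\{b,c\}$ is autonomous in $P$ but not in $Q$, and $\{a,c\}$ is autonomous in $Q$ but not in $P$. In particular, in your Case~1 you pick $A$ autonomous in $P$ and then form the quotient $Q/A$; but this quotient is only defined if $A$ is autonomous in $Q$, which you have not established. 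The fix is to choose $A$ more carefully. If $\mathrm{com}(P)$ is disconnected, take $A$ to be a connected component; if its complement is disconnected, take $A$ to be a co-component. In either case a short argument shows $A$ is autonomous in \emph{every} transitive orientation (for co-components: if $y\notin A$ had $a<y<a'$ with $a,a'\in A$, walk along a path from $a$ to $a'$ in the complement inside $A$ to find adjacent $a_i\parallel a_{i+1}$ with $a_i<y<a_{i+1}$, forcing $a_i<a_{i+1}$, a contradiction). If neither holds, $\mathrm{com}(P)$ is prime and your Case~2 applies.

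A second, minor issue in Case~1: when you lift a dualization from $P/A$ of a subset containing the class $[A]$, the lifted dualization in $P$ also reverses the order \emph{inside} $A$, potentially undoing the agreement $P'|_A=Q|_A$ you achieved in the first step. You may need one final dualization of $A$ itself at the end to repair this, depending on the parity of the number of such lifts.
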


Lemma~\ref{lem:com_graph_criterion} was implicit in the work of a number of authors, but the first published proofs appear in~\cite{dreesen1985comparability} and~\cite{kelly1986invariants}; see~\cite[Chapter 3, exercise 143(a)]{stanley2012ec1}. With Lemma~\ref{lem:com_graph_criterion} it is easy to prove Theorem~\ref{thm:com_graph_dop}. Indeed, if we are free to assume that $Q$ is obtained from~$P$ by dualizing an autonomous subset $A$ then there is a simple bijection (``reflecting the values in $A$'') between the $P$-partitions in question.

It is not the case that doppelg\"{a}ngers must have isomorphic comparability graphs. In the remainder of this section we will introduce some specific families of doppelg\"{a}nger pairs which do not (in general) have isomorphic comparability graphs. These families of doppelg\"{a}nger pairs, which arise in the combinatorics of root systems, will be the focus of the rest of our paper.

\subsection{Root posets, minuscule posets, and minuscule doppelg\"{a}nger pairs}

We use standard terminology and notation for root systems; see, e.g.,~\cite{humphreys1972lie, bourbaki2002lie, bjorner2005coxeter} for detailed presentations. Here we only very briefly review the basic facts about root systems that we will need to define the posets we care about. 

Let $V$ be an $n$-dimensional real vector space with inner product~$\langle \cdot,\cdot \rangle$. For any nonzero vector $v \in V\setminus\{0\}$ we use $s_v\colon V\to V$ to denote the \emph{orthogonal reflection} across the hyperplane perpendicular to $v$: $s_v(w) \coloneqq  w-2\frac{\langle w,v\rangle}{\langle v,v\rangle} v$. A \emph{root system} in $V$ is a finite subset of nonzero vectors~$\Phi\subseteq V\setminus\{0\}$ such that:
\begin{itemize}
\item $\mathrm{Span}_{\mathbb{R}}(\Phi) = V$;
\item $\mathrm{Span}_{\mathbb{R}}(\alpha)\cap \Phi = \{\pm\alpha\}$ for all $\alpha\in \Phi$;
\item $s_{\alpha}(\Phi) = \Phi$ for all $\alpha\in \Phi$.
\end{itemize}
The elements of $\Phi$ are called \emph{roots}. If additionally we have $2\frac{\langle\beta,\alpha\rangle}{\langle\alpha,\alpha\rangle} \in \mathbb{Z}$ for all $\alpha,\beta \in \Phi$, then~$\Phi$ is called \emph{crystallographic}. Root systems arose first in Lie theory: the crystallographic root systems correspond bijectively to semisimple Lie algebras over $\mathbb{C}$, and hence also (essentially) to semisimple Lie groups over~$\mathbb{C}$.

Let $\Phi$ be a root system in $V$. It is well-known that we can choose a collection $\alpha_1,\ldots,\alpha_n \in \Phi$ of \emph{simple roots} with the property that every root can be expressed as linear combination of simple roots with either all nonnegative, or all nonpositive, integral coefficients. Such a choice is equivalent to a choice of \emph{positive roots}~$\Phi^+\subseteq\Phi$, which are those that expand nonnegatively into simple roots.

If $\Phi$ is crystallographic, then $\{v \in V\colon 2\frac{\langle v,\alpha\rangle}{\langle\alpha,\alpha\rangle} \in\mathbb{Z} \textrm{ for all~$\alpha\in\Phi$}\}$ is its \emph{weight lattice}, the elements of which are called \emph{(integral) weights}. This lattice is generated by the \emph{fundamental weights} $\omega_1,\ldots,\omega_n$ defined by $2\frac{\langle \omega_i,\alpha_j\rangle}{\langle\alpha_j,\alpha_j\rangle}=\delta_{i,j}$. A weight is \emph{dominant} if it is a nonnegative combination of fundamental weights.

If $\Phi$ decomposes as a disjoint union of two nonempty subsets which span orthogonal subspaces, then it is called \emph{reducible}; otherwise it is called \emph{irreducible}. The irreducible root systems have been classified. This is the famous \emph{Cartan-Killing classification} into \emph{types}. The crystallographic types are $A_n$, $B_n$, $C_n$, $D_n$, $E_6$, $E_7$, $E_8$, $F_4$, and $G_2$. The non-crystallographic types are $H_3$, $H_4$, and $I_2(m)$ (for $m\notin\{2,3,4,6\}$).

The subgroup of $GL(V)$ generated by $s_{\alpha}$ for $\alpha\in \Phi$ is (essentially by definition) a finite real reflection group. (If $\Phi$ is crystallographic, then this group is a \emph{Weyl group}.) Among all the complex reflection groups, there is a special collection called the \emph{coincidental types}; see~\cite[Theorem 14]{miller2015foulkes}~\cite{miller2018walls} for various equivalent definitions of the coincidental types. We say that $\Phi$ is of \emph{coincidental type} if its corresponding reflection group is. The root systems of coincidental type are precisely $A_n$, $B_n$, $C_n$, $G_2$, $H_3$, and $I_2(m)$. (Since $G_2$ is a special case of $I_2(m)$, we will omit it from future lists of the coincidental types.) The coincidental types tend to enjoy better enumerative properties than the non-coincidental types (see, e.g., the list at the beginning of~\cite[\S8]{hamaker2018doppelgangers}).

Now on to the posets. We first define root posets, and we start with the crystallographic case. So let~$\Phi$ be an irreducible, crystallographic root system. There is a natural partial order on the set $\Phi^+$ of positive roots whereby for two roots~$\alpha,\beta \in \Phi^+$ we have  $\alpha \leq \beta$ if and only if $\beta-\alpha=\sum_{i=1}^{n}a_i \alpha_i$ with all $a_i\geq 0$. The resulting poset is called the \emph{root poset} of $\Phi$. We denote this poset by $\Phi^+(X)$ where $X$ is the type of the root system $\Phi$. For instance, the posets $\Phi^+(A_n)$, $\Phi^+(B_n)\simeq\Phi^+(C_n)$, and $\Phi^+(D_4)$ are depicted in Figure~\ref{fig:crystal_root_posets}.

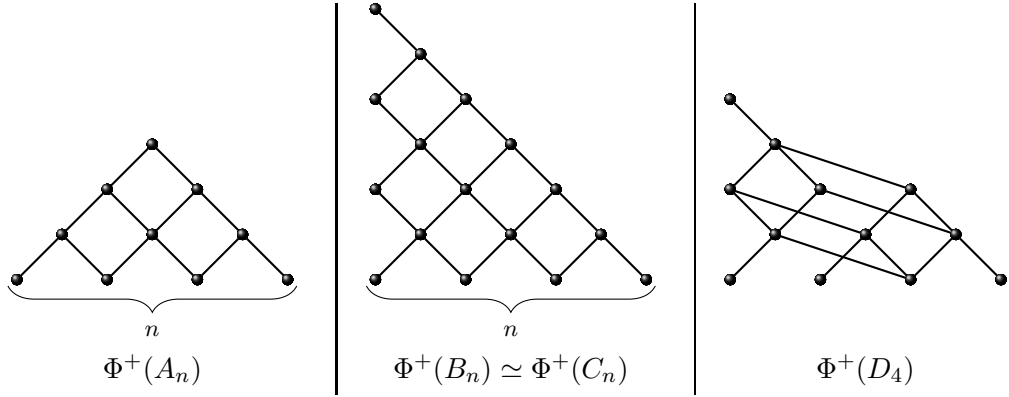
\begin{figure}
\begin{center}
\begin{tikzpicture}[scale=1.2]
	\SetFancyGraph
	\Vertex[NoLabel,x=0,y=0]{1}
	\Vertex[NoLabel,x=1,y=0]{2}
	\Vertex[NoLabel,x=2,y=0]{3}
	\Vertex[NoLabel,x=3,y=0]{4}
	\Vertex[NoLabel,x=0.5,y=0.5]{5}
	\Vertex[NoLabel,x=1.5,y=0.5]{6}
	\Vertex[NoLabel,x=2.5,y=0.5]{7}
	\Vertex[NoLabel,x=1,y=1]{8}
	\Vertex[NoLabel,x=2,y=1]{9}
	\Vertex[NoLabel,x=1.5,y=1.5]{10}
	\Edges[style={thick}](1,5)
	\Edges[style={thick}](2,5)
	\Edges[style={thick}](2,6)
	\Edges[style={thick}](3,6)
	\Edges[style={thick}](3,7)
	\Edges[style={thick}](4,7)
	\Edges[style={thick}](5,8)
	\Edges[style={thick}](6,8)
	\Edges[style={thick}](6,9)
	\Edges[style={thick}](7,9)
	\Edges[style={thick}](8,10)
	\Edges[style={thick}](9,10)
	\draw [decorate,decoration={brace,amplitude=10pt,mirror},yshift=-2pt] (-0.1,0) -- (3.1,0) node [black,midway,yshift=-0.6cm]  {\footnotesize $n$};
	\node at (1.5,-1) {$\Phi^+(A_n)$};
\end{tikzpicture} \quad \vline \quad \begin{tikzpicture}[scale=1.2]
	\SetFancyGraph
	\Vertex[NoLabel,x=0,y=0]{1}
	\Vertex[NoLabel,x=1,y=0]{2}
	\Vertex[NoLabel,x=2,y=0]{3}
	\Vertex[NoLabel,x=3,y=0]{4}
	\Vertex[NoLabel,x=0.5,y=0.5]{5}
	\Vertex[NoLabel,x=1.5,y=0.5]{6}
	\Vertex[NoLabel,x=2.5,y=0.5]{7}
	\Vertex[NoLabel,x=1,y=1]{8}
	\Vertex[NoLabel,x=2,y=1]{9}
	\Vertex[NoLabel,x=1.5,y=1.5]{10}
	\Vertex[NoLabel,x=0,y=1]{11}
	\Vertex[NoLabel,x=0.5,y=1.5]{12}
	\Vertex[NoLabel,x=0,y=2]{13}
	\Vertex[NoLabel,x=1,y=2]{14}
	\Vertex[NoLabel,x=0.5,y=2.5]{15}
	\Vertex[NoLabel,x=0,y=3]{16}
	\Edges[style={thick}](1,5)
	\Edges[style={thick}](2,5)
	\Edges[style={thick}](2,6)
	\Edges[style={thick}](3,6)
	\Edges[style={thick}](3,7)
	\Edges[style={thick}](4,7)
	\Edges[style={thick}](5,8)
	\Edges[style={thick}](6,8)
	\Edges[style={thick}](6,9)
	\Edges[style={thick}](7,9)
	\Edges[style={thick}](8,10)
	\Edges[style={thick}](9,10)
	\Edges[style={thick}](5,11)
	\Edges[style={thick}](11,12)
	\Edges[style={thick}](8,12)
	\Edges[style={thick}](12,13)
	\Edges[style={thick}](12,14)
	\Edges[style={thick}](10,14)
	\Edges[style={thick}](13,15)
	\Edges[style={thick}](14,15)
	\Edges[style={thick}](15,16)
	\draw [decorate,decoration={brace,amplitude=10pt,mirror},yshift=-2pt] (-0.1,0) -- (3.1,0) node [black,midway,yshift=-0.6cm]  {\footnotesize $n$};
	\node at (1.5,-1) {$\Phi^+(B_n)\simeq \Phi^+(C_n)$};
\end{tikzpicture} \quad \vline \quad \begin{tikzpicture}[scale=1.2]
	\SetFancyGraph
	\Vertex[NoLabel,x=0,y=0]{1}
	\Vertex[NoLabel,x=1,y=0]{2}
	\Vertex[NoLabel,x=2,y=0]{3}
	\Vertex[NoLabel,x=3,y=0]{4}
	\Vertex[NoLabel,x=0.5,y=0.5]{5}
	\Vertex[NoLabel,x=1.5,y=0.5]{6}
	\Vertex[NoLabel,x=2.5,y=0.5]{7}
	\Vertex[NoLabel,x=1,y=1]{8}
	\Vertex[NoLabel,x=2,y=1]{9}
	\Vertex[NoLabel,x=0,y=1]{11}
	\Vertex[NoLabel,x=0.5,y=1.5]{12}
	\Vertex[NoLabel,x=0,y=2]{13}
	\Edges[style={thick}](1,5)
	\Edges[style={thick}](2,6)
	\Edges[style={thick}](3,6)
	\Edges[style={thick}](3,7)
	\Edges[style={thick}](4,7)
	\Edges[style={thick}](5,8)
	\Edges[style={thick}](6,9)
	\Edges[style={thick}](7,9)
	\Edges[style={thick}](5,11)
	\Edges[style={thick}](11,12)
	\Edges[style={thick}](8,12)
	\Edges[style={thick}](12,13)
	\Edges[style={thick}](3,5)
	\Edges[style={thick}](6,11)
	\Edges[style={thick}](7,8)
	\Edges[style={thick}](9,12)
	\node at (1.5,-1) {$\Phi^+(D_4)$};
\end{tikzpicture}
\end{center}
\caption{Some of the root posets for crystallographic root systems.} \label{fig:crystal_root_posets}
\end{figure}

The root poset $\Phi^+$ of $\Phi$ is graded, and its rank function is given by $r(\alpha)=\mathrm{ht}(\alpha)-1$, where the \emph{height} of a positive root $\alpha =\sum_{i=1}^{n} a_i \alpha_i \in \Phi^{+}$ is $\mathrm{ht}(\alpha) := \sum_{i=1}^{n}a_i$. The minimal elements of the root poset are precisely the simple roots, and there is a unique maximal element called the \emph{highest root}. Beyond enjoying these nice combinatorial properties, the root poset also contains important numerical information about the root system. For example, recall that the \emph{degrees} $d_1 \leq d_2 \leq \ldots \leq d_n$ of the Weyl group of~$\Phi$ are certain very significant parameters in the theory of finite reflection groups (for instance because of the way they enter into the Chevalley-Shephard-Todd theorem). These can be read off from the rank function of the root poset as follows:
\[\#\{\alpha \in \Phi^+\colon r(\alpha)=i\} = \#\{1\leq j \leq n\colon d_j > i+1\} \textrm{ for all $i=0,\ldots,r(\Phi^+)+1$}.\]
This was first proved uniformly by Kostant~\cite{kostant1959principal}; see also~\cite[Theorem 3.20]{humphreys1990reflection}. In particular, the \emph{Coxeter number} $h$ of the Weyl group of $\Phi$ is $h=d_n = r(\Phi^+)+2$.

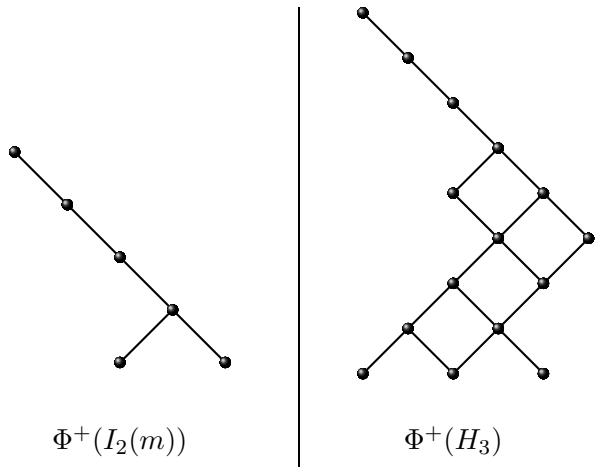
\begin{figure}
\begin{center}
 \begin{tikzpicture}[scale=1.4]
	\SetFancyGraph
	\Vertex[NoLabel,x=0,y=0]{1}
	\Vertex[NoLabel,x=1,y=0]{2}
	\Vertex[NoLabel,x=0.5,y=0.5]{3}
	\Vertex[NoLabel,x=0,y=1]{4}
	\Vertex[NoLabel,x=-0.5,y=1.5]{5}
	\Vertex[NoLabel,x=-1,y=2]{6}
	\Edges[style={thick}](1,3)
	\Edges[style={thick}](2,3)
	\Edges[style={thick}](3,4)
	\Edges[style={thick}](4,5)
	\Edges[style={thick}](5,6)
	\rotatebox{-45}{\draw [decorate,decoration={brace,amplitude=10pt},yshift=7pt] (-2.2,0.5) -- (0.8,0.5) node [black,midway,yshift=0.6cm]  {\footnotesize $(m-1)$};}
	\node at (0,-0.75) {$\Phi^+(I_2(m))$};
\end{tikzpicture}  \qquad  \vline \qquad \begin{tikzpicture}[scale=1.2]
	\SetFancyGraph
	\Vertex[NoLabel,x=0,y=0]{1}
	\Vertex[NoLabel,x=1,y=0]{2}
	\Vertex[NoLabel,x=2,y=0]{3}
	\Vertex[NoLabel,x=0.5,y=0.5]{4}
	\Vertex[NoLabel,x=1.5,y=0.5]{5}
	\Vertex[NoLabel,x=1,y=1]{6}
	\Vertex[NoLabel,x=2,y=1]{7}
	\Vertex[NoLabel,x=1.5,y=1.5]{8}
	\Vertex[NoLabel,x=2.5,y=1.5]{9}
	\Vertex[NoLabel,x=1,y=2]{10}
	\Vertex[NoLabel,x=2,y=2]{11}
	\Vertex[NoLabel,x=1.5,y=2.5]{12}
	\Vertex[NoLabel,x=1,y=3]{13}
	\Vertex[NoLabel,x=0.5,y=3.5]{14}
	\Vertex[NoLabel,x=0,y=4]{15}
	\Edges[style={thick}](1,4)
	\Edges[style={thick}](2,4)
	\Edges[style={thick}](2,5)
	\Edges[style={thick}](3,5)
	\Edges[style={thick}](4,6)
	\Edges[style={thick}](5,6)
	\Edges[style={thick}](5,7)
	\Edges[style={thick}](6,8)
	\Edges[style={thick}](7,8)
	\Edges[style={thick}](7,9)
	\Edges[style={thick}](8,10)
	\Edges[style={thick}](8,11)
	\Edges[style={thick}](9,11)
	\Edges[style={thick}](10,12)
	\Edges[style={thick}](11,12)
	\Edges[style={thick}](12,13)
	\Edges[style={thick}](13,14)
	\Edges[style={thick}](14,15)
	\node at (1,-0.75) {$\Phi^+(H_3)$};
\end{tikzpicture}
\end{center}
\caption{The root posets for non-crystallographic root systems of coincidental type.} \label{fig:non_crystal_root_posets}
\end{figure}

The same definition of partial order could be applied to the positive roots of a non-crystallographic root system, but this naive generalization would not have the desirable properties that the crystallographic case has. For the non-crystallographic root systems of coincidental type, $H_3$ and $I_2(m)$, there are ad hoc constructions of root posets (due to Armstrong~\cite{armstrong2009generalized}) which have all the desirable numerological and combinatorial properties. These are depicted in Figure~\ref{fig:non_crystal_root_posets}. Note that this definition of the root poset for $I_2(m)$ is consistent with the special cases $I_2(2)=A_1 \oplus A_1$, $I_2(3)=A_2$, $I_2(4)=B_2$, and $I_2(6)=G_2$. For $H_4$, the unique non-crystallographic root system which is not of coincidental type, it turns out that there is no way to define a root poset that has all the same desirable properties; see~\cite{cuntz2015root}.

We will particularly focus in this paper on the root posets of coincidental type, namely $\Phi^+(A_n)$, $\Phi^+(B_n)\simeq\Phi^+(C_n)$, $\Phi^+(H_3)$ and  $\Phi^+(I_2(m))$. One feature of the root posets of coincidental type that will turn out to be very important for us is that each element covers, and is covered by, at most two elements. In fact, all the posets we will be interested in have such a $2$-dimensional grid-like structure. The reader can check that the root poset $\Phi^+(D_4)$, depicted in Figure~\ref{fig:crystal_root_posets}, lacks this structure.

Next we define the minuscule posets. We give only a very cursory account here; for details, with a particular focus on the representation theoretic significance of minuscule posets, consult the recent book~\cite{green2013minuscule}. Let $\Phi$ be an irreducible, crystallographic root system. A nonzero, dominant weight $\omega$ is called \emph{minuscule} if $2\frac{\langle \omega,\alpha\rangle }{\langle \alpha,\alpha\rangle}\in \{-1,0,1\}$ for all roots~$\alpha \in \Phi$. Let $\omega$ be a minuscule weight. Such a minuscule weight must be fundamental: we have $\omega = \omega_i$ for some $i=1,\ldots,n$. If $G$ is a simple Lie group over $\mathbb{C}$ whose root system is $\Phi$, and $P_i\subseteq G$ is the \emph{maximal parabolic subgroup} corresponding to the simple root~$\alpha_i$, then we call the homogeneous space $G/P_i$ a \emph{minuscule variety} corresponding to the minuscule weight~$\omega_i$. The \emph{minuscule poset} for this minuscule weight, which we denote by~$\Lambda_{G/P_i}$ where $G/P_i$ is the corresponding minuscule variety, is the order filter generated by the simple root $\alpha_i$ in the root poset $\Phi^+$:
\[ \Lambda_{G/P_i} \coloneqq  \{\alpha\in \Phi^+\colon \alpha \geq \alpha_i\}.\]
Since root posets are graded, minuscule posets are graded. Furthermore, it is clear that each minuscule poset has a unique minimal element and a unique maximal element.

The minuscule posets have been classified. All the minuscule posets (up to poset isomorphism) are depicted in Figure~\ref{fig:minuscule_posets}. These are: the \emph{rectangle}  $\Lambda_{\mathrm{Gr}(k,n)}$  (a.k.a., product of chains $[k]\times [n-k]$) associated to the Grassmannian of $k$-planes in $\mathbb{C}^n$; the \emph{shifted staircase} $\Lambda_{\mathrm{OG}(n,2n)}$ associated to the even orthogonal Grassmannian; the \emph{``propeller poset''} $\Lambda_{\mathbb{Q}^{2n}}$ associated to the even dimensional quadric; and the exceptional posets $\Lambda_{\mathbb{OP}^2}$ associated to the Cayley plane (coming from $E_6$) and $\Lambda_{G_{\omega}(\mathbb{O}^3,\mathbb{O}^6)}$ associated to the Freudenthal variety (coming from $E_7$).

\begin{figure}
\begin{center}
 \begin{tikzpicture}[scale=0.75]
	\SetFancyGraph
	\Vertex[NoLabel,x=0,y=0]{1}
	\Vertex[NoLabel,x=1,y=1]{2}
	\Vertex[NoLabel,x=2,y=2]{3}
	\Vertex[NoLabel,x=3,y=3]{4}
	\Vertex[NoLabel,x=-1,y=1]{5}
	\Vertex[NoLabel,x=0,y=2]{6}
	\Vertex[NoLabel,x=1,y=3]{7}
	\Vertex[NoLabel,x=2,y=4]{8}
	\Vertex[NoLabel,x=-2,y=2]{9}
	\Vertex[NoLabel,x=-1,y=3]{10}
	\Vertex[NoLabel,x=0,y=4]{11}
	\Vertex[NoLabel,x=1,y=5]{12}
	\Edges[style={thick}](1,2)
	\Edges[style={thick}](2,3)
	\Edges[style={thick}](3,4)
	\Edges[style={thick}](1,5)
	\Edges[style={thick}](2,6)
	\Edges[style={thick}](3,7)
	\Edges[style={thick}](4,8)
	\Edges[style={thick}](5,6)
	\Edges[style={thick}](6,7)
	\Edges[style={thick}](7,8)
	\Edges[style={thick}](5,9)
	\Edges[style={thick}](6,10)
	\Edges[style={thick}](7,11)
	\Edges[style={thick}](8,12)
	\Edges[style={thick}](9,10)
	\Edges[style={thick}](10,11)
	\Edges[style={thick}](11,12)
	\rotatebox{-45}{\draw [decorate,decoration={brace,mirror,amplitude=10pt},yshift=-7pt] (-2.8,0) -- (0,0) node [black,midway,yshift=-0.6cm]  {\footnotesize $k$};}
	\rotatebox{45}{\draw [decorate,decoration={brace,mirror,amplitude=10pt},yshift=-7pt] (-0.2,0) -- (4.3,0) node [black,midway,yshift=-0.6cm]  {\footnotesize $(n-k)$};}
	\node at (1,-1) {$\Lambda_{\mathrm{Gr}(k,n)}$};
\end{tikzpicture}  \vline \quad \begin{tikzpicture}[scale=0.75]
	\SetFancyGraph
	\Vertex[NoLabel,x=0,y=0]{1}
	\Vertex[NoLabel,x=1,y=1]{2}
	\Vertex[NoLabel,x=2,y=2]{3}
	\Vertex[NoLabel,x=3,y=3]{4}
	\Vertex[NoLabel,x=0,y=2]{6}
	\Vertex[NoLabel,x=1,y=3]{7}
	\Vertex[NoLabel,x=2,y=4]{8}
	\Vertex[NoLabel,x=0,y=4]{11}
	\Vertex[NoLabel,x=1,y=5]{12}
	\Vertex[NoLabel,x=0,y=6]{13}
	\Edges[style={thick}](1,2)
	\Edges[style={thick}](2,3)
	\Edges[style={thick}](3,4)
	\Edges[style={thick}](2,6)
	\Edges[style={thick}](3,7)
	\Edges[style={thick}](4,8)
	\Edges[style={thick}](6,7)
	\Edges[style={thick}](7,8)
	\Edges[style={thick}](7,11)
	\Edges[style={thick}](8,12)
	\Edges[style={thick}](11,12)
	\Edges[style={thick}](12,13)
	\rotatebox{45}{\draw [decorate,decoration={brace,mirror,amplitude=10pt},yshift=-7pt] (-0.2,0) -- (4.3,0) node [black,midway,yshift=-0.6cm]  {\footnotesize $(n-1)$};}
	\node at (2,-1) {$\Lambda_{\mathrm{OG}(n,2n)}$};
\end{tikzpicture}  \vline \quad  \begin{tikzpicture}[scale=0.75]
	\SetFancyGraph
	\Vertex[NoLabel,x=0,y=0]{1}
	\Vertex[NoLabel,x=-1,y=1]{2}
	\Vertex[NoLabel,x=-2,y=2]{3}
	\Vertex[NoLabel,x=-3,y=3]{4}
	\Vertex[NoLabel,x=-1,y=3]{5}
	\Vertex[NoLabel,x=-2,y=4]{6}
	\Vertex[NoLabel,x=-3,y=5]{7}
	\Vertex[NoLabel,x=-4,y=6]{8}
	\Edges[style={thick}](1,2)
	\Edges[style={thick}](2,3)
	\Edges[style={thick}](3,4)
	\Edges[style={thick}](3,5)
	\Edges[style={thick}](4,6)
	\Edges[style={thick}](5,6)
	\Edges[style={thick}](6,7)
	\Edges[style={thick}](7,8)
	\draw [decorate,decoration={brace,amplitude=10pt},yshift=7pt] (-4,6) -- (-0.9,2.9) node [black,midway,yshift=0.4cm,xshift=0.4cm]  {\rotatebox{-45}{\footnotesize $n$}};
	\rotatebox{-45}{\draw [decorate,decoration={brace,mirror,amplitude=10pt},yshift=-7pt] (-4.2,0) -- (0,0) node [black,midway,yshift=-0.6cm]  {\footnotesize $n$};}
	\node at (-2,-1) {$\Lambda_{\mathbb{Q}^{2n}}$};
\end{tikzpicture} \\ \hrulefill \\ \vspace{0.3cm}
\begin{tikzpicture}[scale=0.75]
	\SetFancyGraph
	\Vertex[NoLabel,x=3,y=-3]{-4}
	\Vertex[NoLabel,x=2,y=-2]{-3}
	\Vertex[NoLabel,x=1,y=-1]{-2}
	\Vertex[NoLabel,x=2,y=0]{-1}
	\Vertex[NoLabel,x=-1,y=1]{0}
	\Vertex[NoLabel,x=0,y=0]{1}
	\Vertex[NoLabel,x=1,y=1]{2}
	\Vertex[NoLabel,x=2,y=2]{3}
	\Vertex[NoLabel,x=3,y=3]{4}
	\Vertex[NoLabel,x=0,y=2]{6}
	\Vertex[NoLabel,x=1,y=3]{7}
	\Vertex[NoLabel,x=2,y=4]{8}
	\Vertex[NoLabel,x=0,y=4]{11}
	\Vertex[NoLabel,x=1,y=5]{12}
	\Vertex[NoLabel,x=0,y=6]{13}
	\Vertex[NoLabel,x=-1,y=7]{14}
	\Edges[style={thick}](-4,-3)
	\Edges[style={thick}](-3,-2)
	\Edges[style={thick}](-2,-1)
	\Edges[style={thick}](-2,1)
	\Edges[style={thick}](-1,2)
	\Edges[style={thick}](0,6)
	\Edges[style={thick}](0,1)
	\Edges[style={thick}](1,2)
	\Edges[style={thick}](2,3)
	\Edges[style={thick}](3,4)
	\Edges[style={thick}](2,6)
	\Edges[style={thick}](3,7)
	\Edges[style={thick}](4,8)
	\Edges[style={thick}](6,7)
	\Edges[style={thick}](7,8)
	\Edges[style={thick}](7,11)
	\Edges[style={thick}](8,12)
	\Edges[style={thick}](11,12)
	\Edges[style={thick}](12,13)
	\Edges[style={thick}](13,14)
	\node at (0,-5) {$\Lambda_{\mathbb{OP}^2}$};
\end{tikzpicture} \qquad  \vline \qquad \begin{tikzpicture}[scale=1.2]
	\SetFancyGraph
	\Vertex[NoLabel,x=0.5,y=-0.5]{-4}
	\Vertex[NoLabel,x=1.5,y=-0.5]{-5}
	\Vertex[NoLabel,x=1,y=-1]{-6}
	\Vertex[NoLabel,x=2,y=-1]{-7}
	\Vertex[NoLabel,x=1.5,y=-1.5]{-8}
	\Vertex[NoLabel,x=2.5,y=-1.5]{-9}
	\Vertex[NoLabel,x=1,y=-2]{-10}
	\Vertex[NoLabel,x=2,y=-2]{-11}
	\Vertex[NoLabel,x=1.5,y=-2.5]{-12}
	\Vertex[NoLabel,x=1,y=-3]{-13}
	\Vertex[NoLabel,x=0.5,y=-3.5]{-14}
	\Vertex[NoLabel,x=0,y=-4]{-15}
	\Vertex[NoLabel,x=0,y=0]{1}
	\Vertex[NoLabel,x=1,y=0]{2}
	\Vertex[NoLabel,x=2,y=0]{3}
	\Vertex[NoLabel,x=0.5,y=0.5]{4}
	\Vertex[NoLabel,x=1.5,y=0.5]{5}
	\Vertex[NoLabel,x=1,y=1]{6}
	\Vertex[NoLabel,x=2,y=1]{7}
	\Vertex[NoLabel,x=1.5,y=1.5]{8}
	\Vertex[NoLabel,x=2.5,y=1.5]{9}
	\Vertex[NoLabel,x=1,y=2]{10}
	\Vertex[NoLabel,x=2,y=2]{11}
	\Vertex[NoLabel,x=1.5,y=2.5]{12}
	\Vertex[NoLabel,x=1,y=3]{13}
	\Vertex[NoLabel,x=0.5,y=3.5]{14}
	\Vertex[NoLabel,x=0,y=4]{15}
	\Edges[style={thick}](1,4)
	\Edges[style={thick}](2,4)
	\Edges[style={thick}](2,5)
	\Edges[style={thick}](3,5)
	\Edges[style={thick}](4,6)
	\Edges[style={thick}](5,6)
	\Edges[style={thick}](5,7)
	\Edges[style={thick}](6,8)
	\Edges[style={thick}](7,8)
	\Edges[style={thick}](7,9)
	\Edges[style={thick}](8,10)
	\Edges[style={thick}](8,11)
	\Edges[style={thick}](9,11)
	\Edges[style={thick}](10,12)
	\Edges[style={thick}](11,12)
	\Edges[style={thick}](12,13)
	\Edges[style={thick}](13,14)
	\Edges[style={thick}](14,15)
	\Edges[style={thick}](1,-4)
	\Edges[style={thick}](2,-4)
	\Edges[style={thick}](2,-5)
	\Edges[style={thick}](3,-5)
	\Edges[style={thick}](-4,-6)
	\Edges[style={thick}](-5,-6)
	\Edges[style={thick}](-5,-7)
	\Edges[style={thick}](-6,-8)
	\Edges[style={thick}](-7,-8)
	\Edges[style={thick}](-7,-9)
	\Edges[style={thick}](-8,-10)
	\Edges[style={thick}](-8,-11)
	\Edges[style={thick}](-9,-11)
	\Edges[style={thick}](-10,-12)
	\Edges[style={thick}](-11,-12)
	\Edges[style={thick}](-12,-13)
	\Edges[style={thick}](-13,-14)
	\Edges[style={thick}](-14,-15)
	\node at (1.5,-4.75) {$\Lambda_{G_{\omega}(\mathbb{O}^3,\mathbb{O}^6)}$};
\end{tikzpicture}
\end{center}
\caption{The minuscule posets.} \label{fig:minuscule_posets}
\end{figure}

Technically what we defined in the previous paragraph are the \emph{connected} minuscule posets, and arbitrary minuscule posets are disjoint unions of these connected minuscule posets. But because we have $J(P + Q) = J(P)\times J(Q)$, and everything about distributive lattices that we are interested in studying (e.g., the CDE property, the behavior of rowmotion, et cetera) decomposes in the obvious way with respect to Cartesian products, we will restrict our attention in this paper to connected posets.

The minuscule posets enjoy a number of remarkable algebraic and combinatorial properties. For instance, they are conjectured to be the only examples of \emph{Gaussian posets} (meaning that their $P$-partition size generating function satisfies an algebraic equation of a specific form reminiscent of the $q$-binomials); see the seminal paper~\cite{proctor1984bruhat}. Moreover, it is often possible to establish these properties of minuscule posets in a uniform manner. By \emph{uniform} we mean in a manner which avoids the classification of minuscule posets and instead relies purely on Lie- and root system-theoretic concepts. 

Let us also mention that the $P$-partitions for minuscule posets $P$ are significant in Standard Monomial Theory~\cite{seshardri1978geometry}; and very recently these $P$-partitions have also been connected to the the theory of quiver representations~\cite{garver2018minuscule}. 

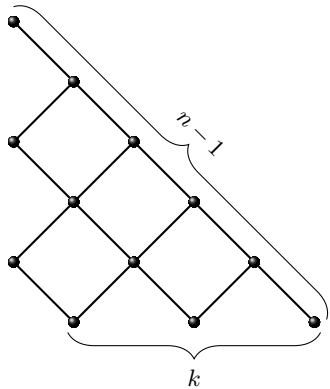
\begin{figure}
\begin{center}
\begin{tikzpicture}[scale=0.8]
	\SetFancyGraph
	\Vertex[NoLabel,x=0,y=0]{1}
	\Vertex[NoLabel,x=2,y=0]{2}
	\Vertex[NoLabel,x=4,y=0]{3}
	\Vertex[NoLabel,x=-1,y=1]{4}
	\Vertex[NoLabel,x=1,y=1]{5}
	\Vertex[NoLabel,x=3,y=1]{6}
	\Vertex[NoLabel,x=0,y=2]{7}
	\Vertex[NoLabel,x=2,y=2]{8}
	\Vertex[NoLabel,x=-1,y=3]{9}
	\Vertex[NoLabel,x=1,y=3]{10}
	\Vertex[NoLabel,x=0,y=4]{11}
	\Vertex[NoLabel,x=-1,y=5]{12}
	\Edges[style={thick}](1,4)
	\Edges[style={thick}](1,5)
	\Edges[style={thick}](2,5)
	\Edges[style={thick}](2,6)
	\Edges[style={thick}](3,6)
	\Edges[style={thick}](4,7)
	\Edges[style={thick}](5,7)
	\Edges[style={thick}](5,8)
	\Edges[style={thick}](6,8)
	\Edges[style={thick}](7,9)
	\Edges[style={thick}](7,10)
	\Edges[style={thick}](8,10)
	\Edges[style={thick}](9,11)
	\Edges[style={thick}](10,11)
	\Edges[style={thick}](11,12)
	\draw [decorate,decoration={brace,amplitude=10pt},yshift=7pt] (-1,5) -- (4.2,-0.2) node [black,midway,yshift=0.4cm,xshift=0.4cm]  {\rotatebox{-45}{\footnotesize $n-1$}};
	\draw [decorate,decoration={brace,amplitude=10pt,mirror},yshift=-2pt] (-0.1,-0.1) -- (4.1,-0.1) node [black,midway,yshift=-0.6cm]  {\footnotesize $k$};
\end{tikzpicture}
\end{center}
\caption{The trapezoid $T_{k,n}$.} \label{fig:trap}
\end{figure}

We are almost ready to define the minuscule doppelg\"{a}nger pairs. We just need to formally introduce our main antagonist in this paper: the trapezoid poset $T_{k,n}$. It is depicted in Figure~\ref{fig:trap}. Note that $T_{k,n}$ is defined for $1\leq k \leq n/2$. And observe that $T_{k,n}$ is an induced subposet (in fact, an order filter) of $\Phi^+(B_{n-k})$, and that $T_{k,2k} \simeq \Phi^+(B_k)$, but that in general $T_{k,n}$ is not a root poset (or a minuscule poset). 

\begin{thm} \label{thm:minuscule_doppelgangers}
Let $(P,Q) \in \{(\Lambda_{\mathrm{Gr}(k,n)},T_{k,n}),(\Lambda_{\mathrm{OG}(6,12)},\Phi^+(H_3)),(\Lambda_{\mathbb{Q}^{2n}},\Phi^+(I_2(2n)))\}$. Then $P$ and $Q$ are doppelg\"{a}ngers.
\end{thm}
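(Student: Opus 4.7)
The plan is to verify the identity $\Omega_P(\ell)=\Omega_Q(\ell)$ case by case, using explicit product formulas for the order polynomials on both sides.

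For $(P,Q)=(\Lambda_{\mathrm{Gr}(k,n)},T_{k,n})$, the rectangle side is MacMahon's classical box formula
\[
\Omega_{[k]\times[n-k]}(\ell) \;=\; \prod_{i=1}^{k}\prod_{j=1}^{n-k}\frac{\ell+i+j-1}{i+j-1},
\]
and the trapezoid $T_{k,n}$ satisfies a parallel multiplicative formula due to Proctor's work on (order filters in) $d$-complete posets. After grouping factors in the trapezoid product according to their position in the underlying shape, one matches them against factors on the rectangle side. I expect this to be the main obstacle: making the cancellation transparent by hand, rather than just observing that both sides evaluate to a common universal product.

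For $(\Lambda_{\mathrm{OG}(6,12)},\Phi^+(H_3))$, both posets have $15$ elements. The minuscule side admits a Proctor hook-length formula (since minuscule posets are Gaussian), and the $H_3$ side admits a coincidental-type product formula of Fuss–Catalan flavor (or, failing that, can be tabulated by finite computation). One then checks that the two polynomials in $\ell$ agree. For $(\Lambda_{\mathbb{Q}^{2n}},\Phi^+(I_2(2n)))$, both posets are essentially two chains of length $n-1$ attached to a small local configuration (a diamond on the propeller side, a wedge of two simple roots on the dihedral side); in each case $\Omega(\ell)$ factors as a product of binomial-coefficient-like factors over the two chains with a small correction from the middle, and the two resulting expressions match after elementary manipulation.

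A more uniform alternative would be to simply invoke the $K$-theoretic jeu-de-taquin bijection of Hamaker, Patrias, Pechenik, and Williams \cite{hamaker2018doppelgangers}, which establishes all three doppelgänger relations simultaneously by giving, for each pair, a bijection between the $P$-partitions of the two sides. Since the whole point of the subsequent sections is to investigate whether this bijection extends further (to set-valued $P$-partitions, to rowmotion orbits, and so on), citing \cite{hamaker2018doppelgangers} here is thematically the natural proof to present, and the product-formula route above is at best a sanity check on the three small numerical identities it produces.
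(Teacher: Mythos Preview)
Your proposal is essentially aligned with the paper's treatment. The paper does not give its own proof of this theorem; it simply attributes the rectangle/trapezoid case to Proctor~\cite{proctor1983shifted}, calls the other two pairs ``a simple exercise'' (citing~\cite{williams2013cataland}), and then points to Hamaker--Patrias--Pechenik--Williams~\cite{hamaker2018doppelgangers} for the uniform bijective proof via $K$-theoretic jeu-de-taquin. Your final paragraph lands on exactly this: cite~\cite{hamaker2018doppelgangers} and move on, which is what the paper does.

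One small historical correction: the trapezoid formula you want is not from Proctor's $d$-complete theory but from his earlier 1983 paper on shifted plane partitions~\cite{proctor1983shifted}, which directly establishes $\Omega_{[k]\times[n-k]}(\ell)=\Omega_{T_{k,n}}(\ell)$. Your product-formula sketch for the other two pairs is more detail than the paper bothers to supply (it just says ``simple exercise''), but it is in the right spirit and would go through; in particular, for $(\Lambda_{\mathbb{Q}^{2n}},\Phi^+(I_2(2n)))$ the two posets actually have isomorphic comparability graphs, so Theorem~\ref{thm:com_graph_dop} already handles that case without any computation.
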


We call the pair $(P,Q)$ appearing in Theorem~\ref{thm:minuscule_doppelgangers} a \emph{minuscule doppelg\"{a}nger pair}. 

As for the history of Theorem~\ref{thm:minuscule_doppelgangers}: that $\Lambda_{\mathrm{Gr}(k,n)}$ and $T_{k,n}$ are doppelg\"{a}ngers was first proved by Proctor~\cite{proctor1983shifted}; that the other two pairs are doppelg\"{a}ngers is a simple exercise (see~\cite{williams2013cataland}). More recently, Hamaker, Patrias, Pechenik, and Williams~\cite{hamaker2018doppelgangers} gave a uniform bijective proof of Theorem~\ref{thm:minuscule_doppelgangers} based on the $K$-theory of minuscule flag varieties. Their bijections employ the $K$-theoretic jeu-de-taquin developed by Thomas and Yong~\cite{thomas2009combinatorial, thomas2009jeu}.

It is easy to verify that $\Lambda_{\mathbb{Q}^{2n}}$ and $\Phi^+(I_2(2n))$ have isomorphic comparability graphs, that $\Lambda_{\mathrm{Gr}(k,n)}$ and $T_{k,n}$ have isomorphic comparability graphs if and only if $k\leq 2$, and that $\Lambda_{\mathrm{OG}(6,12)}$ and $\Phi^+(H_3)$ do not have isomorphic comparability graphs. In the remainder of the paper we will put forward some conjectures which suggest that even though the minuscule doppelg\"{a}nger pairs do not in general have isomorphic comparability graphs, in some respects they behave ``as if'' they do.

\begin{remark} \label{rem:a_root_poset}
The Type A root poset does not appear in Theorem~\ref{thm:minuscule_doppelgangers}. Nevertheless, there is a close connection between the $P$-partitions of the Type A root poset and of the shifted staircase, even though these posets are not literally doppelg\"{a}ngers; one might call them ``lookalikes.'' See~\cite[\S11.1]{hamaker2018doppelgangers}, which cites~\cite{proctor1990new, sheats1999symplectic, purbhoo2018marvellous}, for a precise account of this connection. It would be interesting to extend (in some modified form) the conjectures about minuscule doppelg\"{a}nger pairs we present in the next two sections to include this ``lookalike'' pair as well.
\end{remark}

\begin{remark}
A distinctive feature of all of the posets we are most interested in here (i.e., minuscule posets, root posets of coincidental type, and the trapezoid) is that they have product formulas for their order polynomials (see Conjectures~\ref{conj:minuscule_row_cyc_siev} and~\ref{conj:root_poset_cyc_siev} for $q$-analogs of these product formulas).
\end{remark}

\section{The CDE property} \label{sec:cde}

In this section we study the minuscule  doppelg\"{a}nger pairs from the perspective of the coincidental down-degree expectations (CDE) property. 

\subsection{The CDE property for distributive lattices} \label{sec:cde_dist}

Here we quickly review the coincidental down-degree expectations (CDE) property for distributive lattices. See~\cite{reiner2018poset} and~\cite{hopkins2017cde} for more details, including motivation for the study of this property.

Let $P$ be a poset. We use $\mathrm{uni}_P$ to denote the uniform distribution on $P$, and we use $\mathrm{maxchain}_P$ to denote distribution where each $p\in P$ occurs proportional to the number of maximal chains containing $p$. Recall that the \emph{down-degree} of $p\in P$, denoted $\mathrm{ddeg}(p)$, is the number of elements of $P$ that $p$ covers. For a statistic $f$ on a combinatorial set $X$ and a discrete probability distribution $\mu$ on $X$, we use $\mathbb{E}(\mu;f)$ to denote the expected value of $f$ with respect to $\mu$.

\begin{definition}
$P$ has the \emph{coincidental down-degree expectations (CDE)} property if $\mathbb{E}(\mathrm{maxchain}_P;\mathrm{ddeg})=\mathbb{E}(\mathrm{uni}_P;\mathrm{ddeg})$. We also say $P$ is \emph{CDE} in this case.
\end{definition}

Note that $\mathbb{E}(\mathrm{uni}_P;\mathrm{ddeg})$ is just the number of edges of the Hasse diagram of $P$ divided by the number of elements of $P$; we call this quantity the \emph{edge density} of the poset.

For graded posets, there are also multichain distributions which interpolate between the uniform and maxchain distributions. Namely, let $\mathrm{mchain}_{P}(\ell)$ denote the probability distribution on $P$ defined as follows: choose a multichain $p_0 \leq p_1 \leq \cdots \leq p_{\ell-1}$ of $P$ of length $(\ell-1)$ uniformly at random; choose $i \in \{0,1,\ldots,\ell-1\}$ uniformly at random; and then select element $p_i \in P$. Clearly, $\mathrm{mchain}_{P}(1) = \mathrm{uni}_{P}$, and if $P$ is graded then $\mathrm{lim}_{\ell\to\infty} \mathrm{mchain}_{P}(\ell) = \mathrm{maxchain}_P$.

\begin{definition}
$P$ is \emph{multichain CDE (mCDE)} if $\mathbb{E}(\mathrm{mchain}_P(\ell);\mathrm{ddeg})=\mathbb{E}(\mathrm{uni}_P;\mathrm{ddeg})$ for all $\ell\geq 1$.
\end{definition}

As we just explained, if $P$ is graded then $P$ being mCDE implies it is CDE.

Now we focus on the special case of a distributive lattice; that is, we will consider the CDE property not for $P$ but for $J(P)$. 

Let's make a few immediate comments about the CDE property for $J(P)$. First of all, for $I \in J(P)$ we have that $\mathrm{ddeg}(I)=\#\mathrm{max}(I)$, the number of maximal elements in~$I$. In this context we also refer to $\mathrm{ddeg}$ as the \emph{antichain cardinality statistic} on $J(P)$ because $I \mapsto \mathrm{max}(I)$ is a canonical bijection between $J(P)$ and the set of antichains of~$P$. Hence the edge density of $J(P)$ is just the average size of an antichain of $P$. Next, observe that $\mathrm{mchain}_{J(P)}(\ell)$ can be described as follows: choose a $P$-partition $T\in \mathrm{PP}^{\ell}(P)$ uniformly at random; choose $i \in \{0,1,\ldots,\ell-1\}$ uniformly at random; and then select the element $T^{-1}(\{0,1,\ldots,i\})$. (The distribution $\mathrm{maxchain}_{J(P)}$ can similarly be described in terms of linear extensions of $P$.) Finally, note that since $J(P)$ is necessarily graded, $J(P)$ being mCDE implies it is CDE.

To proceed further we need to introduce toggling and the ``toggle perspective.'' This perspective was first developed in~\cite{chan2017expected}, while the ``toggle'' terminology comes from Striker and Williams~\cite{striker2012promotion}. For $p \in P$ we define the \emph{toggle at $p$} $\tau_p\colon J(P)\to J(P)$ as
\[\tau_p(I) \coloneqq  \begin{cases} I \cup \{p\} &\textrm{if $p\notin I$ and $I\cup\{p\}$ is an order ideal of $P$}; \\ I\setminus \{p\} &\textrm{if $p\in I$ and $I\setminus \{p\}$ is an order ideal of $P$}; \\ I &\textrm{otherwise}. \end{cases}\]
The $\tau_p$ are involutions. We define the \emph{toggleability statistics} $\mathcal{T}_{p^+}, \mathcal{T}_{p^-},\mathcal{T}_p\colon J(P) \to \mathbb{Z}$, which record whether we can toggle $p$ into or out of an order ideal, by 
\begin{align*}
\mathcal{T}_{p^+}(I) &\coloneqq \begin{cases} 1 & \textrm{ if $I \subsetneq \tau_p(I)$}; \\ 0 &\textrm{otherwise}; \end{cases} \\
\mathcal{T}_{p^-}(I) &\coloneqq \begin{cases} 1 & \textrm{ if $\tau_p(I) \subsetneq I$}; \\ 0 &\textrm{otherwise}; \end{cases} \\
\mathcal{T}_p(I) &\coloneqq  \mathcal{T}_{p^+}(I) - \mathcal{T}_{p^-}(I).
\end{align*}

With these toggleability statistics we can introduce the notion of ``toggle-symmetric'' distributions on $J(P)$. 

\begin{definition}
A probability distribution $\mu$ on $J(P)$ is \emph{toggle-symmetric} if for all $p \in P$ we have $\mathbb{E}(\mu;\mathcal{T}_p)=0$.
\end{definition}

In other words, a distribution is toggle-symmetric if we are equally likely to be able toggle out as toggle in every element. In~\cite{chan2017expected} (see also~\cite{reiner2018poset, hopkins2017cde}), it was shown that the distributions $\mathrm{mchain}_{J(P)}(\ell)$ are toggle-symmetric. This motivates the following definition:

\begin{definition}
$J(P)$ is \emph{toggle CDE (tCDE)} if $\mathbb{E}(\mu;\mathrm{ddeg}) = \mathbb{E}(\mathrm{uni}_P;\mathrm{ddeg})$ for any toggle-symmetric distribution~$\mu$ on $J(P)$.
\end{definition}

Since the multichain distributions $\mathrm{mchain}_{J(P)}(\ell)$ are toggle-symmetric, we have:
\[ \textrm{$J(P)$ is tCDE $\Rightarrow$ $J(P)$ is mCDE $\Rightarrow$ $J(P)$ is CDE}.\]

Let us also recall a few other basic facts about these CDE properties for distributive lattices; see~\cite[\S2]{hopkins2017cde} for proofs:

\begin{prop} \label{prop:tcde_basics}
Let $P$ and $Q$ be posets. Then:
\begin{itemize}
\item $J(P)$ is CDE and $J(Q)$ is CDE $\Rightarrow$ $J(P + Q) = J(P)\times(Q)$ is CDE;
\item $J(P)$ is mCDE and $J(Q)$ is mCDE $\Rightarrow$ $J(P + Q) = J(P)\times(Q)$ is mCDE;
\item $J(P)$ is tCDE and $J(Q)$ is tCDE $\Rightarrow$ $J(P + Q) = J(P)\times(Q)$ is tCDE.
\end{itemize}
Furthermore,
\begin{itemize}
\item $J(P)$ is CDE $\Leftrightarrow$ $J(P^*)=J(P)^*$ is CDE;
\item $J(P)$ is mCDE $\Leftrightarrow$ $J(P^*)=J(P)^*$ is mCDE;
\item $J(P)$ is tCDE $\Leftrightarrow$ $J(P^*)=J(P)^*$ is tCDE.
\end{itemize}
Finally, if $P$ is graded and $J(P)$ is tCDE, then the edge density of $J(P)$ is
\[\mathbb{E}(\mathrm{uni}_{J(P)};\mathrm{ddeg})=\frac{\#P}{r(P)+2}.\]
\end{prop}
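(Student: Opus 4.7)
The proposition splits into three groups of claims which I would tackle in turn: the Cartesian product preservation, the duality invariance, and the edge-density formula. For the product claims my plan leans on the structural fact $J(P+Q) = J(P) \times J(Q)$ together with the factorization $\mathrm{ddeg}(I,J) = \mathrm{ddeg}_{J(P)}(I) + \mathrm{ddeg}_{J(Q)}(J)$, so that by linearity the expected down-degree splits whenever the distribution on the product behaves well with respect to the two factors. For uniform this is trivial. For mCDE, length-$(\ell-1)$ multichains in the product are exactly pairs of length-$(\ell-1)$ multichains in the factors, so $\mathrm{mchain}_{J(P+Q)}(\ell)$ is literally a product measure and the split is immediate. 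For tCDE, a toggle $\tau_p$ with $p \in P$ acts only on the first coordinate, so $\mathcal{T}_p^{J(P+Q)}$ depends only on the $J(P)$-marginal; any toggle-symmetric distribution on the product thus has toggle-symmetric marginals, and tCDE of each factor concludes. The one slightly delicate piece is CDE via the maxchain distribution, which does not factor: here I would use a short shuffle-counting argument showing that in a uniformly random interleaving of $|P|$ $P$-moves and $|Q|$ $Q$-moves, each of the $|P|+1$ ``gaps'' between consecutive $P$-moves has the same expected size, and thereby deduce $\mathbb{E}_{\mathrm{maxchain}_{J(P+Q)}}(\mathrm{ddeg}_{J(P)}) = \mathbb{E}_{\mathrm{maxchain}_{J(P)}}(\mathrm{ddeg})$ and symmetrically for $Q$.

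For the duality claims I would work with the order-reversing complement bijection $\phi\colon J(P) \to J(P^*)$, $\phi(I) = P \setminus I$, which realizes $J(P)^* \simeq J(P^*)$. The plan is to verify that $\phi$ transports the uniform, $\mathrm{mchain}(\ell)$, and $\mathrm{maxchain}$ distributions on $J(P)$ to the corresponding distributions on $J(P^*)$ (a multichain in $J(P)$ reverses to a multichain in $J(P^*)$, and likewise for the other two), that $\mathrm{ddeg}_{J(P^*)}\circ \phi = \mathrm{updeg}_{J(P)}$, and that $\mathcal{T}_p^{J(P)} = -\mathcal{T}_p^{J(P^*)}\circ\phi$, so that toggle-symmetry is preserved under pushforward. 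The key algebraic identity is then $\mathrm{updeg} - \mathrm{ddeg} = \sum_{p\in P} \mathcal{T}_p$, which forces $\mathbb{E}(\mu;\mathrm{updeg}) = \mathbb{E}(\mu;\mathrm{ddeg})$ for every toggle-symmetric $\mu$. Combined with the edge-count identity $\mathbb{E}(\mathrm{uni}_{J(P)};\mathrm{updeg}) = \mathbb{E}(\mathrm{uni}_{J(P)};\mathrm{ddeg})$ (a routine double-count for the graded lattice $J(P)$), each CDE-type condition on $J(P^*)$ stated for $\mathrm{ddeg}_{J(P^*)}$ translates via $\phi$ to the corresponding condition on $J(P)$ stated for $\mathrm{ddeg}_{J(P)}$.

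For the edge-density claim, my plan is to exhibit an explicit toggle-symmetric distribution $\mu$ on $J(P)$ whose expected down-degree is transparent. Let $\mu$ be the uniform distribution on the $r+2$ ``horizontal slice'' ideals $I_c \coloneqq \{p \in P : r(p) < c\}$ for $c = 0,1,\ldots,r+1$ (so $I_0 = \emptyset$ and $I_{r+1} = P$). A direct rank inspection shows that for each $p \in P$ the toggle $\tau_p$ can add $p$ to $I_c$ exactly when $c = r(p)$ and can remove $p$ exactly when $c = r(p)+1$, so $\sum_c \mathcal{T}_p(I_c) = 0$ and $\mu$ is toggle-symmetric. Since $P$ is graded of rank $r$ (so all maximal elements of $P$ have rank $r$), the maximal elements of $I_c$ are precisely the rank-$(c-1)$ elements of $P$; summing telescopically gives $\sum_c \mathrm{ddeg}(I_c) = |P|$, hence $\mathbb{E}(\mu;\mathrm{ddeg}) = |P|/(r+2)$, and the tCDE hypothesis forces the edge density $\mathbb{E}(\mathrm{uni}_{J(P)};\mathrm{ddeg})$ to equal this value. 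I expect this last part to be the conceptual crux: without the horizontal-slice construction there is no obvious toggle-symmetric distribution whose expected ddeg is manifestly $|P|/(r+2)$, and the formula appears to demand a real enumeration until one notices that tCDE collapses it to one averaged computation. The shuffle symmetry needed in the product case is technical but routine, and the sign- and distribution-tracking in the dual case is a clean check once the pushforward bookkeeping is arranged.
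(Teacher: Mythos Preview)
Your proposal is correct. The paper does not actually prove this proposition in-text; it simply states the result and refers the reader to \cite[\S2]{hopkins2017cde} for proofs. So there is no in-paper argument to compare against directly, but your outline reproduces the standard arguments and in one place sharpens them.

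A couple of remarks on the individual pieces. For the product claim, your treatment of the mCDE and tCDE cases via marginals is exactly right: since $\mathcal{T}_p$ for $p\in P$ depends only on the $J(P)$-coordinate, any toggle-symmetric $\mu$ on $J(P)\times J(Q)$ has toggle-symmetric marginals, and since $\mathrm{ddeg}$ splits additively the conclusion follows. Your shuffle argument for the CDE case is correct and is the substantive step: the key point is that if one chooses a uniform subset of size $\#P$ from $\{1,\ldots,\#P+\#Q\}$, then the $\#P+1$ gap lengths (including the two end gaps) are exchangeable, so the number of positions $i\in\{0,\ldots,\#P+\#Q\}$ at which exactly $j$ of the $P$-moves have occurred has the same expectation for every $j$. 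This makes the $J(P)$-marginal of $\mathrm{maxchain}_{J(P+Q)}$ equal to $\mathrm{maxchain}_{J(P)}$, as you claim.

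For duality, your identity $\mathrm{updeg}-\mathrm{ddeg}=\sum_{p}\mathcal{T}_p$ handles the toggle-symmetric distributions (hence tCDE and mCDE) immediately. For plain CDE you implicitly need $\mathbb{E}(\mathrm{maxchain}_{J(P)};\mathrm{updeg})=\mathbb{E}(\mathrm{maxchain}_{J(P)};\mathrm{ddeg})$ as well; this follows either because $\mathrm{maxchain}$ is itself toggle-symmetric (as a limit of the $\mathrm{mchain}(\ell)$), or more directly because along any maximal chain of a graded lattice both $\mathrm{ddeg}$ and $\mathrm{updeg}$ sum to the rank.

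Your edge-density argument is particularly clean and worth highlighting: the ``horizontal slice'' ideals $I_c=\{p:r(p)<c\}$ for $c=0,\ldots,r(P)+1$ are exactly the rowmotion orbit singled out later in the paper (Remark~\ref{rem:gpt_details}), and the uniform distribution on any rowmotion orbit is toggle-symmetric by the $\mathcal{T}_p$-averaging fact underlying Lemma~\ref{lem:cde_homo}. You verify toggle-symmetry by hand, which is fine, and then the telescoping computation $\sum_c \mathrm{ddeg}(I_c)=\sum_c \#\{p:r(p)=c-1\}=\#P$ gives the formula directly.
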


To end this subsection, let us formally establish something which was implicitly observed in previous work: that the question of whether a distributive lattice is tCDE is purely a linear algebraic question.

\begin{prop} \label{prop:tcde_eq}
Let $P$ be a poset. Then $J(P)$ is tCDE with edge density $\delta \in \mathbb{Q}$ if and only if we have the following equality of functions $J(P) \to \mathbb{Q}$:
\[ \mathrm{ddeg} + \sum_{p\in P}c_p \mathcal{T}_p  = \delta,\]
for some coefficients $c_p \in \mathbb{Q}$ for $p\in P$.
\end{prop}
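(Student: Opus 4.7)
The plan is to view Proposition~\ref{prop:tcde_eq} as a linear-algebraic duality statement in the finite-dimensional $\mathbb{Q}$-vector space $\mathbb{Q}^{J(P)}$: on one side sit functions like $\mathrm{ddeg}$ and $\mathcal{T}_p$, on the other side sit probability distributions. The two directions correspond to the two ways of reading the pairing.

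The ``if'' direction is immediate from linearity of expectation. Supposing $\mathrm{ddeg} + \sum_{p \in P} c_p \mathcal{T}_p = \delta$ (as functions $J(P) \to \mathbb{Q}$), I apply $\mathbb{E}(\mu; \cdot)$ to both sides for any toggle-symmetric distribution $\mu$ on $J(P)$. By definition of toggle-symmetry, $\mathbb{E}(\mu; \mathcal{T}_p) = 0$ for every $p \in P$, so $\mathbb{E}(\mu; \mathrm{ddeg}) = \delta$. Specializing to $\mu = \mathrm{uni}_{J(P)}$ (which is toggle-symmetric, as recalled in Section~\ref{sec:cde_dist}) shows that $\delta$ is the edge density, and the same calculation applied to an arbitrary toggle-symmetric $\mu$ gives the tCDE property.

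For the ``only if'' direction, suppose $J(P)$ is tCDE with edge density $\delta$. Consider the linear subspace
\[
U \coloneqq \bigl\{\nu \in \mathbb{Q}^{J(P)} : \textstyle\sum_I \nu_I = 0 \text{ and } \sum_I \nu_I \mathcal{T}_p(I) = 0 \text{ for all } p \in P\bigr\}.
\]
By construction $U = \mathrm{span}\bigl(\mathbf{1}, \{\mathcal{T}_p\}_{p\in P}\bigr)^\perp$ with respect to the standard inner product on $\mathbb{Q}^{J(P)}$. I claim that $\mathrm{ddeg} - \delta \cdot \mathbf{1}$ lies in $U^\perp$. To see this, take any $\nu \in U$; because $\mathrm{uni}_{J(P)}$ has strictly positive entries, for sufficiently small $\epsilon > 0$ the vector $\mu_\epsilon \coloneqq \mathrm{uni}_{J(P)} + \epsilon \nu$ is again a probability distribution, and it inherits toggle-symmetry from $\mathrm{uni}_{J(P)}$ and $\nu$. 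By tCDE, $\mathbb{E}(\mu_\epsilon; \mathrm{ddeg}) = \delta = \mathbb{E}(\mathrm{uni}_{J(P)}; \mathrm{ddeg})$; subtracting and dividing by $\epsilon$ yields $\sum_I \nu_I \mathrm{ddeg}(I) = 0$, proving the claim.

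Orthogonal complementation is involutive on subspaces of a finite-dimensional space, so $U^\perp = \mathrm{span}\bigl(\mathbf{1}, \{\mathcal{T}_p\}_{p \in P}\bigr)$. Hence $\mathrm{ddeg} - \delta \cdot \mathbf{1} = c_0 \cdot \mathbf{1} - \sum_{p \in P} c_p \mathcal{T}_p$ for some rationals $c_0, c_p$; that is, $\mathrm{ddeg} + \sum_p c_p \mathcal{T}_p = (c_0 + \delta) \cdot \mathbf{1}$. Taking expectation against $\mathrm{uni}_{J(P)}$ in this identity and using both $\mathbb{E}(\mathrm{uni}_{J(P)}; \mathrm{ddeg}) = \delta$ and $\mathbb{E}(\mathrm{uni}_{J(P)}; \mathcal{T}_p) = 0$ forces $c_0 = 0$, giving exactly the desired identity. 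The only subtle point is the perturbation step in the reverse direction, but this is handled cleanly by the strict positivity of the uniform distribution, so there is no real obstacle.
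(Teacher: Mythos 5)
Your proposal is correct and takes essentially the same route as the paper: the forward direction by linearity of expectation, and the converse by perturbing the strictly positive uniform distribution to show that the toggle-symmetric distributions span the whole constraint space, so that $\mathrm{ddeg}-\delta$ must lie in the span of the $\mathcal{T}_p$ (together with the constant, which you then eliminate by pairing with $\mathrm{uni}_{J(P)}$). The paper phrases this via the linear span of the polytope of toggle-symmetric distributions and its annihilator rather than via orthogonal complements in $\mathbb{Q}^{J(P)}$, but the underlying duality-plus-perturbation argument is identical.
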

\begin{proof}
If we have an equality of functions $\mathrm{ddeg} + \sum_{p\in P}c_p \mathcal{T}_p  = \delta$, then for a toggle-symmetric distribution $\mu$ on $J(P)$ we can take expectations with respect to $\mu$ of both sides of this equality to see that $\mathbb{E}(\mathrm{ddeg};\mu) = \delta$. So one direction of this ``if and only if'' is clear.

Now let $\mathbb{R}^{J(P)}$ denote the vector space of formal $\mathbb{R}$-linear combinations of elements of $J(P)$, and extend any function $f\colon J(P)\to \mathbb{R}$ to a linear function $f\colon \mathbb{R}^{J(P)} \to \mathbb{R}$ in the obvious way. Let~$V\subseteq \mathbb{R}^{J(P)}$ be the linear subspace with $ \mathcal{T}_p =0$ for all $p \in P$. And let $H\subseteq V$ be the affine hyperplane in $V$ with sum of standard basis coordinates equal to one. View the set $T$ of toggle-symmetric distributions on $J(P)$ inside of $H$ in the obvious way: it is the polytope in $H$ defined by requiring each standard basis coordinate to be between~$0$ and $1$. 

The set $T$ contains the uniform distribution $\mathrm{uni}_{J(P)}$. Since no coordinate of~$\mathrm{uni}_{J(P)}$ is equal to either zero or one, if $v$ is any vector in $H-J(P)_{\mathrm{uni}}$ then we can find some~$\varepsilon > 0$ such that $\mathrm{uni}_{J(P)}+\varepsilon v$ remains in $T$. This implies that the affine span of~$T$ is all of $H$, and thus that the linear span of $T$ is all of $V$. Therefore, the space of linear functions vanishing on all of $T$ is the same as the space of linear functions vanishing on all of $V$, and so is spanned by the $\mathcal{T}_p$. But if $J(P)$ is tCDE with edge density $\delta$ then the linear function $\delta-\mathrm{ddeg}$ is identically zero on all of $T$, which means that we indeed get an equality of functions of the form claimed by the proposition.
\end{proof}

Note that $\mathrm{ddeg}(I) = \sum_{p\in P}\mathcal{T}_{p^-}(I)$ for all~$I \in J(P)$. This way of writing the down-degree function is very useful for studying the tCDE property of distributive lattices because with this description of $\mathrm{ddeg}$ in mind, Proposition~\ref{prop:tcde_eq} tells us that the tCDE property is equivalent to the toggleability statistics $\mathcal{T}_{p^+},\mathcal{T}_{p^-}$ for $p\in P$ satisfying a linear equation of a certain form. Finding such a linear equation is essentially the only the general tool we have for proving that a distributive lattice is CDE.

\subsection{The CDE property for minuscule posets, root posets, and minuscule doppelg\"{a}nger pairs}

Now let us discuss the CDE property for the distributive lattices associated to minuscule posets, root posets, and minuscule doppelg\"{a}nger pairs.

First minuscule posets:

\begin{thm} \label{thm:minuscule_cde}
Let $P$ be a minuscule poset. Then $J(P)$ is tCDE.
\end{thm}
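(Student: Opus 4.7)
The plan is to invoke the characterization of the tCDE property given by Proposition~\ref{prop:tcde_eq}: showing that $J(P)$ is tCDE amounts to exhibiting rational coefficients $(c_p)_{p\in P}$ and a constant $\delta \in \mathbb{Q}$ such that
$$\mathrm{ddeg} + \sum_{p\in P} c_p\, \mathcal{T}_p = \delta$$
holds pointwise on $J(P)$. The value of $\delta$ is already forced to be $\#P/(r(P)+2)$ by Proposition~\ref{prop:tcde_basics}. Using the decomposition $\mathrm{ddeg} = \sum_{p\in P} \mathcal{T}_{p^-}$, the task reduces to finding a $\mathbb{Q}$-linear combination of the basic toggleability indicators $\{\mathcal{T}_{p^+}, \mathcal{T}_{p^-} : p\in P\}$ that evaluates to a nonzero constant on every order ideal.

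I would proceed by cases using the classification of connected minuscule posets in Figure~\ref{fig:minuscule_posets}. The rectangle $\Lambda_{\mathrm{Gr}(k,n)}$ has already been shown to be tCDE in~\cite{chan2017expected}, and the coefficients constructed there serve as the prototype: they are built from a simple function of the position $(i,j)$ of an element in the grid. The propeller poset $\Lambda_{\mathbb{Q}^{2n}}$ has a nearly one-dimensional shape (two chains identified at their endpoints) for which the coefficients can be written down directly. For the shifted staircase $\Lambda_{\mathrm{OG}(n,2n)}$, I would adapt the rectangle construction by exploiting its self-dual "folded rectangle" structure, taking coefficients symmetric across the staircase diagonal.

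The main obstacle is the two exceptional minuscule posets $\Lambda_{\mathbb{OP}^2}$ and $\Lambda_{G_\omega(\mathbb{O}^3,\mathbb{O}^6)}$, which have $27$ and $56$ elements respectively and lack any obvious Cartesian-product structure. In principle, the existence of suitable coefficients reduces for each such poset to the consistency of a finite linear system indexed by $J(P)\times P$, so one can always fall back on a direct (possibly computer-assisted) verification. The conceptually preferable route, however, is to define the coefficients $c_p$ uniformly for all five families via the Lie-theoretic realization $p\in \Phi^+$ of a minuscule poset inside its ambient root poset, e.g.\ as a simple function of $\mathrm{ht}(p)$ or of the pairing of $p$ with the minuscule fundamental weight. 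I expect the hard part to be guessing the correct uniform expression for the $c_p$ and verifying the resulting identity at each element $p$; once such a formula is in hand, the tCDE identity follows by a local cover-by-cover check, and the edge density comes out automatically as $\#P/(r(P)+2)$ in agreement with Proposition~\ref{prop:tcde_basics}.
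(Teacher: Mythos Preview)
Your plan is broadly in line with what the paper cites rather than proves: the paper gives no argument of its own here, but points to a case-by-case proof in~\cite{hopkins2017cde} (building on~\cite{chan2017expected}) and a subsequent uniform proof by Rush~\cite{rush2016minuscule}. Your proposed strategy of invoking Proposition~\ref{prop:tcde_eq} and then classifying the minuscule posets is exactly the route of~\cite{hopkins2017cde}, and indeed the rectangle case is handled there via~\cite{chan2017expected} as you say, while the shifted staircase is treated in~\cite{hopkins2017cde} itself (so you need not reinvent that step).

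Where your proposal remains a plan rather than a proof is in the two places you flag yourself. For the shifted staircase you assert that a ``folded rectangle'' adaptation works without giving the coefficients or the verification; this is nontrivial and is carried out in~\cite{hopkins2017cde}. For the exceptional posets $\Lambda_{\mathbb{OP}^2}$ and $\Lambda_{G_\omega(\mathbb{O}^3,\mathbb{O}^6)}$ you fall back on ``a finite linear system'' or a hoped-for uniform formula in terms of $\mathrm{ht}(p)$ or the pairing with the minuscule weight. The latter hope is realized, but not in the way you guess: Rush's uniform argument~\cite{rush2016minuscule} does not proceed by guessing coefficients $c_p$ as simple functions of height, but rather by interpreting the toggles via Stembridge's theory of minuscule heaps~\cite{stembridge1996fully} and the work of~\cite{rush2013orbits, rush2015orbits}. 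So the ``conceptually preferable route'' you gesture toward exists, but it goes through a different piece of machinery than the elementary height-based formula you anticipate; absent that machinery, your proposal for the exceptional types is not yet a proof.
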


Theorem~\ref{thm:minuscule_cde} was first proved in a case-by-case manner in~\cite{hopkins2017cde}, also using results from~\cite{chan2017expected}. Shortly thereafter, Rush~\cite{rush2016minuscule} gave a uniform proof of Theorem~\ref{thm:minuscule_cde}. Rush's arguments were based on his earlier work with coauthors~\cite{rush2013orbits, rush2015orbits} on understanding toggling in order ideals of minuscule posets in terms of Stembridge's theory of minuscule heaps~\cite{stembridge1996fully}. All of this is just to say that we have a very satisfactory account of why Theorem~\ref{thm:minuscule_cde} is true.

Next we consider root posets. Not all distributive lattices associated to root posets are CDE: as mentioned in~\cite[Remark 2.25]{reiner2018poset}, $J(\Phi^+(D_4))$ is not CDE. However, it turns out that the distributive lattices associated to root posets \emph{of coincidental type} are CDE. In fact, most of the work needed to show this has already been done elsewhere; we only need to handle the non-crystallographic types $H_3$ and $I_2(m)$.

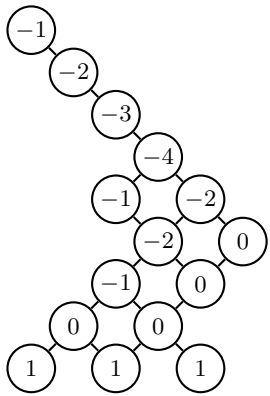
\begin{figure}
\begin{tikzpicture}[scale=0.75]
	\node[circle, draw=black, inner sep=0pt, thick, minimum size=18pt] (12) at (3,6) {\footnotesize $1$};
	\node[circle, draw=black, inner sep=0pt, thick, minimum size=18pt] (14) at (1.5,6) {\footnotesize $1$};
	\node[circle, draw=black, inner sep=0pt, thick, minimum size=18pt] (15) at (2.25,6.75) {\footnotesize $0$};
	\node[circle, draw=black, inner sep=0pt, thick, minimum size=18pt] (16) at (3,7.5) {\footnotesize $0$};
	\node[circle, draw=black, inner sep=0pt, thick, minimum size=18pt] (17) at (3.75,8.25) {\footnotesize $0$};
	\node[circle, draw=black, inner sep=0pt, thick, minimum size=18pt] (18) at (0,6) {\footnotesize $1$};
	\node[circle, draw=black, inner sep=0pt, thick, minimum size=18pt] (19) at (0.75,6.75) {\footnotesize $0$};
	\node[circle, draw=black, inner sep=0pt, thick, minimum size=18pt] (20) at (1.5,7.5) {\footnotesize $-1$};
	\node[circle, draw=black, inner sep=0pt, thick, minimum size=18pt] (21) at (2.25,8.25) {\footnotesize $-2$};
	\node[circle, draw=black, inner sep=0pt, thick, minimum size=18pt] (22) at (3.0,9) {\footnotesize $-2$};
	\node[circle, draw=black, inner sep=0pt, thick, minimum size=18pt] (23) at (1.5,9) {\footnotesize $-1$};
	\node[circle, draw=black, inner sep=0pt, thick, minimum size=18pt] (24) at (2.25,9.75) {\footnotesize $-4$};
	\node[circle, draw=black, inner sep=0pt, thick, minimum size=18pt] (25) at (1.5,10.5) {\footnotesize $-3$};
	\node[circle, draw=black, inner sep=0pt, thick, minimum size=18pt] (26) at (0.75,11.25) {\footnotesize $-2$};
	\node[circle, draw=black, inner sep=0pt, thick, minimum size=18pt] (27) at (0,12) {\footnotesize $-1$};
	\draw[thick] (12) -- (15);
	\draw[thick] (14) -- (15) -- (16) -- (17);
	\draw[thick] (14) -- (19);
	\draw[thick] (15) -- (20);
	\draw[thick] (16) -- (21);
	\draw[thick] (17) -- (22);
	\draw[thick] (18) -- (19) -- (20) -- (21) -- (22);
	\draw[thick] (21) -- (23);
	\draw[thick] (22) -- (24);
	\draw[thick] (23) -- (24);
	\draw[thick] (24) -- (25) -- (26) -- (27);
\end{tikzpicture}
\caption{The tCDE equation coefficients $c_p$ for $\Phi^{+}(H_3)$.} \label{fig:h3_coeffs}
\end{figure}

\begin{thm} \label{thm:root_poset_cde}
Let $P$ be a root poset of coincidental type. Then $J(P)$ is tCDE.
\end{thm}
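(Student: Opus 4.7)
The plan is to apply Proposition~\ref{prop:tcde_eq}: for each coincidental type root poset $P$, it suffices to exhibit rational coefficients $(c_p)_{p\in P}$ satisfying $\mathrm{ddeg} + \sum_{p\in P} c_p \mathcal{T}_p = \delta$ as functions on $J(P)$, where $\delta = \#P/(r(P)+2)$ by Proposition~\ref{prop:tcde_basics}. I would handle the four coincidental families $A_n$, $B_n\simeq C_n$, $H_3$, and $I_2(m)$ separately.

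For the crystallographic coincidental types $A_n$ and $B_n\simeq C_n$, the tCDE property of $J(\Phi^+(A_n))$ and $J(\Phi^+(B_n))$ should follow from toggle identities established in earlier work~\cite{hopkins2017cde,chan2017expected}: both root posets have the ``2-dimensional grid-like'' structure those identities exploit (indeed, $\Phi^+(B_n)$ is the balanced trapezoid $T_{n,2n}$, and $\Phi^+(A_n)$ is essentially the shifted staircase). I would simply cite these results rather than redoing them.

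For $\Phi^+(I_2(m))$, label the poset so that $a,b$ are the two minimal elements, $c$ is their common cover, and $c<d_1<d_2<\cdots<d_{m-3}$ is the chain above $c$. Then $\#P=m$ and $r(P)=m-2$, so $\delta=1$. I claim that setting $c_a=c_b=\tfrac{1}{2}$ and $c_p=0$ for every other $p$ yields a valid tCDE identity; using the $a\leftrightarrow b$ symmetry, one verifies this directly on each of the $m+2$ order ideals of $J(\Phi^+(I_2(m)))$ (the three ideals $\emptyset,\{a,b\},\{a,b,c\}$ require nontrivial checks, while every other ideal has $\mathrm{ddeg}=1$ and contributes only a telescoping toggle of two consecutive chain elements). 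For $\Phi^+(H_3)$, I would use the explicit coefficients $c_p$ displayed in Figure~\ref{fig:h3_coeffs}: here $\#P=15$ and $r(P)=8$ (since the Coxeter number of $H_3$ is $10$), so $\delta=\tfrac{3}{2}$. Verification reduces to evaluating the claimed identity on each order ideal of $\Phi^+(H_3)$, a finite (and computer-checkable) computation.

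The main obstacle is not the verification, which is mechanical once the coefficients are known, but the discovery of the $H_3$ coefficients: there is no apparent uniform reason, comparable to Rush's argument~\cite{rush2016minuscule} for minuscule posets, explaining why the specific values in Figure~\ref{fig:h3_coeffs} should work. Fortunately, by Proposition~\ref{prop:tcde_eq} the existence of such coefficients is equivalent to the solvability of a (large but finite) linear system indexed by $J(\Phi^+(H_3))$, so the coefficients can be found with computer assistance once and for all, recorded as in the figure, and then the tCDE identity confirmed.
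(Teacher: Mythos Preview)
Your approach is essentially identical to the paper's: it also does a case-by-case check, citing \cite{chan2017expected} for $\Phi^+(A_n)$ (the staircase, not the shifted staircase) and \cite{hopkins2017cde} for $\Phi^+(B_n)$ (treated there as the shifted double staircase rather than as $T_{n,2n}$), and then verifying the same explicit toggle identities $2\,\mathrm{ddeg}+\mathcal{T}_a+\mathcal{T}_b=2$ for $I_2(m)$ and $2\,\mathrm{ddeg}+\sum_p c_p\mathcal{T}_p=3$ for $H_3$ using the coefficients in Figure~\ref{fig:h3_coeffs}. Your parenthetical identifications of $\Phi^+(A_n)$ and $\Phi^+(B_n)$ are slightly garbled, but this does not affect the argument.
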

\begin{proof}
We do a case-by-case check:
\begin{itemize}
\item For $P=\Phi^{+}(A_n)$, $P$ corresponds (up to duality- see Proposition~\ref{prop:tcde_basics}) to a staircase partition and so $J(P)$ is tCDE by~\cite[Theorem 3.4]{chan2017expected}.
\item For $P=\Phi^{+}(B_n)\simeq \Phi^+(C_n)$, $P$ corresponds (up to duality- see Proposition~\ref{prop:tcde_basics}) to a shifted double staircase and so $J(P)$ is tCDE by~\cite[Theorem 4.2]{hopkins2017cde}.
\item For $P=\Phi^{+}(H_3)$, if we let the coefficients $c_p$ for $p\in P$ be as in Figure~\ref{fig:h3_coeffs}, then one can check that we have the following equality of functions $J(P)\to\mathbb{Z}$:
\[2\cdot\mathrm{ddeg} +  \sum_{p\in P}c_p\mathcal{T}_p= 3,\]
and so $J(P)$ is tCDE by Proposition~\ref{prop:tcde_eq}.
\item For $P=\Phi^{+}(I_2(m))$, if we let $a$ and $b$ be the two minimal elements of $P$, then one can check that we have the following equality of functions $J(P)\to\mathbb{Z}$:
\[2\cdot\mathrm{ddeg} + \mathcal{T}_a + \mathcal{T}_b= 2,\]
and so $J(P)$ is tCDE by Proposition~\ref{prop:tcde_eq}. \qedhere
\end{itemize} 
\end{proof}

In contrast to Theorem~\ref{thm:minuscule_cde}, it is much less clear what a uniform proof of Theorem~\ref{thm:root_poset_cde} could look like. This is because, while there are various more-or-less ``uniform'' descriptions of the coincidental types (again, see~\cite[Theorem 14]{miller2015foulkes}), these criteria are quite hard to apply in practice. And moreover, recall that the root posets for the non-crystallographic root systems of coincidental types were defined in an ad hoc manner.

\begin{remark} \label{rem:pan}
Panyushev~\cite[Corollary~3.4]{panyushev2006poset} proved that for any crystallographic root system $\Phi$, the edge density of $J(\Phi^+)$ is $\frac{n}{2}$, where $n$ is the number of simple roots of $\Phi$. He was motivated to consider this edge density because of a conjectural duality property for ad-nilpotent ideals of Borel subalgebras of simple Lie algebras he proposed in~\cite{panyushev2004adnilpotent}. Note that $n/2=\#\Phi^+/h$ where $h := r(\Phi^+)+2$ is the Coxeter number of~$\Phi$ (see~\cite{kostant1959principal}). So Theorem~\ref{thm:root_poset_cde}, together with Proposition~\ref{prop:tcde_basics}, recaptures Panyushev's edge density result for the coincidental types.
\end{remark}

Finally, let's move on to the minuscule doppelg\"{a}nger pairs. Reiner-Tenner-Yong~\cite[Conjecture 2.24]{reiner2018poset} conjectured that $J(T_{k,n})$ is CDE. (Technically they conjectured this for the dual of $J(T_{k,n})$ but it amounts to the same thing by Proposition~\ref{prop:tcde_basics}.) Unfortunately, as was already observed in~\cite[Example 4.7]{hopkins2017cde}, $J(T_{2,5})$ is not tCDE. Indeed, it appears that $J(T_{k,n})$ is not tCDE except for the trivial case $k=1$ and the case $n=2k$ where $T_{k,2k}\simeq\Phi^+(B_k)$. Nevertheless, we make the following strengthening of Reiner-Tenner-Yong's conjecture:

\begin{conj} \label{conj:doppelganger_mcde}
Let $(P,Q) \in \{(\Lambda_{\mathrm{Gr}(k,n)},T_{k,n}),(\Lambda_{\mathrm{OG}(6,12)},\Phi^+(H_3)),(\Lambda_{\mathbb{Q}^{2n}},\Phi^+(I_2(2n)))\}$ be a minuscule doppelg\"{a}nger pair. Then $J(P)$ and $J(Q)$ are both mCDE with edge density
\[\frac{\#P}{r(P)+2} = \frac{\#Q}{r(Q)+2}.\]
\end{conj}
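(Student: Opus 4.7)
My plan is to split the three pairs into cases. For $(\Lambda_{\mathrm{OG}(6,12)},\Phi^+(H_3))$ and each $(\Lambda_{\mathbb{Q}^{2n}},\Phi^+(I_2(2n)))$, both members are either a minuscule poset or a root poset of coincidental type, so $J(P)$ and $J(Q)$ are tCDE by Theorems~\ref{thm:minuscule_cde} and~\ref{thm:root_poset_cde}, and hence mCDE. The edge density formula from Proposition~\ref{prop:tcde_basics} applies: for a graded poset $R$ with $J(R)$ tCDE the edge density is $\#R/(r(R)+2)$. Since doppelg\"{a}ngers share cardinality and longest-chain length by Proposition~\ref{prop:doppelganger_basics}, and longest-chain length coincides with rank in the graded case, the two values $\#P/(r(P)+2)$ and $\#Q/(r(Q)+2)$ agree, as required.

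This leaves the rectangle--trapezoid pair $(\Lambda_{\mathrm{Gr}(k,n)},T_{k,n})$, where $J(\Lambda_{\mathrm{Gr}(k,n)})$ is tCDE, hence mCDE with edge density $k(n-k)/n$, but $J(T_{k,n})$ is known to fail tCDE whenever $k\geq 2$ and $n>2k$. I would target the stronger statement that for every $\ell\geq 1$,
\[
\mathbb{E}(\mathrm{mchain}_{J(T_{k,n})}(\ell);\mathrm{ddeg})=\mathbb{E}(\mathrm{mchain}_{J(\Lambda_{\mathrm{Gr}(k,n)})}(\ell);\mathrm{ddeg}),
\]
which combined with mCDE for the rectangle yields mCDE for the trapezoid with the same edge density. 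Writing the left-hand side out, $\ell$ times this expectation equals $\frac{1}{\#\mathrm{PP}^\ell(P)}\sum_T\sum_{i=0}^{\ell-1}\#\mathrm{max}(T^{-1}\{0,\ldots,i\})$; a short check using Stanley's transfer map $\phi$ rewrites this as $\frac{1}{\#\mathrm{PP}^\ell(P)}\sum_T\sum_{p\in P}\phi(T/\ell)(p)$, the average sum of chain-polytope coordinates.

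The task thus reduces to producing, for each $\ell$, a bijection $\Psi_\ell\colon\mathrm{PP}^\ell(\Lambda_{\mathrm{Gr}(k,n)})\to\mathrm{PP}^\ell(T_{k,n})$ preserving the statistic $\sum_{i=0}^{\ell-1}\#\mathrm{max}(T^{-1}\{0,\ldots,i\})$. The natural starting point is the Hamaker--Patrias--Pechenik--Williams $K$-theoretic jeu-de-taquin bijection from~\cite{hamaker2018doppelgangers}, which already matches $\mathrm{PP}^\ell$ on the two posets; I would attempt to upgrade its analysis to track this refined statistic. Such an upgrade would in particular follow from the stronger set-valued $P$-partition conjecture suggested in the introduction, since the desired statistic is a coarse specialization of the set-valued enumerator.

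The main obstacle is exactly this refinement. Because $\Lambda_{\mathrm{Gr}(k,n)}$ and $T_{k,n}$ do not have isomorphic comparability graphs once $k\geq 3$, Lemma~\ref{lem:com_graph_criterion} offers no autonomous-subset dualization argument, and the chain polytopes on the two sides are not identified under $\phi$. The $K$-theoretic jeu-de-taquin bijection is not manifestly compatible with the statistic in question, so finding the combinatorial mechanism that forces equality of the refined sum $\sum_T\sum_i\#\mathrm{max}(T^{-1}\{0,\ldots,i\})$, whether by modifying that bijection or by proving the full set-valued conjecture, is where the central difficulty of the proof would lie.
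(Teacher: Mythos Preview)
Your proposal is entirely appropriate, because the statement in question is a \emph{conjecture}, not a theorem: the paper does \emph{not} prove it. Your plan matches the paper's own assessment almost exactly. The paper handles the pairs $(\Lambda_{\mathrm{OG}(6,12)},\Phi^+(H_3))$ and $(\Lambda_{\mathbb{Q}^{2n}},\Phi^+(I_2(2n)))$ via tCDE (Theorems~\ref{thm:minuscule_cde} and~\ref{thm:root_poset_cde}) just as you do, and for the rectangle side of the remaining pair it uses tCDE as well. For the trapezoid $T_{k,n}$ the paper explicitly states that the cases $k\neq 1,2,n/2$ remain open, with only computer verification for $T_{3,7}$ and $T_{3,8}$. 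So you have correctly located the gap rather than papered over it.

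A few minor refinements relative to the paper's discussion. First, you omit that the case $k=2$ is already settled: since $\mathrm{com}(T_{2,n})\simeq\mathrm{com}(\Lambda_{\mathrm{Gr}(2,n)})$, Proposition~\ref{prop:com_ddeg_gf} (or~\cite[Proposition~4.8]{hopkins2017cde}) gives mCDE for $J(T_{2,n})$. Second, the paper is slightly more cautious than you are about failure of tCDE for the trapezoid: it says ``it appears that $J(T_{k,n})$ is not tCDE'' outside $k=1$ and $n=2k$, rather than asserting it is known. Third, the paper warns that the Hamaker--Patrias--Pechenik--Williams bijection does \emph{not} preserve $\mathrm{ddeg}$ even for $\ell=1$ and $(P,Q)=(\Lambda_{\mathrm{Gr}(2,4)},T_{2,4})$, so your suggestion to ``upgrade its analysis to track this refined statistic'' would require a genuinely new ingredient; the paper's proposed route is instead to extend that bijection to set-valued $P$-partitions (Conjecture~\ref{conj:set_valued}), which would imply the $e=1$ (barely set-valued) case and hence Conjecture~\ref{conj:doppelganger_mcde}. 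Your identification of the set-valued conjecture as the natural strengthening is exactly in line with this.
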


For a minuscule doppelg\"{a}nger pair $(P,Q)$ we always have $\#P=\#Q$ and $r(P)=r(Q)$ (see Proposition~\ref{prop:doppelganger_basics}), from which the equality $\frac{\#P}{r(P)+2} = \frac{\#Q}{r(Q)+2}$ in Conjecture~\ref{conj:doppelganger_mcde} follows. Also, Proposition~\ref{prop:ddeg_gf_1} below will imply that at least the edge density of the posets is as claimed. So the real content of this conjecture is the assertion that both of these posets are mCDE.

The only poset appearing in Conjecture~\ref{conj:doppelganger_mcde} whose distributive lattice of order ideals is not tCDE by one of Theorems~\ref{thm:minuscule_cde} or~\ref{thm:root_poset_cde} is $T_{k,n}$ for $k\neq 1, n/2$. For $k=2$, it was shown in~\cite[Proposition~4.8]{hopkins2017cde} that $J(T_{k,n})$ is mCDE. Actually, in just a moment we will explain another proof (in fact, two other proofs) that $J(T_{k,n})$ is mCDE when $k=2$, based on the fact that $\mathrm{com}(T_{k,n})\simeq \mathrm{com}(\Lambda_{\mathrm{Gr}(k,n)})$ in this case. What remains to prove is that $J(T_{k,n})$ is mCDE when $k\neq 1,2,n/2$. We have verified by computer that this holds for the first two cases not covered by the aforementioned considerations: $T_{3,7}$ and~$T_{3,8}$.

In the remainder of this section we will propose a couple of different strengthenings of Conjecture~\ref{conj:doppelganger_mcde} which all would hold under the assumption that $\mathrm{com}(P)\simeq\mathrm{com}(Q)$. In this way, the strengthened conjectures suggest that minuscule doppelg\"{a}nger pairs behave ``as if'' they had isomorphic comparability graphs.

\begin{remark} \label{rem:other_tcde}
Minuscule posets and root posets of coincidental type are not the only posets $P$ for which $J(P)$ is tCDE; for instance, this also happens when:
\begin{itemize}
\item $P$ is the poset corresponding to a ``balanced skew shape'' (see~\cite{chan2017expected});
\item $P$ is the poset corresponding to a ``shifted-balanced shape'' (see~\cite{hopkins2017cde}).
\end{itemize}
It is worthwhile to note, however, that all of these posets do have a $2$-dimensional grid-like structure. Indeed, it seems that an equality as in Proposition~\ref{prop:tcde_eq} can only happen as a result of the identity $\mathrm{min}(a,b) = a+b-\mathrm{max}(a,b)$ (see Lemma~\ref{lem:bi_toggle_eqs}).
\end{remark}

\subsection{Down-degree generating functions} \label{sec:ddeg_gf}

Let $P$ be a poset. For $T \in \mathrm{PP}^{\ell}(P)$, let us define the \emph{down-degree} of the $P$-partition $T$ to be
\[\mathrm{ddeg}(T) \coloneqq  \sum_{i=0}^{\ell-1} \mathrm{ddeg}(T^{-1}\{0,1,\ldots,i\}).\]
With this notation, it is clear that
\[\mathbb{E}(\mathrm{mchain}_{J(P)}(\ell);\mathrm{ddeg}) = \frac{\sum_{T \in \mathrm{PP}^{\ell}(P)}\mathrm{ddeg}(T)}{\ell\cdot\#\mathrm{PP}^{\ell}(P)}.\]
Since for a minuscule doppelg\"{a}nger pair $(P,Q)$ we have $\#\mathrm{PP}^{\ell}(P)=\#\mathrm{PP}^{\ell}(Q)$, Conjecture~\ref{conj:doppelganger_mcde} is therefore equivalent to
\[\sum_{T \in \mathrm{PP}^{\ell}(P)}\mathrm{ddeg}(T)=\sum_{T \in \mathrm{PP}^{\ell}(Q)}\mathrm{ddeg}(T),\]
for all $\ell \geq 1$. The first strengthening of Conjecture~\ref{conj:doppelganger_mcde} we offer asserts that for a minuscule doppelg\"{a}nger pair $(P,Q)$  we don't just have an equality of the sum of the $\mathrm{ddeg}$ statistic among $P$- and $Q$-partitions of height $\ell$; we have an equality of the \emph{generating functions} for this statistic.

\begin{conj} \label{conj:ddeg_gf}
Let $(P,Q) \in \{(\Lambda_{\mathrm{Gr}(k,n)},T_{k,n}),(\Lambda_{\mathrm{OG}(6,12)},\Phi^+(H_3)),(\Lambda_{\mathbb{Q}^{2n}},\Phi^+(I_2(2n)))\}$ be a minuscule doppelg\"{a}nger pair. Then for any $\ell \geq 1$ we have
\[ \sum_{T \in \mathrm{PP}^{\ell}(P)}q^{\mathrm{ddeg}(T)}=\sum_{T \in \mathrm{PP}^{\ell}(Q)}q^{\mathrm{ddeg}(T)}. \]
\end{conj}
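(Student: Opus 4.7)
The plan is to recast Conjecture~\ref{conj:ddeg_gf} via Stanley's transfer map, thereby reducing it to a question about chain polytope lattice points, and then to handle each family according to whether the comparability graphs coincide. For $T\in\mathrm{PP}^\ell(P)$, let $g_T\coloneqq\phi(T/\ell)\in\mathcal{C}(P)\cap\tfrac{1}{\ell}\mathbb{Z}^P$. Unpacking the definition of $\phi$ yields the identity
\[\mathrm{ddeg}(T)=\ell\sum_{p\in P}g_T(p),\]
because the $p$-contribution to $\mathrm{ddeg}(T)=\sum_{i=0}^{\ell-1}\#\max(T^{-1}\{0,\ldots,i\})$ is exactly the count of $i$ with $T(p)\le i<\min_{q\gtrdot p}T(q)$ (with the min replaced by $\ell$ when $p$ is maximal), equal to $\ell\,g_T(p)$. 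Since the transfer map restricts to a bijection between $\mathrm{PP}^\ell(P)$ and $\ell\mathcal{C}(P)\cap\mathbb{Z}^P$, Conjecture~\ref{conj:ddeg_gf} becomes the claim that $\ell\mathcal{C}(P)$ and $\ell\mathcal{C}(Q)$ contain the same number of lattice points of each $L^1$-norm, for all $\ell\ge 1$.

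As $\mathcal{C}(P)$ depends on $P$ only through the clique complex of $\mathrm{com}(P)$, this reformulation immediately settles the whole family $(\Lambda_{\mathbb{Q}^{2n}},\Phi^+(I_2(2n)))$ together with the pairs $(\Lambda_{\mathrm{Gr}(k,n)},T_{k,n})$ for $k\le 2$. The sporadic pair $(\Lambda_{\mathrm{OG}(6,12)},\Phi^+(H_3))$ reduces to a finite computer check: the $L^1$-graded lattice-point enumerator of $\ell\mathcal{C}(P)$ is, by Ehrhart-type considerations, a polynomial in $\ell$ in each coefficient of $q$ of degree at most $\#P$, so it suffices to verify the identity for $\#P+1$ values of $\ell$.

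The substantive case is $(\Lambda_{\mathrm{Gr}(k,n)},T_{k,n})$ for $k\ge 3$, where the chain polytopes genuinely differ and no polytopal argument is available. Here I would translate the statement into the language of set-valued $P$-partitions, which is better suited to the $K$-theoretic jeu-de-taquin technology of~\cite{hamaker2018doppelgangers}. Call $\widetilde{T}\colon P\to 2^{\{0,\ldots,\ell\}}\setminus\{\varnothing\}$ a \emph{set-valued $P$-partition of height $\ell$} if $\max\widetilde{T}(p)\le\min\widetilde{T}(q)$ whenever $p<q$, and set $\mathrm{ex}(\widetilde{T})\coloneqq\sum_p(|\widetilde{T}(p)|-1)$. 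Conditioning on $T^{\min}\coloneqq\min\widetilde{T}\in\mathrm{PP}^\ell(P)$ and applying Chu-Vandermonde (the extra elements of $\widetilde{T}$ can be chosen freely from a disjoint union of slots of total size $\mathrm{ddeg}(T^{\min})$) gives
\[\sum_{T\in\mathrm{PP}^\ell(P)}(1+z)^{\mathrm{ddeg}(T)}=\sum_{\widetilde{T}}z^{\mathrm{ex}(\widetilde{T})},\]
the right-hand sum ranging over all set-valued $P$-partitions of height $\ell$. Via the substitution $q=1+z$, Conjecture~\ref{conj:ddeg_gf} is therefore equivalent to the assertion that the two members of a minuscule doppelg\"{a}nger pair enumerate set-valued $P$-partitions identically by excess.

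The hard part, and the main obstacle, will be upgrading the Hamaker-Patrias-Pechenik-Williams $K$-theoretic jeu-de-taquin bijection from $P$-partitions to an excess-preserving bijection of set-valued $P$-partitions. Since $K$-theoretic jeu-de-taquin was designed in~\cite{thomas2009combinatorial, thomas2009jeu} precisely to act on increasing and set-valued tableaux, such a lift ought to exist morally; the genuine work is to check that the rectification/infusion scheme used in~\cite{hamaker2018doppelgangers} respects the set-valued structure in all three minuscule settings. This is precisely the set-valued doppelg\"{a}nger strengthening foreshadowed in the introduction, and establishing it would simultaneously yield Conjecture~\ref{conj:ddeg_gf} in full.
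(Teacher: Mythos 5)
Your proposal does not close Conjecture~\ref{conj:ddeg_gf} (which the paper itself leaves open), but parts of it are correct and one part is genuinely valuable. The transfer-map identity $\mathrm{ddeg}(T)=\ell\sum_{p\in P}\phi(\tfrac{1}{\ell}T)(p)$ and the resulting disposal of $(\Lambda_{\mathbb{Q}^{2n}},\Phi^+(I_2(2n)))$ and of $(\Lambda_{\mathrm{Gr}(k,n)},T_{k,n})$ with $k\le 2$ are exactly the paper's route (Proposition~\ref{prop:com_ddeg_gf}). Your binomial identity $\sum_{T\in\mathrm{PP}^{\ell}(P)}(1+z)^{\mathrm{ddeg}(T)}=\sum_{\widetilde{T}}z^{\mathrm{ex}(\widetilde{T})}$ is also correct: conditioning on $T=\min\widetilde{T}$, the admissible extra entries at $p$ form the interval $\{T(p)+1,\ldots,\min_{q\gtrdot p}T(q)\}$ (resp.\ up to $\ell$ for $p$ maximal), these choices are independent over $p$, and the slot sizes sum to $\mathrm{ddeg}(T)$. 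This shows Conjecture~\ref{conj:ddeg_gf} is \emph{equivalent}, for each fixed $\ell$, to Conjecture~\ref{conj:set_valued}; the paper only records this link at the boundary cases ($\ell=1$, and $e=1$ versus Conjecture~\ref{conj:doppelganger_mcde}), so observing that the set-valued $K$-theoretic jeu-de-taquin program for Conjecture~\ref{conj:set_valued} would automatically yield Conjecture~\ref{conj:ddeg_gf} is a sharper statement than the paper's vague suggestion of a chain-polytope bijection. Note, however, that you only need equicardinality of $\mathrm{PP}^{\ell}_e(P)$ and $\mathrm{PP}^{\ell}_e(Q)$, and the paper warns both that the Hamaker--Patrias--Pechenik--Williams bijection already fails to preserve $\mathrm{ddeg}$ for $\ell=1$, $(\Lambda_{\mathrm{Gr}(2,4)},T_{2,4})$, and that $K$-jeu-de-taquin lacks confluence, so the ``moral'' extension is exactly the hard open problem; deferring it means your argument for $(\Lambda_{\mathrm{Gr}(k,n)},T_{k,n})$, $k\ge 3$, is a program, not a proof.

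There is also a concretely wrong step: the claimed finite check for $(\Lambda_{\mathrm{OG}(6,12)},\Phi^+(H_3))$. For fixed $d$, the coefficient of $q^d$ --- the number of lattice points of $\ell\mathcal{C}(P)$ with coordinate sum $d$ --- is \emph{not} a polynomial in $\ell$. Once $\ell\ge d$ every chain constraint $\sum_{p\in C}x_p\le\ell$ is slack (the full coordinate sum is already $d\le\ell$), so the count stabilizes at $\binom{\#P+d-1}{d}$, a poset-independent value, while for small $\ell$ it is strictly smaller in general; an eventually constant, nonconstant function of $\ell$ agrees with no polynomial, so interpolation from $\#P+1$ values of $\ell$ is not justified. (The paper's example confirms the stabilization: $\sum_{T\in\mathrm{PP}^{4}(\Lambda_{\mathrm{Gr}(2,4)})}q^{\mathrm{ddeg}(T)}$ has coefficients $1,4,10,20,35$ in degrees $0,\ldots,4$, i.e.\ $\binom{d+3}{d}$; all poset-dependent information sits in degrees $d>\ell$.) A finite certification for the $H_3$ pair is plausible via the bivariate rational generating function $\sum_{\ell\ge 0}t^{\ell}\sum_{T\in\mathrm{PP}^{\ell}(P)}q^{\mathrm{ddeg}(T)}$, which enumerates lattice points of the cone over $\mathcal{C}(P)$, but that requires explicit denominator and numerator-degree bounds that you have not supplied; as written, even this sporadic case remains open under your scheme, beyond the $\ell\le 4$ computer checks the paper already reports.
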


\begin{remark} \label{rem:ppart_wt_gf}
As far as we know, there is no prior work on the generating function for $P$-partitions by down-degree. On the other hand, it is very common to study the generating function for these partitions by size. For $T \in \mathrm{PP}^{\ell}(P)$, define the \emph{size} of~$T$, denoted $|T|$, to be $|T| \coloneqq \sum_{p \in P} T(p)$. (To compare size to down-degree, it is useful to note that $|T| = \ell \cdot \#P - \sum_{i=0}^{\ell-1}\#T^{-1}(\{0,1,\ldots,i\})$.) Define the size generating function $F(P,\ell) \coloneqq  \sum_{T\in \mathrm{PP}^{\ell}(P)} q^{|T|}$. It is a celebrated theorem of MacMahon~\cite{macmahon2004combinatory} that
\[F([a]\times[b],\ell) = \prod_{i=1}^{a}\prod_{j=1}^{b}\frac{(1-q^{\ell+i+j-1})}{(1-q^{i+j-1})}.\]
In fact, for any minuscule poset $P$, the size generating function of $P$-partitions of height~$\ell$ satisfies a similar product formula:
\[F(P,\ell) = \prod_{p\in P}\frac{(1-q^{r(p)+\ell+1})}{(1-q^{r(p)+1})}.\]
That $F(P,\ell)$ has this form is what it means to say minuscule posets are Gaussian. This product formula is a $q$-version of the Weyl dimension formula for the irreducible $\mathfrak{g}$-representation $V(\ell \omega)$ with highest weight $\ell \omega$, where $\mathfrak{g}$ is the Lie algebra and $\omega$ the minuscule weight corresponding to our minuscule poset~$P$. This formula for $F(P,\ell)$ was first established by Proctor~\cite{proctor1984bruhat} using results from Standard Monomial Theory~\cite{seshardri1978geometry}; see also the presentation of Stembridge~\cite{stembridge1994minuscule}. 

But note that $F(P,\ell)$ does not play nicely with the doppelg\"{a}nger philosophy: already for $(P,Q)=(\Lambda_{\mathrm{Gr}(2,4)},T_{2,4})$ and $\ell=1$ we have $F(P,\ell)\neq F(Q,\ell)$. And also note that for a minuscule poset $P$, the down-degree generating function for $P$-partitions does not seem to satisfy any product formula. For instance, we have
\[\sum_{T \in \mathrm{PP}^{4}(\Lambda_{\mathrm{Gr}(2,4)})}q^{\mathrm{ddeg}(T)} = q^8 + 4q^7 + 10q^6 + 20q^5 + 35q^4 + 20q^3 + 10q^2 + 4q + 1,\]
which is an irreducible polynomial in $q$.
\end{remark}

\begin{remark}
For $T \in \mathrm{PP}^{\ell}(P)$, define $\mathrm{acard}(T) := \sum_{i=0}^{\ell-1} \#\mathrm{max}(I_{i}\setminus I_{i-1})$ where $I_i := T^{-1}(\{0,1,\ldots,i\})$ for $i=0,\ldots,\ell-1$, and $I_{-1} := \varnothing$. The quantity $\mathrm{acard}(T)$ is the size of the antichain of $P\times [\ell]$ naturally corresponding to the $P$-partition $T$. Observe that if $\ell=1$ then $\mathrm{acard}(T)=\mathrm{ddeg}(T)$, but otherwise these statistics are not the same in general. The generating function $\sum_{T \in \mathrm{PP}^{\ell}(P)} q^{\mathrm{acard}(T)}$ for a minuscule poset $P$ was considered recently in~\cite{ding2019antichain}. However, note that, like size, the statistic $\mathrm{acard}$ does not play nicely with the doppelg\"{a}nger philosophy: as observed by Stembridge in~\cite[\S6]{stembridge1986trapezoidal}, the $\mathrm{acard}$-generating functions for the $P$-partitions of the rectangle and the trapezoid need not be the same.
\end{remark}

Before discussing what is known about Conjecture~\ref{conj:ddeg_gf}, let us explain how it is an instance of the ``minuscule doppelg\"{a}nger pairs pretend to have isomorphic comparability graphs'' refrain we've been harping on.

\begin{prop} \label{prop:com_ddeg_gf}
Let $P$ and $Q$ be posets with $\mathrm{com}(P)\simeq\mathrm{com}(Q)$. Then for any $\ell \geq 1$ we have
\[ \sum_{T \in \mathrm{PP}^{\ell}(P)}q^{\mathrm{ddeg}(T)}=\sum_{T \in \mathrm{PP}^{\ell}(Q)}q^{\mathrm{ddeg}(T)}. \]
\end{prop}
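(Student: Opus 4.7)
The plan is to use Stanley's transfer map $\phi \colon \mathcal{O}(P) \to \mathcal{C}(P)$ to refine the first proof of Theorem~\ref{thm:com_graph_dop} so that it keeps track of the down-degree statistic.

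First I would establish the identity
\[ \mathrm{ddeg}(T) \;=\; \ell \sum_{p \in P} \phi(T/\ell)(p) \]
for every $T \in \mathrm{PP}^{\ell}(P)$. To verify it, expand $\mathrm{ddeg}(T) = \sum_{i=0}^{\ell-1} \#\mathrm{max}(I_i)$ with $I_i = T^{-1}(\{0,\ldots,i\})$ and swap the order of summation. An element $p \in P$ is maximal in $I_i$ exactly when $T(p) \leq i$ and $T(q) > i$ for every $q \in P$ that covers $p$ (the second condition being vacuous if $p$ is maximal in $P$). Summing over $i$, the contribution of $p$ is $\min\{T(q) : q \textrm{ covers } p\} - T(p)$ in the non-maximal case and $\ell - T(p)$ in the maximal case; by the very definition of $\phi$ this equals $\ell \cdot \phi(T/\ell)(p)$.

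Next, using Stanley's theorem that $\phi$ restricts to a bijection $\frac{1}{\ell}\mathbb{Z}^P\cap\mathcal{O}(P) \to \frac{1}{\ell}\mathbb{Z}^P \cap \mathcal{C}(P)$ together with the natural bijection $\mathrm{PP}^{\ell}(P) \to \frac{1}{\ell}\mathbb{Z}^P\cap\mathcal{O}(P)$ sending $T \mapsto T/\ell$, I would rewrite
\[ \sum_{T \in \mathrm{PP}^{\ell}(P)} q^{\mathrm{ddeg}(T)} \;=\; \sum_{g \in \frac{1}{\ell}\mathbb{Z}^P \cap \mathcal{C}(P)} q^{\ell \sum_{p \in P} g(p)}. \]
The right-hand side depends on $P$ only through the chain polytope $\mathcal{C}(P)$ and the ``sum of coordinates'' functional. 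But $\mathcal{C}(P)$ is cut out by antichain inequalities, and antichains of $P$ are precisely the independent sets of $\mathrm{com}(P)$; and the ``sum of coordinates'' functional is manifestly preserved by any relabeling of vertices. Hence an isomorphism $\mathrm{com}(P) \simeq \mathrm{com}(Q)$ identifies the right-hand sides for $P$ and $Q$, proving the proposition.

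The main step requiring care is the identity above, but it is merely an unwinding of definitions and is essentially already implicit in the choice of $\phi$. I do not anticipate any real obstacle, since the proposition should be viewed as a ``down-degree refinement'' of Stanley's Theorem~\ref{thm:com_graph_dop}. As an alternative one could instead invoke Lemma~\ref{lem:com_graph_criterion} and reduce to the case where $Q$ arises from $P$ by dualizing a single autonomous subset $A$; but then the bijection between $P$- and $Q$-partitions is slightly subtler than the naive reflection $v \mapsto \ell-v$ on $A$ (one must reflect within the window of values pinned on $A$ by its comparable neighbors in $P \setminus A$), so the transfer-map route feels cleaner and more uniform.
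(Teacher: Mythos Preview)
Your proposal is correct and follows essentially the same route as the paper: use Stanley's transfer map to rewrite $\mathrm{ddeg}(T)$ as $\ell\sum_{p\in P}\phi(T/\ell)(p)$, push the generating function over to lattice points of the chain polytope, and observe that $\mathcal{C}(P)$ depends only on $\mathrm{com}(P)$. One small slip: $\mathcal{C}(P)$ is cut out by \emph{chain} inequalities (chains are the cliques of $\mathrm{com}(P)$), not antichain inequalities; the antichains arise as the vertices of $\mathcal{C}(P)$, but either description shows the dependence is only on the comparability graph, so the argument stands.
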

\begin{proof}
Recall Stanley's transfer map $\phi\colon \mathcal{O}(P)\to\mathcal{C}(P)$ from Section~\ref{sec:doppelganger_defs}. It is routine to check that for $T \in \mathrm{PP}^{\ell}(P)$ we have $\frac{1}{\ell}\mathrm{ddeg}(T) = \sum_{p\in P} \phi(\frac{1}{\ell}T)(p)$. (Indeed, $\phi$ can be seen as a piecewise-linear generalization of the bijection $I\mapsto \mathrm{max}(I)$ between order ideals and antichains.) Hence,
\[ \sum_{T \in \mathrm{PP}^{\ell}(P)}q^{\mathrm{ddeg}(T)}= \sum_{f \in \frac{1}{\ell}\mathbb{Z}^{P}\cap\mathcal{C}(P)} q^{\ell \cdot \sum_{p\in P}f(p)}.\] 
But the set $\frac{1}{\ell}\mathbb{Z}^{P}\cap\mathcal{C}(P)$ depends only on the comparability graph of $P$, thus proving the proposition.
\end{proof}

As the following example demonstrates, this equality of down-degree generating functions does not automatically happen for all doppelg\"{a}ngers.

\begin{figure}
\begin{tikzpicture}[scale=1]
	\SetFancyGraph
	\Vertex[NoLabel,x=0,y=1]{1}
	\Vertex[NoLabel,x=0,y=2]{2}
	\Vertex[NoLabel,x=0,y=3]{3}
	\Vertex[NoLabel,x=1,y=1]{4}
	\Vertex[NoLabel,x=1,y=2]{5}
	\Vertex[NoLabel,x=1,y=3]{6}
	\Edges[style={thick}](1,2)
	\Edges[style={thick}](2,3)
	\Edges[style={thick}](4,5)
	\Edges[style={thick}](5,6)
	\node at (0.5,0.25) {$P$};
\end{tikzpicture} \qquad \vline \qquad \begin{tikzpicture}[scale=1]
	\SetFancyGraph
	\Vertex[NoLabel,x=0.5,y=1]{1}
	\Vertex[NoLabel,x=0,y=2]{2}
	\Vertex[NoLabel,x=1,y=2]{3}
	\Vertex[NoLabel,x=0,y=3]{4}
	\Vertex[NoLabel,x=1,y=3]{5}
	\Vertex[NoLabel,x=2,y=3]{6}
	\Edges[style={thick}](1,2)
	\Edges[style={thick}](1,3)
	\Edges[style={thick}](2,4)
	\Edges[style={thick}](3,5)
	\Edges[style={thick}](3,6)
	\node at (0.5,0.25) {$Q$};
\end{tikzpicture}
\caption{A pair of doppelg\"{a}ngers which don't behave like they have isomorphic comparability graphs.} \label{fig:not_com}
\end{figure}
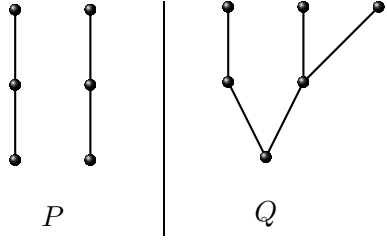

\begin{example} \label{ex:not_com}
Let $P$ and $Q$ be as in Figure~\ref{fig:not_com}. Then it can be verified that $P$ and $Q$ are doppelg\"{a}ngers with
\[\Omega_P(\ell) = \frac{1}{36}(\ell+1)^2(\ell+2)^2(\ell+3)^2=\Omega_Q(\ell).\]
(This example comes from~\cite{browning2017doppelgangers}.) But
\[ \sum_{I \in J(P)}q^{\mathrm{ddeg}(I)}= 9q^2 + 6q+1 \neq 2q^3 + 7q^2+6q+1=\sum_{I \in J(Q)}q^{\mathrm{ddeg}(I)}.\]
Indeed, we even have $\sum_{I \in J(P)}\mathrm{ddeg}(I) = 24\neq 26=\sum_{I \in J(Q)}\mathrm{ddeg}(I)$.
\end{example}

Note that a corollary to Proposition~\ref{prop:com_ddeg_gf} is that having a CDE (or mCDE) distributive lattice of order ideals is a comparability invariant:

\begin{cor}
Let $P$ and $Q$ be posets with $\mathrm{com}(P)\simeq\mathrm{com}(Q)$. Then
\begin{itemize}
\item $J(P)$ is CDE if and only if $J(Q)$ is CDE;
\item $J(P)$ is mCDE if and only if $J(Q)$ is mCDE;
\end{itemize}
\end{cor}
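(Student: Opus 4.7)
The plan is to derive both biconditionals directly from Proposition~\ref{prop:com_ddeg_gf} by extracting the relevant moments of the down-degree statistic from the generating function identity. Differentiating $\sum_{T \in \mathrm{PP}^{\ell}(P)}q^{\mathrm{ddeg}(T)}=\sum_{T \in \mathrm{PP}^{\ell}(Q)}q^{\mathrm{ddeg}(T)}$ with respect to $q$ and evaluating at $q=1$ yields
\[\sum_{T \in \mathrm{PP}^{\ell}(P)}\mathrm{ddeg}(T) = \sum_{T \in \mathrm{PP}^{\ell}(Q)}\mathrm{ddeg}(T)\]
for every $\ell \geq 1$. Evaluating the same identity at $q=1$ (equivalently, invoking Theorem~\ref{thm:com_graph_dop}) gives $\#\mathrm{PP}^{\ell}(P) = \#\mathrm{PP}^{\ell}(Q)$ for every $\ell$.

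I then feed these two equalities into the formula
\[\mathbb{E}(\mathrm{mchain}_{J(P)}(\ell);\mathrm{ddeg}) = \frac{\sum_{T \in \mathrm{PP}^{\ell}(P)}\mathrm{ddeg}(T)}{\ell\cdot\#\mathrm{PP}^{\ell}(P)}\]
from Section~\ref{sec:ddeg_gf}, which tells me $\mathbb{E}(\mathrm{mchain}_{J(P)}(\ell);\mathrm{ddeg}) = \mathbb{E}(\mathrm{mchain}_{J(Q)}(\ell);\mathrm{ddeg})$ for every $\ell \geq 1$. Specializing to $\ell=1$ and using $\mathrm{mchain}_{J(P)}(1) = \mathrm{uni}_{J(P)}$ gives $\mathbb{E}(\mathrm{uni}_{J(P)};\mathrm{ddeg}) = \mathbb{E}(\mathrm{uni}_{J(Q)};\mathrm{ddeg})$, and taking $\ell \to \infty$ (using that $J(P)$ and $J(Q)$ are graded, since comparability-equivalent posets are doppelg\"{a}ngers and hence both graded by Proposition~\ref{prop:doppelganger_basics}) gives $\mathbb{E}(\mathrm{maxchain}_{J(P)};\mathrm{ddeg}) = \mathbb{E}(\mathrm{maxchain}_{J(Q)};\mathrm{ddeg})$.

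From here the two biconditionals are immediate. The CDE condition for $J(P)$ is the equality $\mathbb{E}(\mathrm{uni}_{J(P)};\mathrm{ddeg}) = \mathbb{E}(\mathrm{maxchain}_{J(P)};\mathrm{ddeg})$, and both sides of this equation coincide with their $Q$ counterparts, so $J(P)$ is CDE if and only if $J(Q)$ is. The mCDE condition for $J(P)$ is the system of equalities $\mathbb{E}(\mathrm{mchain}_{J(P)}(\ell);\mathrm{ddeg}) = \mathbb{E}(\mathrm{uni}_{J(P)};\mathrm{ddeg})$ for all $\ell \geq 1$; again every term matches its $Q$ counterpart, so $J(P)$ is mCDE if and only if $J(Q)$ is.

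There is no genuine obstacle here: the corollary is a formal consequence of Proposition~\ref{prop:com_ddeg_gf}. The only thing to notice is that equality of generating polynomials in $q$ gives equality of their values and derivatives at $q=1$, which is exactly what is needed to pin down the expectations appearing in the (m)CDE definitions.
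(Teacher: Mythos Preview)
Your proof is correct and follows essentially the same approach as the paper: both use Proposition~\ref{prop:com_ddeg_gf} together with Theorem~\ref{thm:com_graph_dop} and the formula $\mathbb{E}(\mathrm{mchain}_{J(P)}(\ell);\mathrm{ddeg}) = \frac{\sum_{T \in \mathrm{PP}^{\ell}(P)}\mathrm{ddeg}(T)}{\ell\cdot\#\mathrm{PP}^{\ell}(P)}$ to conclude that all multichain down-degree expectations agree for $J(P)$ and $J(Q)$, from which both biconditionals follow. One small quibble: your justification that $J(P)$ and $J(Q)$ are graded via Proposition~\ref{prop:doppelganger_basics} is misplaced (that proposition concerns $P$ and $Q$, not their order-ideal lattices); in fact $J(P)$ is \emph{always} graded, as noted in Section~\ref{sec:doppelganger_defs}, so no extra hypothesis is needed for the limit $\ell\to\infty$.
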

\begin{proof}
It follows from Theorem~\ref{thm:com_graph_dop} and Proposition~\ref{prop:com_ddeg_gf} that for $P$ and $Q$ posets with isomorphic comparability graphs we have
\[\frac{\sum_{T \in \mathrm{PP}^{\ell}(P)}\mathrm{ddeg}(T)}{\ell\cdot\#\mathrm{PP}^{\ell}(P)} = \frac{\sum_{T \in \mathrm{PP}^{\ell}(Q)}\mathrm{ddeg}(T)}{\ell\cdot\#\mathrm{PP}^{\ell}(Q)}.\]
But recall from above that
\[\mathbb{E}(\mathrm{mchain}_{J(P)}(\ell);\mathrm{ddeg}) = \frac{\sum_{T \in \mathrm{PP}^{\ell}(P)}\mathrm{ddeg}(T)}{\ell\cdot\#\mathrm{PP}^{\ell}(P)}.\]
So if $P$ and $Q$ have isomorphic comparability graphs, their distributive lattices of order ideals have the same multichain down-degree expectations, proving the corollary.
\end{proof}

However, having a tCDE distributive lattice of order ideals is not a comparability invariant: with $(P,Q)=(\Lambda_{\mathrm{Gr}(2,5)},T_{2,5})$, $P$ and $Q$ have isomorphic comparability graphs; but $J(P)$ is tCDE (for instance by Theorem~\ref{thm:minuscule_cde}) and, as mentioned, $J(Q)$ is not tCDE (see~\cite[Example 4.7]{hopkins2017cde}).

Now let us discuss what is known about Conjecture~\ref{conj:ddeg_gf}. An ideal proof of Conjecture~\ref{conj:ddeg_gf} would be a bijection between $\mathrm{PP}^{\ell}(P)$ and  $\mathrm{PP}^{\ell}(Q)$ which preserves the down-degree statistic. One might hope that the jeu-de-taquin style bijection of Hamaker et al.~\cite{hamaker2018doppelgangers} achieves this. Unfortunately, a quick check reveals that their bijection fails to preserve down-degree already for $\ell=1$ and $(P,Q) = (\Lambda_{\mathrm{Gr}(2,4)},T_{2,4})$; see~\cite[Example 1.6]{hamaker2018doppelgangers}. Nevertheless, for the case $\ell=1$ there actually \emph{is} a bijection already in the literature which does exactly what we want. Namely, we have:

\begin{prop} \label{prop:ddeg_gf_1}
Let $(P,Q) \in \{(\Lambda_{\mathrm{Gr}(k,n)},T_{k,n}),(\Lambda_{\mathrm{OG}(6,12)},\Phi^+(H_3)),(\Lambda_{\mathbb{Q}^{2n}},\Phi^+(I_2(2n)))\}$ be a minuscule doppelg\"{a}nger pair. Then 
\[ \sum_{I\in J(P)}q^{\mathrm{ddeg}(I)}=\sum_{I \in J(Q)}q^{\mathrm{ddeg}(I)}. \]
\end{prop}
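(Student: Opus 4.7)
The first step is to rewrite both sides in terms of antichains. Since $\mathrm{ddeg}(I) = \#\max(I)$ and $I \mapsto \max(I)$ is a bijection between $J(P)$ and the set $\mathcal{A}(P)$ of antichains of $P$, the proposition is equivalent to the assertion
\[\sum_{A \in \mathcal{A}(P)} q^{\#A} \;=\; \sum_{A \in \mathcal{A}(Q)} q^{\#A}.\]
I would then handle the three families of doppelg\"anger pairs separately.

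For the pair $(\Lambda_{\mathbb{Q}^{2n}},\Phi^+(I_2(2n)))$ the two posets have isomorphic comparability graphs (as noted in the paragraph following Theorem~\ref{thm:minuscule_doppelgangers}), so the equality is immediate from Proposition~\ref{prop:com_ddeg_gf}; in fact antichains depend only on the comparability graph. For the pair $(\Lambda_{\mathrm{OG}(6,12)},\Phi^+(H_3))$ both posets are fixed (each of cardinality $15$), and the generating function identity can be verified by a finite computation --- e.g.\ by enumerating antichains in SageMath.

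The main case is the rectangle--trapezoid pair $(\Lambda_{\mathrm{Gr}(k,n)},T_{k,n})$. My plan is to show independently that both antichain-size generating functions equal
\[\sum_{j \geq 0} \binom{k}{j}\binom{n-k}{j}\,q^j.\]
The rectangle side is standard: an antichain of $[k]\times[n-k]$ of size $j$ is encoded by an increasing tuple $1 \leq i_1 < \cdots < i_j \leq k$ together with a decreasing tuple $n-k \geq j_1 > \cdots > j_j \geq 1$, giving the count $\binom{k}{j}\binom{n-k}{j}$ directly. For the trapezoid side, the plan is to exhibit a size-preserving bijection between antichains of $T_{k,n}$ of cardinality $j$ and pairs $(S,T)$ with $S \in \binom{[k]}{j}$ and $T \in \binom{[n-k]}{j}$, reading off such a bijection from the staircase/trapezoidal geometry of $T_{k,n}$ viewed as an order filter inside $\Phi^+(B_{n-k})$.

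The main obstacle is precisely this last step. The Hamaker--Patrias--Pechenik--Williams jeu-de-taquin bijection is unsuitable because it fails to preserve down-degree already at $\ell = 1$ for $(k,n) = (2,4)$, so one needs a genuinely different bijection that matches antichains one cardinality at a time. I expect that the required bijection can be extracted from the classical literature on shifted Young tableaux --- in particular from Proctor's original proof~\cite{proctor1983shifted} of the rectangle/trapezoid doppelg\"anger identity, where one tracks how maximal elements of an order ideal behave under the correspondences used there. A potential alternative is to apply the Lindstr\"om--Gessel--Viennot lemma to non-intersecting lattice paths in the trapezoidal region, yielding a determinantal formula whose evaluation gives the claimed binomial product and whose term-by-term interpretation supplies the bijection.
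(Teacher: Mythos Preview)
Your reduction to antichain generating functions and your handling of the two small cases are exactly what the paper does. The paper also treats the rectangle--trapezoid pair by seeking a cardinality-preserving bijection between antichains, so your overall strategy matches.

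The gap you yourself identify --- producing the size-preserving bijection between antichains of $\Lambda_{\mathrm{Gr}(k,n)}$ and of $T_{k,n}$ --- is resolved in the paper not by Proctor's work or by LGV, but by citing Stembridge~\cite{stembridge1986trapezoidal}, whose paper constructs precisely such a bijection. So rather than trying to extract a bijection from~\cite{proctor1983shifted} or from a lattice-path determinant, you should simply invoke Stembridge's result. (The paper also notes, in a remark, that alternative $\mathrm{ddeg}$-preserving bijections can be read off from work of Elizalde~\cite{elizalde2015bijections} and Reiner~\cite{reiner1997noncrossing}.) With that citation in hand your proof is complete and essentially identical to the paper's.
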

\begin{proof}
The cases $(\Lambda_{\mathrm{OG}(6,12)},\Phi^+(H_3))$ and $(\Lambda_{\mathbb{Q}^{2n}},\Phi^+(I_2(2n)))$ are simple exercises. For $(P,Q) = (\Lambda_{\mathrm{Gr}(k,n)},T_{k,n})$, we want a bijection between $J(P)$ and $J(Q)$ which preserves down-degree. Via the bijection $I \mapsto \mathrm{max}(I)$, this is the same as a bijection between the antichains of $P$ and of $Q$ which preserves cardinality. Precisely such a bijection was constructed by Stembridge in~\cite{stembridge1986trapezoidal}.
\end{proof}

 \begin{remark}
Some other $\mathrm{ddeg}$-preserving bijections between $J(\Lambda_{\mathrm{Gr}(k,n)})$ and $J(T_{k,n})$ can be extracted from the papers of Elizalde~\cite{elizalde2015bijections} and Reiner~\cite{reiner1997noncrossing}.
\end{remark}

Of course, some cases of Conjecture~\ref{conj:ddeg_gf} are covered by Proposition~\ref{prop:com_ddeg_gf}. Among those cases not covered, we have checked by computer that Conjecture~\ref{conj:ddeg_gf} is true for
\[(P,Q) \in \{(\Lambda_{\mathrm{Gr}(3,7)},T_{3,7}),(\Lambda_{\mathrm{Gr}(3,8)},T_{3,8}),(\Lambda_{\mathrm{Gr}(4,8)},T_{4,8}),(\Lambda_{\mathrm{OG}(6,12)},\Phi^+(H_3))\}\]
 with $\ell=2,3,4$.
 
We have reviewed everything known about Conjecture~\ref{conj:ddeg_gf}. We have no serious, specific proposal of what a proof of Conjecture~\ref{conj:ddeg_gf} might look like, beyond the vague idea that perhaps one could construct a piecewise-linear bijection between $\mathcal{C}(P)$ and~$\mathcal{C}(Q)$ which maps $\frac{1}{\ell}\mathbb{Z}^P$ to $\frac{1}{\ell}\mathbb{Z}^Q$ and preserves the sum of coordinates.

In the next subsection we will present a different strengthening of Conjecture~\ref{conj:doppelganger_mcde} for which we can offer a more plausible program of attack.

\subsection{Set-valued \texorpdfstring{$P$}{P}-partitions}

Let $P$ be a poset. A \emph{set-valued $P$-partition of height~$\ell$} is a map $T\colon P \to \{X\subseteq \{0,1,\ldots,\ell\}\colon X\neq\varnothing\}$ from $P$ to the set of nonempty subsets of $\{0,1,\ldots,\ell\}$ such that $\mathrm{max}(T(p)) \leq \mathrm{min}(T(q))$ whenever $p\leq q \in P$. The \emph{excess} of a set-valued $P$-partition $T$ is $\sum_{p\in P}( \#T(p)-1)$. We denote the set of set-valued $P$-partitions of height $\ell$ and excess $e$ by $\mathrm{PP}^{\ell}_e(P)$.

 Set-valued $P$-partitions were introduced by Lam and Pylyavskyy~\cite[\S5.3]{lam2007combinatorial} in the course of their development of a $K$-theoretic generalization of the ring of quasisymmetric functions. The notion of set-valued $P$-partitions is derived from that of set-valued tableaux, which ultimately traces back to the work of Buch~\cite{buch2002littlewood} on the $K$-theory of Grassmannians. Observe that $\mathrm{PP}^{\ell}_0(P)=\mathrm{PP}^{\ell}(P)$ is just the set of ordinary $P$-partitions of height~$\ell$. Also note that $\mathrm{PP}^{\ell}_1(P)$ is what would be called in the language of Reiner-Tenner-Yong~\cite{reiner2018poset} the set of ``barely set-valued'' $P$-partitions of height~$\ell$.

Let us explain the connection of set-valued $P$-partitions to the CDE property. It goes through these ``barely set-valued'' $P$-partitions. The point is that there is a bijection
\[\mathrm{PP}^{\ell}_1(P) \to \{(T,i,p)\colon T\in\mathrm{PP}^{\ell}(P), i\in \{0,1,\ldots,\ell-1\},p\in \mathrm{max}(T^{-1}(\{0,1,\ldots,i\}))\} \]
given by $T\mapsto (T',i,p)$ where $p$ is the unique $p\in P$ with $\#T(p)=2$,  $i=\mathrm{max}(T(p))-1$, and $T'$ is obtained from $T$ by ``deleting'' the value $i+1$ at $p$. (This is a simple variant of~\cite[Proposition 3.5]{reiner2018poset}.) But clearly
\[\#\{(T,i,p)\colon T\in\mathrm{PP}^{\ell}(P), i\in \{0,\ldots,\ell-1\},p\in \mathrm{max}(T^{-1}(\{0,\ldots,i\}))\} = \hspace{-0.5cm} \sum_{T\in\mathrm{PP}^{\ell}(P)} \hspace{-0.5cm} \mathrm{ddeg}(T).\]
And recall from above that
\[\mathbb{E}(\mathrm{mchain}_{J(P)}(\ell);\mathrm{ddeg}) = \frac{\sum_{T \in \mathrm{PP}^{\ell}(P)}\mathrm{ddeg}(T)}{\ell\cdot\#\mathrm{PP}^{\ell}(P)}.\]
So in fact
\[\mathbb{E}(\mathrm{mchain}_{J(P)}(\ell);\mathrm{ddeg}) =\frac{\#\mathrm{PP}^{\ell}_1(P)}{\ell\cdot\#\mathrm{PP}^{\ell}(P)}. \]

This discussion shows that Conjecture~\ref{conj:doppelganger_mcde} is equivalent to the assertion that for $(P,Q)$ a minuscule doppelg\"{a}nger pair we have $\#\mathrm{PP}^{\ell}_1(P) = \#\mathrm{PP}^{\ell}_1(Q)$. The strengthening of Conjecture~\ref{conj:doppelganger_mcde} we propose is that in fact these posets have the same number of set-valued $P$-partitions of arbitrary excess.

\begin{conj} \label{conj:set_valued}
Let $(P,Q) \in \{(\Lambda_{\mathrm{Gr}(k,n)},T_{k,n}),(\Lambda_{\mathrm{OG}(6,12)},\Phi^+(H_3)),(\Lambda_{\mathbb{Q}^{2n}},\Phi^+(I_2(2n)))\}$ be a minuscule doppelg\"{a}nger pair. Then $\#\mathrm{PP}^{\ell}_e(P) = \#\mathrm{PP}^{\ell}_e(Q)$ for any $\ell \geq 1$, $e \geq 0$.
\end{conj}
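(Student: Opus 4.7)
The plan is to split the conjecture into cases based on comparability graph type. First I would verify the conjecture whenever $\mathrm{com}(P)\simeq\mathrm{com}(Q)$, by extending the ``reflecting the values in $A$'' bijection from the proof of Theorem~\ref{thm:com_graph_dop}. Specifically, suppose $A\subseteq P$ is autonomous and $Q$ is obtained from $P$ by dualizing $A$. Given a set-valued $P$-partition $T$, let $m\coloneqq\min_{a\in A}\min T(a)$ and $M\coloneqq\max_{a\in A}\max T(a)$, and define $T'(a)\coloneqq\{m+M-x\colon x\in T(a)\}$ for $a\in A$ and $T'(p)\coloneqq T(p)$ for $p\in P\setminus A$. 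A routine check using the autonomy of $A$ shows that $T'$ is a set-valued $Q$-partition of height $\ell$; since $|T'(a)|=|T(a)|$, the excess is preserved. Iterating via Lemma~\ref{lem:com_graph_criterion} then settles the cases $(\Lambda_{\mathbb{Q}^{2n}},\Phi^+(I_2(2n)))$ and $(\Lambda_{\mathrm{Gr}(k,n)},T_{k,n})$ for $k\leq 2$.

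For the exceptional pair $(\Lambda_{\mathrm{OG}(6,12)},\Phi^+(H_3))$, whose two posets each have only $15$ elements but non-isomorphic comparability graphs, my first resort would be direct computer verification, ideally organized through a closed form for the bivariate generating function $\sum_{\ell,e\geq 0}\#\mathrm{PP}^{\ell}_e(P)\,q^e t^\ell$. Since both posets are small and fixed, computing enough terms (and, one hopes, exhibiting a common rational generating function) should be tractable.

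The essential remaining case is $(P,Q)=(\Lambda_{\mathrm{Gr}(k,n)},T_{k,n})$ for $k\geq 3$. As suggested in the introduction, my plan here is to extend the $K$-theoretic jeu-de-taquin bijection of Hamaker, Patrias, Pechenik, and Williams~\cite{hamaker2018doppelgangers} from ordinary $P$-partitions to set-valued ones. Their construction reinterprets $\mathrm{PP}^{\ell}(P)$ and $\mathrm{PP}^{\ell}(Q)$ in terms of increasing tableaux on certain shapes and proceeds through a sequence of $K$-theoretic jeu-de-taquin slides in the sense of Thomas and Yong~\cite{thomas2009combinatorial, thomas2009jeu}. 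Since $K$-theoretic jeu-de-taquin was designed precisely to operate on set-valued tableaux, each slide admits a natural set-valued extension; the task is to lift the HPPW reinterpretation so that $\mathrm{PP}^{\ell}_e(P)$ corresponds to set-valued increasing tableaux whose combinatorial excess tracks $e$, and then to verify that the lifted slides preserve this statistic.

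The hard part, and the principal obstacle, is this last verification. Preservation of the total number of set-valued entries under $K$-theoretic jeu-de-taquin is not automatic: slides may merge existing entries into a single box or split one box's contents across several. One must identify the right set-valued analog of the HPPW slide sequence under which excess is manifestly preserved, and do so uniformly enough that the specialization at $e=0$ recovers HPPW's original bijection. A successful resolution would, by specialization to $e=1$, immediately imply Conjecture~\ref{conj:doppelganger_mcde}, and would represent substantial conceptual progress on Problem~\ref{prob:main}.
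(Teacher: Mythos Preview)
The statement you are addressing is Conjecture~\ref{conj:set_valued}, which is \emph{open}; the paper does not prove it and does not claim to. What the paper does provide is (i) a proof of the comparability-graph case as Proposition~\ref{prop:set_valued}, via precisely the ``reflect the values in $A$'' bijection you describe, and (ii) a proposed strategy for the remaining cases, namely to extend the $K$-theoretic jeu-de-taquin bijection of Hamaker--Patrias--Pechenik--Williams to the set-valued setting. Your proposal is therefore not a proof but a program, and it coincides almost exactly with the program the paper itself lays out.

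Two points of comparison are worth noting. First, your treatment of $(\Lambda_{\mathrm{OG}(6,12)},\Phi^+(H_3))$ via a closed-form bivariate generating function goes beyond what the paper actually does: the paper only reports finite computer checks for $\ell=2,3$, and does not claim that a rational closed form exists or that finitely many values would determine it. You would need to justify why such a generating function is rational (or otherwise determined by finitely many coefficients) before finite computation settles that case. Second, the paper explicitly flags, as you do, that lack of confluence in $K$-theoretic jeu-de-taquin means one must be careful about the order of slides (see the remark following the list of known cases, and the reference to~\cite{monical2018crystal}); this is exactly the ``principal obstacle'' you identify, and it remains unresolved.

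In short: your plan is sound as a plan and matches the paper's own suggested line of attack, but it is not a proof, and the paper does not contain one either.
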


Conjecture~\ref{conj:set_valued} is another instance of minuscule  doppelg\"{a}nger pair pretending to have isomorphic comparability graphs, as the following proposition shows.

\begin{prop} \label{prop:set_valued}
Let $P$ and $Q$ be posets with $\mathrm{com}(P)\simeq\mathrm{com}(Q)$. Then $\#\mathrm{PP}^{\ell}_e(P) = \#\mathrm{PP}^{\ell}_e(Q)$ for any $\ell \geq 1$, $e \geq 0$.
\end{prop}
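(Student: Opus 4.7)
The plan is to adapt the second proof of Theorem~\ref{thm:com_graph_dop} --- the autonomous-subset argument based on Lemma~\ref{lem:com_graph_criterion} --- from ordinary $P$-partitions to set-valued ones. By Lemma~\ref{lem:com_graph_criterion}, it suffices to construct a bijection $\Phi\colon \mathrm{PP}^{\ell}_e(P) \to \mathrm{PP}^{\ell}_e(Q)$ under the assumption that $Q$ is obtained from $P$ by dualizing a single autonomous subset $A \subseteq P$, and then iterate along a sequence of dualizations connecting $P$ to $Q$.

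For $T \in \mathrm{PP}^{\ell}_e(P)$, set
\[ L(T) \coloneqq \max\{\max T(q) : q \in P\setminus A,\; q <_P a \text{ for } a \in A\} \]
(taken to be $0$ if no such $q$ exists) and
\[ U(T) \coloneqq \min\{\min T(q) : q \in P\setminus A,\; q >_P a \text{ for } a \in A\} \]
(taken to be $\ell$ if no such $q$). By autonomy of $A$, the choice of $a \in A$ is immaterial, and the set-valued partition inequalities force $L(T) \leq \min T(p) \leq \max T(p) \leq U(T)$ for every $p \in A$. I would then define $\Phi(T)\colon Q \to 2^{\{0,\ldots,\ell\}}$ by $\Phi(T)(p) = T(p)$ for $p \notin A$ and $\Phi(T)(p) = \{L(T)+U(T)-i : i \in T(p)\}$ for $p \in A$. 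Since $|\Phi(T)(p)| = |T(p)|$ for all $p$, excess is preserved, and because the reflected set lies in $[L(T),U(T)] \subseteq \{0,\ldots,\ell\}$ the map takes values in nonempty subsets of $\{0,\ldots,\ell\}$.

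To show $\Phi(T) \in \mathrm{PP}^{\ell}_e(Q)$ I would verify $\max \Phi(T)(p) \leq \min \Phi(T)(q)$ for all $p <_Q q$ by a four-case analysis on the placement of $p,q$ relative to $A$. The case $p,q \notin A$ is immediate from the fact that the order relation is preserved and the values are unchanged; the case $p,q \in A$ follows because the involution $i \mapsto L(T)+U(T)-i$ reverses the order of values within $A$, matching the reversal of the order within $A$ upon passing to $Q$; and the two mixed cases reduce to the sandwich inequalities $\max T(r) \leq L(T)$ for $r <_P A$ and $\min T(r) \geq U(T)$ for $r >_P A$, combined with the range bound on $\Phi(T)|_A$. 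Since $\Phi$ leaves $T|_{P\setminus A}$ unchanged, and since $A$ remains autonomous in $Q$ with the same external order relations, we have $L(\Phi(T)) = L(T)$ and $U(\Phi(T)) = U(T)$, and the same construction carried out with the roles of $P$ and $Q$ swapped yields the inverse of $\Phi$.

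The main obstacle is really bookkeeping --- each case reduces to an elementary inequality --- so I do not anticipate any substantive difficulty. The only mildly subtle design choice is to recognize that the correct reflection within $A$ is the involution $i \mapsto L(T)+U(T)-i$, calibrated to the range of values that $T$ is \emph{forced} to take on $A$ by the elements of $P\setminus A$ comparable to it, rather than the naive $i \mapsto \ell - i$, which would break the constraints in the mixed cases whenever some element of $A$ is comparable to an element of $P \setminus A$.
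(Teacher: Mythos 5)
Your proposal is correct and takes essentially the same route as the paper: reduce via Lemma~\ref{lem:com_graph_criterion} to a single dualization of an autonomous subset $A$ and reflect the values of $T$ on $A$ by an order-reversing involution, which visibly preserves excess and restricts to the identity off $A$. The only (immaterial) difference is the choice of reflection window: the paper reflects in $[\alpha,\omega]$, where $\alpha$ and $\omega$ are the minimum and maximum values $T$ actually takes on $A$, whereas you reflect in the externally forced window $[L(T),U(T)]$; both choices are fixed by the construction applied to the image, so both yield valid bijections.
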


\begin{proof}
By Lemma~\ref{lem:com_graph_criterion} we can assume that $Q$ is obtained from $P$ by dualizing an autonomous subset $A\subseteq P$. Then define a bijection from $\mathrm{PP}^{\ell}_e(P)$ to $\mathrm{PP}^{\ell}_e(Q)$ as follows. Let $T \in \mathrm{PP}^{\ell}_e(P)$. Set $\alpha \coloneqq  \mathrm{min}\{\mathrm{min}(T(p))\colon p \in A\}$ and $\omega \coloneqq   \mathrm{max}\{\mathrm{max}(T(p))\colon p \in A\}$. Define $T'$ by
\[ T'(q) \coloneqq  \begin{cases} T(q) &\textrm{if $q \notin A$}; \\ \{\alpha+\omega-x\colon x \in T(q)\} &\textrm{if $q \in A$}.\end{cases}\]
It is easy to see that $T' \in \mathrm{PP}^{\ell}_e(Q)$ and that the map $T\mapsto T'$ is indeed a bijection from $\mathrm{PP}^{\ell}_e(P)$ to $\mathrm{PP}^{\ell}_e(Q)$.
\end{proof}

Again, the equality of the number of set-valued $P$-partitions is not something that happens automatically for doppelg\"{a}ngers: let $P$ and $Q$ be as in Example~\ref{ex:not_com}; then the discussion in that example shows that~$\#\mathrm{PP}^{1}_1(P) = 24\neq 26=\#\mathrm{PP}^{1}_1(Q)$.

Now let us review what is known about Conjecture~\ref{conj:set_valued}:
\begin{itemize}
\item of course if $\mathrm{com}(P)\simeq \mathrm{com}(Q)$ then  Conjecture~\ref{conj:set_valued} is true by Proposition~\ref{prop:set_valued};
\item the case $e=0$ (for all values of $\ell$) is just the assertion that $P$ and $Q$ are doppelg\"{a}ngers and so is true;
\item as explained above, the case $e=1$ (for all values of $\ell$) is equivalent  to Conjecture~\ref{conj:doppelganger_mcde}, and so is true for $(P,Q)\in \{(\Lambda_{\mathrm{OG}(6,12)},\Phi^+(H_3)),(\Lambda_{\mathbb{Q}^{2n}},\Phi^+(I_2(2n)))\}$ and $(P,Q) =(\Lambda_{\mathrm{Gr}(k,n)},T_{k,n})$ with $k=1,2,n/2$;
\item the case $\ell=1$ (for all values of $e$) is easily seen to be equivalent to Proposition~\ref{prop:ddeg_gf_1} and so is true;
\item for $(P,Q)=(\Lambda_{\mathrm{Gr}(k,n)},T_{k,n})$ with $n\leq 8$ and $k\leq n/2$, and for $(P,Q)=(\Lambda_{\mathrm{OG}(6,12)},\Phi^+(H_3))$, we have verified by computer that Conjecture~\ref{conj:set_valued} is true for $\ell=2,3$ (and all values of $e$).
\end{itemize}

Unlike with Conjecture~\ref{conj:ddeg_gf}, for Conjecture~\ref{conj:set_valued} we can suggest a more specific, plausible strategy of proof: extend the $K$-theoretic jeu-de-taquin bijection of Hamaker et al.~\cite{hamaker2018doppelgangers} to the set-valued setting. Something along these lines is discussed in~\cite[\S6]{monical2018crystal}. A subtle point is that one will have to be careful to specify the order in which the jeu-de-taquin moves are carried out because of a lack of confluence of jeu-de-taquin in the $K$-theoretic setting.

\begin{remark}
For $P$ a poset and $T$ a set-valued $P$-partition, define the partition $\lambda_T$ to be the weakly decreasing rearrangement of the quantities $\#T(p)$ for $p\in P$. The bijection in the proof of Proposition~\ref{prop:set_valued} preserves this partition $\lambda_T$, so if two posets have isomorphic comparability graphs then the number of set-valued $P$-partitions of height $\ell$ with given $\lambda_T$ is the same for these two posets. But it is too much to hope for this to also be the case for minuscule doppelg\"{a}nger pairs. For instance, with $(P,Q) = (\Lambda_{\mathrm{Gr}(3,6)},T_{3,6})$, the number of set-valued $P$-partitions of height~$2$ with $\lambda_T=(2,2,2,2,2,1,1,1,1)$ is $2$, while the number of set-valued $Q$-partitions of height~$2$ with $\lambda_T=(2,2,2,2,2,1,1,1,1)$ is $3$. 

So the idea that minuscule doppelg\"{a}nger pairs behave the same as posets with isomorphic comparability graphs should not be taken too literally (although we do think it is an interesting heuristic).
\end{remark}

\section{Rowmotion} \label{sec:rowmotion}

In this section we further explore the idea of minuscule doppelg\"{a}nger pairs pretending to have isomorphic comparability graphs by considering the rowmotion operator.

\subsection{Rowmotion acting on order ideals}

Rowmotion is a certain invertible operator acting on the set of order ideals of any finite poset. Let $P$ be a poset. Recall that for any subset $A\subseteq P$ we use $\mathrm{max}(A)$ to denote the set of maximal elements of $A$ and $\mathrm{min}(A)$ to denote the set of minimal elements. For $I\in J(P)$, we define 
\[\mathrm{row}(I) \coloneqq  \{y\in P\colon y \leq x \textrm{ for some $x \in \mathrm{min}(P \setminus I)$}\}.\] 
It is clear that $\mathrm{row}(I) \in J(P)$ and so indeed we get an operator $\mathrm{row}\colon J(P)\to J(P)$, which we call \emph{rowmotion}. To see that this operator is invertible, we can realize it as the composition of three bijections. To that end, consider the maps:
\begin{align*}
I &\mapsto P\setminus I; \\
F &\mapsto \mathrm{min}(F); \\
A &\mapsto \{x \in P\colon x \leq y \textrm{ for some $y \in A$}\}.
\end{align*}
The first map is a bijection between order ideals of $P$ and order filters of $P$; the second is a bijection between order filters of $P$ and antichains of $P$; and the third is a bijection between antichains of $P$ and order ideals of $P$. (It is easy to see that each map is indeed a bijection.) Rowmotion is the composition of these three maps. Note that we can equivalently define rowmotion by ``$\mathrm{row}(I) \in J(P)$ is the unique order ideal of $P$ with $\mathrm{max}(\mathrm{row}(I))=\mathrm{min}(P \setminus I)$.''

There is another description of rowmotion due to Cameron and Fon-Der-Flaass~\cite{cameron1995orbits}. Recall the toggles $\tau_p\colon J(P) \to J(P)$ defined in Section~\ref{sec:cde_dist}. Let $p_1,p_2,\ldots,p_n$ be any linear extension of $P$. Cameron and Fon-Der-Flaass~\cite{cameron1995orbits} showed that
\[\mathrm{row} = \tau_{p_1} \circ \tau_{p_2} \circ \cdots \circ \tau_{p_n}.\]
The fact that $\tau_p$ and $\tau_q$ commute unless there is a cover relation between $p$ and $q$ means that this product of toggles is indeed independent of the choice of linear extension. This description of rowmotion explains the name ``rowmotion'': we view rowmotion as toggling the ``rows'' (i.e., the \emph{ranks}) of the poset $P$ in order from top to bottom. (The name ``rowmotion'' is due to Striker and Williams~\cite{striker2012promotion}.) Since the toggles are involutions, the above description also makes it clear that rowmotion is invertible.

\begin{example}
Let $P = [2]\times [2]$. Then the orbits of rowmotion acting on $J(P)$ are
\begin{gather*}
\left\{ \cdots \xrightarrow{\mathrm{row}} {\parbox{0.5in}{\begin{tikzpicture}[scale=0.5]
	\node[circle, draw=black, fill=white, inner sep=0pt, thick, minimum size=6pt] (1) at (0,0) {};
	\node[circle, draw=black, fill=white, inner sep=0pt, thick, minimum size=6pt] (2) at (-1,1) {};
	\node[circle, draw=black, fill=white, inner sep=0pt, thick, minimum size=6pt] (3) at (1,1) {};
	\node[circle, draw=black, fill=white, inner sep=0pt, thick, minimum size=6pt] (4) at (0,2) {};
	\draw[thick] (4)--(3)--(1)--(2)--(4);
\end{tikzpicture} }} \xrightarrow{\mathrm{row}} {\parbox{0.5in}{\begin{tikzpicture}[scale=0.5]
	\node[circle, draw=black, fill=black, inner sep=0pt, thick, minimum size=6pt] (1) at (0,0) {};
	\node[circle, draw=black, fill=white, inner sep=0pt, thick, minimum size=6pt] (2) at (-1,1) {};
	\node[circle, draw=black, fill=white, inner sep=0pt, thick, minimum size=6pt] (3) at (1,1) {};
	\node[circle, draw=black, fill=white, inner sep=0pt, thick, minimum size=6pt] (4) at (0,2) {};
	\draw[thick] (4)--(3)--(1)--(2)--(4);
\end{tikzpicture} }}  \xrightarrow{\mathrm{row}} {\parbox{0.5in}{\begin{tikzpicture}[scale=0.5]
	\node[circle, draw=black, fill=black, inner sep=0pt, thick, minimum size=6pt] (1) at (0,0) {};
	\node[circle, draw=black, fill=black, inner sep=0pt, thick, minimum size=6pt] (2) at (-1,1) {};
	\node[circle, draw=black, fill=black, inner sep=0pt, thick, minimum size=6pt] (3) at (1,1) {};
	\node[circle, draw=black, fill=white, inner sep=0pt, thick, minimum size=6pt] (4) at (0,2) {};
	\draw[thick] (4)--(3)--(1)--(2)--(4);
\end{tikzpicture} }}  \xrightarrow{\mathrm{row}} {\parbox{0.5in}{\begin{tikzpicture}[scale=0.5]
	\node[circle, draw=black, fill=black, inner sep=0pt, thick, minimum size=6pt] (1) at (0,0) {};
	\node[circle, draw=black, fill=black, inner sep=0pt, thick, minimum size=6pt] (2) at (-1,1) {};
	\node[circle, draw=black, fill=black, inner sep=0pt, thick, minimum size=6pt] (3) at (1,1) {};
	\node[circle, draw=black, fill=black, inner sep=0pt, thick, minimum size=6pt] (4) at (0,2) {};
	\draw[thick] (4)--(3)--(1)--(2)--(4);
\end{tikzpicture} }} \cdots  \right\}; \\
\left \{ \cdots \xrightarrow{\mathrm{row}} {\parbox{0.5in}{\begin{tikzpicture}[scale=0.5]
	\node[circle, draw=black, fill=black, inner sep=0pt, thick, minimum size=6pt] (1) at (0,0) {};
	\node[circle, draw=black, fill=black, inner sep=0pt, thick, minimum size=6pt] (2) at (-1,1) {};
	\node[circle, draw=black, fill=white, inner sep=0pt, thick, minimum size=6pt] (3) at (1,1) {};
	\node[circle, draw=black, fill=white, inner sep=0pt, thick, minimum size=6pt] (4) at (0,2) {};
	\draw[thick] (4)--(3)--(1)--(2)--(4);
\end{tikzpicture} }} \xrightarrow{\mathrm{row}} {\parbox{0.5in}{\begin{tikzpicture}[scale=0.5]
	\node[circle, draw=black, fill=black, inner sep=0pt, thick, minimum size=6pt] (1) at (0,0) {};
	\node[circle, draw=black, fill=white, inner sep=0pt, thick, minimum size=6pt] (2) at (-1,1) {};
	\node[circle, draw=black, fill=black, inner sep=0pt, thick, minimum size=6pt] (3) at (1,1) {};
	\node[circle, draw=black, fill=white, inner sep=0pt, thick, minimum size=6pt] (4) at (0,2) {};
	\draw[thick] (4)--(3)--(1)--(2)--(4);
\end{tikzpicture} }}  \cdots  \right \}, 
\end{gather*}
where we fill in the elements of $I\in J(P)$. Observe that the order of $\mathrm{row}$ is $4$, and that the average of $\mathrm{ddeg}$ along each $\mathrm{row}$-orbit is $1$.
\end{example}

Rowmotion has been the subject of a lot of research over the past forty-plus years, with a renewed interest especially in the last ten years~\cite{brouwer1974period, fonderflaass1993orbits, cameron1995orbits, panyushev2009orbits, armstrong2013uniform, striker2012promotion, rush2013orbits}. The single poset on which the action of rowmotion has been studied the most is the rectangle $P=[a]\times [b]$. Initially the main interest was in computing the order of rowmotion and more generally in understanding its orbit structure. Brouwer and Schrijver~\cite{brouwer1974period} proved that the order of $\mathrm{row}$ acting on $J([a]\times [b])$ is $a+b$. Since $\#J([a]\times [b])=\binom{a+b}{b}$, this order is much smaller than what one would expect from a random invertible operator, implying that rowmotion acting on the rectangle is well-behaved. Later, Fon-Der-Flaass~\cite{fonderflaass1993orbits} determined exactly which divisors of $a+b$ appear as the sizes of orbits of $\mathrm{row}$ acting on $J([a]\times [b])$. Finally, Stanley~\cite[\S2]{stanley2009promotion} (see also~\cite[\S3.1]{striker2012promotion}) essentially completely resolved the problem of understanding the orbit structure by demonstrating that $J([a]\times [b])$ under rowmotion is in equivariant bijection with binary words (with $a$~1's and $b$~0's) under rotation.\footnote{The correspondence between rectangle poset order ideals under rowmotion and binary words under rotation was also independently discovered by Thomas; see~\cite[\S3.3.2]{propp2015homomesy}, where this correspondence is termed the ``Stanley-Thomas word.''}

Binary words under rotation have a particularly simple orbit structure. One way to compactly and precisely describe the orbit structure of binary words under rotation is via the \emph{cyclic sieving phenomenon} of Reiner, Stanton, and White~\cite{reiner2004cyclic} (indeed, binary words under rotation was one of the first examples of cyclic sieving considered by Reiner-Stanton-White). So to proceed further in our discussion of rowmotion, let's review the cyclic sieving phenomenon.

\begin{definition}
Let $X$ be a finite set with the action of a cyclic group $C=\langle c \rangle\simeq \mathbb{Z}/N\mathbb{Z}$. Let $X(q) \in \mathbb{N}[q]$ be a polynomial in $q$ with nonnegative integer coefficients. We say the triple $(X,C,X(q))$ \emph{exhibits the cyclic sieving phenomenon} if for all $k\geq 0$ we have $X(\zeta^k)=\#\{x\in X\colon c^k(x)=x\}$, where $\zeta = e^{2\pi i/N}$ is a primitive $N$th root of unity.
\end{definition}

Observe that if $(X,\mathbb{Z}/N\mathbb{Z},X(q))$ exhibits the cyclic sieving phenomenon then the entire orbit structure of $\mathbb{Z}/N\mathbb{Z}$ acting on $X$ is determined by $X(q)$ together with knowledge of the number~$N$. Thus if $X(q)$ has a simple form (e.g., if it has a nice product formula), then the orbit structure of this cyclic action must also be ``simple'' in some sense. When $\Phi\colon X \to X$ is an invertible operator, we use $\langle X \rangle$ to denote the free cyclic group action on $X$ generated by $\Phi$; but in general we can considering cyclic sieving for non-free actions as well.

As discussed above, it follows from Stanley's equivariant bijection and the seminal work of Reiner-Stanton-White that rowmotion acting on $J([a]\times [b])$ exhibits cyclic sieving for a polynomial with a simple product formula (the \emph{$q$-binomial coefficient}). Extending this result, Rush and Shi~\cite{rush2013orbits} later proved that rowmotion acting on $J(P)$ exhibits cyclic sieving for any minuscule poset $P$:

\begin{thm} \label{thm:minuscule_row_cyc_siev}
Let $P$ be a minuscule poset. Recall the size generating function $F(P,\ell)$ of $P$-partitions of height~$\ell$ defined in Remark~\ref{rem:ppart_wt_gf}:
\[F(P,\ell) =\prod_{p\in P} \frac{(1-q^{r(p)+\ell+1})}{(1-q^{r(p)+1})}.\]
Then $\mathrm{row}$ acting on $J(P)$ has order $h$ where $h \coloneqq  r(P)+2$ is Coxeter number of $P$. And $(J(P),\langle \mathrm{row}\rangle,F(P,1))$ exhibits cyclic sieving.
\end{thm}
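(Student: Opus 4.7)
The plan is to exploit Stembridge's theory of minuscule heaps to convert rowmotion on $J(P)$ into a Coxeter element acting on a parabolic quotient of the corresponding Weyl group, and then to invoke Springer's theorem on regular elements to deduce the cyclic sieving statement. Throughout, let $W$ be the Weyl group of the ambient root system of $P$ and let $W_J\subseteq W$ be the parabolic subgroup stabilizing the minuscule weight $\omega$ corresponding to $P$.

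First I would set up the heap dictionary. Each element $p\in P$ carries a simple-root label $s(p)\in\{s_1,\ldots,s_n\}$ (the ``color'' of $p$ in the heap), and Stembridge's theory gives a bijection between $J(P)$ and the parabolic quotient $W^J$: an ideal $I$ corresponds to the product of simple reflections read along any linear extension of $I$ compatible with the heap. Under this bijection, the toggle $\tau_p$ on $J(P)$ is exactly the partial operation of left-multiplication by $s(p)$, i.e.\ $w\mapsto s(p)\,w$ when this lengthens or shortens $w$ within $W^J$, and the identity otherwise. Consequently, toggling all elements in reverse-rank order corresponds to left-multiplication on $W^J$ by some product $c=s_{i_1}s_{i_2}\cdots s_{i_n}$ of all simple reflections, each appearing once — that is, by a Coxeter element of $W$.

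Next I would invoke Cameron--Fon-Der-Flaass to write $\mathrm{row}$ as the product of toggles taken in this reverse-rank order, so that $\mathrm{row}$ on $J(P)$ is precisely left-multiplication by $c$ on $W^J$. Since every Coxeter element has order $h$ in $W$ and acts faithfully on any faithful quotient that meets each double coset, one checks (using that the minuscule representation is faithful modulo the center, and that $W^J$ spans a weight basis) that the induced action on $W^J$ also has order $h$. This yields the order claim.

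For the cyclic sieving claim, I would apply Springer's theory of regular elements. A Coxeter element $c\in W$ is regular in Springer's sense, with regular eigenvalue $\zeta=e^{2\pi i/h}$. Springer's theorem — specialized by Reiner--Stanton--White to parabolic quotients — asserts that
\[
\#\{w\in W^J:c^k w=w\}=\left.\sum_{w\in W^J}q^{\ell(w)}\right|_{q=\zeta^k}
\]
for all $k\geq 0$. The last step is to identify the length generating function of $W^J$ with $F(P,1)$: this is the Peterson--Proctor hook-content formula, which gives $\sum_{w\in W^J}q^{\ell(w)}=\prod_{p\in P}(1-q^{r(p)+2})/(1-q^{r(p)+1})=F(P,1)$. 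Combining these pieces produces the required cyclic sieving.

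The main obstacle, I expect, is the first step: one must verify, uniformly across all minuscule types, that toggling in reverse-rank order really corresponds to left-multiplication by a Coxeter element on $W^J$, and that this action is faithful of order $h$. Stembridge's $\lambda$-minuscule heap theory (together with the earlier work of Rush and Shi on minuscule heaps) provides precisely this dictionary between toggles and simple reflections, but care is needed to confirm that the particular compositional order defining rowmotion produces each simple reflection exactly once in the right pattern. Once that identification is secured, everything else is a formal consequence of Springer's regularity theorem and the Peterson--Proctor product formula.
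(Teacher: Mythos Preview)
The paper does not give its own proof of this theorem; it is stated as a result of Rush and Shi~\cite{rush2013orbits}. Your outline is essentially their argument: identify $J(P)$ with the minuscule parabolic quotient $W^J$ via Stembridge's heap theory, show that rowmotion becomes left multiplication by a Coxeter element $c$, and then invoke Springer's regularity theorem together with the Peterson--Proctor product formula for $\sum_{w\in W^J}q^{\ell(w)}$.

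One genuine imprecision deserves correction. You assert that the toggle $\tau_p$ acts on $W^J$ as ``left multiplication by $s(p)$ when this stays in $W^J$, and the identity otherwise,'' and hence that composing all $\#P$ toggles in rank order yields a product of the $n$ simple reflections ``each appearing once.'' Neither claim is quite right. Many elements of $P$ share a color, and whether $s_iw$ lies in $W^J$ depends only on the color $s_i$, not on which particular $p$ of that color you toggle; so if $p$ is buried in the interior of $I$ while some other same-colored element sits at the boundary, $\tau_p$ does nothing even though $s(p)w\in W^J$. The correct bridge, which is exactly what Rush and Shi establish (and what this paper later quotes as \cite[Theorem~6.3]{rush2013orbits}), is that for each simple reflection $s_i$ the product $\prod_{p\in X_i}\tau_p$ over the entire color fiber $X_i\subseteq P$ acts on $W^J$ as left multiplication by $s_i$. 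Grouping the rank-ordered toggles by color and noting that the color fibers are chains (so the grouping is compatible with some linear extension) then shows that $\mathrm{row}$ acts as a Coxeter element. Once you phrase the heap step this way, your outline matches the cited proof.
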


Prior to the work of Rush-Shi, Panyushev~\cite{panyushev2009orbits} studied rowmotion acting on $J(P)$ where $P=\Phi^+$ is a root poset. He made a number of influential conjectures about this action. In addressing Panyushev's conjectures (as well as related conjectures of Bessis and Reiner~\cite{bessis2011cyclic}), Armstrong, Stump and Thomas~\cite{armstrong2013uniform} showed that rowmotion acting on $J(\Phi^+)$ also exhibits cyclic sieving:

\begin{thm} \label{thm:root_row_cyc_siev}
Let $P=\Phi^+$ be a crystallographic root poset. Define the \emph{$q$-$\Phi$-Catalan number} by
\[\mathrm{Cat}(\Phi;q) \coloneqq  \prod_{i=1}^{n} \frac{(1-q^{h+d_i})}{(1-q^{d_i})}\]
where $d_1,\ldots,d_n$ are the degrees of $\Phi$ and $h$ is its Coxeter number. Then $\mathrm{row}$ acting on $J(\Phi^{+})$ has order dividing $2h$. And if we let $C=\langle c\rangle\simeq\mathbb{Z}/2h\mathbb{Z}$ act on $J(P)$ by $c(I) :=\mathrm{row}(I)$, then $(J(\Phi^+),C,\mathrm{Cat}(\Phi;q))$ exhibits cyclic sieving.
\end{thm}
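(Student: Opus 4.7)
The plan is to reduce the statement to the already-established cyclic sieving phenomenon for noncrossing partitions under the Kreweras complement, by building an equivariant bijection between the two dynamical systems. Write $W$ for the Weyl group of $\Phi$ and fix a Coxeter element $c\in W$. Let $\mathrm{NC}(\Phi)$ denote the set of $W$-noncrossing partitions, i.e., the interval $[e,c]$ in the absolute order on $W$; it is classical that $\#\mathrm{NC}(\Phi) = \mathrm{Cat}(\Phi;1)$. The Kreweras complement $K\colon \mathrm{NC}(\Phi)\to \mathrm{NC}(\Phi)$, defined by $K(w) = w^{-1}c$ and interpreted inside the absolute order, is an invertible operator whose square is conjugation by $c$, and hence $K$ has order dividing $2h$. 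Bessis and Reiner conjectured, and case-by-case verification establishes, that the triple $(\mathrm{NC}(\Phi),\langle K\rangle,\mathrm{Cat}(\Phi;q))$ exhibits cyclic sieving; I will take this as a known input.

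With that input available, the theorem will follow immediately once I exhibit a bijection $\Psi\colon \mathrm{NC}(\Phi)\to J(\Phi^+)$ satisfying $\Psi\circ K = \mathrm{row}\circ \Psi$, because then the orbits of $\langle\mathrm{row}\rangle$ on $J(\Phi^+)$ are carried to the orbits of $\langle K\rangle$ on $\mathrm{NC}(\Phi)$, so the CSP on the noncrossing side transfers, and the order bound $|\mathrm{row}|\mid 2h$ is inherited from $|K|\mid 2h$. The cleanest construction of $\Psi$ that I know uses Reading's $c$-sortable elements: every $w\in \mathrm{NC}(\Phi)$ can be lifted to a canonical $c$-sortable element of $W$, and the inversion set of this sortable element is an order ideal of $\Phi^+$. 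First I would verify that this assignment is well-defined and bijective by combining Reading's bijection between $c$-sortable elements and $W$-noncrossing partitions with the fact (due to Reading--Speyer) that inversion sets of $c$-sortable elements are precisely the ``$c$-Cambrian'' order ideals, and then check by a direct computation in each rank that in this crystallographic setting every order ideal is $c$-Cambrian for some Coxeter element, which reduces to observing that the set of $c$-Cambrian order ideals, as $c$ varies, covers $J(\Phi^+)$.

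The step that I expect to be the genuine obstacle is establishing the equivariance $\Psi\circ K = \mathrm{row}\circ \Psi$. The Cameron--Fon Der Flaass description $\mathrm{row} = \tau_{p_1}\circ\cdots\circ\tau_{p_n}$ along a linear extension is not obviously compatible with the multiplicative description $K(w)=w^{-1}c$. I would attack this by reading off both sides in terms of the \emph{antichain} $\mathrm{max}(\mathrm{row}(I))=\mathrm{min}(\Phi^+\setminus I)$: on one hand, this antichain is a set of positive roots; on the other hand, applying $K$ to the corresponding noncrossing partition and factoring the result into reflections produces another antichain of positive roots via Athanasiadis--Reiner-style combinatorics. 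A careful root-by-root matching, organized by the rank function on $\Phi^+$, should force the two antichains to coincide. This is where case-by-case input is hardest to avoid; if a uniform matching proves intractable I would fall back on a type-by-type verification across $A_n,B_n,C_n,D_n,E_6,E_7,E_8,F_4,G_2$, using the explicit Hasse diagrams of root posets and the known generators of $W$.

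Finally, once the equivariant bijection is in hand, the proof concludes in two lines: the order of $\mathrm{row}$ divides the order of $K$, which divides $2h$; and $\#\{I\in J(\Phi^+)\colon \mathrm{row}^k(I)=I\} = \#\{w\in \mathrm{NC}(\Phi)\colon K^k(w)=w\} = \mathrm{Cat}(\Phi;\zeta^k)$ for $\zeta = e^{2\pi i/(2h)}$, establishing the claimed CSP. A secondary, lower-priority task would be to see whether the equivariance step can be upgraded to a uniform argument in the spirit of Rush--Shi's treatment of the minuscule case (Theorem~\ref{thm:minuscule_row_cyc_siev}), perhaps by routing through the Panyushev complement on antichains and Stembridge-style heap arguments; but for the stated theorem the case-by-case route suffices.
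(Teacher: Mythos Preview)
The paper does not itself prove Theorem~\ref{thm:root_row_cyc_siev}; it is stated there as a result of Armstrong, Stump, and Thomas~\cite{armstrong2013uniform}. Your high-level strategy---transfer the Bessis--Reiner cyclic sieving for $(\mathrm{NC}(\Phi),\langle K\rangle,\mathrm{Cat}(\Phi;q))$ across an equivariant bijection to $(J(\Phi^+),\langle \mathrm{row}\rangle)$---is exactly the architecture of the Armstrong--Stump--Thomas proof, so in that sense you have identified the right route.

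However, the specific construction you sketch for the bijection $\Psi$ does not work. The claim that ``the inversion set of a $c$-sortable element is an order ideal of $\Phi^+$'' is false: inversion sets of Weyl group elements are biconvex subsets of $\Phi^+$, and for general $c$-sortable elements these are not order ideals in the root poset. There is no notion of ``$c$-Cambrian order ideal'' in Reading--Speyer with the property you assert, and even if there were, your fallback argument (``every order ideal is $c$-Cambrian for \emph{some} Coxeter element'') would not produce a bijection for a \emph{fixed} $c$, which is what equivariance with the Kreweras complement $K(w)=w^{-1}c$ requires. So the middle paragraph of your plan would have to be replaced wholesale.

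What Armstrong--Stump--Thomas actually do is construct the equivariant bijection uniformly, via an inductive argument on the parabolic structure that simultaneously handles the Kreweras complement on noncrossing partitions and the Panyushev/rowmotion action on antichains of $\Phi^+$; the $c$-sortable machinery is not the vehicle. Your instinct that the equivariance is the crux is correct, and your willingness to fall back on type-by-type checks is defensible, but you should be aware that the published proof is uniform (only the Bessis--Reiner CSP input was, at the time, case-by-case). If you want to reconstruct the argument, work directly with antichains and the description $\mathrm{max}(\mathrm{row}(I))=\mathrm{min}(\Phi^+\setminus I)$ you already wrote down, and look at how Armstrong--Stump--Thomas set up their induction rather than going through sortable elements.
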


Though not obvious, we do have that $\mathrm{Cat}(\Phi;q)\in \mathbb{N}[q]$ (see~\cite{armstrong2013uniform} or~\cite[\S4]{bessis2011cyclic}).

Rowmotion acting on $J(\Phi^+)$ has order equal to either $h$ or $2h$: $\mathrm{row}^h$ acts as $-w_0$ where $w_0$ is the \emph{longest element} of the Weyl group $W$ of $\Phi$ (see~\cite{armstrong2013uniform}). This means that rowmotion acting on $J(\Phi^+)$ has order $2h$ for $\Phi=A_n$ $(n\geq 2)$, $D_{2n+1}$, $E_6$, and order $h$ otherwise.

Armstrong-Stump-Thomas's interest in Panyushev's action was its connection to Coxeter-Catalan combinatorics and in particular the relationship between noncrossing and nonnesting Coxeter-Catalan objects. (See~\cite{armstrong2009generalized} for background on Coxeter-Catalan combinatorics.) Note that Theorem~\ref{thm:root_row_cyc_siev} applies to \emph{all} crystallographic root posets, of coincidental type or not. (And actually it is easily checked that the conclusion of the theorem holds for $\Phi^+(H_3)$ and $\Phi^+(I_2(m))$ as well~\cite{cuntz2015root}.) Nevertheless, in the next subsection we will see a way in which the coincidental types behave better with respect to a certain natural generalization of rowmotion acting on order ideals.

In recent years, especially in the context of ``dynamical algebraic combinatorics''~\cite{roby2016dynamical, striker2017dynamical}, other aspects of rowmotion beyond its orbit structure have been investigated. There has been a particular focus on exhibiting \emph{homomesies} for rowmotion. So let's review the homomesy paradigm of Propp and Roby~\cite{propp2015homomesy}.

\begin{definition}
Let $X$ be a finite set, $\Psi\colon X \to X$ an invertible operator on $X$, and $f\colon X \to \mathbb{R}$ some statistic on $X$. Then we say that the tripe $(X,\Psi,f)$ \emph{exhibits homomesy} if the average of $f$ is the same along every $\Psi$-orbit of $X$. In this we also say that $f$ is \emph{homomesic} with respect to the action of $\Psi$ on $X$; and we say $f$ is \emph{$c$-mesic} if the average of $f$ along every $\Psi$-orbit is $c\in \mathbb{R}$.
\end{definition}

Homomesies of a dynamical system are in some sense ``dual'' to invariant quantities of the system (see~\cite[\S2.4]{propp2015homomesy}).

The main motivating example for Propp-Roby's introduction of the homomesy paradigm was an instance of homomesy conjectured by Panyushev~\cite{panyushev2009orbits} and proved by  Armstrong-Stump-Thomas~\cite{armstrong2013uniform}:

\begin{thm} \label{thm:root_row_homo}
Let $P=\Phi^+$ be a crystallographic root poset. Let $h \coloneqq  r(P)+2$ be its Coxeter number. Then the antichain cardinality statistic $I\mapsto \#\mathrm{max}(I)$ is $\#P/h$-mesic with respect to the action of $\mathrm{row}$ on $J(\Phi^+)$.
\end{thm}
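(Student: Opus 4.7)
The plan is to exploit the Armstrong-Stump-Thomas equivariant bijection (established in their proof of Theorem~\ref{thm:root_row_cyc_siev}) from $(J(\Phi^+), \mathrm{row})$ to the lattice $NC(W)$ of $W$-noncrossing partitions equipped with the Kreweras complement $K$, and then prove the homomesy directly on the noncrossing partition side.

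Recall that $NC(W) \coloneqq \{w \in W : w \leq_T c\}$ for a fixed Coxeter element $c$, where $\leq_T$ denotes absolute order (with rank function given by reflection length $\ell_T$), and that the Kreweras complement is $K(\pi) \coloneqq \pi^{-1}c$. Since $\pi \leq_T c$ by definition, we have the identity $\ell_T(\pi) + \ell_T(K(\pi)) = \ell_T(c) = n$. Under the Armstrong-Stump-Thomas bijection, the antichain cardinality statistic $\#\max(I)$ on $J(\Phi^+)$ is intertwined with the reflection length statistic $\ell_T(\pi)$ on $NC(W)$, so it suffices to prove that $\ell_T$ is $n/2$-mesic under $K$.

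This in turn follows from the observation that $K^2(\pi) = c^{-1} \pi c$ is conjugation by $c$, which preserves reflection length. Consequently, along any $K$-orbit the values of $\ell_T$ alternate between $\ell_T(\pi)$ and $n - \ell_T(\pi)$, so the orbit average is exactly $n/2$. Finally, by Kostant's formula relating the rank function of $\Phi^+$ to the degrees of $W$ (as discussed just before Remark~\ref{rem:pan}), we have $n/2 = \#\Phi^+/h$, which is the desired value $\#P/h$. Note this also recovers the global edge density of $J(\Phi^+)$ established by Panyushev.

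The main obstacle is the construction and detailed analysis of the equivariant bijection itself. This is the technical core of Armstrong-Stump-Thomas's argument and depends on Reading's theory of Cambrian combinatorics and sortable elements, together with a careful identification showing that antichains of $\Phi^+$ transfer to reduced reflection factorizations of elements of $NC(W)$ in a way compatible with rowmotion and the Kreweras complement. As a less uniform alternative, one could dispatch the coincidental types $A_n$, $B_n$, $C_n$ using Theorem~\ref{thm:root_poset_cde} combined with the ``tCDE implies antichain cardinality homomesy'' implication, and handle the remaining types $D_n$, $E_6$, $E_7$, $E_8$, $F_4$, $G_2$ by a separate case-by-case verification; however, this approach sacrifices the conceptual clarity of the uniform proof.
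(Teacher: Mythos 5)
There is a genuine gap at the transfer step. Your claim that the Armstrong--Stump--Thomas bijection intertwines the antichain cardinality statistic $\#\mathrm{max}(I)$ with reflection length $\ell_T$ is false; in fact \emph{no} equivariant bijection can do this. Already for $\Phi=A_2$: rowmotion on $J(\Phi^+(A_2))$ has two orbits, of sizes $3$ and $2$, and the size-$2$ orbit is $\{\{\alpha_1\},\{\alpha_2\}\}$, on which the antichain cardinality is constantly $1$. On the noncrossing side, the size-$2$ orbit of the Kreweras complement is $\{e,c\}$ (since $K(e)=c$ and $K(c)=e$), on which $\ell_T$ takes the values $0$ and $2$. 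Any $\mathrm{row}$/$K$-equivariant bijection must match these two orbits, so it cannot send $\#\mathrm{max}(I)$ to $\ell_T$ pointwise. Your computation on the $NC(W)$ side is fine ($\ell_T(\pi)+\ell_T(K(\pi))=\ell_T(c)=n$ and $K^2$ is conjugation by $c$), but the statement ``it suffices to prove $\ell_T$ is $n/2$-mesic under $K$'' does not follow from equivariance: what is true is only that the \emph{orbit sums} of the two statistics agree (in the example, $1+1=0+2$), and establishing that is essentially the content of Panyushev's conjecture itself, requiring a finer analysis of the bijection than the formal equivariance you invoke. So the reduction, as written, is circular at exactly the hard point.

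For comparison: the paper does not prove this theorem at all — it is quoted as a known result, conjectured by Panyushev and proved by Armstrong--Stump--Thomas — and the paper only remarks that the coincidental-type cases are recovered by Theorem~\ref{thm:root_poset_cde} together with Lemma~\ref{lem:cde_homo}. Your proposed fallback is also miscalibrated: $G_2=I_2(6)$ \emph{is} of coincidental type and hence covered by the tCDE route, whereas $D_n$, $E_6$, $E_7$, $E_8$, $F_4$ are not tCDE (the paper notes $J(\Phi^+(D_4))$ is not even CDE), so the toggle machinery is unavailable there, and a ``case-by-case verification'' of the infinite family $D_n$ is not an argument but a restatement of the problem.
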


Again, it is easily checked that the conclusion of Theorem~\ref{thm:root_row_homo} holds also for $\Phi^+(H_3)$ and $\Phi^+(I_2(m))$~\cite{cuntz2015root}. Also recall, as mentioned above in Remark~\ref{rem:pan}, that $\#\Phi^+/h=n/2$ where $n$ is the number of simple roots of~$\Phi$ (see~\cite{kostant1959principal}).

Following Panyushev and Armstrong-Stump-Thomas, Propp and Roby~\cite{propp2015homomesy} investigated homomesy for rowmotion acting on $J([a]\times [b])$. They exhibited a number of homomesies for this action. For instance, they showed that the antichain cardinality statistic is homomesic for this action (as are certain statistics which refine antichain cardinality). And they also showed that the \emph{order ideal cardinality} statistic $I\mapsto \#I$ is homomesic for this action (as are certain statistics which refine order ideal cardinality). Rush and Wang~\cite{rush2015orbits} extended (most of) Propp and Roby's homomesy results from the rectangle to all minuscule posets. In particular, they showed:

\begin{thm} \label{thm:minuscule_row_homo}
Let $P$ be a minuscule poset. Let $h \coloneqq  r(P)+2$ be its Coxeter number. Then the antichain cardinality statistic $I\mapsto \#\mathrm{max}(I)$ is $\#P/h$-mesic with respect to the action of $\mathrm{row}$ on $J(P)$.
\end{thm}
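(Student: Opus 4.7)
My plan is to derive this homomesy from the tCDE property of $J(P)$ (Theorem~\ref{thm:minuscule_cde}) via a general ``tCDE implies rowmotion antichain-cardinality homomesy'' principle. First, since $J(P)$ is tCDE, Proposition~\ref{prop:tcde_eq} combined with the edge density formula in Proposition~\ref{prop:tcde_basics} gives coefficients $c_p \in \mathbb{Q}$ and the identity of functions $J(P)\to\mathbb{Q}$
\[\mathrm{ddeg} + \sum_{p \in P} c_p\, \mathcal{T}_p \;=\; \frac{\#P}{h},\]
and on a distributive lattice $\mathrm{ddeg}(I)=\#\mathrm{max}(I)$, which is exactly the statistic in the theorem.

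The one non-routine step is the key lemma: for any rowmotion orbit $\mathcal{O} \subseteq J(P)$ and any $p \in P$, one has $\sum_{I \in \mathcal{O}} \mathcal{T}_p(I) = 0$. I would prove this directly from the definition of rowmotion. Note that $\mathcal{T}_p(I) = +1$ iff $p \in \mathrm{min}(P \setminus I)$; by the defining property $\mathrm{max}(\mathrm{row}(I)) = \mathrm{min}(P \setminus I)$ of rowmotion, this is equivalent to $p \in \mathrm{max}(\mathrm{row}(I))$. Similarly, $\mathcal{T}_p(I)=-1$ iff $p \in \mathrm{max}(I)$. Because $\mathrm{row}$ restricts to a bijection of the orbit $\mathcal{O}$, both $\{I \in \mathcal{O}\colon \mathcal{T}_p(I)=+1\}$ and $\{I \in \mathcal{O}\colon \mathcal{T}_p(I)=-1\}$ are equinumerous with $\{I' \in \mathcal{O}\colon p \in \mathrm{max}(I')\}$, so they have equal size and the $\mathcal{T}_p$-values cancel.

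To finish, I would average the tCDE identity over an orbit $\mathcal{O}$; the middle term drops out by the key lemma, leaving $\frac{1}{|\mathcal{O}|}\sum_{I \in \mathcal{O}}\#\mathrm{max}(I) = \#P/h$, which is exactly the claimed $(\#P/h)$-mesy. The main obstacle in this plan is the tCDE input (Theorem~\ref{thm:minuscule_cde} itself), whose proof either requires a case-by-case verification~\cite{hopkins2017cde} or Rush's uniform approach~\cite{rush2016minuscule} via minuscule heaps; every other step is a short, uniform argument that works for any poset whose lattice of order ideals is tCDE, so it would also recover Theorem~\ref{thm:root_row_homo} in the coincidental types via Theorem~\ref{thm:root_poset_cde}.
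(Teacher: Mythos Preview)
Your proposal is correct and follows essentially the same route the paper indicates: it cites Theorem~\ref{thm:minuscule_row_homo} as due to Rush--Wang, but then observes (Lemma~\ref{lem:cde_homo} and the sentence following it) that the result is recovered by combining Theorem~\ref{thm:minuscule_cde} with Striker's observation that each $\mathcal{T}_p$ is $0$-mesic under rowmotion, which is exactly your key lemma. Your direct proof of that lemma via the identity $\mathrm{max}(\mathrm{row}(I))=\mathrm{min}(P\setminus I)$ is a clean repackaging of Striker's argument (compare also the proof of Lemma~\ref{lem:pl_striker}, which uses the equivalent identity $\mathcal{T}_{p^-}(\mathrm{row}(I))=\mathcal{T}_{p^+}(I)$).
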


We will largely restrict our attention to the antichain cardinality statistic homomesy. The reason we focus on antichain cardinality is because there is a close connection between the CDE property and the antichain cardinality homomesy for rowmotion. This is not too surprising since, as mentioned earlier, the antichain cardinality statistic is also equal to down-degree in the lattice of order ideals. The precise connection, which follows from an observation of Striker~\cite{striker2015toggle} that for any $p\in P$, $\mathcal{T}_p$ is $0$-mesic with respect to the action of rowmotion, is the following (see, e.g.,~\cite[Corollary 7.9]{hopkins2017cde}):

\begin{lemma} \label{lem:cde_homo}
Let $P$ be a poset for which $J(P)$ is tCDE with edge density~$\delta$. Then the the antichain cardinality statistic is $\delta$-mesic with respect to the action of $\mathrm{row}$ on $J(P)$.
\end{lemma}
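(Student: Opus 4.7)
The plan is to deduce the lemma by combining Proposition~\ref{prop:tcde_eq} with the aforementioned observation of Striker that for every $p \in P$ the toggleability statistic $\mathcal{T}_p$ is $0$-mesic with respect to rowmotion acting on $J(P)$. Both of these are in hand, so the argument is essentially a one-line linear-algebraic manipulation.

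First I would invoke Proposition~\ref{prop:tcde_eq} to extract, from the tCDE hypothesis, coefficients $c_p \in \mathbb{Q}$ (for $p \in P$) realizing the equality of functions
\[ \mathrm{ddeg} \;+\; \sum_{p \in P} c_p\, \mathcal{T}_p \;=\; \delta \]
on $J(P)$. Then I would fix an arbitrary $\mathrm{row}$-orbit $\mathcal{O} \subseteq J(P)$ and average both sides of this identity over $\mathcal{O}$. The right-hand side is the constant $\delta$, while on the left-hand side Striker's observation forces every orbit average $\frac{1}{\#\mathcal{O}} \sum_{I \in \mathcal{O}} \mathcal{T}_p(I)$ to vanish. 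What remains is that the orbit average of $\mathrm{ddeg}$ equals $\delta$ on every $\mathrm{row}$-orbit.

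To finish, I would simply recall that under the standard bijection $I \mapsto \mathrm{max}(I)$ between $J(P)$ and the antichains of $P$, the down-degree of $I$ in the distributive lattice $J(P)$ equals the cardinality of the antichain $\mathrm{max}(I)$ (this was already noted in Section~\ref{sec:cde_dist}). Hence antichain cardinality is $\delta$-mesic under $\mathrm{row}$, which is exactly the assertion of the lemma.

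Given both inputs, there is no genuine obstacle. The only step that would require actual work is Striker's fact, and if one wished to make the proof self-contained the key point is this: in the product-of-toggles description $\mathrm{row} = \tau_{p_1} \circ \cdots \circ \tau_{p_n}$ of Cameron--Fon-Der-Flaass, traversing a full $\mathrm{row}$-orbit returns one to the starting order ideal, so for each $p$ the number of times $p$ is toggled in must equal the number of times $p$ is toggled out along the orbit, and a short bookkeeping argument then identifies this signed count with $\sum_{I \in \mathcal{O}} \mathcal{T}_p(I)$.
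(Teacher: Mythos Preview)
Your proposal is correct and follows precisely the approach the paper indicates: the paper does not write out a proof of this lemma but explains just before stating it that the result follows from Striker's observation that each $\mathcal{T}_p$ is $0$-mesic under rowmotion, combined (implicitly) with Proposition~\ref{prop:tcde_eq}. Your argument is exactly this, and indeed the paper later spells out the same argument in the piecewise-linear setting in the proof of Theorem~\ref{thm:row_ppart_homo}.
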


Observe that Lemma~\ref{lem:cde_homo} together with Theorem~\ref{thm:minuscule_cde} recovers all of Theorem~\ref{thm:minuscule_row_homo}; and together with Theorem~\ref{thm:root_poset_cde} this lemma recovers the part of Theorem~\ref{thm:root_row_homo} concerning root posets of coincidental type.

All of the above was review. Now let us consider rowmotion in the context of minuscule doppelg\"{a}ngers. We conjecture the following:

\begin{conj} \label{conj:min_dop_row}
Let $(P,Q) \in \{(\Lambda_{\mathrm{Gr}(k,n)},T_{k,n}),(\Lambda_{\mathrm{OG}(6,12)},\Phi^+(H_3)),(\Lambda_{\mathbb{Q}^{2n}},\Phi^+(I_2(2n)))\}$ be a minuscule doppelg\"{a}nger pair. Then there is a bijection $\varphi$ between the $\mathrm{row}$-orbits of $J(P)$ and the $\mathrm{row}$-orbits of $J(Q)$ such that for any $\mathrm{row}$-orbit $\mathcal{O}\subseteq J(P)$ we have
\begin{enumerate}
\item $\#\mathcal{O}=\#\varphi(\mathcal{O})$; \label{cond:minuscule_row_sizes}
\item $\sum_{I \in \mathcal{O}} \mathrm{ddeg}(I) = \sum_{I\in\varphi(\mathcal{O})} \mathrm{ddeg}(I)$. \label{cond:minuscule_row_ddeg}
\end{enumerate}
\end{conj}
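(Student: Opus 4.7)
My plan begins with a reduction to the rectangle-trapezoid family $(\Lambda_{\mathrm{Gr}(k,n)}, T_{k,n})$, since the other two cases are either finite or enjoy $\mathrm{com}(P)\simeq\mathrm{com}(Q)$. For $(\Lambda_{\mathrm{OG}(6,12)}, \Phi^+(H_3))$ the underlying sets $J(P)$ and $J(Q)$ are small enough that conditions (1) and (2) can be checked directly. For $(\Lambda_{\mathbb{Q}^{2n}}, \Phi^+(I_2(2n)))$ with isomorphic comparability graphs, I would try to leverage Lemma~\ref{lem:com_graph_criterion}: although dualizing an autonomous subset does not commute with $\mathrm{row}$ in general, the propeller poset and the $I_2(2n)$ root poset are transparent enough that one can analyze the rowmotion orbits directly and match them.

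The core work lies in $(\Lambda_{\mathrm{Gr}(k,n)}, T_{k,n})$ with $3\le k\le n-k$. For condition (1), the goal is to show that $\mathrm{row}$ on $J(T_{k,n})$ has order $n$ and that the triple $(J(T_{k,n}), \langle \mathrm{row}\rangle, F(\Lambda_{\mathrm{Gr}(k,n)},1))$ exhibits the cyclic sieving phenomenon with the same $q$-binomial polynomial that works for the rectangle by Theorem~\ref{thm:minuscule_row_cyc_siev}; combined with Rush--Shi on the rectangle side, this forces an orbit-size-preserving bijection of rowmotion orbits. Two complementary routes to establish this cyclic sieving statement are: (i) construct a ``Stanley--Thomas word'' for $T_{k,n}$, that is, an equivariant bijection from $J(T_{k,n})$ to binary words of length $n$ with $k$ ones under rotation; or (ii) lift the $K$-theoretic jeu-de-taquin bijection of~\cite{hamaker2018doppelgangers} to the piecewise-linear or birational level, where one might hope rowmotion can be intertwined with promotion-like operations coming from the $K$-theoretic crystal structure on minuscule tableaux, and then specialize.

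For condition (2), once (1) is in hand it suffices to prove that antichain cardinality is $\delta$-mesic on $J(T_{k,n})$ with the same constant $\delta = k(n-k)/n$ that works for the rectangle by Theorem~\ref{thm:minuscule_row_homo}; then the orbit sum of $\mathrm{ddeg}$ on each side equals $\delta$ times the common orbit size. Because $J(T_{k,n})$ is not tCDE, Lemma~\ref{lem:cde_homo} does not apply. My proposed substitute is to search for an identity of the form $\mathrm{ddeg} + \sum_{p\in T_{k,n}} c_p\,\mathcal{T}_p = \delta + g$ where $g\colon J(T_{k,n})\to\mathbb{Q}$ is $0$-mesic under $\mathrm{row}$ for structural reasons, for instance because it is a telescoping combination of the form $h - h\circ\mathrm{row}$. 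A much cleaner alternative would be an equivariant bijection $J(\Lambda_{\mathrm{Gr}(k,n)})\to J(T_{k,n})$ that also preserves antichain cardinality, yielding (1) and (2) simultaneously.

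The principal obstacle is establishing the orbit structure for $J(T_{k,n})$: even the rowmotion order on $J(T_{k,n})$ was an open problem before the resolution noted in the footnote, precisely because the toggle perspective (the workhorse of this area) breaks down for the trapezoid. Any successful attack seems to require a genuinely new ingredient; the most promising candidates, in my view, are (a) the $K$-theoretic jeu-de-taquin machinery underlying the doppelg\"{a}nger bijection of~\cite{hamaker2018doppelgangers}, and (b) the piecewise-linear and birational liftings of rowmotion studied later in the paper, where an equivariant matching at the higher level would specialize to give both conditions of the conjecture.
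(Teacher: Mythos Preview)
This statement is a \emph{conjecture} in the paper; the paper does not prove it in full. What the paper does establish is Proposition~\ref{prop:min_dop_row}, which disposes of the non-generic cases, and Remark~\ref{rem:reu}, which records that condition~(\ref{cond:minuscule_row_sizes}) was subsequently resolved in~\cite{dao2019trapezoid}. Your proposal should be read against that partial proof, not against a complete argument.

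On the easy cases, you are roughly aligned with the paper but in two places less sharp. First, for $(\Lambda_{\mathbb{Q}^{2n}},\Phi^+(I_2(2n)))$ you hedge that dualizing an autonomous subset need not commute with rowmotion and propose an ad~hoc analysis; but the paper already proves Proposition~\ref{prop:com_graph_row}, which shows that \emph{any} two posets with isomorphic comparability graphs have matching rowmotion orbit structures and matching down-degree orbit sums. That proposition also covers $(\Lambda_{\mathrm{Gr}(k,n)},T_{k,n})$ for $k\le 2$, which you fold into the hard case. Second, you omit the case $k=n/2$: the paper handles it by observing $T_{k,2k}\simeq\Phi^+(B_k)$, so that Theorems~\ref{thm:minuscule_row_cyc_siev}, \ref{thm:root_row_cyc_siev}, \ref{thm:minuscule_row_homo}, \ref{thm:root_row_homo} together with the polynomial identity $[F(\Lambda_{\mathrm{Gr}(k,2k)},1)]_{q\mapsto q^2}=\mathrm{Cat}(B_k;q)$ give both conditions.

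On the genuinely open cases $(\Lambda_{\mathrm{Gr}(k,n)},T_{k,n})$ with $3\le k<n/2$, your instincts are good and in fact match what later worked. Your route~(ii) for condition~(\ref{cond:minuscule_row_sizes}), namely showing the Hamaker--Patrias--Pechenik--Williams bijection intertwines rowmotion, is exactly what~\cite{dao2019trapezoid} established (see Remark~\ref{rem:reu}); no lift to the piecewise-linear or birational level is needed at $\ell=1$. Your proposed substitute for tCDE in condition~(\ref{cond:minuscule_row_ddeg}), an identity $\mathrm{ddeg}+\sum_p c_p\mathcal{T}_p=\delta+g$ with $g$ a rowmotion coboundary, is also what~\cite{dao2019trapezoid} carried out for $k\le 3$. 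For general $k$ that part remains open, so your proposal is a reasonable research plan there rather than a proof.
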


In short, Conjecture~\ref{conj:min_dop_row} says that rowmotion behaves the same way on the two members of a minuscule doppelg\"{a}nger pair. Of course, in light of Theorems~\ref{thm:minuscule_row_cyc_siev} and~\ref{thm:minuscule_row_homo}, we could replace condition~\eqref{cond:minuscule_row_sizes} with the assertion that both members of a minuscule doppelg\"{a}nger pair exhibit cyclic sieving for the appropriate polynomial, and condition~\eqref{cond:minuscule_row_ddeg} with the assertion assertion that both members exhibit the antichain cardinality homomesy for rowmotion. But we chose to phrase the conjecture in this way to highlight that it is another instance of the minuscule doppelg\"{a}nger pairs behaving ``as if'' they had isomorphic comparability graphs. That is to say:

\begin{prop} \label{prop:com_graph_row}
Let $P$ and $Q$ be posets with $\mathrm{com}(P)\simeq\mathrm{com}(Q)$. Then there is a bijection $\varphi$ between the $\mathrm{row}$-orbits of $J(P)$ and the $\mathrm{row}$-orbits of $J(Q)$ such that for any $\mathrm{row}$-orbit $\mathcal{O}\subseteq J(P)$ we have
\begin{enumerate}
\item $\#\mathcal{O}=\#\varphi(\mathcal{O})$; \label{cond:com_graph_row_sizes}
\item $\sum_{I \in \mathcal{O}} \mathrm{ddeg}(I) = \sum_{I\in\varphi(\mathcal{O})} \mathrm{ddeg}(I)$. \label{cond:com_graph_row_ddeg}
\end{enumerate}
\end{prop}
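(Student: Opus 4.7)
The plan is to first reduce, via Lemma~\ref{lem:com_graph_criterion}, to the case where $Q$ is obtained from $P$ by dualizing a single autonomous subset $A\subseteq P$. Writing $P\setminus A=B\sqcup D\sqcup C$ as usual (below, incomparable, above), the goal becomes: produce a bijection on $\mathrm{row}$-orbits of $J(P)$ and $J(Q)$ that matches cardinalities and $\mathrm{ddeg}$-sums. I note up front that one cannot in general hope for a rowmotion-equivariant bijection $J(P)\to J(Q)$ at the element level: already for $A$ a three-element chain inside $P=A+D$, the natural ``reflect on $A$'' bijection (from the proof of Proposition~\ref{prop:set_valued} at $\ell=1$) fails to commute with combinatorial rowmotion, even though the multisets of (orbit size, orbit $\mathrm{ddeg}$-sum) still agree. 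So the argument must be made at the orbit level.

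I would pass through the antichain reformulation. The map $I\mapsto\mathrm{max}(I)$ identifies $J(P)$ (and $J(Q)$) with the common set of antichains of $\mathrm{com}(P)=\mathrm{com}(Q)$, and under this identification $\mathrm{ddeg}$ becomes the comparability-invariant function ``cardinality of the antichain.'' Rowmotion transports to invertible operators $\mathrm{row}_P^{\mathrm{ant}}$ and $\mathrm{row}_Q^{\mathrm{ant}}$ on this common antichain set. The proposition then reduces to showing that these two operators induce the same partition of the antichain set into orbits; both assertions of the proposition follow immediately, since sum of antichain cardinalities is a function of the orbit as an unordered set.

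To compare the two operators, I would invoke the Cameron--Fon-Der-Flaass toggle factorization. Because $A$ is autonomous, I can choose a linear extension of $P$ in which all of $A$ appears as a consecutive block $a_1<a_2<\cdots<a_k$ (elements of $D$ incomparable to $A$ can be freely placed on either side). This gives $\mathrm{row}_P=T_1 R T_2$ with $R=\tau_{a_1}^P\cdots\tau_{a_k}^P$. The corresponding linear extension of $Q$ reverses the $A$-block to $a_k<_Q\cdots<_Q a_1$; transported through the reflect-on-$A$ bijection $\Phi\colon J(P)\to J(Q)$ (which one can check intertwines each individual toggle, $\Phi\circ\tau_p^P=\tau_p^Q\circ\Phi$, by a case analysis exploiting autonomy), one obtains $\Phi^{-1}\mathrm{row}_Q\Phi = T_1 R^{-1} T_2$, using that reversing a product of involutions yields its inverse. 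Dualization of $A$ has thus been localized to inverting the inner $A$-subroutine $R\mapsto R^{-1}$, while the outer toggles $T_1,T_2$ are unchanged.

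The main obstacle is to conclude from ``$R$ is replaced by $R^{-1}$ in the middle of the toggle product'' that the cycle structure and multiset of antichains visited along each cycle are preserved. Since $R$ and $R^{-1}$ are conjugate in the symmetric group on $J(P)$ (inverses always have the same cycle type), and since the outside product is sandwiched symmetrically, I expect this to follow by exhibiting an explicit conjugator in the toggle group built from the $A$-toggles themselves, using the fact that autonomy makes the $D$-to-$A$ interactions uniform and hence blind to the reversal. An alternative route, which I would pursue if the direct argument becomes unwieldy, is to lift the whole comparison to piecewise-linear rowmotion on $\mathcal{O}(P)$ and use Stanley's transfer map $\phi\colon\mathcal{O}(P)\to\mathcal{C}(P)$: since $\mathcal{C}(P)=\mathcal{C}(Q)$, the discrepancy between $\mathrm{row}_P^{PL}$ and $\mathrm{row}_Q^{PL}$ pulls back to a localized reversal in the $A$-coordinates of the chain polytope, again preserving the orbit-level statistics of interest.
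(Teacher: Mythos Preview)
Your proposal contains a genuine error and an unfilled gap.

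\textbf{The error.} The claim that the reflect-on-$A$ bijection $\Phi$ intertwines each individual toggle, $\Phi\circ\tau_p^P=\tau_p^Q\circ\Phi$, is false for $p\in A$. Take $A=\{a_1<a_2<a_3\}$ a three-chain sitting as an autonomous block (your own test case). Then $\Phi$ fixes $\varnothing$ and $A$, and sends $\{a_1\}\mapsto\{a_2,a_3\}$, $\{a_1,a_2\}\mapsto\{a_3\}$ (as order ideals of $A^*$). Now $\Phi(\tau_{a_1}^P(\varnothing))=\Phi(\{a_1\})=\{a_2,a_3\}$, whereas $\tau_{a_1}^Q(\Phi(\varnothing))=\tau_{a_1}^{A^*}(\varnothing)=\varnothing$ since $a_1$ is maximal in $A^*$ and cannot be toggled into the empty ideal. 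So the intertwining fails, and with it your derivation of $\Phi^{-1}\,\mathrm{row}_Q\,\Phi = T_1 R^{-1} T_2$.

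\textbf{The gap.} Even granting a reduction to comparing $T_1RT_2$ and $T_1R^{-1}T_2$, you never prove they have the same cycle structure. That $R$ and $R^{-1}$ are conjugate in the full symmetric group is irrelevant: a conjugator need not commute with $T_1,T_2$. This step is the entire content of the proposition, and you leave it at ``I expect this to follow.''

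\textbf{What the paper does instead.} Your opening assertion---that no element-level rowmotion-equivariant bijection $J(P)\to J(Q)$ can exist---is in fact wrong, and the paper builds exactly such a map. The key observation is that complementation $c\colon J(A)\to J(A^*)$, $c(I)=A\setminus I$, satisfies $c\circ\mathrm{row}_A=\mathrm{row}_{A^*}^{-1}\circ c$. Since $\mathrm{row}_A(A)=\varnothing$ (and likewise in $A^*$), the elements $\varnothing$ and $A$ lie in the same $\mathrm{row}$-orbit on each side. One can therefore modify $c$ by a cyclic shift within each orbit to obtain $\psi\colon J(A)\to J(A^*)$ that \emph{commutes} with rowmotion and additionally fixes $\varnothing$ and $A$. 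Now extend to $\widetilde\psi\colon J(P)\to J(Q)$ by $\widetilde\psi(I)=(I\setminus A)\cup\psi(I\cap A)$. Because the ``interface'' between $A$ and $P\setminus A$ only ever sees $I\cap A\in\{\varnothing,A\}$ (autonomy forces $I\cap A=A$ when $I$ meets $U$, and $I\cap A=\varnothing$ when $I$ misses part of $L$), and $\psi$ fixes both, the extension $\widetilde\psi$ genuinely commutes with rowmotion. Condition~(2) is then checked by tracking $\mathcal{T}_{p^-}$ along orbits, using that the $I\cap A$'s traverse full $\mathrm{row}_A$-orbits and that $\psi$ and $c$ agree up to a within-orbit shift. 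Your ``reflect'' map is essentially $c$; the missing idea is the within-orbit adjustment that turns the anti-commutation into commutation while pinning down $\varnothing$ and $A$.
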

\begin{proof}
Since we will be considering so many different rowmotion operators, let's use the notation $\mathrm{row}_P\colon J(P)\to J(P)$ to denote rowmotion acting on order ideals of~$P$. 

Let $P$ and $Q$ be posets with $\mathrm{com}(P)\simeq\mathrm{com}(Q)$. By Lemma~\ref{lem:com_graph_criterion} we can assume that~$Q$ is obtained from $P$ by dualizing an autonomous subset $A\subseteq P$. By abuse of notation, we use $A$ to denote the induced subposet of $P$ formed by the elements in~$A$. Thus the induced subposet of $Q$ formed by the elements of $A$ is $A^*$. Let's also define the subsets $U, L, N\subseteq P$ by
\begin{align*}
U &\coloneqq  \{p\in P\colon p > a \textrm{ for all $a \in A$}\}; \\
L &\coloneqq  \{p\in P\colon p < a \textrm{ for all $a \in A$}\}; \\
N &\coloneqq  \{p\in P\colon \textrm{$p$ is incomparable to $a$ for all $a \in A$}\}.
\end{align*}
Since $A$ is autonomous, $P$ is the disjoint union of $A$, $U$, $L$, and $N$.

First let's consider rowmotion acting on $J(A)$ and $J(A^*)$. The complementation map $c\colon J(A)\to J(A^*)$ defined by $c(I) \coloneqq  A\setminus I$ is a bijection, and moreover satisfies $c(\mathrm{row}_A(I)) = \mathrm{row}_{A^*}^{-1}(c(I))$ for all $I \in J(A)$. Also, we have $\mathrm{row}_A(A) = \varnothing$ and $\mathrm{row}_{A^*}(A) = \varnothing$. The previous two sentences imply that we can find a bijection $\psi\colon J(A) \to J(A^*)$ which satisfies all of the following properties:
\begin{itemize}
\item $\psi(\mathrm{row}_A(I)) = \mathrm{row}_{A^*}(\psi(I))$ for all $I\in J(A)$;
\item $\psi(I)$ and $c(I)$ belong to the same $\mathrm{row}_{A^*}$-orbit for all $I \in J(A)$;
\item $\psi(\varnothing) = \varnothing$ and $\psi(A)=A$.
\end{itemize}

Now we extend $\psi$ to a bijection $\widetilde{\psi}\colon J(P)\to J(Q)$ by setting $\widetilde{\psi}(I) \coloneqq  (I \setminus A) \cup \psi(I\cap A)$ for any $I\in J(P)$. Note that for any $I\in J(P)$, $I \cap U \neq \varnothing$ implies $I\cap A = A$, and $I \cap L \neq L$ implies $I\cap A = \varnothing$;  and the same is true for any $I\in J(Q)$. Therefore, the fact that $\psi(\varnothing) = \varnothing$ and $\psi(A)=A$ means that $\widetilde{\psi}$ really is a bijection from $J(P)$ to~$J(Q)$. 

We claim moreover that $\widetilde{\psi}(\mathrm{row}_{P}(I)) = \mathrm{row}_{Q}(\widetilde{\psi}(I))$ for all $I\in J(P)$. Indeed, the fact that $\psi(\varnothing) = \varnothing$ and $\psi(A)=A$ means that certainly $\widetilde{\psi}(\mathrm{row}_{P}(I))\cap X = \mathrm{row}_{Q}(\widetilde{\psi}(I))\cap X$ for $X=U$, $L$, or $N$ for all $I\in J(P)$. And then observe that, for $I \in J(P)$:
\begin{itemize}
\item if $\mathrm{row}_{P}(I)\cap U \neq \varnothing$, then $\mathrm{row}_{P}(I)\cap A = A$;
\item if $I \cap L \neq L$, then $\mathrm{row}_{P}(I)\cap A = \varnothing$;
\item if  $\mathrm{row}_{P}(I)\cap U = \varnothing$ and $I\cap L = L$, then $\mathrm{row}_P(I)\cap A = \mathrm{row}_A(I\cap A)$;
\end{itemize}
Together these imply that $\widetilde{\psi}(\mathrm{row}_{P}(I))\cap A = \mathrm{row}_{Q}(\widetilde{\psi}(I))\cap A$. So indeed the bijection $\widetilde{\psi}\colon J(P)\to J(Q)$ commutes with rowmotion. Then by setting $\varphi(\mathcal{O}) \coloneqq  \{\widetilde{\psi}(I)\colon I \in \mathcal{O}\}$ for any $\mathrm{row}_P$-orbit $\mathcal{O}\subseteq J(P)$, we obtain a bijection of rowmotion orbits satisfying~\eqref{cond:com_graph_row_sizes}. 

Finally, let's show~\eqref{cond:com_graph_row_ddeg} is also satisfied. So let $\mathcal{O}\subseteq J(P)$ be a $\mathrm{row}_P$-orbit. In fact, we claim something stronger than~\eqref{cond:com_graph_row_ddeg} holds: namely, that $\sum_{I \in \mathcal{O}} \mathcal{T}_{p^-}(I) = \sum_{I\in\varphi(\mathcal{O})} \mathcal{T}_{p^-}(I)$ for any $p\in P$. For any $p \notin A$, it is easy to see that we have $\mathcal{T}_{p^-}(I)= \mathcal{T}_{p^-}(\widetilde{\psi}(I))$ for all~$I\in J(P)$, which immediately gives us what we want. So from now on suppose that~$p \in A$. As we traverse the orbit $\mathcal{O}$, the intersection of our order ideal $I\in J(P)$ with $A$ looks like the sequence:
\[\ldots,I', \mathrm{row}_{A}(I'), \mathrm{row}^2_{A}(I'), \ldots, \mathrm{row}^{-1}_A(I'), \overline{A}, \overline{A},\ldots, \overline{A}, \overline{\varnothing}, \overline{\varnothing}, \ldots, \overline{\varnothing}, \ldots \]
and repeats that pattern, where we have overlined the $I\cap A$ for which either $I\cap U\neq \varnothing$ or $I\cap L\neq L$. Note that we might not have any overlined $I\cap A$'s, or conversely we could have a constant pattern $\overline{A},\overline{A},\ldots$ or $\overline{\varnothing},\overline{\varnothing},\ldots$; but importantly the not-overlined $I\cap A$'s do decompose into full $\mathrm{row}_A$-orbits. For any overlined $I\cap A$, $\mathcal{T}_{p^-}(I)= \mathcal{T}_{p^-}(\widetilde{\psi}(I))=0$, so we can safely ignore these. Now consider a not-overlined $\mathrm{row}_A$-orbit $\mathcal{O}' = \{I', \mathrm{row}_{A}(I'), \mathrm{row}^2_{A}(I'), \ldots, \mathrm{row}^{-1}_A(I')\}$ from the above sequence. By the second condition we imposed on $\psi$ above, we have $\{\psi(I')\colon I' \in \mathcal{O}'\}=\{c(I')\colon I'\in \mathcal{O}'\}$. Observe that we can toggle $p$ out of $I' \in J(A)$ if and only if we can toggle $p$ into $c(I')\in J(A^*)$. Moreover, for $I\in\mathcal{O}$ for which $I\cap A$ is not overlined, we can toggle $p$ out of $I\in J(P)$ if and only if we can toggle $p$ out of $I\cap A\in J(P)$; and similarly with toggling~$p$ in for $\widetilde{\psi}(I)\in J(Q)$ and $\widetilde{\psi}(I)\cap A\in J(A^*)$. So by grouping together the not-overlined $\mathrm{row}_A$-orbits in the above sequence we see that $\sum_{I \in \mathcal{O}} \mathcal{T}_{p^-}(I) = \sum_{I\in\varphi(\mathcal{O})} \mathcal{T}_{p^+}(I)$. But we also know that $\mathcal{T}_p$ averages to zero along any rowmotion orbit, which gives us $\sum_{I \in \mathcal{O}} \mathcal{T}_{p^-}(I) = \sum_{I\in\varphi(\mathcal{O})} \mathcal{T}_{p^-}(I)$, as desired.
\end{proof}

\begin{cor}
Let $P$ and $Q$ be posets with $\mathrm{com}(P)\simeq\mathrm{com}(Q)$. Then $\mathrm{ddeg}$ is homomesic with respect to the action of $\mathrm{row}$ on $J(P)$ if and only if $\mathrm{ddeg}$ is homomesic with respect to the action of $\mathrm{row}$ on $J(Q)$.
\end{cor}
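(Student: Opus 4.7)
The plan is to derive this corollary as an immediate consequence of Proposition~\ref{prop:com_graph_row}. By definition, $\mathrm{ddeg}$ is homomesic with respect to the action of $\mathrm{row}$ on $J(P)$ precisely when the quantity
\[ \mathrm{avg}(\mathcal{O}) \coloneqq \frac{1}{\#\mathcal{O}} \sum_{I \in \mathcal{O}} \mathrm{ddeg}(I) \]
takes the same value as $\mathcal{O}$ ranges over all $\mathrm{row}$-orbits of $J(P)$, and similarly for $Q$. So the homomesy property is a statement purely about the multiset of orbit averages.

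My approach is then: invoke the bijection $\varphi$ furnished by Proposition~\ref{prop:com_graph_row} between the $\mathrm{row}$-orbits of $J(P)$ and those of $J(Q)$. Condition~(1) of the proposition gives $\#\mathcal{O} = \#\varphi(\mathcal{O})$, and condition~(2) gives $\sum_{I \in \mathcal{O}} \mathrm{ddeg}(I) = \sum_{I \in \varphi(\mathcal{O})} \mathrm{ddeg}(I)$. Dividing these equal numerators by equal denominators yields $\mathrm{avg}(\mathcal{O}) = \mathrm{avg}(\varphi(\mathcal{O}))$. Hence $\varphi$ induces a bijection between the multisets of orbit averages for $\mathrm{row}$ on $J(P)$ and on $J(Q)$.

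It follows that the multiset of orbit averages for $J(P)$ is a singleton (i.e., constant) if and only if the multiset for $J(Q)$ is a singleton, which is exactly the biconditional we wish to establish. The argument is symmetric in $P$ and $Q$ so both directions of the "if and only if" are handled simultaneously. There is no real obstacle here beyond having Proposition~\ref{prop:com_graph_row} in hand; the entire content of the corollary is that orbit-wise equality of the ddeg-sum and ddeg-count forces equality of orbit averages, a triviality once the orbit bijection exists.
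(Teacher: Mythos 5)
Your argument is correct and is exactly the paper's: the paper proves the corollary by declaring it immediate from Proposition~\ref{prop:com_graph_row}, and your write-up simply spells out that immediacy (equal orbit sizes and equal $\mathrm{ddeg}$-sums under $\varphi$ give equal orbit averages, so constancy of averages transfers both ways). Nothing further is needed.
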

\begin{proof}
This is immediate from Proposition~\ref{prop:com_graph_row}.
\end{proof}

To our knowledge, no one had previously thought to study the behavior of rowmotion on posets with isomorphic comparability graphs. So Proposition~\ref{prop:com_graph_row} is an instance in which the philosophy behind Problem~\ref{prob:main} actually aided in the discovery of comparability invariants.

Again, the conclusion of Proposition~\ref{prop:com_graph_row} fails badly under the weaker assumption that $P$ and $Q$ are arbitrary doppelg\"{a}ngers: with $P$ and $Q$ as in Example~\ref{ex:not_com}, the order of $\mathrm{row}\colon J(P)\to J(P)$ is $4$ while the order of $\mathrm{row}\colon J(Q)\to J(Q)$ is $12$.

Let us also briefly comment on another impossible strengthening of Proposition~\ref{prop:com_graph_row}. As explained in the proof of Proposition~\ref{prop:com_graph_row}, there exists a bijection between the $\mathrm{row}$-orbits of $J(P)$ and  $\mathrm{row}$-orbits of $J(P^{*})$ which preserves the multiset $\{\mathrm{ddeg}(I)\colon I\in \mathcal{O}\}$ of the down-degree statistic values along each orbit. But in general there is no such bijection between the rowmotion orbits for two posets with isomorphic comparability graphs, as Example~\ref{ex:com_graph_row_ddeg} below will show. That is, we cannot strengthen Proposition~\ref{prop:com_graph_row} by requiring the equality of multisets~$\{\mathrm{ddeg}(I)\colon I \in \mathcal{O}\}= \{\mathrm{ddeg}(I)\colon I \in \varphi(\mathcal{O})\}$. (Note, however, that the multisets $\{\mathrm{ddeg}(I)\colon I \in J(P)\}$ and $\{\mathrm{ddeg}(I)\colon I \in J(Q)\}$ actually \emph{are} equal: this is the content of Proposition~\ref{prop:com_ddeg_gf}.)

\begin{figure}
\begin{tikzpicture}[scale=1]
	\SetFancyGraph
	\Vertex[NoLabel,x=0,y=0]{1}
	\Vertex[NoLabel,x=1,y=0]{2}
	\Vertex[NoLabel,x=0,y=1]{3}
	\Vertex[NoLabel,x=1,y=1]{4}
	\Vertex[NoLabel,x=0.5,y=2]{5}
	\Vertex[NoLabel,x=1.5,y=2]{6}
	\Vertex[NoLabel,x=0.25,y=3]{7}
	\Edges[style={thick}](1,3)
	\Edges[style={thick}](2,3)
	\Edges[style={thick}](2,4)
	\Edges[style={thick}](3,7)
	\Edges[style={thick}](4,5)
	\Edges[style={thick}](4,6)
	\Edges[style={thick}](5,7)
	\Edges[style={thick}](6,7)
	\node at (0.5,-0.75) {$P$};
\end{tikzpicture} \qquad \vline \qquad \begin{tikzpicture}[scale=1]
	\SetFancyGraph
	\Vertex[NoLabel,x=0,y=0]{1}
	\Vertex[NoLabel,x=1,y=0]{2}
	\Vertex[NoLabel,x=0,y=1]{3}
	\Vertex[NoLabel,x=1,y=2.5]{4}
	\Vertex[NoLabel,x=0.5,y=1.5]{5}
	\Vertex[NoLabel,x=1.5,y=1.5]{6}
	\Vertex[NoLabel,x=0.25,y=3]{7}
	\Edges[style={thick}](1,3)
	\Edges[style={thick}](2,3)
	\Edges[style={thick}](2,5)
	\Edges[style={thick}](2,6)
	\Edges[style={thick}](3,7)
	\Edges[style={thick}](4,5)
	\Edges[style={thick}](4,6)
	\Edges[style={thick}](4,7)
	\node at (0.5,-0.75) {$Q$};
\end{tikzpicture}
\caption{Posets with isomorphic comparability graphs but different down-degree multisets for their rowmotion orbits.} \label{fig:com_graph_row_ddeg}
\end{figure}
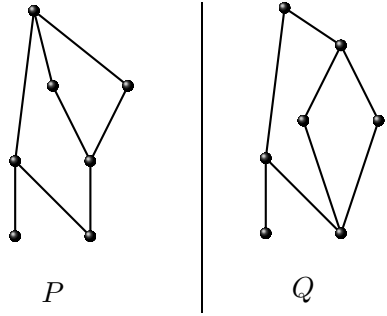

\begin{example} \label{ex:com_graph_row_ddeg}
Let $P$ and $Q$ be as in Figure~\ref{fig:com_graph_row_ddeg}. Then $P$ and $Q$ have isomorphic comparability graphs, but $\{ \{\mathrm{ddeg}(I)\colon I \in \mathcal{O}\}: \textrm{$\mathcal{O}$ a $\mathrm{row}$-orbit of $J(P)$} \}$ is:
\[ \{\{1, 0, 2, 2, 2, 1, 1, 2, 3\}, \{3, 1, 1\}, \{2, 1, 2, 2, 1, 2\}\}\]
while $\{\{\mathrm{ddeg}(I)\colon I \in \mathcal{O}\}: \textrm{$\mathcal{O}$ a $\mathrm{row}$-orbit of $J(Q)$} \} $ is:
\[\{\{1, 0, 2, 3, 1, 1, 1, 3, 2\}, \{2, 1, 2\}, \{2, 1, 2, 2, 1, 2\}\} .\]
These sets of multisets are evidently different.
\end{example}

Now let's return to a discussion of Conjecture~\ref{conj:min_dop_row}. First of all, by combining various results mentioned above in this section, we can prove some cases of this conjecture.

\begin{prop} \label{prop:min_dop_row}
Conjecture~\ref{conj:min_dop_row} is true for $(P,Q)=(\Lambda_{\mathrm{OG}(6,12)},\Phi^+(H_3))$, for $(P,Q)=(\Lambda_{\mathbb{Q}^{2n}},\Phi^+(I_2(2n)))$, and for $(P,Q) =(\Lambda_{\mathrm{Gr}(k,n)},T_{k,n})$ with $k=1,2,n/2$.
\end{prop}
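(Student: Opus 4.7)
The proposition splits naturally into two types of cases. For $(P,Q) = (\Lambda_{\mathbb{Q}^{2n}},\Phi^+(I_2(2n)))$ and for $(P,Q) = (\Lambda_{\mathrm{Gr}(k,n)}, T_{k,n})$ with $k \in \{1,2\}$, the two posets have isomorphic comparability graphs (as remarked in the paragraph after Theorem~\ref{thm:minuscule_doppelgangers}), so Proposition~\ref{prop:com_graph_row} delivers the required bijection immediately. The remaining cases are $(\Lambda_{\mathrm{OG}(6,12)},\Phi^+(H_3))$ and $(\Lambda_{\mathrm{Gr}(k,2k)},T_{k,2k})\simeq ([k]\times[k], \Phi^+(B_k))$, which I will handle by a common reduction.

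The key observation is the following: if posets $P$ and $Q$ satisfy (i)~$\mathrm{row}|_{J(P)}$ and $\mathrm{row}|_{J(Q)}$ have the same multiset of orbit sizes, and (ii)~$\mathrm{ddeg}$ is $c$-mesic under rowmotion on both $J(P)$ and $J(Q)$ for the same constant $c$, then any size-preserving bijection $\varphi$ between the two sets of rowmotion orbits automatically satisfies condition~\eqref{cond:minuscule_row_ddeg} of Conjecture~\ref{conj:min_dop_row}, because $\sum_{I\in\mathcal{O}}\mathrm{ddeg}(I) = c\cdot\#\mathcal{O}$ on both sides. Condition~(ii) holds uniformly in our setting: $J(P)$ is tCDE by Theorem~\ref{thm:minuscule_cde} and $J(Q)$ is tCDE by Theorem~\ref{thm:root_poset_cde}, so Lemma~\ref{lem:cde_homo} makes $\mathrm{ddeg}$ mesic with values $\#P/(r(P)+2)$ and $\#Q/(r(Q)+2)$ respectively; and Proposition~\ref{prop:doppelganger_basics} gives $\#P=\#Q$ and $r(P)=r(Q)$, so the two constants coincide.

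It remains to establish (i) by comparing cyclic sieving polynomials. For the type-$B$ case, Theorem~\ref{thm:minuscule_row_cyc_siev} gives $F([k]\times[k],1)=\binom{2k}{k}_q$ as the cyclic sieving polynomial for the order-$h=2k$ action of $\mathrm{row}$ on $J([k]\times[k])$. Theorem~\ref{thm:root_row_cyc_siev} gives $\mathrm{Cat}(B_k;q) = \prod_{i=1}^{k}\frac{1-q^{2(k+i)}}{1-q^{2i}} = \binom{2k}{k}_{q^2}$ for $J(\Phi^+(B_k))$ (the formal cyclic group has order $2h=4k$, but since $w_0=-1$ in $B_k$, rowmotion itself has order only $h=2k$). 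Writing $\zeta_m \coloneqq e^{2\pi i/m}$, for each $d$ dividing $2k$ we have
\[\binom{2k}{k}_{q^2}\Big|_{q=\zeta_{4k}^d} \;=\; \binom{2k}{k}_q\Big|_{q=\zeta_{2k}^d},\]
which is exactly the number of $\mathrm{row}^d$-fixed points on $J([k]\times[k])$. Hence the orbit multisets match.

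For $(\Lambda_{\mathrm{OG}(6,12)},\Phi^+(H_3))$, both posets have $15$ elements, rank $8$, and Coxeter number $h=10$; in $H_3$ the longest element equals $-1$, so $\mathrm{row}$ on $J(\Phi^+(H_3))$ has order exactly $h=10$. Verifying (i) thus reduces to the identity $F(\Lambda_{\mathrm{OG}(6,12)},1)(\zeta_{10}^d) = \mathrm{Cat}(H_3;q)(\zeta_{20}^d)$ for $d=0,1,\ldots,9$, since both sides count the $\mathrm{row}^d$-fixed ideals in their respective distributive lattices. This is a finite check which is readily completed by computer. The main obstacle is minor: it is just confirming that the two cyclic sieving evaluations agree in the $H_3$ case, whereas the $B_k$ family is resolved cleanly by the algebraic identity $\mathrm{Cat}(B_k;q) = \binom{2k}{k}_{q^2}$.
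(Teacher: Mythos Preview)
Your proof is correct and follows essentially the same strategy as the paper's. The paper likewise dispatches $(\Lambda_{\mathbb{Q}^{2n}},\Phi^+(I_2(2n)))$ and $(\Lambda_{\mathrm{Gr}(k,n)},T_{k,n})$ for $k=1,2$ via Proposition~\ref{prop:com_graph_row}, and handles $(\Lambda_{\mathrm{Gr}(k,2k)},T_{k,2k})$ by the same reduction you use: homomesy on both sides (the paper invokes Theorems~\ref{thm:minuscule_row_homo} and~\ref{thm:root_row_homo} directly rather than the tCDE route, but this is equivalent) together with the polynomial identity $[F(\Lambda_{\mathrm{Gr}(k,2k)},1)]_{q\mapsto q^2}=\mathrm{Cat}(B_k;q)$ to match the cyclic sieving data.

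The one point of divergence is $(\Lambda_{\mathrm{OG}(6,12)},\Phi^+(H_3))$: the paper simply says ``check by hand or by computer,'' whereas you organize the finite check through cyclic sieving evaluations. Your version is fine, but note that Theorem~\ref{thm:root_row_cyc_siev} as stated is only for crystallographic root systems; to invoke cyclic sieving for $\Phi^+(H_3)$ you are relying on the parenthetical remark after that theorem (citing~\cite{cuntz2015root}) that the conclusion extends to $H_3$ and $I_2(m)$. It would be cleaner to say so explicitly, or else to acknowledge (as the paper does) that the $H_3$ case is just a direct finite verification.
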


\begin{proof}
For $(P,Q)=(\Lambda_{\mathbb{Q}^{2n}},\Phi^+(I_2(2n)))$ or $(P,Q) =(\Lambda_{\mathrm{Gr}(k,n)},T_{k,n})$ with $k=1,2$, the conjecture follows from Proposition~\ref{prop:com_graph_row} because in this case $\mathrm{com}(P)\simeq \mathrm{com}(Q)$.

For $(P,Q)=(\Lambda_{\mathrm{OG}(6,12)},\Phi^+(H_3))$ we can just check by hand or by computer.

Now consider $(P,Q) =(\Lambda_{\mathrm{Gr}(k,2k)},T_{k,2k})$. Recall that $Q=T_{k,2k}\simeq \Phi^+(B_k)$. By Theorems~\ref{thm:minuscule_row_homo} and~\ref{thm:root_row_homo}, $J(P)$ and $J(Q)$ both exhibit the antichain cardinality homomesy for rowmotion, so any bijection $\varphi$ satisfying condition~\eqref{cond:minuscule_row_sizes} of Conjecture~\ref{conj:min_dop_row} will satisfy condition~\eqref{cond:minuscule_row_ddeg} as well. (Recall that $J(P)$ and $J(Q)$ have the same edge density; see Proposition~\ref{prop:ddeg_gf_1}.) Therefore, we just need to show that $J(P)$ and $J(Q)$ have the same rowmotion orbit structure. This follows from the cyclic sieving results mentioned above (Theorems~\ref{thm:minuscule_row_cyc_siev} and~\ref{thm:root_row_cyc_siev}) together with the observation that
\[ \left[ F(\Lambda_{\mathrm{Gr}(k,2k)},1)\right]_{q\mapsto q^2}= \left [ \prod_{i=1}^{k}\prod_{j=1}^{k}  \frac{(1-q^{i+j})}{ (1-q^{i+j-1})} \right ]_{q\mapsto q^2} \hspace{-0.25cm} = \prod_{i=1}^{k}\frac{(1-q^{2k+2i})}{(1-q^{2i})} = \mathrm{Cat}(B_k;q),\]
for the relevant cyclic sieving polynomials.
\end{proof}

\begin{remark} \label{rem:cyc_siev_polys}
In the proof of Proposition~\ref{prop:min_dop_row} we saw an equality of the cyclic sieving polynomials appearing in Theorems~\ref{thm:minuscule_row_cyc_siev} and~\ref{thm:root_row_cyc_siev} when $(P,Q)=(\Lambda_{\mathrm{Gr}(k,2k)},\Phi^+(B_k))$. It can also be checked that for $(P,Q)=(\Lambda_{\mathrm{OG}(6,12)},\Phi^+(H_3))$ we have
\[\left [\prod_{i=1}^{5}\prod_{j=i}^{5} \frac{ (1-q^{i+j}) }{ (1-q^{i+j-1})}  \right ]_{q\mapsto q^2} = \frac{(1-q^{12})(1-q^{16})(1-q^{20})}{(1-q^{2})(1-q^{6})(1-q^{10})},\]
for the relevant cyclic sieving polynomials; and similarly for  $(P,Q)=(\Lambda_{\mathbb{Q}^{2n}},\Phi^+(I_2(2n)))$ we have
\[ \left [ \frac{(1-q^2)}{(1-q)} \cdot \prod_{i=1}^{2n-1} \frac{(1-q^{i+1})}{(1-q^{i})} \right ]_{q\mapsto q^2} = \frac{(1-q^{2n+2})(1-q^{4n})}{(1-q^2)(1-q^{2n})}.\]
Note that, even with the knowledge that rowmotion has the same orbit structure for $J(P)$ and $J(Q)$, the cyclic sieving phenomenon by itself does not automatically guarantee that these polynomials are equal; it only guarantees that they agree after modding out by $(1-q^{2h})$, where~$h$ is the relevant Coxeter number. It would be interesting to explain why we have these exact equalities of polynomials.
\end{remark}

Among the cases not covered by Proposition~\ref{prop:min_dop_row}, we have verified Conjecture~\ref{conj:min_dop_row} by computer for $(P,Q) =(\Lambda_{\mathrm{Gr}(k,n)},T_{k,n})$ for all $n \leq 10$ and $2 < k < n/2$.

\begin{remark} \label{rem:reu}
After the first version of this paper was made available online, much of Conjecture~\ref{conj:min_dop_row} was resolved affirmatively by Dao, Wellman, Yost-Wolff, and Zhang~\cite{dao2019trapezoid}. Namely, they established that the bijection of Hamaker et al.~\cite{hamaker2018doppelgangers} between $J(P)$ and~$J(Q)$ commutes with rowmotion. And they also showed that for $k\leq 3$, $J(T_{k,n})$ satisfies a condition similar to but weaker than the tCDE property, but which is still strong enough to imply the rowmotion antichain cardinality homomesy.
\end{remark}

In the remainder of this section we will present some strengthenings and variants of Conjecture~\ref{conj:min_dop_row} based on the \emph{piecewise-linear} and \emph{birational} liftings of rowmotion introduced by Einstein and Propp~\cite{einstein2013combinatorial, einstein2014piecewise}.

 \begin{remark}
\emph{Promotion} acting on the linear extensions of a poset $P$ is another operator with interesting dynamical properties (see~\cite{stanley2009promotion} for background on promotion); it is spiritually very similar to rowmotion acting on order ideals (see~\cite{striker2012promotion}). Promotion of linear extensions has good behavior for minuscule posets $P$: it is known that $\#P$ applications of promotion is the identity; furthermore, Rhoades~\cite[Theorem 1.3]{rhoades2010cyclic} has obtained a cyclic sieving result in the case of $P$ being a rectangle; and Purbhoo~\cite[Theorem 5.1]{purbhoo2018marvellous} extended Rhoades's work to the case of $P$ a shifted staircase. It makes sense to ask whether the minuscule doppelg\"{a}nger philosophy applies to promotion as well. Actually, there are known results for promotion, rather than mere conjectures. First of all, it is easy to establish (using Lemma~\ref{lem:com_graph_criterion}) that if $P$ and $Q$ are posets with isomorphic comparability graphs then they have the same orbit structure for promotion acting on their linear extensions. Furthermore, if $(P,Q)$ is a minuscule doppelg\"{a}nger pair, then $P$ and $Q$ also have the same orbit structure for promotion of linear extensions: for $(P,Q)=(\Lambda_{\mathbb{Q}^{2n}},\Phi^+(I_2(2n)))$, this is trivial; for $(P,Q)=(\Lambda_{\mathrm{OG}(6,12)},\Phi^+(H_3))$, this amounts to a finite check that can be carried out via computer; and for $(P,Q) =(\Lambda_{\mathrm{Gr}(k,n)},T_{k,n})$, this follows from the fact that a bijection due to Haiman~\cite[Proposition 8.11]{haiman1989mixed}~\cite[p.~112]{haiman1992dual} between the linear extensions of $P$ and $Q$ commutes with promotion. (The main bijection of Hamaker et~al.~\cite{hamaker2018doppelgangers} is an extension of Haiman's bijection from linear extensions to $P$-partitions, so compare this result of Haiman to the result of~\cite{dao2019trapezoid} mentioned in Remark~\ref{rem:reu}.)
\end{remark}

\subsection{Piecewise-linear rowmotion and rowmotion acting on \texorpdfstring{$P$}{P}-partitions}

We now describe the piecewise-linear lifting of rowmotion. We will review the history of this extension of rowmotion in the next subsection where we consider birational rowmotion.

Recall from Section~\ref{sec:doppelganger_defs} that $\mathbb{R}^{P}$ denotes the real vector space of functions $P\to \mathbb{R}$. Let $\widehat{P}$ denote the poset obtained from $P$ by adding a new minimal element $\widehat{0}$ and a new maximal element~$\widehat{1}$. We view any $f\in \mathbb{R}^{P}$ also being an element of $\mathbb{R}^{\widehat{P}}$ with the convention that we always have $f(\, \widehat{0} \,)=0$ and $f(\, \widehat{1} \,)=1$. For $p\in P$, we define the \emph{piecewise-linear toggle at $p$} $\tau_{p}^{\mathrm{PL}}\colon \mathbb{R}^{P}\to \mathbb{R}^{P}$ by
\[\tau_{p}^{\mathrm{PL}}(f)(q) \coloneqq  \begin{cases} f(q) &\textrm{if $p \neq q$}; \\ \mathrm{min}\{f(x)\colon\textrm{$x\in \widehat{P}$ covers $p$}\} +  \mathrm{max}\{f(x)\colon\textrm{$p$ covers $x\in \widehat{P}$} \}  - f(p) &\textrm{if $p=q$}.\end{cases}\]
It is clear that each $\tau_{p}^{\mathrm{PL}}$ is an involution.

For $I\in J(P)$, let $f_I \in \mathbb{R}^P$ denote the indicator function of its complement~$P\setminus I$. (In other words, $f_I$ is just the realization of the order ideal $I$ as a $P$-partition of height~$1$.) Then we have~$\tau^{\mathrm{PL}}_p(f_I) = f_{\tau_p(I)}$ for all $I\in J(P)$ and $p\in P$. So these piecewise-linear toggles do generalize the combinatorial toggles. 

Following  Cameron and Fon-Der-Flaass's description of combinatorial rowmotion as a composition of toggles, we then define \emph{piecewise-linear rowmotion} $\mathrm{row}^{\mathrm{PL}}\colon \mathbb{R}^{P}\to \mathbb{R}^{P}$ by
\[\mathrm{row}^{\mathrm{PL}} \coloneqq  \tau_{p_1}^{\mathrm{PL}} \circ \tau_{p_2}^{\mathrm{PL}} \circ \cdots \circ \tau_{p_n}^{\mathrm{PL}},  \]
where $p_1, p_2, \ldots, p_n$ is any linear extension of $P$. (It can again be checked that $\tau_{p}^{\mathrm{PL}}$ and $\tau_{q}^{\mathrm{PL}}$ commute unless there is a covering relation between $p$ and $q$, so that this composition is well-defined.)

\begin{remark}
Recall that we originally defined combinatorial rowmotion as a composition of three bijections (from order ideals to order filters to antichains back to order ideals). It is possible to generalize this definition to the piecewise-linear setting (and indeed the birational setting), essentially by using Stanley's transfer map~$\phi$ from the order polytope to chain polytope; see~\cite{einstein2013combinatorial, einstein2014piecewise, joseph2017antichain} for details.
\end{remark}

 As we have seen above, $P$-partitions are particularly well-behaved for the posets we care about (minuscule posets and root posets of coincidental type). Therefore, in this subsection we will actually primarily focus on piecewise-linear rowmotion acting on the rational points in the order polytope, i.e., on $P$-partitions.

The piecewise-linear toggles $\tau_{p}^{\mathrm{PL}}$ preserve the order polytope $\mathcal{O}(P) \subseteq \mathbb{R}^{P}$. Thus piecewise-linear rowmotion preserves the order polytope as well. It's also easy to see that these toggles preserve $\frac{1}{\ell}\mathbb{Z}^{P}$ for any $\ell \geq 1$, and thus rowmotion does as well. But recall that there is a canonical identification $\mathrm{PP}^{\ell}(P)\simeq\mathcal{O}(P)\cap \frac{1}{\ell}\mathbb{Z}^{P}$ given by $T\mapsto \frac{1}{\ell}T$. Thus by pulling back the action of $\mathrm{row}^{\mathrm{PL}}$ on $\mathcal{O}(P)\cap \frac{1}{\ell}\mathbb{Z}^{P}$, we get an action on $P$-partitions of height $\ell$, which we simply denote by~$\mathrm{row}\colon \mathrm{PP}^{\ell}(P)\to\mathrm{PP}^{\ell}(P)$. Note that $\mathrm{row}\colon\mathrm{PP}^{1}(P)\to\mathrm{PP}^{1}(P)$ is exactly the same as $\mathrm{row}\colon J(P)\to J(P)$.

\begin{example}
Let $P = [2]\times [2]$. Let's consider $\mathrm{row}$ acting on $\mathrm{PP}^3(P)$. We depict a $P$-partition $T\in\mathrm{PP}^{\ell}(P)$ by writing $T(p)$ on $p \in P$; and actually for such a $P$-partition we find it convenient to depict all of $\widehat{P}$ with the convention $T(\,\widehat{0}\,) = 0$ and $T(\,\widehat{1}\,) = \ell$. For example, for one choice of $T \in \mathrm{PP}^3(P)$ we could compute $\mathrm{row}(T)$ as a composition of piecewise-linear toggles as follows:
\[ \begin{tikzpicture}[scale=0.5]
	\node[circle, draw=black, fill=white, inner sep=0pt, thick, minimum size=12pt] (0) at (0,-1) {\footnotesize 0};
	\node[circle, draw=black, fill=white, inner sep=0pt, thick, minimum size=12pt] (1) at (0,0) {\footnotesize 1};
	\node[circle, draw=black, fill=white, inner sep=0pt, thick, minimum size=12pt] (2) at (-1,1) {\footnotesize 1};
	\node[circle, draw=black, fill=white, inner sep=0pt, thick, minimum size=12pt] (3) at (1,1) {\footnotesize 2};
	\node[circle, draw=black, fill=gray!50, inner sep=0pt, thick, minimum size=12pt] (4) at (0,2) {\footnotesize 2};
	\node[circle, draw=black, fill=white, inner sep=0pt, thick, minimum size=12pt] (5) at (0,3) {\footnotesize 3};
	\draw[thick] (4)--(3)--(1)--(2)--(4);
	\draw[thick] (0)--(1);
	\draw[thick] (4)--(5);
	\node at (0,-2.2) {$T$};
\end{tikzpicture} \parbox{0.5in}{\begin{center}\vspace{-1.5in}$\xrightarrow{\tau_{(2,2)}^{\mathrm{PL}}}$\end{center}} \begin{tikzpicture}[scale=0.5]
	\node[circle, draw=black, fill=white, inner sep=0pt, thick, minimum size=12pt] (0) at (0,-1) {\footnotesize 0};
	\node[circle, draw=black, fill=white, inner sep=0pt, thick, minimum size=12pt] (1) at (0,0) {\footnotesize 1};
	\node[circle, draw=black, fill=white, inner sep=0pt, thick, minimum size=12pt] (2) at (-1,1) {\footnotesize 1};
	\node[circle, draw=black, fill=gray!50, inner sep=0pt, thick, minimum size=12pt] (3) at (1,1) {\footnotesize 2};
	\node[circle, draw=black, fill=white, inner sep=0pt, thick, minimum size=12pt] (4) at (0,2) {\footnotesize 3};
	\node[circle, draw=black, fill=white, inner sep=0pt, thick, minimum size=12pt] (5) at (0,3) {\footnotesize 3};
	\draw[thick] (4)--(3)--(1)--(2)--(4);
	\draw[thick] (0)--(1);
	\draw[thick] (4)--(5);
	\node at (0,-2.2) {};
\end{tikzpicture} \parbox{0.5in}{\begin{center}\vspace{-1.5in}$\xrightarrow{\tau_{(2,1)}^{\mathrm{PL}}}$\end{center}} \begin{tikzpicture}[scale=0.5]
	\node[circle, draw=black, fill=white, inner sep=0pt, thick, minimum size=12pt] (0) at (0,-1) {\footnotesize 0};
	\node[circle, draw=black, fill=white, inner sep=0pt, thick, minimum size=12pt] (1) at (0,0) {\footnotesize 1};
	\node[circle, draw=black, fill=gray!50, inner sep=0pt, thick, minimum size=12pt] (2) at (-1,1) {\footnotesize 1};
	\node[circle, draw=black, fill=white, inner sep=0pt, thick, minimum size=12pt] (3) at (1,1) {\footnotesize 2};
	\node[circle, draw=black, fill=white, inner sep=0pt, thick, minimum size=12pt] (4) at (0,2) {\footnotesize 3};
	\node[circle, draw=black, fill=white, inner sep=0pt, thick, minimum size=12pt] (5) at (0,3) {\footnotesize 3};
	\draw[thick] (4)--(3)--(1)--(2)--(4);
	\draw[thick] (0)--(1);
	\draw[thick] (4)--(5);
	\node at (0,-2.2) {};
\end{tikzpicture} \parbox{0.5in}{\begin{center}\vspace{-1.5in}$\xrightarrow{\tau_{(1,2)}^{\mathrm{PL}}}$\end{center}} \begin{tikzpicture}[scale=0.5]
	\node[circle, draw=black, fill=white, inner sep=0pt, thick, minimum size=12pt] (0) at (0,-1) {\footnotesize 0};
	\node[circle, draw=black, fill=gray!50, inner sep=0pt, thick, minimum size=12pt] (1) at (0,0) {\footnotesize 1};
	\node[circle, draw=black, fill=white, inner sep=0pt, thick, minimum size=12pt] (2) at (-1,1) {\footnotesize 3};
	\node[circle, draw=black, fill=white, inner sep=0pt, thick, minimum size=12pt] (3) at (1,1) {\footnotesize 2};
	\node[circle, draw=black, fill=white, inner sep=0pt, thick, minimum size=12pt] (4) at (0,2) {\footnotesize 3};
	\node[circle, draw=black, fill=white, inner sep=0pt, thick, minimum size=12pt] (5) at (0,3) {\footnotesize 3};
	\draw[thick] (4)--(3)--(1)--(2)--(4);
	\draw[thick] (0)--(1);
	\draw[thick] (4)--(5);
	\node at (0,-2.2) {};
\end{tikzpicture} \parbox{0.5in}{\begin{center}\vspace{-1.5in}$\xrightarrow{\tau_{(1,1)}^{\mathrm{PL}}}$\end{center}} \begin{tikzpicture}[scale=0.5]
	\node[circle, draw=black, fill=white, inner sep=0pt, thick, minimum size=12pt] (0) at (0,-1) {\footnotesize 0};
	\node[circle, draw=black, fill=white, inner sep=0pt, thick, minimum size=12pt] (1) at (0,0) {\footnotesize 1};
	\node[circle, draw=black, fill=white, inner sep=0pt, thick, minimum size=12pt] (2) at (-1,1) {\footnotesize 3};
	\node[circle, draw=black, fill=white, inner sep=0pt, thick, minimum size=12pt] (3) at (1,1) {\footnotesize 2};
	\node[circle, draw=black, fill=white, inner sep=0pt, thick, minimum size=12pt] (4) at (0,2) {\footnotesize 3};
	\node[circle, draw=black, fill=white, inner sep=0pt, thick, minimum size=12pt] (5) at (0,3) {\footnotesize 3};
	\draw[thick] (4)--(3)--(1)--(2)--(4);
	\draw[thick] (0)--(1);
	\draw[thick] (4)--(5);
	\node at (0,-2.2) {$\mathrm{row}(T)$};
\end{tikzpicture} \]
We could further compute that the $\mathrm{row}$-orbit to which $T$ belongs is
\[ \parbox{0.75in}{\begin{center}\vspace{-0.9in}$\cdots \xrightarrow{ \mathrm{row}}$\end{center}} \begin{tikzpicture}[scale=0.5]
	\node[circle, draw=black, fill=white, inner sep=0pt, thick, minimum size=12pt] (0) at (0,-1) {\footnotesize 0};
	\node[circle, draw=black, fill=white, inner sep=0pt, thick, minimum size=12pt] (1) at (0,0) {\footnotesize 1};
	\node[circle, draw=black, fill=white, inner sep=0pt, thick, minimum size=12pt] (2) at (-1,1) {\footnotesize 1};
	\node[circle, draw=black, fill=white, inner sep=0pt, thick, minimum size=12pt] (3) at (1,1) {\footnotesize 2};
	\node[circle, draw=black, fill=white, inner sep=0pt, thick, minimum size=12pt] (4) at (0,2) {\footnotesize 2};
	\node[circle, draw=black, fill=white, inner sep=0pt, thick, minimum size=12pt] (5) at (0,3) {\footnotesize 3};
	\draw[thick] (4)--(3)--(1)--(2)--(4);
	\draw[thick] (0)--(1);
	\draw[thick] (4)--(5);
\end{tikzpicture} \parbox{0.5in}{\begin{center}\vspace{-0.9in}$\xrightarrow{ \mathrm{row}}$\end{center}}  \begin{tikzpicture}[scale=0.5]
	\node[circle, draw=black, fill=white, inner sep=0pt, thick, minimum size=12pt] (0) at (0,-1) {\footnotesize 0};
	\node[circle, draw=black, fill=white, inner sep=0pt, thick, minimum size=12pt] (1) at (0,0) {\footnotesize 1};
	\node[circle, draw=black, fill=white, inner sep=0pt, thick, minimum size=12pt] (2) at (-1,1) {\footnotesize 3};
	\node[circle, draw=black, fill=white, inner sep=0pt, thick, minimum size=12pt] (3) at (1,1) {\footnotesize 2};
	\node[circle, draw=black, fill=white, inner sep=0pt, thick, minimum size=12pt] (4) at (0,2) {\footnotesize 3};
	\node[circle, draw=black, fill=white, inner sep=0pt, thick, minimum size=12pt] (5) at (0,3) {\footnotesize 3};
	\draw[thick] (4)--(3)--(1)--(2)--(4);
	\draw[thick] (0)--(1);
	\draw[thick] (4)--(5);
\end{tikzpicture} \parbox{0.5in}{\begin{center}\vspace{-0.9in}$\xrightarrow{ \mathrm{row}}$\end{center}}  \begin{tikzpicture}[scale=0.5]
	\node[circle, draw=black, fill=white, inner sep=0pt, thick, minimum size=12pt] (0) at (0,-1) {\footnotesize 0};
	\node[circle, draw=black, fill=white, inner sep=0pt, thick, minimum size=12pt] (1) at (0,0) {\footnotesize 0};
	\node[circle, draw=black, fill=white, inner sep=0pt, thick, minimum size=12pt] (2) at (-1,1) {\footnotesize 1};
	\node[circle, draw=black, fill=white, inner sep=0pt, thick, minimum size=12pt] (3) at (1,1) {\footnotesize 2};
	\node[circle, draw=black, fill=white, inner sep=0pt, thick, minimum size=12pt] (4) at (0,2) {\footnotesize 3};
	\node[circle, draw=black, fill=white, inner sep=0pt, thick, minimum size=12pt] (5) at (0,3) {\footnotesize 3};
	\draw[thick] (4)--(3)--(1)--(2)--(4);
	\draw[thick] (0)--(1);
	\draw[thick] (4)--(5);
\end{tikzpicture} \parbox{0.5in}{\begin{center}\vspace{-0.9in}$\xrightarrow{ \mathrm{row}}$\end{center}}  \begin{tikzpicture}[scale=0.5]
	\node[circle, draw=black, fill=white, inner sep=0pt, thick, minimum size=12pt] (0) at (0,-1) {\footnotesize 0};
	\node[circle, draw=black, fill=white, inner sep=0pt, thick, minimum size=12pt] (1) at (0,0) {\footnotesize 0};
	\node[circle, draw=black, fill=white, inner sep=0pt, thick, minimum size=12pt] (2) at (-1,1) {\footnotesize 1};
	\node[circle, draw=black, fill=white, inner sep=0pt, thick, minimum size=12pt] (3) at (1,1) {\footnotesize 0};
	\node[circle, draw=black, fill=white, inner sep=0pt, thick, minimum size=12pt] (4) at (0,2) {\footnotesize 2};
	\node[circle, draw=black, fill=white, inner sep=0pt, thick, minimum size=12pt] (5) at (0,3) {\footnotesize 3};
	\draw[thick] (4)--(3)--(1)--(2)--(4);
	\draw[thick] (0)--(1);
	\draw[thick] (4)--(5);
\end{tikzpicture}  \parbox{0.3in}{\begin{center}\vspace{-0.8in}$\cdots$\end{center}} \]
Observe that the size of this orbit is $4$, and that the average of $\mathrm{ddeg}$ along this orbit is $\frac{1}{4}(2+2+4+4) = 3$.
\end{example}

\begin{remark}
Recall that for $I\in J(P)$, $\mathrm{row}(I)$ is the unique order ideal of $P$ with $\mathrm{max}(\mathrm{row}(I))=\mathrm{min}(P \setminus I)$. There is a generalization of this for rowmotion acting on $P$-partitions. Namely, for $T\in \mathrm{PP}^{\ell}(P)$, $\mathrm{row}(T)$ is the unique element of $ \mathrm{PP}^{\ell}(P)$ with
\[\biguplus_{i=0}^{\ell-1} \mathrm{max}(\mathrm{row}(T)^{-1}(\{0,1,\ldots,i\})) = \biguplus_{i=0}^{\ell-1} \mathrm{min}(P\setminus T^{-1}(\{0,1,\ldots,i\})),  \] 
where $\uplus$ denotes multiset sum.
\end{remark}

\begin{remark}
There is a natural bijection $\mathrm{PP}^{\ell}(P)\simeq J(P\times[\ell])$. But the rowmotion acting on $P$-partitions of height $\ell$ that we have just defined in terms of piecewise-linear toggles is {\bf not the same} as order ideal rowmotion of $J(P\times [\ell])$. For example, for the rectangle poset $P=[a]\times[b]$, the order of $\mathrm{row}$ acting on $\mathrm{PP}^{\ell}(P)$ is $a+b$ for all $\ell \geq 1$ (see Theorem~\ref{thm:minuscule_row_pp_order} below); while Cameron and Fon-Der-Flaass~\cite{cameron1995orbits} showed that $\mathrm{row}$ acting on $J(P\times [2])$ has order $a+b+1$. Order ideal rowmotion acting on $J(P\times [\ell])$ for a minuscule poset $P$ was studied by Rush and Shi~\cite{rush2013orbits} and later by Mandel and Pechenik~\cite{mandel2018orbits} in the context of cyclic sieving, but the results they obtained were somewhat sporadic. We believe rowmotion acting on $\mathrm{PP}^{\ell}(P)$ behaves better than rowmotion acting on $J(P\times [\ell])$: for example, below we conjecture a uniform cyclic sieving result for rowmotion acting on $\mathrm{PP}^{\ell}(P)$ when $P$ is a minuscule poset.
\end{remark}

Let's now consider rowmotion acting on $P$-partitions for a minuscule poset $P$. Very recently Garver, Patias, and Thomas~\cite{garver2018minuscule} proved in a uniform manner that rowmotion acting on $P$-partitions of height~$\ell$ has the ``correct'' order  for a minuscule poset $P$: 

\begin{thm} \label{thm:minuscule_row_pp_order}
Let $P$ be a minuscule poset and let $h \coloneqq  r(P)+2$ be its Coxeter number. Then for any $\ell \geq 1$, $\mathrm{row}\colon \mathrm{PP}^{\ell}(P)\to\mathrm{PP}^{\ell}(P)$ has order $h$.
\end{thm}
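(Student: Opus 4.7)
The plan is to deduce the statement by sandwiching the order of piecewise-linear rowmotion on $\mathrm{PP}^{\ell}(P)$ between an upper bound coming from the birational level and a lower bound coming from the combinatorial level. More precisely, I would exploit the fact that $\mathrm{row}^{\mathrm{PL}}$ is the tropicalization of birational rowmotion $\mathrm{row}^B$ (introduced by Einstein and Propp, as hinted at later in the excerpt), for which Grinberg and Roby have proven that $(\mathrm{row}^B)^h$ is the identity whenever $P$ is a minuscule poset. Since tropicalization preserves identities, this immediately gives $(\mathrm{row}^{\mathrm{PL}})^h = \mathrm{id}$ on all of $\mathcal{O}(P)$, and in particular on the invariant subset $\mathrm{PP}^{\ell}(P) \simeq \mathcal{O}(P)\cap \tfrac{1}{\ell}\mathbb{Z}^P$. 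Hence the order divides $h$.

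For the matching lower bound, I would use the canonical inclusion $\mathrm{PP}^{1}(P) \hookrightarrow \mathrm{PP}^{\ell}(P)$ (sending a $\{0,1\}$-valued $P$-partition to itself, viewed as taking values in $\{0,1,\ldots,\ell\}$). One checks directly from the definition of the piecewise-linear toggles that this inclusion intertwines combinatorial rowmotion on $\mathrm{PP}^{1}(P) = J(P)$ with piecewise-linear rowmotion on $\mathrm{PP}^{\ell}(P)$; indeed, each $\tau_{p}^{\mathrm{PL}}$ restricts to the combinatorial toggle $\tau_p$ on $\{0,1\}$-valued functions (since $\min+\max-f(p)$ for $f(p)\in\{0,1\}$ just flips $f(p)$ when allowed). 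By Rush–Shi (Theorem~\ref{thm:minuscule_row_cyc_siev}), the order of combinatorial rowmotion on $J(P)$ is exactly $h$, so the order of $\mathrm{row}\colon \mathrm{PP}^{\ell}(P)\to\mathrm{PP}^{\ell}(P)$ is a multiple of $h$. Combined with the upper bound, the order is exactly $h$.

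The genuine obstacle in this approach is of course the Grinberg–Roby theorem itself, which is a nontrivial input; however, since the paper later treats the birational level in detail and freely cites \cite{grinberg2016birational1, grinberg2015birational2}, this is a legitimate ingredient. A more self-contained strategy would instead work directly at the piecewise-linear level, perhaps by extending the arguments of Rush (and Rush–Shi) based on Stembridge's minuscule heaps and the description of toggling in terms of simple reflections in the corresponding Weyl group, to show that $h$ iterations of $\mathrm{row}^{\mathrm{PL}}$ correspond to a trivial word in an appropriate action; the difficulty there is that Rush's arguments are intrinsically combinatorial and it is not obvious how to promote them to the piecewise-linear setting without passing through tropicalization anyway. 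A third route, which is the one Garver–Patrias–Thomas ultimately take, is to realize minuscule $P$-partitions as dimension vectors in a type-$A$-like quiver representation category and to describe piecewise-linear rowmotion as a composition of elementary reflection functors whose $h$-th iterate is manifestly the identity; this has the advantage of being uniform, but it requires machinery from quiver representation theory that is largely orthogonal to the rest of this paper.
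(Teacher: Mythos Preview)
Your overall strategy---sandwich the order between an upper bound from the birational level and a lower bound from the combinatorial level---matches how the paper handles this (attributing the upper bound to Garver--Patrias--Thomas and, alternatively, to Grinberg--Roby via tropicalization, and supplying the lower bound itself in Remark~\ref{rem:gpt_details}). However, your lower bound argument contains a concrete error.

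The inclusion $\mathrm{PP}^{1}(P) \hookrightarrow \mathrm{PP}^{\ell}(P)$ you describe, sending a $\{0,1\}$-valued $P$-partition to itself viewed as $\{0,1,\ldots,\ell\}$-valued, is \emph{not} $\mathrm{row}$-equivariant. The action of $\mathrm{row}$ on $\mathrm{PP}^{\ell}(P)$ is pulled back from $\mathrm{row}^{\mathrm{PL}}$ on $\mathcal{O}(P)$ via $T \mapsto \frac{1}{\ell}T$, so the boundary convention becomes $T(\,\widehat{1}\,) = \ell$, not $T(\,\widehat{1}\,) = 1$. Concretely, for $P=[1]$ and $\ell=2$, the toggle sends $T(p)=0$ to $T(p)=2$, so the image of your inclusion is not even invariant. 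The correct equivariant embedding is $T \mapsto \ell\cdot T$, sending $\{0,1\}$-valued partitions to $\{0,\ell\}$-valued ones (this is exactly what the paper uses in the proof of Theorem~\ref{thm:root_poset_cyc_siev}'s piecewise-linear analog). Alternatively, the paper exhibits the lower bound directly via an explicit orbit $\{T_0,\ldots,T_{r(P)+1}\}$ of size $h$ in Remark~\ref{rem:gpt_details}.

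A secondary issue: the Grinberg--Roby birational periodicity you invoke does not cover the $E_7$ minuscule poset $\Lambda_{G_{\omega}(\mathbb{O}^3,\mathbb{O}^6)}$ (see the discussion after Conjecture~\ref{conj:min_bi_row_ord}). The paper is aware of this and attributes the full uniform result to Garver--Patrias--Thomas, whose quiver-theoretic argument you correctly identify as a third route.
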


(Theorem~\ref{thm:minuscule_row_pp_order} was essentially established earlier in a case-by-case manner by Grinberg and Roby~\cite{grinberg2016birational1, grinberg2015birational2}, and will be established in a case-by-case manner in a paper of Okada~\cite{okada2019birational} in preparation; see the discussion after Conjecture~\ref{conj:root_bi_row_ord} below.)

\begin{remark} \label{rem:gpt_details}
Garver-Patrias-Thomas~\cite{garver2018minuscule} studied (piecewise-linear) \emph{promotion} acting on $\mathrm{PP}^{\ell}(P)$, which is a slightly different operator than (piecewise-linear) rowmotion acting on $\mathrm{PP}^{\ell}(P)$: it is also a composition of all of the toggles $\tau_{p}^{\mathrm{PL}}$, but in a different order than for rowmotion. Nevertheless, it follows from the arguments of Striker and Williams~\cite{striker2012promotion} that promotion and rowmotion are conjugate elements of the ``toggle group'' (i.e., the group generated by the toggles) and so these two operators have the same orbit structure. Also, Garver-Patrias-Thomas~\cite{garver2018minuscule} claimed only that the order of promotion \emph{divides} $h$. But in fact it is easy to see that there is always at least one rowmotion orbit of size $h$: for $i=0,1,\ldots,r(P)+1$ we can define $T_i\in \mathrm{PP}^{\ell}(P)$ by
\[T_i(p) \coloneqq  \begin{cases} 0 &\textrm{ if $r(p)<i$}, \\ \ell &\textrm{otherwise};\end{cases}\]
and observe that $\{T_0,T_1,\ldots,T_{r(P)+1}\} \subseteq \mathrm{PP}^{\ell}(P)$ is a $\mathrm{row}$-orbit.
\end{remark}

Furthermore, we conjecture the following cyclic sieving result for rowmotion acting on $P$-partitions of a minuscule poset $P$, extending Rush and Shi's Theorem~\ref{thm:minuscule_row_cyc_siev} (which is the case $\ell=1$):

\begin{conj} \label{conj:minuscule_row_cyc_siev}
Let $P$ be a minuscule poset. Recall the size generating function $F(P,\ell)$ of $P$-partitions of height~$\ell$ defined in Remark~\ref{rem:ppart_wt_gf}:
\[F(P,\ell) = \prod_{p\in P}\frac{(1-q^{r(p)+\ell+1})}{(1-q^{r(p)+1})}.\]
Then for any $\ell \geq 1$, $(\mathrm{PP}^{\ell}(P),\langle \mathrm{row}\rangle,F(P,\ell))$ exhibits cyclic sieving.
\end{conj}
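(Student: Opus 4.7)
The plan is to attack this conjecture by a type-by-type verification over the five families of connected minuscule posets, since a fully uniform proof seems beyond current technology. First note that by Theorem~\ref{thm:minuscule_row_pp_order} the operator $\mathrm{row}$ on $\mathrm{PP}^{\ell}(P)$ already has order exactly $h$, so the underlying cyclic group action is correctly framed; what remains is to verify, for each divisor $d\mid h$ and primitive $d$-th root of unity $\zeta$, that $F(P,\ell)(\zeta)$ equals the number of $\mathrm{row}^{h/d}$-fixed points. Note also that the $\ell=1$ case is already settled uniformly by Rush--Shi's Theorem~\ref{thm:minuscule_row_cyc_siev}, so one is really trying to bootstrap from height $1$ to arbitrary height.

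For the rectangle $P=[a]\times[b]$, this is essentially Rhoades' theorem~\cite{rhoades2010cyclic}: semistandard Young tableaux of shape $a\times b$ with entries in $\{0,1,\ldots,\ell\}$ correspond bijectively to $\mathrm{PP}^{\ell}(P)$, the polynomial $F(P,\ell)$ is the $q$-analog of the hook content formula, and under standardization piecewise-linear rowmotion corresponds (up to the relation with promotion discussed in Remark~\ref{rem:gpt_details}) to Sch\"utzenberger promotion on the resulting rectangular standard Young tableaux. Rhoades' cyclic sieving theorem for promotion on rectangular SYT is then exactly the required statement. For the propeller poset $\Lambda_{\mathbb{Q}^{2n}}$, the poset has very simple structure (two chains joined by a diamond in the middle), so one can explicitly parametrize $\mathrm{PP}^{\ell}(\Lambda_{\mathbb{Q}^{2n}})$ by tuples of weakly increasing integer sequences subject to a single branching condition, and verify directly that $\mathrm{row}$ becomes (essentially) a cyclic shift on this parametrization, from which cyclic sieving for $F(P,\ell)$ follows by inspection.

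For the shifted staircase $\Lambda_{\mathrm{OG}(n,2n)}$, the plan would be to identify $\mathrm{PP}^{\ell}(P)$ with shifted semistandard tableaux of staircase shape with bounded entries, and to appeal to cyclic sieving results for promotion on shifted tableaux (due to Pechenik, Patrias, Purbhoo and coauthors), transferring from promotion to rowmotion via their conjugacy in the toggle group as mentioned in Remark~\ref{rem:gpt_details}. For the two exceptional minuscule posets $\Lambda_{\mathbb{OP}^2}$ and $\Lambda_{G_{\omega}(\mathbb{O}^3,\mathbb{O}^6)}$, I would seek to realize $F(P,\ell)$ as a principal specialization of the Weyl character of $V(\ell\omega)$, where $\omega$ is the corresponding minuscule weight (note that the product formula for $F(P,\ell)$ is already a $q$-Weyl dimension formula, cf. Remark~\ref{rem:ppart_wt_gf}), and then to exhibit a cyclic $\mathbb{Z}/h\mathbb{Z}$-action on a combinatorial model of the weight basis of $V(\ell\omega)$ (for example Littelmann paths, or the minuscule crystal $B(\ell\omega)$) that realizes rowmotion; cyclic sieving would then follow from a graded character computation, in the spirit of the crystal/cactus-group connection alluded to in the acknowledgements.

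The main obstacle is this last step: the exceptional types $E_6$ and $E_7$ cases must be handled for \emph{all} $\ell$ simultaneously, so a finite computer check is not enough. A uniform crystal-theoretic reinterpretation of rowmotion on $\mathrm{PP}^{\ell}(P)$ would be the most satisfying route, but establishing that such an action has the required order-$h$ cyclic structure and tracks the principal specialization of the character is subtle, since no general link between piecewise-linear rowmotion and crystal combinatorics is yet known outside of special cases. A fallback strategy is to compute $F(P,\ell)(\zeta^k)$ explicitly by cancellation in the product formula (where many factors collapse at $h$-th roots of unity because of the palindromic structure of the rank multiplicities), and to independently compute the $\mathrm{row}^k$-fixed sets via their description as $P$-partitions invariant under a rotational symmetry of the poset; matching the two yields the conjecture without a crystal model but requires careful type-by-type bookkeeping.
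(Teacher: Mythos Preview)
The statement you are attempting to prove is a \emph{conjecture} in the paper; there is no proof there to compare against. The paper only records that the rectangle case is known (Remark~\ref{rem:cyc_siev_rect}, via Rhoades' SSYT cyclic sieving and, independently, Shen--Weng), and reports computer checks for the other minuscule types at small~$\ell$. So your proposal is not being measured against an existing argument but against an open problem.

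That said, your outline has genuine gaps beyond the ones you flag. For the rectangle, your bijection is miswired: the correct identification (as in Remark~\ref{rem:cyc_siev_rect}) sends $\mathrm{PP}^{\ell}([a]\times[b])$ to SSYT of shape $a\times\ell$ with entries in $\{1,\ldots,a+b\}$, on which jeu-de-taquin promotion has order $a+b=h$; Rhoades' \emph{semistandard} theorem (his Theorem~1.4) then applies directly. Your route through SSYT of shape $a\times b$ with entries bounded by $\ell$, followed by standardization to rectangular SYT, would produce a promotion action of the wrong order (namely $\ell+1$ or $ab$, not $h$), and Rhoades' SYT theorem does not match the polynomial $F(P,\ell)$. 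For the shifted staircase, the literature you invoke does not contain what you need: Purbhoo's result concerns \emph{standard} shifted staircase tableaux (i.e., linear extensions), not semistandard tableaux of arbitrary bounded height, so no existing cyclic sieving theorem covers $\mathrm{PP}^{\ell}(\Lambda_{\mathrm{OG}(n,2n)})$ for general $\ell$. For the propeller you assert but do not carry out the computation, and for the exceptional types you yourself acknowledge the argument is aspirational. In short, only the rectangle is actually settled by your sketch, and even there the details need correcting; the remaining four families are precisely where the conjecture is open.
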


\begin{remark} \label{rem:cyc_siev_rect}
Conjecture~\ref{conj:minuscule_row_cyc_siev} is known to be true in Type A, i.e., in the case of the rectangle $P=[a]\times[b]$. In fact, there are two different proofs of this cyclic sieving result, as we now explain. 

The first proof is based on relating (piecewise-linear) rowmotion of $P$-partitions to  (jeu-de-taquin) promotion of semistandard Young tableaux. To relate rowmotion of $P$-partitions to promotion of semistandard tableaux, we first need to instead consider (piecewise-linear) \emph{promotion} of $P$-partitions: this is a slightly different operator than rowmotion, but, as discussed in Remark~\ref{rem:gpt_details} above, it follows from Striker-Williams~\cite{striker2012promotion} that these two operators have the same orbit structure. Then, as explained in~\cite[pp.~516-517]{einstein2014piecewise} or~\cite[Appendix A]{hopkins2019cyclic}, we observe that there is a simple bijection (based on Gelfand-Tsetlin patterns) from $\mathrm{PP}^{\ell}([a]\times[b])$ to the set of semistandard Young tableaux of shape~$a \times \ell$ with entries in $\{1,2,\ldots,a+b\}$ such that (piecewise-linear) promotion of $P$-partitions corresponds to (jeu-de-taquin) promotion of semistandard tableaux. And finally, Rhoades~\cite[Theorem 1.4]{rhoades2010cyclic} proved that promotion of semistandard tableaux of rectangular shape with bounded entries exhibits cyclic sieving for the appropriate polynomial. Rhoades's argument employed the dual canonical basis for representations of~$GL_n$~\cite{lusztig1990canonical, du1992canonical}.

Recently, Shen and Weng~\cite{shen2018cyclic} gave a different proof of Conjecture~\ref{conj:minuscule_row_cyc_siev} in the case of the rectangle. Shen and Weng worked directly with piecewise-linear toggles, although again they considered promotion of $P$-partitions, rather than rowmotion. Their approach was from the perspective of the ``cluster duality conjecture'' of Fock and Goncharov~\cite{fock2009cluster} and applied the breakthrough work of Gross, Hacking, Keel, and Kontsevich~\cite{gross2018canonical} on canonical bases for cluster algebras.
\end{remark}

\begin{remark}
Rhoades~\cite[Theorem 1.5]{rhoades2010cyclic} also established a content-refined version of cyclic sieving for rectangular semistandard Young tableaux. Namely, if $\mu$ is a vector of nonnegative integers invariant under $d$-fold rotation, then the $d$th power of promotion acts on the set of rectangular semistandard tableaux with content $\mu$. Rhoades showed that this action exhibits cyclic sieving (at least up to taking absolute value) with the sieving polynomial being a Kostka-Foulkes polynomial. (Fontaine and Kamnitzer~\cite[Theorem 5.2]{fontaine2014cyclic} found the correct power of $q$ to multiply the Kostka-Foulkes polynomial by to give cyclic sieving on the nose.)

There is possibly a refined version of Conjecture~\ref{conj:minuscule_row_cyc_siev} as well, as we know explain. Let $P$ be a minuscule poset corresponding to the minuscule weight $\omega$ of the irreducible crystallographic root system $\Phi$. Let $W$ be the Weyl group and $\Lambda$ the weight lattice of~$\Phi$. There is a canonical identification, which we call \emph{weight}, between the distributive lattice $J(P)$ and the Weyl group orbit of $\omega$: $\mathrm{wt}\colon J(P)\simeq W(\omega)$. Rush and Shi~\cite[Theorem 6.3]{rush2013orbits} showed that for each simple reflection $s_i \in W$ there is a corresponding subset $X_i \subseteq P$ so that $\mathrm{wt}((\prod_{p\in X_i} \tau_p) I) = s_i(\mathrm{wt}(I))$ for all $I\in J(P)$. We can extend the weight map to $\mathrm{wt}\colon \mathrm{PP}^{\ell}(P)\to \Lambda$ by defining $\mathrm{wt}(T) := \sum_{i=0}^{\ell-1}\mathrm{wt}(T^{-1}(\{0,1,\ldots,i\}))$. It is possible to extend Rush and Shi's result to show that $\mathrm{wt}((\prod_{p\in X_i} \tau^{\mathrm{PL}}_p) T) = s_i(\mathrm{wt}(T))$ for all $T\in \mathrm{PP}^{\ell}(P)$. This means that if $\mu \in \Lambda$ is a weight for which $c^d(\mu)=\mu$, where $c\in W$ is a Coxeter element, then the $d$th power of piecewise-linear promotion acts on the set of $T\in \mathrm{PP}^{\ell}(P)$ with $\mathrm{wt}(T)=\mu$. It seems highly plausible that this action exhibits cyclic sieving (at least up to taking absolute value) with the sieving polynomial being Lusztig's $q$-analog of weight multiplicity~\cite{lusztig1983singularities, joseph2000brylinski}. (Lusztig's $q$-analogs of weight multiplicity are the same as the Kostka-Foulkes polynomials in Type A.) We thank Joel Kamnitzer for suggesting that we consider the $q$-analog of weight multiplicity.
\end{remark}

\begin{remark} \label{rem:invariant_tensors}
Some of Rhoades's cyclic sieving results from~\cite{rhoades2010cyclic} have been extended to other minuscule types in the work of Fontaine and Kamnitzer~\cite{fontaine2014cyclic}, and of Westbury and his collaborators~\cite{westbury2016invariant, rubey2015symplectic, pfannerer2018promotion}. The basic idea is to study the Henriques-Kamnitzer~\cite{henriques2006crystals} cactus group action on the tensor product of crystals corresponding to minuscule representations. But, as far as we can tell, this crystal perspective does not seem to be directly related to Conjecture~\ref{conj:minuscule_row_cyc_siev}. However, there may be a connection between this crystal perspective and Conjecture~\ref{conj:root_poset_cyc_siev} below: that is because the dimension of the space of invariant tensors of $\otimes^{r}V$, where $V$ is the spin representation of the spin group $\mathrm{Spin}(2n+1)$, is equal to $\#\mathrm{PP}^{n}(\Phi^{+}(A_r))$ (see~\cite[Example 2.4]{pfannerer2018promotion}). We thank Bruce Westbury for explaining this possible connection to us.
\end{remark}

Among the cases not addressed by Remark~\ref{rem:cyc_siev_rect}, we have checked Conjecture~\ref{conj:minuscule_row_cyc_siev} by computer with $\ell=2,3,4$ for $P=\Lambda_{\mathrm{OG}(n,2n)}$ with $2\leq n \leq 7$, for $P=\Lambda_{\mathbb{Q}^{2n}}$ with $2\leq n \leq 6$, and for $P=\Lambda_{\mathbb{OP}^2}$ and $P=\Lambda_{G_{\omega}(\mathbb{O}^3,\mathbb{O}^6)}$.

Now let's consider rowmotion acting on $P$-partitions of height~$\ell$ for $P=\Phi^+$ a root poset. Unfortunately, already for $P=\Phi^+(D_4)$ and $\ell=2$ there is a $\mathrm{row}$-orbit of $\mathrm{PP}^{\ell}(P)$ of size $54$, which is a lot bigger than the Coxeter number~$h=6$ and dashes any hope of good behavior for rowmotion acting on the $P$-partitions of an arbitrary root poset. However, for root posets of coincidental type it does appear that we still have good behavior. For instance, results of Grinberg-Roby~\cite{grinberg2016birational1, grinberg2015birational2} imply that the order of rowmotion acting on $\mathrm{PP}^{\ell}(\Phi^+)$ for $\Phi$ of coincidental type divides $2h$:

\begin{thm}
Let $P = \Phi^+$ be a root poset of coincidental type. Let $h$ denote the Coxeter number of $\Phi$. Then for any $\ell \geq 1$, $\mathrm{row}$ acting on $\mathrm{PP}^{\ell}(P)$ has order
\[ \mathrm{ord}(\mathrm{row}) =\begin{cases} 2h &\Phi=A_n (n\geq 2), I_2(2m+1); \\ h &\Phi = B_n, H_3, I_2(2m).\end{cases}\]
\end{thm}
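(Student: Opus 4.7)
The plan is a type-by-type analysis combining an upper bound from birational rowmotion with a lower bound from combinatorial rowmotion. For the upper bound, the paragraph preceding the theorem already notes that Grinberg-Roby~\cite{grinberg2016birational1, grinberg2015birational2} imply $\mathrm{ord}(\mathrm{row})$ divides $2h$ on $\mathrm{PP}^{\ell}(\Phi^+)$ for any $\Phi$ of coincidental type, since piecewise-linear rowmotion tropicalizes birational rowmotion and hence inherits its divisibility bound. For the lower bound, I would observe that the inclusion $J(\Phi^+) = \mathrm{PP}^{1}(\Phi^+) \hookrightarrow \mathrm{PP}^{\ell}(\Phi^+)$ which identifies an order ideal $I$ with the indicator $\chi_{P \setminus I}$ is equivariant for rowmotion: a direct case check on the four possibilities for $T(p) \in \{0,1\}$ shows that the piecewise-linear toggle formula $\min + \max - T(p)$ always returns a value in $\{0,1\}$ and agrees with the combinatorial toggle on such $T$. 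Consequently the order of combinatorial rowmotion on $J(\Phi^+)$ divides the order of piecewise-linear rowmotion on $\mathrm{PP}^{\ell}(\Phi^+)$.

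Now apply these bounds type by type. By Theorem~\ref{thm:root_row_cyc_siev} and the remark following it, combinatorial rowmotion on $J(\Phi^+)$ has order exactly $2h$ when $\Phi = A_n$ with $n \geq 2$, and exactly $h$ when $\Phi = B_n$; the same case-check extends this to the non-crystallographic coincidental types, giving order $2h$ for $I_2(2m+1)$ and order $h$ for $H_3$ and $I_2(2m)$, the dichotomy tracking whether $-w_0$ is the identity in the reflection group. Thus for $\Phi = A_n$ ($n \geq 2$) and $\Phi = I_2(2m+1)$, the combinatorial lower bound of $2h$ already matches the Grinberg-Roby upper bound of $2h$, proving the claim in these cases.

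What remains is the refined upper bound $\mathrm{ord}(\mathrm{row}) \mid h$ for $\Phi \in \{B_n, H_3, I_2(2m)\}$, which is the piecewise-linear analog of the statement $\mathrm{row}^h = -w_0 = \mathrm{id}$ available at the combinatorial level. For $I_2(m)$ the poset $\Phi^+(I_2(m))$ is simple enough (two minimal elements covered by a single chain) that one can verify the piecewise-linear identity $\mathrm{row}^h = \mathrm{id}$ on $\mathcal{O}(\Phi^+(I_2(m)))$, and hence on $\mathrm{PP}^{\ell}$ for all $\ell$ simultaneously, by explicit coordinate computation. For $B_n$ and $H_3$, I would invoke the forthcoming paper of Okada~\cite{okada2019birational}, which establishes finer birational rowmotion order results for exactly these posets. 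The main obstacle is the type $B_n$ case: showing that birational rowmotion on the shifted double staircase $\Phi^+(B_n)$ has order dividing $2n$ rather than $4n$ requires substantial new computation, since the generic Grinberg-Roby divisibility bound of $2h$ does not suffice and there is no obvious direct lift of the combinatorial identity $\mathrm{row}^h = -w_0$ from $J(\Phi^+)$ to $\mathrm{PP}^{\ell}(\Phi^+)$.
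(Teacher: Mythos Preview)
Your overall architecture matches the paper's: a lower bound from the $\mathrm{row}$-equivariant embedding $\mathrm{PP}^{1}(P)\hookrightarrow\mathrm{PP}^{\ell}(P)$ (the paper uses $T\mapsto\ell\cdot T$, which is the same idea), and an upper bound by tropicalizing Grinberg--Roby's birational periodicity results. For $A_n$ ($n\geq 2$) and $I_2(2m+1)$ this suffices exactly as you say, and your explicit treatment of $I_2(2m)$ is fine.

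The genuine gap is your handling of $B_n$ and $H_3$. Your appeal to Okada~\cite{okada2019birational} is misplaced: that paper concerns \emph{minuscule} posets, not root posets, so it says nothing about $\Phi^+(B_n)$ or $\Phi^+(H_3)$. For $H_3$ the fix is easy: it is a single fixed $15$-element poset, and Grinberg--Roby~\cite[\S13]{grinberg2015birational2} already verified the birational order there, so no additional argument is needed. But for $B_n$ the birational order is \emph{still open} (it is Conjecture~73 in~\cite{grinberg2015birational2}), so you cannot cite your way out, and your own concluding sentence effectively concedes the case is unproved.

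The paper resolves $B_n$ by a folding trick you missed, and it works purely at the piecewise-linear level without needing the birational statement. Namely, $\mathrm{PP}^{\ell}(\Phi^+(B_n))$ embeds $\mathrm{row}$-equivariantly into $\mathrm{PP}^{\ell}(\Phi^+(A_{2n-1}))$ as the $P$-partitions fixed by the vertical reflection symmetry of $\Phi^+(A_{2n-1})$. Grinberg--Roby~\cite[Theorem~65]{grinberg2015birational2} proved (birationally, hence piecewise-linearly) that $\mathrm{row}^{n}$ on $\Phi^+(A_{2n-1})$ \emph{is} that reflection. Hence on the symmetric locus $\mathrm{row}^{n}=\mathrm{id}$, and since $h=2n$ for $B_n$ this gives the required upper bound $\mathrm{ord}(\mathrm{row})\mid h$ with no ``substantial new computation'' at all.
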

\begin{proof}
The map $T \mapsto \ell \cdot T$ gives a $\mathrm{row}$-equivariant embedding of $\mathrm{PP}^{1}(P)$ into $\mathrm{PP}^{\ell}(P)$. Since it is known that the order of $\mathrm{row}$ acting on $\mathrm{PP}^{1}(P)$ is the claimed value (see the discussion after Theorem~\ref{thm:root_row_cyc_siev} above), we need only prove that the order of $\mathrm{row}$ acting on $\mathrm{PP}^{\ell}(P)$ divides the claimed value.

For $\Phi$ other than $\Phi=B_n$ the birational analog of this result was proved by Grinberg-Roby~\cite{grinberg2016birational1, grinberg2015birational2} (see the discussion after Conjecture~\ref{conj:root_bi_row_ord} below). Via tropicalization it therefore holds at the piecewise-linear level as well. Meanwhile, the $P$-partitions in $\mathrm{PP}^{\ell}(\Phi^+(B_n))$ naturally correspond to the $P$-partitions in $\mathrm{PP}^{\ell}(\Phi^+(A_{2n-1}))$ which are invariant under the reflection across the vertical axis of symmetry of $\Phi^+(A_{2n-1})$. This embedding of $\mathrm{PP}^{\ell}(\Phi^+(B_n))$ into $\mathrm{PP}^{\ell}(\Phi^+(A_{2n-1}))$ is easily seen to be $\mathrm{row}$-equivariant. A theorem of Grinberg-Roby~\cite[Theorem 65]{grinberg2015birational2} says that $\mathrm{row}^{n}$ acting on $\mathrm{PP}^{\ell}(\Phi^+(A_{2n-1}))$ is the reflection across the vertical axis of symmetry of $\Phi^+(A_{2n-1})$. This gives the desired result for $\Phi=B_n$ as well.
\end{proof}

 Moreover, we conjecture the following cyclic sieving result:

\begin{conj} \label{conj:root_poset_cyc_siev}
Let $P = \Phi^+$ be a root poset of coincidental type.  Define the \emph{$q$-$\Phi$-multi-Catalan number} by
\[\mathrm{Cat}(\Phi,\ell;q) \coloneqq  \prod_{j=0}^{\ell-1} \prod_{i=1}^{n} \frac{(1-q^{h+d_i+2j})}{(1-q^{d_i+2j})}\]
where $d_1,\ldots,d_n$ are the degrees of $\Phi$ and $h$ is its Coxeter number. Then for any $\ell \geq 1$, $\mathrm{row}$ acting on $\mathrm{PP}^{\ell}(P)$ has order dividing $2h$. And if we let $C=\langle c\rangle\simeq\mathbb{Z}/2h\mathbb{Z}$ act on $\mathrm{PP}^{\ell}(P)$ by $c(I) :=\mathrm{row}(I)$, then $(\mathrm{PP}^{\ell}(P),C,\mathrm{Cat}(\Phi,\ell;q))$ exhibits cyclic sieving.
\end{conj}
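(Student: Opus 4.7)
The order divisibility is immediate from the theorem stated immediately before the conjecture: rowmotion on $\mathrm{PP}^{\ell}(\Phi^+)$ has order exactly $h$ or $2h$ depending on the type, both of which divide $2h$. So the real work is in establishing the cyclic sieving polynomial identity, and I would proceed by a type-by-type analysis guided by the minuscule doppelg\"anger philosophy developed throughout the paper.

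For the types $B_n$, $H_3$, and $I_2(2m)$, the root poset $\Phi^+$ appears as the non-minuscule member of a minuscule doppelg\"anger pair with some minuscule partner $P_{\min}$. My plan is to first establish the piecewise-linear lift of Conjecture~\ref{conj:min_dop_row}, producing a rowmotion-equivariant bijection between $\mathrm{PP}^{\ell}(P_{\min})$ and $\mathrm{PP}^{\ell}(\Phi^+)$ (or at least a bijection on orbits that preserves sizes). Combined with Conjecture~\ref{conj:minuscule_row_cyc_siev} applied to $P_{\min}$, this would transfer cyclic sieving to the root poset side, modulo the polynomial identity $F(P_{\min},\ell)[q \mapsto q^2] = \mathrm{Cat}(\Phi,\ell;q)$. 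For $\ell=1$ this identity was verified in Remark~\ref{rem:cyc_siev_polys}, and the general $\ell$ case should be a routine, if tedious, manipulation of $q$-shifted factorials, since the degrees $d_i$ of the Weyl group of $\Phi$ are arithmetic progressions in the coincidental types, precisely so that the products collapse in a manner compatible with $F(P_{\min},\ell)$. For $A_n$, I would attempt the analogous approach using the ``lookalike'' relationship between $\Phi^+(A_n)$-partitions and shifted staircase $P$-partitions (Remark~\ref{rem:a_root_poset}), transferring cyclic sieving from the minuscule poset $\Lambda_{\mathrm{OG}(n+1,2n+2)}$. The dihedral case $I_2(m)$ is small enough (a zigzag with one linear tail) for a direct enumeration: both $\#\mathrm{PP}^{\ell}(\Phi^+(I_2(m)))$ and the cardinalities $\#\{T \colon \mathrm{row}^k(T) = T\}$ admit closed-form descriptions which can be matched against evaluations of $\mathrm{Cat}(I_2(m),\ell;q)$ at roots of unity.

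The main obstacle is that essentially every ingredient of this strategy is itself currently open. The piecewise-linear lift of Conjecture~\ref{conj:min_dop_row} is conjectural — even the combinatorial version was only resolved for order ideals by Dao-Wellman-Yost-Wolff-Zhang (Remark~\ref{rem:reu}) — and Conjecture~\ref{conj:minuscule_row_cyc_siev} is established only for rectangles (Remark~\ref{rem:cyc_siev_rect}), so for the $H_3$ and higher-$n$ applications one would first have to prove the minuscule cyclic sieving conjecture uniformly, or at least for $\Lambda_{\mathrm{OG}(6,12)}$ and $\Lambda_{\mathbb{Q}^{2n}}$. The hardest individual piece is probably the $A_n$ case: the lookalike bijection does not preserve all natural statistics (the two posets are not literal doppelg\"angers), so showing it intertwines piecewise-linear rowmotion is a genuinely new problem, potentially requiring an entirely different approach — perhaps via a $K$-theoretic jeu-de-taquin argument in the spirit of Hamaker-Patrias-Pechenik-Williams, or by directly analyzing the toggle dynamics using Stembridge-style heap methods as Rush-Shi did in the minuscule setting.
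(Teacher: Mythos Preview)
This statement is labeled a \emph{conjecture} in the paper, and the paper does not supply a proof. All the paper offers is: (i) the observation that the case $\ell=1$ is a consequence of Theorem~\ref{thm:root_row_cyc_siev} (Armstrong--Stump--Thomas) since $\mathrm{Cat}(\Phi,1;q)=\mathrm{Cat}(\Phi;q)$; (ii) the polynomial identity of Remark~\ref{rem:cyc_siev_conj_polys} showing consistency with Conjecture~\ref{conj:minuscule_row_cyc_siev}; and (iii) computer verification for $\ell=2,3,4$ and small rank. So there is no ``paper's own proof'' to compare your proposal against.

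Your write-up reflects an accurate understanding of this. You correctly note that the order-divisibility assertion is already handled by the preceding theorem, and you lay out a plausible research program for the cyclic sieving part: transfer from the minuscule side via the doppelg\"anger (or lookalike) bijections, granting Conjectures~\ref{conj:minuscule_row_cyc_siev} and~\ref{conj:min_dop_row_ppart}. You are also honest that each ingredient is itself open. One small sharpening: the polynomial identity $[F(P_{\min},\ell)]_{q\mapsto q^2}=\mathrm{Cat}(\Phi,\ell;q)$ for the three doppelg\"anger pairs is not merely ``should be routine'' --- it is already asserted in Remark~\ref{rem:cyc_siev_conj_polys}, so that piece is done. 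The genuine gaps are exactly the ones you name: Conjecture~\ref{conj:minuscule_row_cyc_siev} beyond the rectangle, a $\mathrm{row}$-equivariant bijection at the $P$-partition level (the paper explicitly notes after Conjecture~\ref{conj:min_dop_row_ppart} that the Hamaker et~al.\ bijection does \emph{not} commute with rowmotion for $\ell>1$), and the $A_n$ case where no doppelg\"anger exists. Your proposal is a fair sketch of what a proof might eventually look like, but it is a program, not a proof.
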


Though not obvious, we do have that $\mathrm{Cat}(\Phi,\ell;q) \in \mathbb{N}[q]$. Indeed, in Type~A we have
\[ \mathrm{Cat}(A_n,\ell;q)= \prod_{1\leq i \leq j \leq n} \frac{(1-q^{2\ell+i+j})}{(1-q^{i+j})},\]
which is $q^{\ell \cdot \binom{n+1}{2}}$ times the expression denoted ``(CGI)'' by Proctor in~\cite{proctor1990new}. Proctor showed that this expression is a generating function for $P$-partitions in $\mathrm{PP}^{\ell}(\Phi^+(A_n))$ with respect to a certain statistic. Meanwhile, for Types $B_n$, $H_3$, and $I_2(2m)$ the polynomials $\mathrm{Cat}(\Phi,\ell;q)$ are basically the same as the size generating functions for the $P$-partitions of their minuscule doppelg\"{a}ngers (see Remark~\ref{rem:cyc_siev_conj_polys} below).

\begin{remark}
The multi-Catalan numbers are {\bf not} the same as the more well-known \emph{Fuss-Catalan numbers} (see~\cite[\S3.5]{armstrong2009generalized} and~\cite{fomin2005generalized} for the Fuss-Catalan numbers). These polynomials $\mathrm{Cat}(\Phi,\ell;q)$ first appeared in a paper of Ceballos, Labb\'{e}, and Stump~\cite[\S9]{ceballos2014subword} concerning multi-triangulations and the multi-cluster complex. In particular, Ceballos-Labb\'{e}-Stump conjectured for the coincidental types that $\mathrm{Cat}(\Phi,\ell;q)$ is a cyclic sieving polynomial for the action of the Auslander-Reiten translate (a.k.a.~the ``deformed'' or ``tropical'' Coxeter element) on the facets of the multi-cluster complex. In Type A, this action is rotation of the multi-triangulations of a polygon. Their conjecture remains open (to our knowledge), except for the case $\ell=1$ which is addressed by the work of Eu and Fu~\cite{eu2008cyclic}. That these same polynomials appear in Conjecture~\ref{conj:root_poset_cyc_siev} suggests that perhaps toggling of coincidental type root poset $P$-partitions could be related to cluster algebras and/or Auslander-Reiten quivers. However, note that the orders of rowmotion and of the Auslander-Reiten translate are not the same, even in the case $\ell=1$. At any rate, it is known that the multi-Catalan numbers do count the root poset $P$-partitions for the coincidental types: see~\cite[\S4.6.1]{williams2013cataland} and~\cite[Theorem 3]{hamaker2015subwords}.
\end{remark}

The case $\ell=1$ of Conjecture~\ref{conj:root_poset_cyc_siev} is of course a consequence of Theorem~\ref{thm:root_row_cyc_siev} because $\mathrm{Cat}(\Phi,1;q)=\mathrm{Cat}(\Phi;q)$. We have checked Conjecture~\ref{conj:root_poset_cyc_siev} by computer with $\ell=2,3,4$ for $\Phi=A_n$ with $n \leq 5$, for $\Phi=B_n$ with $n \leq 4$, for $\Phi=H_3$, and for $\Phi=I_2(m)$ with $3 \leq m \leq 6$.

So far we have discussed the order/orbit structure of rowmotion acting on $P$-partitions (i.e., we've discussed extensions of Theorems~\ref{thm:minuscule_row_cyc_siev} and~\ref{thm:root_row_cyc_siev}). But we can also consider homomesies for rowmotion acting on $P$-partitions (i.e., extensions of Theorems~\ref{thm:minuscule_row_homo} and~\ref{thm:root_row_homo}). It turns out that the CDE property (or rather, the tCDE property) continues to be extremely useful here, as we now explain.

The homomesy that we are interested in generalizing is the one involving antichain cardinality. So we need a piecewise-linear analog of the antichain cardinality statistic. First we define the \emph{piecewise-linear toggleability statistics} $\mathcal{T}^{\mathrm{PL}}_{p^+}, \mathcal{T}^{\mathrm{PL}}_{p^-},\mathcal{T}^{\mathrm{PL}}_p\colon \mathbb{R}^{P} \to \mathbb{R}$ by 
\begin{align*}
\mathcal{T}^{\mathrm{PL}}_{p^+}(f) &\coloneqq  f(p) - \mathrm{max}\{f(q)\colon\textrm{$p$ covers $q \in \widehat{P}$}\}; \\
\mathcal{T}^{\mathrm{PL}}_{p^-}(f) & \coloneqq  \mathrm{min}\{f(q)\colon\textrm{$q \in \widehat{P}$ covers $p$}\} - f(p); \\
\mathcal{T}^{\mathrm{PL}}_p(f) &\coloneqq  \mathcal{T}^{\mathrm{PL}}_{p^+}(f) - \mathcal{T}^{\mathrm{PL}}_{p^-}(f).
\end{align*} 
Of course these are defined so that $\mathcal{T}^{\mathrm{PL}}_{p^+}(f_I) = \mathcal{T}_{p^+}(I)$ and $\mathcal{T}^{\mathrm{PL}}_{p^-}(f_I) = \mathcal{T}_{p^-}(I)$ for all order ideals $I \in J(P)$. Then we can analogously define the \emph{piecewise-linear down-degree statistic} $\mathrm{ddeg}^{\mathrm{PL}}\colon \mathbb{R}^{P} \to \mathbb{R}$ by
\[ \mathrm{ddeg}^{\mathrm{PL}} \coloneqq  \sum_{p\in P}\mathcal{T}^{\mathrm{PL}}_{p^-}.\]
Since antichain cardinality and $\mathrm{ddeg}$ are the same for order ideals, $\mathrm{ddeg}^{\mathrm{PL}}$ will serve as our piecewise-linear analog of antichain cardinality. 

Observe that Stanley's transfer map~$\phi\colon\mathbb{R}^P \to \mathbb{R}^P$ is defined by $\phi(f)(p) \coloneqq  \mathcal{T}^{\mathrm{PL}}_{p^-}(f)$ for all $p\in P$ and $f \in \mathbb{R}^P$. Thus, $\mathrm{ddeg}^{\mathrm{PL}}(f) = \sum_{p\in P} \phi(f)(p)$ for any $f \in \mathbb{R}^P$. (Compare this to the natural piecewise-linear analog of the order ideal cardinality, which would be $f \mapsto \#P - \sum_{p\in P} f(p)$.) Also observe that for $T\in\mathrm{PP}^{\ell}(P)$, $\mathrm{ddeg}(T)=\ell\cdot\mathrm{ddeg}^{\mathrm{PL}}(\frac{1}{\ell}T)$.

Writing down-degree in terms of the toggleability statistics is extremely useful, because identities among the toggleability statistics lift from the combinatorial level to the whole order polytope, as the next lemma explains.

\begin{lemma} \label{lem:order_poly_toggle_eqs}
Suppose we have an equality of functions $J(P)\to \mathbb{R}$:
\[ \sum_{p\in P} c_{p^+}\mathcal{T}_{p^+} + c_{p^-}\mathcal{T}_{p^-}=\delta \]
where $c_{p^+},c_{p^-}$ for $p\in P$ and $\delta$ are constants in~$\mathbb{R}$. Then for any $f \in \mathcal{O}(P)$ we have
\[ \sum_{p\in P} c_{p^+}\mathcal{T}^{\mathrm{PL}}_{p^+}(f) + c_{p^-}\mathcal{T}^{\mathrm{PL}}_{p^-}(f) =\delta.\]
\end{lemma}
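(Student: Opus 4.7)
The plan is to exploit the canonical triangulation of the order polytope $\mathcal{O}(P)$ into simplices indexed by linear extensions of $P$, and to observe that on each such simplex the piecewise-linear toggleability statistics are genuinely affine-linear in $f$. Together with the hypothesis (which supplies the values of the piecewise-linear toggleability statistics at every vertex of $\mathcal{O}(P)$) this will force the claimed identity by a standard convex-combination argument.

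More precisely, I would first recall Stanley's triangulation: for each linear extension $\pi = (p_1 < p_2 < \cdots < p_n)$ of $P$, set
\[ \Delta_\pi \coloneqq  \{f \in \mathcal{O}(P) \colon 0 = f(\, \widehat{0}\,) \leq f(p_1) \leq f(p_2) \leq \cdots \leq f(p_n) \leq f(\, \widehat{1}\,) = 1\}. \]
Then $\mathcal{O}(P) = \bigcup_\pi \Delta_\pi$, and the vertices of $\Delta_\pi$ are exactly the $f_{I_k}$ where $I_k \coloneqq \{p_1,\ldots,p_k\}$ for $k=0,1,\ldots,n$; each such $I_k$ is an order ideal because $\pi$ is a linear extension, so all vertices of all of these simplices are of the form $f_I$ for some $I \in J(P)$.

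Next I would verify that on the interior of $\Delta_\pi$, for every $p = p_i \in P$, the expressions $\max\{f(q)\colon p$ covers $q \in \widehat{P}\}$ and $\min\{f(q)\colon q \in \widehat{P}$ covers $p\}$ each collapse to the value of $f$ at a specific element $q_{\max}(p,\pi), q_{\min}(p,\pi) \in \widehat{P}$ depending only on $\pi$ and $p$ (namely the latest-in-$\pi$ element covered by $p$ and the earliest-in-$\pi$ element covering $p$). Consequently $\mathcal{T}^{\mathrm{PL}}_{p^+}$ and $\mathcal{T}^{\mathrm{PL}}_{p^-}$ restrict to linear functions of $f$ on $\Delta_\pi$, so the whole function
\[ G(f) \coloneqq  \sum_{p\in P} c_{p^+}\mathcal{T}^{\mathrm{PL}}_{p^+}(f) + c_{p^-}\mathcal{T}^{\mathrm{PL}}_{p^-}(f)\]
is affine on $\Delta_\pi$. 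Because $\mathcal{T}^{\mathrm{PL}}_{p^+}(f_I) = \mathcal{T}_{p^+}(I)$ and $\mathcal{T}^{\mathrm{PL}}_{p^-}(f_I) = \mathcal{T}_{p^-}(I)$ at each combinatorial vertex $f_I$, the hypothesis gives $G(f_I) = \delta$ for every $I \in J(P)$, and in particular at every vertex of every $\Delta_\pi$. An affine function that equals $\delta$ at all vertices of a simplex equals $\delta$ on the entire simplex, so $G \equiv \delta$ on each $\Delta_\pi$ and hence on all of $\mathcal{O}(P)$.

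There is no real obstacle here beyond bookkeeping: the crucial technical point is simply that the $\min$ and $\max$ in the definitions of the piecewise-linear toggleability statistics are resolved into specific coordinate projections once one restricts to a single chamber $\Delta_\pi$. Everything else is just convex-combination/affinity reasoning together with the already-established fact that the combinatorial toggleability statistics at $I$ agree with the piecewise-linear ones evaluated at $f_I$.
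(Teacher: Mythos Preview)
Your proposal is correct and is essentially identical to the paper's own proof: both use Stanley's canonical triangulation of $\mathcal{O}(P)$ by linear extensions, observe that $\mathcal{T}^{\mathrm{PL}}_{p^\pm}$ is affine on each simplex $\Delta_\pi$, and then conclude by checking the identity at the vertices $f_I$. You supply a bit more detail than the paper does about \emph{why} the $\min$/$\max$ collapse to fixed coordinates on each $\Delta_\pi$, but the argument is the same.
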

\begin{proof}
Consider the triangulation of the order polytope $\mathcal{O}(P)$ where the maximal simplices are given by $0\leq f(p_1) \leq f(p_2) \leq \cdots \leq f(p_n) \leq 1$ for all linear extensions $p_1, p_2, \ldots, p_n$ of~$P$. And fix one such maximal simplex~$\Delta$. It's clear that $\mathcal{T}^{\mathrm{PL}}_{p^-}$ and $\mathcal{T}^{\mathrm{PL}}_{p^+}$ for $p\in P$ are affine linear functions on~$\Delta$. Also, the vertices of~$\Delta$ are a subset of the vertices of $\mathcal{O}(P)$, i.e., are equal to $f_I$ for certain order ideals $I \in J(P)$. And by supposition we have
\[ \sum_{p\in P} c_{p^+}\mathcal{T}^{\mathrm{PL}}_{p^+}(f_I) + c_{p^-}\mathcal{T}^{\mathrm{PL}}_{p^-}(f_I) =\delta.\]
for any order ideal $I\in J(P)$. Thus, by linearity we have
\[ \sum_{p\in P} c_{p^+}\mathcal{T}^{\mathrm{PL}}_{p^+}(f) + c_{p^-}\mathcal{T}^{\mathrm{PL}}_{p^-}(f) =\delta.\]
for any $f\in \Delta$. But every $f  \in \mathcal{O}(P)$ is in one of these simplices~$\Delta$, so we are done.
\end{proof}

Furthermore, Striker's~\cite{striker2015toggle} key observation about $\mathcal{T}_p$ averaging to zero along rowmotion orbits continues to apply to piecewise-linear rowmotion.

\begin{lemma} \label{lem:pl_striker}
Let $f \in \mathcal{O}(P)$. Then for any $p\in P$ we have
\[\lim_{n \to \infty} \frac{1}{n} \sum_{i=0}^{n-1}\mathcal{T}_{p}^{\mathrm{PL}}( (\mathrm{row}^{\mathrm{PL}})^i(f)) = 0.\]
\end{lemma}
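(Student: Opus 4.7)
The strategy is to find a telescoping identity for $\mathcal{T}^{\mathrm{PL}}_p$ along the rowmotion orbit. Write $h_i := (\mathrm{row}^{\mathrm{PL}})^i(f)$, and fix any linear extension $p_1, \ldots, p_n$ of $P$ with $p_n$ maximal, so that computing $\mathrm{row}^{\mathrm{PL}}(h_i)$ proceeds from top to bottom. Each element is toggled exactly once, and the toggle at $q$ changes only the value at $q$; hence by the time $\tau^{\mathrm{PL}}_p$ is performed, every element covering $p$ in $\widehat{P}$ has already been updated to its $h_{i+1}$-value, while every element covered by $p$ in $\widehat{P}$ still carries its $h_i$-value. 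Writing out the toggle formula at $p$ then gives the recursion
\[
h_{i+1}(p) + h_i(p) \;=\; \min_{q \gtrdot p} h_{i+1}(q) \;+\; \max_{q \lessdot p} h_i(q),
\]
where the min/max range over covers in $\widehat{P}$ with the boundary conventions $h_j(\widehat{0})=0$ and $h_j(\widehat{1})=1$.

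Rearranging this recursion yields the key identity
\[
\mathcal{T}^{\mathrm{PL}}_{p^-}(h_{i+1}) \;=\; \mathcal{T}^{\mathrm{PL}}_{p^+}(h_i),
\]
and consequently $\mathcal{T}^{\mathrm{PL}}_p(h_i) = \mathcal{T}^{\mathrm{PL}}_{p^+}(h_i) - \mathcal{T}^{\mathrm{PL}}_{p^-}(h_i) = \mathcal{T}^{\mathrm{PL}}_{p^-}(h_{i+1}) - \mathcal{T}^{\mathrm{PL}}_{p^-}(h_i)$, which is a discrete derivative. Summing telescopes:
\[
\sum_{i=0}^{n-1} \mathcal{T}^{\mathrm{PL}}_p(h_i) \;=\; \mathcal{T}^{\mathrm{PL}}_{p^-}(h_n) - \mathcal{T}^{\mathrm{PL}}_{p^-}(h_0).
\]
Since $\mathrm{row}^{\mathrm{PL}}$ preserves $\mathcal{O}(P)$, every $h_j$ takes values in $[0,1]$, so $|\mathcal{T}^{\mathrm{PL}}_{p^-}(h_j)| \leq 1$ for all $j$; the right-hand side is therefore bounded by a constant independent of $n$. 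Dividing by $n$ and taking $n \to \infty$ delivers the stated limit $0$.

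The only nontrivial step is the key identity $\mathcal{T}^{\mathrm{PL}}_{p^-}(h_{i+1}) = \mathcal{T}^{\mathrm{PL}}_{p^+}(h_i)$, but this is a direct consequence of expanding $\mathrm{row}^{\mathrm{PL}}$ as the composition of piecewise-linear toggles applied from top to bottom. One can view this identity as the piecewise-linear shadow of Striker's combinatorial observation that each element is ``toggled in as often as it is toggled out'' over any rowmotion orbit. I anticipate no serious obstacle; the proof requires only the toggle-composition description of $\mathrm{row}^{\mathrm{PL}}$, the fact that $\mathcal{O}(P)$ is invariant, and the arithmetic rearrangement above.
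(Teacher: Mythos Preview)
Your proof is correct and is essentially the same as the paper's: both establish the key identity $\mathcal{T}^{\mathrm{PL}}_{p^-}(\mathrm{row}^{\mathrm{PL}}(f)) = \mathcal{T}^{\mathrm{PL}}_{p^+}(f)$, telescope the sum, and invoke boundedness on $\mathcal{O}(P)$. The only cosmetic difference is that you derive the key identity via the explicit toggle recursion for $h_{i+1}(p)$, whereas the paper argues more abstractly that toggles at elements above (resp.\ below) $p$ leave $\mathcal{T}^{\mathrm{PL}}_{p^+}$ (resp.\ $\mathcal{T}^{\mathrm{PL}}_{p^-}$) unchanged.
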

\begin{proof}
Let $p\in P$. It is immediate from the definitions of the piecewise-linear toggles and the toggleability statistics that for any $f \in \mathbb{R}^P$, we have $\mathcal{T}^{\mathrm{PL}}_{p^-}(\tau^{\mathrm{PL}}_p(f)) = \mathcal{T}^{\mathrm{PL}}_{p^+}(f)$. We claim that moreover for any $f \in \mathbb{R}^P$,  we have $\mathcal{T}^{\mathrm{PL}}_{p^-}(\mathrm{row}^{\mathrm{PL}}(f)) = \mathcal{T}^{\mathrm{PL}}_{p^+}(f)$. To see this, we need only need to recall that
\[\mathrm{row}^{\mathrm{PL}} = \tau_{p_1}^{\mathrm{PL}} \circ \tau_{p_2}^{\mathrm{PL}} \circ \cdots \circ \tau_{p_n}^{\mathrm{PL}},  \]
where $p_1,\ldots,p_n$ is some linear extension of $P$, and observe that: if $q \in P$ is greater than or incomparable to $p$, then $\mathcal{T}^{\mathrm{PL}}_{p^+}(\tau^{\mathrm{PL}}_q(f))=\mathcal{T}^{\mathrm{PL}}_{p^+}(f)$; and similarly, if $q \in P$ is less than or incomparable to $p$, then $\mathcal{T}^{\mathrm{PL}}_{p^-}(\tau^{\mathrm{PL}}_q(f))=\mathcal{T}^{\mathrm{PL}}_{p^-}(f)$.

The fact that $\mathcal{T}^{\mathrm{PL}}_{p^-}(\mathrm{row}^{\mathrm{PL}}(f)) = \mathcal{T}^{\mathrm{PL}}_{p^+}(f)$ means that for any $f\in \mathbb{R}^P$ we have
\[ \sum_{i=0}^{n}\mathcal{T}_{p}^{\mathrm{PL}}( (\mathrm{row}^{\mathrm{PL}})^i(f))=\mathcal{T}^{\mathrm{PL}}_{p^+}((\mathrm{row}^{\mathrm{PL}})^{n-1}(f))-\mathcal{T}^{\mathrm{PL}}_{p^-}(f) \]
due to a large number of cancellations. But since $\mathrm{row}^{\mathrm{PL}}$ preserves $\mathcal{O}(P)$, and $\mathcal{T}^\mathrm{PL}_{p^+}$ is bounded on $\mathcal{O}(P)$, we indeed have for any $f\in\mathcal{O}(P)$ that
\[\lim_{n \to \infty} \frac{1}{n} \sum_{i=0}^{n-1}\mathcal{T}_{p}^{\mathrm{PL}}( (\mathrm{row}^{\mathrm{PL}})^i(f)) =\lim_{n \to \infty} \frac{1}{n} \left ( \mathcal{T}^{\mathrm{PL}}_{p^+}((\mathrm{row}^{\mathrm{PL}})^{n-1}(f))-\mathcal{T}^{\mathrm{PL}}_{p^-}(f) \right) = 0, \]
as claimed.
\end{proof}

The above lemmas imply the analog of the antichain cardinality homomesy for piecewise-linear rowmotion acting on the order polytope for posets $P$ with $J(P)$ tCDE:

\begin{thm} \label{thm:row_ppart_homo}
Let $P$ be a poset for which $J(P)$ is tCDE with edge density~$\delta$. Then for any $f\in \mathcal{O}(P)$ we have
\[\lim_{n \to \infty} \frac{1}{n} \sum_{i=0}^{n-1}\mathrm{ddeg}^{\mathrm{PL}}( (\mathrm{row}^{\mathrm{PL}})^i(f)) = \delta.\]
\end{thm}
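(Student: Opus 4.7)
The plan is to reduce the piecewise-linear statement to the combinatorial tCDE hypothesis via the lifting machinery already set up in the section. The heart of the argument will be that the tCDE equation from Proposition~\ref{prop:tcde_eq} lifts verbatim to the order polytope, and then rowmotion averages kill off everything except the constant.

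First I would invoke Proposition~\ref{prop:tcde_eq} to obtain coefficients $c_p \in \mathbb{Q}$ for $p\in P$ such that the equality of functions
\[ \mathrm{ddeg} + \sum_{p\in P} c_p\,\mathcal{T}_p = \delta \]
holds on $J(P)$. Rewriting $\mathrm{ddeg} = \sum_{p\in P}\mathcal{T}_{p^-}$ and $\mathcal{T}_p = \mathcal{T}_{p^+}-\mathcal{T}_{p^-}$, this is an identity of the form $\sum_{p\in P}(c_{p^+}\mathcal{T}_{p^+}+c_{p^-}\mathcal{T}_{p^-})=\delta$ purely in the toggleability statistics. By Lemma~\ref{lem:order_poly_toggle_eqs}, the same linear combination of the piecewise-linear toggleability statistics is identically $\delta$ on $\mathcal{O}(P)$, which I rewrite as
\[ \mathrm{ddeg}^{\mathrm{PL}}(f) + \sum_{p\in P} c_p\,\mathcal{T}^{\mathrm{PL}}_p(f) = \delta \qquad \text{for all } f \in \mathcal{O}(P). \]

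Next I would evaluate this lifted identity at the iterates $(\mathrm{row}^{\mathrm{PL}})^i(f)$, which remain in $\mathcal{O}(P)$ since piecewise-linear rowmotion preserves the order polytope. Averaging over $i=0,1,\ldots,n-1$, dividing by $n$, and letting $n\to\infty$ gives
\[ \lim_{n\to\infty} \frac{1}{n}\sum_{i=0}^{n-1}\mathrm{ddeg}^{\mathrm{PL}}((\mathrm{row}^{\mathrm{PL}})^i(f)) + \sum_{p\in P} c_p\lim_{n\to\infty}\frac{1}{n}\sum_{i=0}^{n-1}\mathcal{T}^{\mathrm{PL}}_p((\mathrm{row}^{\mathrm{PL}})^i(f)) = \delta. \]
By Lemma~\ref{lem:pl_striker}, each of the inner limits on the right equals zero, leaving the desired identity.

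The only real subtlety is making sure the two lemmas actually apply in this combined way, but both have been engineered for exactly this purpose: Lemma~\ref{lem:order_poly_toggle_eqs} already handles the lifting of any affine relation among the $\mathcal{T}_{p^\pm}$ from vertices to the whole polytope by triangulating $\mathcal{O}(P)$ into linear-extension simplices, and Lemma~\ref{lem:pl_striker} gives the Cesàro vanishing of $\mathcal{T}^{\mathrm{PL}}_p$ along rowmotion orbits using boundedness of $\mathcal{T}^{\mathrm{PL}}_{p^+}$ on $\mathcal{O}(P)$. So no new ingredient is needed; the argument is essentially ``tropicalize the tCDE identity, then telescope.'' The main conceptual point worth emphasizing is that this is an unusual direction of implication: a combinatorial identity (on the vertices of $\mathcal{O}(P)$) forces a piecewise-linear identity (on all of $\mathcal{O}(P)$), which is the ``surprising aspect'' flagged in the introduction.
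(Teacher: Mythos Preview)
Your proposal is correct and follows essentially the same approach as the paper's proof: invoke Proposition~\ref{prop:tcde_eq} to get the tCDE identity, lift it to the order polytope via Lemma~\ref{lem:order_poly_toggle_eqs}, and then apply Lemma~\ref{lem:pl_striker} to kill the $\mathcal{T}^{\mathrm{PL}}_p$ terms in the Ces\`aro average. The only cosmetic difference is that you make explicit the rewriting of $\mathrm{ddeg}$ and $\mathcal{T}_p$ in terms of $\mathcal{T}_{p^\pm}$ before applying Lemma~\ref{lem:order_poly_toggle_eqs}, which the paper leaves implicit.
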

\begin{proof}
Let $P$ be a poset for which $J(P)$ is tCDE with edge density~$\delta$. By Proposition~\ref{prop:tcde_eq} we have an equality of functions~$J(P)\to\mathbb{R}$:
\[ \mathrm{ddeg} + \sum_{p\in P}c_p \mathcal{T}_p  = \delta,\]
for some coefficients $c_p \in \mathbb{Q}$. By Lemma~\ref{lem:order_poly_toggle_eqs} that means we have the following equality of functions $\mathcal{O}(P)\to\mathbb{R}$:
\[ \mathrm{ddeg}^{\mathrm{PL}} = \delta- \sum_{p\in P}c_p \mathcal{T}^{\mathrm{PL}}_p.\]
Thus for any $f \in \mathcal{O}(P)$ we have
\[\lim_{n \to \infty} \frac{1}{n} \sum_{i=0}^{n-1}\mathrm{ddeg}^{\mathrm{PL}}( (\mathrm{row}^{\mathrm{PL}})^i(f)) = \delta - \lim_{n \to \infty} \sum_{p \in P} \frac{c_p}{n} \sum_{i=0}^{n-1}\mathcal{T}_{p}^{\mathrm{PL}}( (\mathrm{row}^{\mathrm{PL}})^i(f)) = \delta,\]
by Lemma~\ref{lem:pl_striker}.
\end{proof}

\begin{remark} \label{rem:bounded}
We thank David Einstein for explaining the following to us. Suppose that $P$ is graded. Let $x_0 \in \mathcal{O}(P)$ be the point defined by $x_0(p) = \frac{r(p)+1}{r(P)+2}$ for all~$p \in P$. Note that $x_0$ is a fixed point of $\tau^{\mathrm{PL}}_p$ for every $p\in P$. (In fact, even without assuming $P$ is graded such an $x_0$ exists thanks to the Brouwer fixed-point theorem.) Moreover, it can be verified that for all $m \geq 0$ we have $\tau^{\mathrm{PL}}_p( m(\mathcal{O}(P)-x_0) + x_0)= m(\mathcal{O}(P)-x_0) + x_0$ for every $p\in P$. But every $f \in \mathbb{R}^P$ belongs to $m(\mathcal{O}(P)-x_0) + x_0$ for some $m\geq 0$ depending on $f$. Thus for any $f \in \mathbb{R}^P$ the sequence $f, \mathrm{row}^{\mathrm{PL}}(f), (\mathrm{row}^{\mathrm{PL}})^{2}(f), ...$ belongs to some bounded subset of $\mathbb{R}^P$, and so the conclusion of Lemma~\ref{lem:pl_striker} is true for all $f \in\mathbb{R}^P$. Combined with results we will prove in the next subsection, this will imply that the conclusion of Theorem~\ref{thm:row_ppart_homo} holds for all~$f \in\mathbb{R}^P$ for posets $P$ for which $J(P)$ is tCDE and which have a $2$-dimensional grid-like structure (see Lemma~\ref{lem:bi_toggle_eqs}).
\end{remark}

\begin{cor} \label{cor:row_ppart_homo}
Let $P$ be a poset for which $J(P)$ is tCDE with edge density~$\delta$. Then for any $\ell \geq 1$, $\mathrm{ddeg}$ is $\ell\cdot \delta$-mesic with respect to the action of $\mathrm{row}$ on $\mathrm{PP}^{\ell}(P)$.
\end{cor}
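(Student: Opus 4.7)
The plan is to deduce Corollary~\ref{cor:row_ppart_homo} from Theorem~\ref{thm:row_ppart_homo} by restricting the piecewise-linear dynamics to the lattice points in the order polytope that correspond to $P$-partitions of height $\ell$. First I would recall the equivariant identification $\iota\colon \mathrm{PP}^{\ell}(P) \to \mathcal{O}(P)\cap \tfrac{1}{\ell}\mathbb{Z}^{P}$ given by $T \mapsto \tfrac{1}{\ell}T$; the definition of $\mathrm{row}\colon \mathrm{PP}^{\ell}(P)\to \mathrm{PP}^{\ell}(P)$ as pulled back from $\mathrm{row}^{\mathrm{PL}}$ makes this identification tautologically equivariant. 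Since the piecewise-linear toggles $\tau_p^{\mathrm{PL}}$ preserve $\tfrac{1}{\ell}\mathbb{Z}^P$, this is consistent.

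Next I would record the scaling identity $\mathrm{ddeg}(T) = \ell \cdot \mathrm{ddeg}^{\mathrm{PL}}(\iota(T))$ for every $T \in \mathrm{PP}^{\ell}(P)$; this is immediate from $\mathrm{ddeg}^{\mathrm{PL}} = \sum_{p\in P}\mathcal{T}^{\mathrm{PL}}_{p^-}$ together with the homogeneity of each toggleability statistic $\mathcal{T}^{\mathrm{PL}}_{p^-}$ on rays through $0$ with suitable conventions on $\widehat{0},\widehat{1}$ (and in any event the identity is already implicit in the excerpt right after the definition of $\mathrm{ddeg}^{\mathrm{PL}}$).

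Now fix any $\mathrm{row}$-orbit $\mathcal{O}\subseteq \mathrm{PP}^{\ell}(P)$, and let $N \coloneqq \#\mathcal{O}$. Because $\mathrm{PP}^{\ell}(P)$ is finite, the piecewise-linear orbit of $\iota(T)$ for $T\in\mathcal{O}$ is periodic of period $N$, so the Cesàro average appearing in Theorem~\ref{thm:row_ppart_homo} collapses to the uniform orbit average:
\[
\lim_{n\to\infty}\frac{1}{n}\sum_{i=0}^{n-1}\mathrm{ddeg}^{\mathrm{PL}}\!\bigl((\mathrm{row}^{\mathrm{PL}})^{i}(\iota(T))\bigr) \;=\; \frac{1}{N}\sum_{T'\in\mathcal{O}}\mathrm{ddeg}^{\mathrm{PL}}(\iota(T')).
\]
By Theorem~\ref{thm:row_ppart_homo} this common value equals $\delta$. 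Multiplying by $\ell$ and applying the scaling identity gives $\tfrac{1}{N}\sum_{T'\in\mathcal{O}}\mathrm{ddeg}(T') = \ell\cdot\delta$, which is precisely the $\ell\cdot\delta$-mesy of $\mathrm{ddeg}$ along $\mathcal{O}$.

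There is no real obstacle here: all the analytic content is packaged into Theorem~\ref{thm:row_ppart_homo}, and the corollary is simply the specialization of that piecewise-linear homomesy statement to rational points of denominator~$\ell$, using the tautological facts that $\mathrm{row}^{\mathrm{PL}}$ restricts to $\mathrm{row}$ on $\iota(\mathrm{PP}^{\ell}(P))$ and that time averages over finite orbits equal uniform orbit averages.
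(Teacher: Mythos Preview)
Your proposal is correct and follows essentially the same approach as the paper's own proof, which simply says the result is immediate from Theorem~\ref{thm:row_ppart_homo} together with the scaling identity $\mathrm{ddeg}(T)=\ell\cdot\mathrm{ddeg}^{\mathrm{PL}}(\tfrac{1}{\ell}T)$. You have just spelled out more explicitly why the Ces\`aro limit in Theorem~\ref{thm:row_ppart_homo} reduces to the uniform orbit average on the finite set $\mathrm{PP}^{\ell}(P)$.
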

\begin{proof}
This is immediate from Theorem~\ref{thm:row_ppart_homo} and the observation that for any $P$-partition $T\in\mathrm{PP}^{\ell}(P)$ we have $\mathrm{ddeg}(T)=\ell\cdot\mathrm{ddeg}^{\mathrm{PL}}(\frac{1}{\ell}T)$.
\end{proof}

Of course, Theorem~\ref{thm:row_ppart_homo} and Corollary~\ref{cor:row_ppart_homo} apply when $P$ is a minuscule poset or a root poset of coincidental type. (They also apply to the other posets $P$ for which $J(P)$ is tCDE mentioned in Remark~\ref{rem:other_tcde}.)

So we've now seen (either conjecturally or provably) that for minuscule posets and root posets of coincidental type, the good behavior of rowmotion acting on order ideals (e.g., predictable and small order, cyclic sieving, and antichain cardinality homomesy) extends to rowmotion acting on $P$-partitions. Next, we want to examine how $P$-partition rowmotion behaves for the minuscule doppelg\"{a}nger pairs. Unsurprisingly, we make the following conjecture, directly generalizing  Conjecture~\ref{conj:min_dop_row}:

\begin{conj} \label{conj:min_dop_row_ppart}
Let $(P,Q) \in \{(\Lambda_{\mathrm{Gr}(k,n)},T_{k,n}),(\Lambda_{\mathrm{OG}(6,12)},\Phi^+(H_3)),(\Lambda_{\mathbb{Q}^{2n}},\Phi^+(I_2(2n)))\}$ be a minuscule doppelg\"{a}nger pair. Let $\ell \geq 1$. Then there is a bijection $\varphi$ between the $\mathrm{row}$-orbits of $\mathrm{PP}^{\ell}(P)$ and the $\mathrm{row}$-orbits of $\mathrm{PP}^{\ell}(Q)$ such that for any $\mathrm{row}$-orbit $\mathcal{O}\subseteq \mathrm{PP}^{\ell}(P)$ we have
\begin{enumerate}
\item $\#\mathcal{O}=\#\varphi(\mathcal{O})$;
\item $\sum_{T \in \mathcal{O}} \mathrm{ddeg}(T) = \sum_{T \in\varphi(\mathcal{O})} \mathrm{ddeg}(T)$.
\end{enumerate}
\end{conj}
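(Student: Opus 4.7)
The plan is to attack Conjecture~\ref{conj:min_dop_row_ppart} by splitting into the same cases as in Proposition~\ref{prop:min_dop_row} and lifting the arguments there from the combinatorial to the piecewise-linear setting. In each case, my aim is to separate condition (1) (matching orbit sizes) from condition (2) (matching $\mathrm{ddeg}$ sums), exploiting the fact that the piecewise-linear antichain-cardinality homomesy established in Corollary~\ref{cor:row_ppart_homo} reduces condition (2) to condition (1) whenever both $J(P)$ and $J(Q)$ are tCDE.

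First I would handle the pairs $(\Lambda_{\mathbb{Q}^{2n}},\Phi^+(I_2(2n)))$ and $(\Lambda_{\mathrm{Gr}(k,n)},T_{k,n})$ with $k=1,2$, where $P$ and $Q$ share a comparability graph, by lifting Proposition~\ref{prop:com_graph_row} to the level of $\mathrm{PP}^\ell$. Using Lemma~\ref{lem:com_graph_criterion} we may assume $Q$ is obtained from $P$ by dualizing an autonomous subset $A$; imitating the set-valued reflection in the proof of Proposition~\ref{prop:set_valued}, I would define $\widetilde{\psi}\colon \mathrm{PP}^\ell(P)\to \mathrm{PP}^\ell(Q)$ by $\widetilde{\psi}(T)(p)=T(p)$ for $p\notin A$ and $\widetilde{\psi}(T)(p)=\alpha+\omega-T(p)$ for $p\in A$, where $\alpha$ and $\omega$ are the appropriate local extrema computed from the filter above and ideal below $A$ in $\widehat{P}$. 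The main checks are that $\widetilde{\psi}$ is well-defined and commutes with each piecewise-linear toggle $\tau_p^{\mathrm{PL}}$ for $p\notin A$, while conjugating the toggles inside $A$ by dualization — this is a direct rank-by-rank argument, parallel to the combinatorial case.

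For $(\Lambda_{\mathrm{OG}(6,12)},\Phi^+(H_3))$ and the trapezoidal subfamily $T_{k,2k}\simeq\Phi^+(B_k)$, the strategy is to deduce (1) from piecewise-linear cyclic sieving and (2) from homomesy. Both $J(P)$ and $J(Q)$ are tCDE by Theorems~\ref{thm:minuscule_cde} and~\ref{thm:root_poset_cde}, so Corollary~\ref{cor:row_ppart_homo} delivers condition (2) for free (using that the edge densities agree by Proposition~\ref{prop:ddeg_gf_1}). Condition (1) requires proving Conjectures~\ref{conj:minuscule_row_cyc_siev} and~\ref{conj:root_poset_cyc_siev} in the types at hand, plus an extension to all $\ell$ of the polynomial identities of Remark~\ref{rem:cyc_siev_polys}: specifically the equality, after $q\mapsto q^{2}$, between $F(P,\ell)$ and $\mathrm{Cat}(\Phi,\ell;q)$ for the three relevant pairs.

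The main obstacle is the remaining case $(\Lambda_{\mathrm{Gr}(k,n)},T_{k,n})$ with $2<k<n/2$. Here $J(T_{k,n})$ is \emph{not} tCDE, so Lemma~\ref{lem:order_poly_toggle_eqs} and Corollary~\ref{cor:row_ppart_homo} do not apply to $Q$, and no cyclic sieving result for $\mathrm{PP}^\ell(T_{k,n})$ is available. My preferred route is to extend the Dao--Wellman--Yost-Wolff--Zhang result of Remark~\ref{rem:reu}: they showed the jeu-de-taquin bijection of Hamaker--Patrias--Pechenik--Williams~\cite{hamaker2018doppelgangers} commutes with combinatorial rowmotion on $J(P) = \mathrm{PP}^1(P)$; since that bijection is defined on all of $\mathrm{PP}^\ell(P)$, one would try to show it still commutes with $\mathrm{row}$ at each height $\ell$, which would give the orbit bijection $\varphi$ outright. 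For condition (2), I would then import the ``weaker-than-tCDE'' identity that Dao et al.\ established for small $k$, lift it to the piecewise-linear level by an analog of Lemma~\ref{lem:order_poly_toggle_eqs}, and combine it with a Striker-type cancellation as in the proof of Lemma~\ref{lem:pl_striker}. The hardest step will be this last lifting: tCDE is essentially the \emph{only} identity in the toggleability statistics that persists across simplices of the order-polytope triangulation, so a strictly weaker combinatorial identity on $J(T_{k,n})$ may well fail piecewise-linearly and will have to be replaced by a more subtle rank-dependent identity valid on each simplex.
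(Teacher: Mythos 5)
This statement is one of the paper's open conjectures, not a theorem: the paper offers no proof of Conjecture~\ref{conj:min_dop_row_ppart}, only computer verification for small $\ell$ and small instances, and it explicitly states that it has ``no specific proposal'' for how to prove it. So your proposal should be judged as a program, and as such it contains a genuine gap at its central step. For the crucial remaining case $(\Lambda_{\mathrm{Gr}(k,n)},T_{k,n})$ with $2<k<n/2$, your preferred route is to show that the Hamaker--Patrias--Pechenik--Williams bijection commutes with rowmotion on $\mathrm{PP}^{\ell}$ for all $\ell$, extending the Dao--Wellman--Yost-Wolff--Zhang result from $\ell=1$. But the paper records precisely the opposite fact: the HPPW bijection \emph{does not} commute with rowmotion of $P$-partitions of height $\ell>1$ (this is attributed to~\cite{dao2019trapezoid}). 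So that route is closed, not merely hard. The fallback you sketch for condition (2) --- lifting the ``weaker-than-tCDE'' identity of Dao et al.\ to the piecewise-linear level --- also does not go through as stated: Lemma~\ref{lem:order_poly_toggle_eqs} lifts exactly those identities expressing a constant as a linear combination of the toggleability statistics (i.e.\ tCDE-type identities), and $J(T_{k,n})$ is not tCDE for these $k$; moreover the Dao et al.\ condition is only established for $k\le 3$.

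The other cases of your plan are also not closed, because they are reduced to statements that are themselves open conjectures in the paper. The comparability-graph cases would follow from Conjecture~\ref{conj:com_graph_row_ppart}, but the paper explicitly says it could not prove that statement even though it is ``presumably simpler''; your proposed reflection $T(p)\mapsto \alpha+\omega-T(p)$ on an autonomous subset conjugates rowmotion on $A$ into \emph{inverse} rowmotion on $A^{*}$ (as complementation does at $\ell=1$), and the trick used in Proposition~\ref{prop:com_graph_row} of replacing complementation by an orbit-compatible bijection $\psi$ with $\psi(\varnothing)=\varnothing$, $\psi(A)=A$ does not obviously extend when the values on $A$ range over $\{0,\dots,\ell\}$ and the local bounds $\alpha,\omega$ vary along the orbit --- this is exactly where the paper says ``the details seem more difficult.'' Likewise, deducing condition (1) for $(\Lambda_{\mathrm{OG}(6,12)},\Phi^{+}(H_3))$ and $T_{k,2k}\simeq\Phi^{+}(B_k)$ from cyclic sieving requires Conjectures~\ref{conj:minuscule_row_cyc_siev} and~\ref{conj:root_poset_cyc_siev} in those types, which are open beyond the rectangle case. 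What is correct and worth keeping is your observation that, whenever both $J(P)$ and $J(Q)$ are tCDE with equal edge density, Corollary~\ref{cor:row_ppart_homo} makes condition (2) an automatic consequence of condition (1); this mirrors the paper's own reasoning at $\ell=1$ in Proposition~\ref{prop:min_dop_row}, but it does not apply to the trapezoid, which is the only case where the conjecture has real content.
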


We have checked Conjecture~\ref{conj:min_dop_row_ppart} by computer with $\ell=2,3,4$ for $(P,Q)=(\Lambda_{\mathrm{Gr}(k,n)},T_{k,n})$ with $n\leq 8$ and $k\leq n/2$, for $(P,Q)=(\Lambda_{\mathrm{OG}(6,12)},\Phi^+(H_3))$, and for $(\Lambda_{\mathbb{Q}^{2n}},\Phi^+(I_2(2n)))$ with $2\leq n \leq 8$.

In terms of proving Conjecture~\ref{conj:min_dop_row_ppart}, we note that the bijection of Hamaker et~al.~\cite{hamaker2018doppelgangers} {\bf does not} commute with rowmotion of $P$-partitions of height $\ell > 1$ (see~\cite{dao2019trapezoid}). Thus, we have no specific proposal for how one might prove Conjecture~\ref{conj:min_dop_row_ppart}.

\begin{remark} \label{rem:cyc_siev_conj_polys}
Conjecture~\ref{conj:min_dop_row_ppart} is consistent with Conjectures~\ref{conj:minuscule_row_cyc_siev} and~\ref{conj:root_poset_cyc_siev} in the sense that for $(P,Q) \in \{(\Lambda_{\mathrm{Gr}(k,2k)},\Phi^+(B_k)),(\Lambda_{\mathrm{OG}(6,12)},\Phi^+(H_3)),(\Lambda_{\mathbb{Q}^{2n}},\Phi^+(I_2(2n))\}$, we have
\[ \left [ F(P,\ell) \right ]_{q \mapsto q^2} = \mathrm{Cat}(\Phi,\ell;q)\]
for the appropriate cyclic sieving polynomials, just as in Remark~\ref{rem:cyc_siev_polys}.
\end{remark}

\begin{remark} \label{rem:ppart_homo_implies_mcde}
Note that Conjecture~\ref{conj:min_dop_row_ppart} implies Conjecture~\ref{conj:doppelganger_mcde}. This is because, as explained in Section~\ref{sec:ddeg_gf}, Conjecture~\ref{conj:doppelganger_mcde} is equivalent to
\[\sum_{T \in \mathrm{PP}^{\ell}(P)}\mathrm{ddeg}(T)=\sum_{T \in \mathrm{PP}^{\ell}(Q)}\mathrm{ddeg}(T),\]
and Conjecture~\ref{conj:min_dop_row_ppart} refines this equality to hold at the level of rowmotion orbits.
\end{remark}

We believe that Conjecture~\ref{conj:min_dop_row_ppart} is another instance of the minuscule doppelg\"{a}nger pairs pretending to have isomorphic comparability graphs, but this time we could not prove the corresponding statement even in this (presumably simpler) setting.

\begin{conj} \label{conj:com_graph_row_ppart}
Let $P$ and $Q$ be posets with $\mathrm{com}(P)\simeq\mathrm{com}(Q)$. Let $\ell \geq 1$. Then there is a bijection $\varphi$ between the $\mathrm{row}$-orbits of $\mathrm{PP}^{\ell}(P)$ and the $\mathrm{row}$-orbits of $\mathrm{PP}^{\ell}(Q)$ such that for any $\mathrm{row}$-orbit $\mathcal{O}\subseteq \mathrm{PP}^{\ell}(P)$ we have
\begin{enumerate}
\item $\#\mathcal{O}=\#\varphi(\mathcal{O})$;
\item $\sum_{T \in \mathcal{O}} \mathrm{ddeg}(T) = \sum_{T \in\varphi(\mathcal{O})} \mathrm{ddeg}(T)$.
\end{enumerate}
\end{conj}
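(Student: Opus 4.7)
The natural approach is to adapt the strategy of the proof of Proposition~\ref{prop:com_graph_row} to the setting of $P$-partitions. First, by Lemma~\ref{lem:com_graph_criterion} and induction on the number of autonomous-subset dualizations, I would reduce to the case where $Q$ is obtained from $P$ by dualizing a single autonomous subset $A\subseteq P$, and decompose $P\setminus A = U \uplus L \uplus N$ as in that proof.

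For $T \in \mathrm{PP}^{\ell}(P)$, define $\beta(T) \coloneqq \max\{T(l) : l \in L\}$ (or $0$ if $L=\varnothing$) and $\gamma(T) \coloneqq \min\{T(u) : u \in U\}$ (or $\ell$ if $U = \varnothing$). Because $A$ is autonomous, $T|_A$ takes values in $[\beta(T),\gamma(T)]$, so $T|_A - \beta(T)$ is naturally an $A$-partition of height $\gamma(T) - \beta(T)$. I would then construct a bijection $\widetilde{\psi}\colon \mathrm{PP}^{\ell}(P) \to \mathrm{PP}^{\ell}(Q)$ of the form $\widetilde{\psi}(T)(q) = T(q)$ for $q \notin A$ and $\widetilde{\psi}(T)|_A = \psi(T|_A - \beta(T)) + \beta(T)$, where $\psi$ is a collection of bijections from $A$-partitions of each possible height $\gamma - \beta$ to $A^*$-partitions of the same height, chosen compatibly so that: (i)~each $\psi$ intertwines piecewise-linear rowmotion on $A$-partitions with that on $A^*$-partitions; (ii)~each $\psi$ is orbit-equivalent to the complementation $T \mapsto (\gamma-\beta) - T$; and (iii)~each $\psi$ fixes the constant functions, so that the extensions glue correctly across varying boundary parameters.

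Once $\widetilde{\psi}$ is defined and shown to commute with piecewise-linear rowmotion, condition~(1) of the conjecture follows immediately by setting $\varphi(\mathcal{O}) \coloneqq \{\widetilde{\psi}(T) : T \in \mathcal{O}\}$. For condition~(2), I would mimic the toggleability-statistic argument from Proposition~\ref{prop:com_graph_row}: for $p \notin A$, the down-degree contribution $\mathcal{T}^{\mathrm{PL}}_{p^-}$ is transparently preserved by $\widetilde{\psi}$, and for $p \in A$, the orbit-equivalence of $\psi$ with complementation effectively swaps $\mathcal{T}^{\mathrm{PL}}_{p^-}$ and $\mathcal{T}^{\mathrm{PL}}_{p^+}$, after which the fact that $\mathcal{T}^{\mathrm{PL}}_p$ averages to zero along rowmotion orbits (Lemma~\ref{lem:pl_striker}) converts the swap into the required equality of down-degree orbit sums.

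The main obstacle is the construction of the bijections $\psi$ so that the global extension $\widetilde{\psi}$ actually commutes with piecewise-linear rowmotion on all of $\mathrm{PP}^{\ell}(P)$. Unlike the combinatorial case $\ell=1$, where the only "interior" values of $I \cap A$ are handled by a single $\psi\colon J(A) \to J(A^*)$ and the boundary cases $I\cap A \in \{\varnothing, A\}$ are trivial, at height $\ell > 1$ the parameters $\beta(T), \gamma(T)$ range over all of $[0,\ell]$. One can check in small examples (for instance with $P$ a chain and $A$ an interior subchain) that the naive reflection $T(a) \mapsto \beta + \gamma - T(a)$ fails to commute with rowmotion. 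Although the complementation $T \mapsto (\gamma - \beta) - T$ does anti-commute with rowmotion (so orbit sizes match and the required $\psi$ exists orbit-by-orbit for each height), arranging orbit-by-orbit choices simultaneously for every height $\gamma - \beta$ so that the pieces glue into a globally rowmotion-equivariant map on $\mathrm{PP}^{\ell}(P)$ is the hard part, and it is not clear to me that such a consistent choice exists in full generality.
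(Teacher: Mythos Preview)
This statement is a \emph{conjecture} in the paper, not a proved result: the paper gives no proof. Immediately after stating it, the author writes that ``possibly the same general approach from the proof of Proposition~\ref{prop:com_graph_row} could be used to prove Conjecture~\ref{conj:com_graph_row_ppart} but the details seem more difficult,'' suggests that piecewise-linear antichain toggles might help, and reports computer verification for $\ell=2,3,4$ on all connected posets with at most $8$ elements.

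Your proposal follows exactly the approach the paper itself suggests --- reduce via Lemma~\ref{lem:com_graph_criterion} to a single autonomous dualization and try to lift the construction of Proposition~\ref{prop:com_graph_row} to $P$-partitions --- and you have correctly located the obstruction: for $\ell>1$ the ``boundary parameters'' $\beta(T),\gamma(T)$ vary continuously, so one needs not a single $\psi\colon J(A)\to J(A^*)$ but a coherent family of bijections $\mathrm{PP}^{m}(A)\to \mathrm{PP}^{m}(A^*)$ for all heights $m$, glued so that the global map commutes with piecewise-linear rowmotion. Your observation that the naive reflection fails already for a chain with an interior autonomous subchain is exactly the kind of difficulty the paper alludes to. In short, your write-up is not a proof but an accurate diagnosis of why the conjecture is open, and it matches the paper's own assessment.
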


Possibly the same general approach from the proof of Proposition~\ref{prop:com_graph_row} could be used to prove Conjecture~\ref{conj:com_graph_row_ppart} but the details seem more difficult. For this conjecture it might be useful to work with the ``antichain toggles'' (and their piecewise-linear analogs) described in~\cite{joseph2017antichain} (see also~\cite{joseph2019birationalantichain}). We note that antichain toggles are a special case of ``independent set toggles,'' which have also received a fair amount of attention~\cite{einstein2016noncrossing, striker2018rowmotion, joseph2018toggling}. Thomas and Williams~\cite{thomas2018independence} abstracted and generalized independent set toggles with their definition of ``independence posets,'' so it might also be worthwhile to understand Proposition~\ref{prop:com_graph_row} and Conjecture~\ref{conj:com_graph_row_ppart} in the context of independence posets. At any rate, we have checked Conjecture~\ref{conj:com_graph_row_ppart} by computer with $\ell=2,3,4$ for all connected posets on $8$ or fewer vertices.

\subsection{Birational rowmotion}

Now we consider birational rowmotion. Before we give any definitions, let's review the history. Birational rowmotion was introduced by Einstein and Propp~\cite{einstein2013combinatorial, einstein2014piecewise}. It is defined by ``detropicalizing'' the piecewise-linear operations we studied in the previous subsection. The idea of starting with some operation in algebraic combinatorics, expressing it in terms of sums and maxes, and then ``deptropicalizing'' to get a subtraction-free rational expression, goes back at least to~\cite{kirillov1995groups, kirillov2001tropical}. One reason for considering birational liftings of combinatorial operations is that they really are direct generalizations: results proved at the birational level tropicalize to the piecewise-linear level, and then specialize to the combinatorial level.

Birational rowmotion, especially in the case of the rectangle poset, was thoroughly investigated by Grinberg and Roby~\cite{grinberg2016birational1, grinberg2015birational2} (see also Musiker and Roby~\cite{musiker2018paths}). For the rectangle, birational rowmotion enjoys remarkable integrability properties related to cluster algebra \emph{Y-system dynamics} (a.k.a., \emph{Zamolodchikov periodicity})~\cite{volkov2007periodicity}. Birational rowmotion was also the principal example motivating the recent definition of \emph{R-systems} due to Galashin and Pylyavskyy~\cite{galashin2017rsystems}.

Before we introduce birational rowmotion, we need to slightly extend the description of piecewise-linear rowmotion from the previous subsection. Namely, we fix two parameters $\alpha^{\mathrm{PL}}, \omega^{\mathrm{PL}} \in \mathbb{R}$. We then view any $f \in \mathbb{R}^P$ as belonging also to $\mathbb{R}^{\widehat{P}}$ by the convention that $f(\, \widehat{0}\, )=\alpha^{\mathrm{PL}}$ and $f(\, \widehat{1}\, )=\omega^{\mathrm{PL}}$. Otherwise the piecewise-linear operations are exactly as we described above. The case we considered before is of course the specialization  $\alpha^{\mathrm{PL}}=0$ and $\omega^{\mathrm{PL}}=1$.

Now we ``detropicalize'' everything. This means we replace sums by products, maxes by sums, et cetera. What we will obtain are subtraction free rational expressions. From the perspective of algebraic geometry, it would probably be best to work with $\mathbb{C}$-valued functions on our poset $P$; but then we would have to consider the expressions we obtain via detropicalization as birational maps, which are not necessarily defined everywhere because of ``division by zero'' issues. An easy (but perhaps clumsy) way to get around this problem is to instead work with $\mathbb{R}_{>0}$-valued functions on $P$, for which all expressions will always be everywhere well-defined. We denote the set of such functions by $\mathbb{R}_{>0}^{P}$.

Fix parameters $\alpha^{\mathrm{B}}, \omega^{\mathrm{B}} \in \mathbb{R}_{>0}$. View any element $f\in\mathbb{R}_{>0}^{P}$ as also belonging to $\mathbb{R}_{>0}^{\widehat{P}}$ by the convention that $f(\, \widehat{0}\, )=\alpha^{\mathrm{B}}$ and $f(\, \widehat{1}\, )=\omega^{\mathrm{B}}$. Then we define for each $p\in P$ the \emph{birational toggle at $p$} $\tau_{p}^{\mathrm{B}}\colon \mathbb{R}_{>0}^{P}\to \mathbb{R}_{>0}^{P}$ by
\[\tau_{p}^{\mathrm{B}}(f)(q) \coloneqq  \begin{cases} f(q) &\textrm{if $p \neq q$}; \\ \displaystyle \frac{1}{f(p)} \cdot \frac{\displaystyle \sum_{\textrm{$p$ covers $x\in \widehat{P}$}} f(x)}{\displaystyle \sum_{\textrm{$x\in \widehat{P}$ covers $p$}} \frac{1}{f(x)}} &\textrm{if $p=q$}.\end{cases}\]
These toggles are once again involutions, and $\tau_{p}^{\mathrm{B}}$ commutes with $\tau_{q}^{\mathrm{B}}$ unless there is a covering relation between them. We define \emph{birational rowmotion} $\mathrm{row}^{\mathrm{B}}\colon \mathbb{R}_{>0}^{P}\to \mathbb{R}_{>0}^{P}$ by
\[\mathrm{row}^{\mathrm{B}} \coloneqq  \tau_{p_1}^{\mathrm{B}} \circ \tau_{p_2}^{\mathrm{B}} \circ \cdots \circ \tau_{p_n}^{\mathrm{B}},  \]
where $p_1,\ldots,p_n$ is any linear extension of $P$.

 Unsurprisingly, birational rowmotion behaves well for minuscule posets and root posets of coincidental type. We'll focus on the same two aspects of rowmotion for our posets of interest: order/orbit structure, and homomesies.

It is not so meaningful to talk about the orbit structure when we will have uncountably many orbits, but it can be very interesting to study the order of birational rowmotion because there is no a priori reason for birational rowmotion to have finite order and indeed for most posets it does not. (Actually, already piecewise-linear rowmotion acting on all of $\mathbb{R}^P$ will usually fail to have finite order.)

The following two conjectures about the order of birational rowmotion for minuscule posets and root posets of coincidental type were essentially stated in the work of Grinberg-Roby~\cite{grinberg2016birational1, grinberg2015birational2}.

 \begin{conj} \label{conj:min_bi_row_ord}
 Let $P$ be a minuscule poset, with $h \coloneqq  r(P)+2$ its Coxeter number. Then the order of  $\mathrm{row}^{\mathrm{B}}\colon \mathbb{R}_{>0}^{P}\to \mathbb{R}_{>0}^{P}$ is $h$.
 \end{conj}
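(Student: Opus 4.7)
The plan is to prove Conjecture~\ref{conj:min_bi_row_ord} by a case-by-case argument using the classification of connected minuscule posets into five families: the rectangle $[k]\times[n-k]$, the shifted staircase $\Lambda_{\mathrm{OG}(n,2n)}$, the propeller $\Lambda_{\mathbb{Q}^{2n}}$, and the two exceptional posets $\Lambda_{\mathbb{OP}^2}$ and $\Lambda_{G_{\omega}(\mathbb{O}^3,\mathbb{O}^6)}$. The lower bound on the order is immediate and type-uniform: since $\mathrm{row}^{\mathrm{B}}$ tropicalizes to $\mathrm{row}^{\mathrm{PL}}$, which in turn restricts to combinatorial $\mathrm{row}$ on the vertices of $\mathcal{O}(P)$, any identity $(\mathrm{row}^{\mathrm{B}})^k=\mathrm{id}$ descends to $\mathrm{row}^k=\mathrm{id}$ on $J(P)$, and Theorem~\ref{thm:minuscule_row_cyc_siev} then forces $h\mid k$.

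For the upper bound $(\mathrm{row}^{\mathrm{B}})^h=\mathrm{id}$, I would invoke Grinberg-Roby~\cite{grinberg2016birational1, grinberg2015birational2} for the rectangle $[a]\times[b]$, where the identity $(\mathrm{row}^{\mathrm{B}})^{a+b}=\mathrm{id}$ is proved by writing out an explicit rational-function formula for the iterates (fleshed out further in the Lindström-type nonintersecting path model of Musiker-Roby~\cite{musiker2018paths}). The shifted staircase and propeller cases I would then try to reduce to the rectangle by a folding argument: each realizes as the fixed-point subposet of an order-preserving involution of a suitable rectangle, and birational rowmotion commutes with poset automorphisms and restricts appropriately to the fixed locus, so periodicity in the rectangle forces periodicity in the fold. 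A small additional calculation is needed to match the Coxeter numbers and to confirm that the folded operator really coincides with birational rowmotion for the smaller poset.

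The main obstacle will be the two exceptional families $\Lambda_{\mathbb{OP}^2}$ (from $E_6$) and $\Lambda_{G_{\omega}(\mathbb{O}^3,\mathbb{O}^6)}$ (from $E_7$), for which no folding reduction appears available. For these my fallback plan is a direct symbolic verification: parametrize $\mathbb{R}_{>0}^P$ by the values of $f$ together with $\alpha^{\mathrm{B}}$ and $\omega^{\mathrm{B}}$, iterate $\mathrm{row}^{\mathrm{B}}$ in a computer algebra system, and check that every coordinate of $(\mathrm{row}^{\mathrm{B}})^h$ equals the initial coordinate as a rational function. The difficulty is size: the intermediate subtraction-free rational expressions swell dramatically after only a few toggles, so one must exploit the intrinsic symmetries of each poset (the $\mathbb{Z}/3\mathbb{Z}$ symmetry of $\Lambda_{\mathbb{OP}^2}$ and the involutive symmetry of $\Lambda_{G_{\omega}(\mathbb{O}^3,\mathbb{O}^6)}$) to cut down on the data tracked, or else avoid full symbolic simplification by verifying the identity at sufficiently many generic numerical specializations.

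A type-uniform proof imitating Rush's uniform proof of Theorem~\ref{thm:minuscule_cde} via Stembridge's theory of minuscule heaps would of course be vastly preferable, but it is unclear how to lift the reflection-group description of combinatorial toggles to the birational level. Finding such a lift, rather than carrying out the case analysis above, is what I expect to be the deepest difficulty for those seeking a conceptual explanation of the conjecture.
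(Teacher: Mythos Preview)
Your plan largely mirrors what the paper itself says about the state of the art, but you should be aware that the paper does \emph{not} prove this statement---it is explicitly labeled a \textbf{Conjecture}, and the paper's discussion following it is a survey of partial results, not a proof.

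Specifically, the paper records that Grinberg--Roby~\cite{grinberg2016birational1, grinberg2015birational2} have already established the cases $\Lambda_{\mathrm{Gr}(k,n)}$, $\Lambda_{\mathrm{OG}(n,2n)}$, $\Lambda_{\mathbb{Q}^{2n}}$, and $\Lambda_{\mathbb{OP}^2}$ (each by separate arguments, not by the folding reductions you propose---you would need to check carefully whether your folding idea for the shifted staircase actually works, since the fixed-point subposet of a square under transposition is not literally the shifted staircase as a poset with the same cover structure). The remaining case is $\Lambda_{G_{\omega}(\mathbb{O}^3,\mathbb{O}^6)}$ from $E_7$, and here your fallback plan of direct symbolic verification has already been attempted and failed: the paper says explicitly that ``this poset was simply too big to be worked out completely by computer'' (see~\cite[\S13]{grinberg2015birational2}). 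So the genuine gap in your proposal is precisely the $E_7$ case, and the computational approach you suggest is known not to close it with available resources.

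The paper does note that a forthcoming work of Okada~\cite{okada2019birational} establishes the full conjecture case-by-case, using the coordinate transformation and lattice-path interpretation of Musiker--Roby~\cite{musiker2018paths} rather than brute-force symbolic iteration. That is the route that actually succeeds for $E_7$. Your closing remark about the desirability of a uniform proof matches the paper's own sentiment exactly.
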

 
  \begin{conj} \label{conj:root_bi_row_ord}
 Let $P$ be a root poset of coincidental type, with $h \coloneqq  r(P)+2$ its Coxeter number. Then the order of $\mathrm{row}^{\mathrm{B}}\colon \mathbb{R}_{>0}^{P}\to \mathbb{R}_{>0}^{P}$ divides $2h$.
 \end{conj}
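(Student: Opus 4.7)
The plan is case-by-case along the classification of coincidental-type root systems into $A_n$, $B_n \simeq C_n$, $H_3$, and $I_2(m)$. For $\Phi \in \{A_n, H_3, I_2(m)\}$, Grinberg and Roby have already established the birational order bound (as was cited for the tropicalized statement in the proof of the earlier theorem on $\mathrm{row}$ acting on $\mathrm{PP}^{\ell}(P)$), so the only case requiring new work is $\Phi = B_n \simeq C_n$. My plan is to first try to lift the piecewise-linear ``folding'' argument used in that earlier proof to the birational setting, and, when that fails, to fall back on more global techniques.

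Concretely, for $B_n$ the natural first attempt is to identify $\mathbb{R}_{>0}^{\Phi^+(B_n)}$ with the subspace of $\sigma$-invariant positive functions in $\mathbb{R}_{>0}^{\Phi^+(A_{2n-1})}$, where $\sigma$ is the reflection across the vertical axis of symmetry, and then exploit the fact that birational toggles are natural with respect to poset automorphisms, so that this subspace is preserved by $\mathrm{row}^{\mathrm{B}}$ on $\Phi^+(A_{2n-1})$. One would then invoke the birational lift of Grinberg--Roby's reflection theorem for $\Phi^+(A_{2n-1})$, which says that an appropriate power of $\mathrm{row}^{\mathrm{B}}$ acts as $\sigma$, hence as the identity on $\sigma$-invariants. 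If the identification were $\mathrm{row}^{\mathrm{B}}$-equivariant, the bound for $B_n$ would follow immediately (in fact with the stronger divisor $h$ rather than $2h$).

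The main obstacle is that, unlike the piecewise-linear case, this fold is \emph{not} birational-rowmotion-equivariant. At an axis element $p$ of $\Phi^+(A_{2n-1})$ whose upper covers come in $\sigma$-pairs $\{x_1, x_2\}$, the birational toggle denominator $\sum 1/\tilde{f}(x)$ picks up $2/f(\bar{x})$ rather than the $1/f(\bar{x})$ appearing in the $\Phi^+(B_n)$-toggle; in the PL world the same issue is invisible because $\min\{f(x_1), f(x_2)\} = f(x_1)$ when $f(x_1) = f(x_2)$. A simple $\sigma$-invariant rescaling cannot absorb this factor of two, since the numerator sum $\sum \tilde{f}(x)$ would require the opposite rescaling. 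The fallback plan is therefore to bypass the fold and handle $B_n$ directly: either via Zamolodchikov periodicity for an appropriate $Y$-system (exploiting the dictionary, as in Grinberg--Roby's treatment of the rectangle, between birational rowmotion and $Y$-system dynamics), or via an explicit analysis of birational rowmotion on the shifted double staircase $\Phi^+(B_n)$ in the spirit of their rectangle proof. This is likely to be the hardest step, and plausibly explains why $B_n$ has remained the one outstanding case.
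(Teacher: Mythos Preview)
Your case analysis is accurate and matches the paper's own discussion: Grinberg--Roby handle $A_n$, $H_3$, and $I_2(m)$, and $B_n$ is the sole outstanding case. Your diagnosis of why the folding argument from the piecewise-linear proof breaks birationally --- the factor of $2$ in $\sum 1/\tilde f(x)$ at axis elements, which $\min$ absorbs but the harmonic sum does not --- is exactly right and is the real obstruction.

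However, you should be aware that this statement is a \emph{conjecture} in the paper, not a theorem: the paper does not supply a proof. The paper explicitly notes that the $B_n$ case is Conjecture~73 in Grinberg--Roby and remains open. Your ``fallback plan'' (Zamolodchikov periodicity for an appropriate $Y$-system, or a direct Musiker--Roby-style analysis of the shifted double staircase) is a reasonable outline of where one would look, but you have not actually executed it, and there is no reason to expect it to be routine --- indeed, if it were, Grinberg and Roby would presumably have done it. So your proposal is not a proof, but it is a correct assessment of the state of the problem, and in that sense it is in complete agreement with the paper.
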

 
Results of Grinberg-Roby~\cite{grinberg2016birational1, grinberg2015birational2} are almost enough to establish Conjecture~\ref{conj:min_bi_row_ord} in a case-by-case manner. Namely, they proved this conjecture for $P=\Lambda_{\mathrm{Gr}(k,n)}$~\cite[Theorem~30]{grinberg2015birational2}, for $P=\Lambda_{\mathrm{OG}(n,2n)}$~\cite[Theorem~58]{grinberg2015birational2}, for $P=\Lambda_{\mathbb{Q}^{2n}}$~\cite[\S10]{grinberg2016birational1}, and for $P=\Lambda_{\mathbb{OP}^2}$~\cite[\S13]{grinberg2015birational2}. For the only other minuscule poset $P=\Lambda_{G_{\omega}(\mathbb{O}^3,\mathbb{O}^6)}$ coming from Type $E_7$, all evidence suggests the conjecture is true in this case as well (see~\cite[\S13]{grinberg2015birational2}), but this poset was simply too big to be worked out completely by computer. In a paper in preparation, Okada~\cite{okada2019birational} establishes Conjecture~\ref{conj:min_bi_row_ord} by using the coordinate transformation and lattice paths interpretation of Musiker and Roby~\cite{musiker2018paths}. Of course, a uniform proof of Conjecture~\ref{conj:min_bi_row_ord} would be preferred.
 
Results of Grinberg-Roby~\cite{grinberg2016birational1, grinberg2015birational2} again establish much of Conjecture~\ref{conj:root_bi_row_ord}: as explained in~\cite[\S13]{grinberg2015birational2}, they proved that this conjecture holds for $P=\Phi^+(A_n)$, $P=\Phi^+(H_3)$, and $P=\Phi^+(I_2(m))$. For the remaining case $P=\Phi^+(B_n)$, it is explicitly stated as a conjecture in~\cite[Conjecture~73]{grinberg2015birational2} that birational rowmotion should have order~$h=2n$.  We note that for the root poset $P=\Phi^+(D_4)$, birational rowmotion does not (appear) to have finite order. 
 
Now let's consider homomesies for birational rowmotion. The birational analog of the order ideal cardinality homomesy (and its refinements) for the rectangle were established by Einstein and Propp~\cite{einstein2013combinatorial, einstein2014piecewise} (see also Musiker-Roby~\cite{musiker2018paths}). We are interested in the birational analog of the antichain cardinality homomesy, which had not been previously studied in any publicly available article (but see Remark~\ref{rem:darij} below). So let's define birational analogs of the toggleabilitiy and down-degree statistics. For $p\in P$ we define the \emph{birational toggleability statistics} $\mathcal{T}^{\mathrm{B}}_{p^+}, \mathcal{T}^{\mathrm{B}}_{p^-},\mathcal{T}^{\mathrm{B}}_p\colon \mathbb{R}_{>0}^{P} \to \mathbb{R}_{>0}$ by 
\begin{align*}
\mathcal{T}^{\mathrm{B}}_{p^+}(f) &\coloneqq  f(p) \cdot \frac{1}{\displaystyle \sum_{\textrm{$p$ covers $q \in \widehat{P}$}}f(q)} ; \\
\mathcal{T}^{\mathrm{B}}_{p^-}(f) & \coloneqq  \frac{1}{f(p)} \cdot \frac{1}{\displaystyle \sum_{\textrm{$q \in \widehat{P}$ covers $p$}} \frac{1}{f(q)}}; \\
\mathcal{T}^{\mathrm{B}}_p(f) &\coloneqq  \frac{\mathcal{T}^{\mathrm{PL}}_{p^+}(f)}{\mathcal{T}^{\mathrm{PL}}_{p^-}(f)}.
\end{align*} 
Then we define the \emph{birational down-degree statistic} $\mathrm{ddeg}^{\mathrm{B}}\colon \mathbb{R}_{>0}^{P} \to \mathbb{R}_{>0}$ by
\[ \mathrm{ddeg}^{\mathrm{B}} \coloneqq  \prod_{p\in P}\mathcal{T}^{\mathrm{B}}_{p^-}.\]

Again, writing down-degree in terms of toggleability statistics allows us to relate the tCDE property to birational antichain cardinality homomesy, as the following series of lemmas explain.

\begin{lemma} \label{lem:bi_toggle_eqs}
Let $P$ be a poset for which each element covers, and is covered by, at most two elements. Suppose we have an equality of functions $J(P)\to \mathbb{R}$:
\[ \sum_{p\in P} c_{p^+}\mathcal{T}_{p^+} + c_{p^-}\mathcal{T}_{p^-}=\delta \]
where $c_{p^+},c_{p^-}$ for $p\in P$ and $\delta$ are constants in~$\mathbb{Q}$. Then we have an analogous equality of functions $\mathbb{R}_{>0}^{P} \to \mathbb{R}_{>0}$:
\[ \prod_{p\in P} (\mathcal{T}^{\mathrm{B}}_{p^+})^{c_{p^+}} \cdot (\mathcal{T}^{\mathrm{B}}_{p^-})^{c_{p^-}} = \left( \frac{\omega^{\mathrm{B}}}{\alpha^{\mathrm{B}}}\right)^{\delta}. \]
(Here we use principal roots for non-integral powers of numbers in $\mathbb{R}_{>0}$.)
\end{lemma}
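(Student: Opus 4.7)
My plan is to bridge the combinatorial identity and the birational identity via the piecewise-linear level, using the 2-dimensional structure of $P$ as the essential input. The three main steps are: (i) lift the combinatorial identity to a PL identity on all of $\mathbb{R}^P$; (ii) put the PL identity into a form involving only the operations $+$, $-$, and two-argument $\max$; and (iii) detropicalize to obtain the birational identity.

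For (i), I would apply Lemma~\ref{lem:order_poly_toggle_eqs} to obtain
\[ \sum_{p\in P} c_{p^+}\mathcal{T}^{\mathrm{PL}}_{p^+}(f) + c_{p^-}\mathcal{T}^{\mathrm{PL}}_{p^-}(f) = \delta \]
for all $f \in \mathcal{O}(P)$ under the convention $f(\,\widehat{0}\,)=0$, $f(\,\widehat{1}\,)=1$. Since each $\mathcal{T}^{\mathrm{PL}}_{p^+}$ and $\mathcal{T}^{\mathrm{PL}}_{p^-}$ is invariant under $f\mapsto f+c$ and homogeneous of degree $1$, rescaling the boundary values to $f(\,\widehat{0}\,)=\alpha^{\mathrm{PL}}$ and $f(\,\widehat{1}\,)=\omega^{\mathrm{PL}}$ upgrades this to
\[ \sum_{p\in P} c_{p^+}\mathcal{T}^{\mathrm{PL}}_{p^+}(f) + c_{p^-}\mathcal{T}^{\mathrm{PL}}_{p^-}(f) = \delta(\omega^{\mathrm{PL}} - \alpha^{\mathrm{PL}}) \]
on the corresponding generalized order polytope.

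For (ii), the hypothesis on $P$ means every $\min$ or $\max$ appearing in a PL toggleability statistic is over at most two values. I would use $\min(a,b) = a + b - \max(a,b)$ to eliminate all the $\min$'s, leaving an identity whose only nonlinearity consists of terms $\max(f(a), f(b))$ where $a,b \in \widehat{P}$ are both down-covers or both up-covers of a common element. Such pairs $a,b$ are always incomparable in $\widehat{P}$, so both relative orderings of $f(a)$ and $f(b)$ are realized within $\mathcal{O}(P)$; hence the chambers of the arrangement cut out by these pair-comparisons each meet $\mathcal{O}(P)$ in full dimension. The rewritten identity, being linear on each such chamber, then extends from $\mathcal{O}(P)$ to all of $\mathbb{R}^P$.

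For (iii), I would detropicalize via the standard correspondence $\max \leftrightarrow +$, $+ \leftrightarrow \cdot$, $- \leftrightarrow /$, and scaling by $c \in \mathbb{Q}$ $\leftrightarrow$ raising to the $c$th power. Under this correspondence $\mathcal{T}^{\mathrm{PL}}_{p^+}$ and $\mathcal{T}^{\mathrm{PL}}_{p^-}$ become precisely $\mathcal{T}^{\mathrm{B}}_{p^+}$ and $\mathcal{T}^{\mathrm{B}}_{p^-}$, while $\delta(\omega^{\mathrm{PL}}-\alpha^{\mathrm{PL}})$ becomes $(\omega^{\mathrm{B}}/\alpha^{\mathrm{B}})^\delta$. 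I expect the hardest part of the proof to be making this detropicalization rigorous: the implication ``birational $\Rightarrow$ PL'' follows immediately from Maslov dequantization (substitute $f(p) = e^{\lambda g(p)}$ and send $\lambda \to \infty$), but the reverse direction is false for general PL identities. The key point I would push is that the identity produced in step~(ii) is a formal $(\max,+)$-semiring identity in which every $\max$ is over two \emph{formally distinct} variables, and such identities can be derived formally using only the two-argument semiring rule $\max(a,b) + \min(a,b) = a + b$, whose detropicalization $(a+b)\cdot\frac{ab}{a+b} = ab$ is tautological. Carrying out the same formal derivation on the birational side then produces the desired multiplicative identity.
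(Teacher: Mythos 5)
Your overall architecture is the same as the paper's: lift the combinatorial identity to the piecewise-linear level via Lemma~\ref{lem:order_poly_toggle_eqs}, use the two-cover hypothesis to rewrite everything with $\mathrm{min}(a,b)=a+b-\mathrm{max}(a,b)$ into a canonical form $\sum_p c'_p f(p) + \sum c'_{p,q}\,\mathrm{max}(f(p),f(q)) + c'_\alpha\alpha^{\mathrm{PL}} + c'_\omega\omega^{\mathrm{PL}}$, and then detropicalize by running the identical rewriting multiplicatively using $\tfrac{1}{1/a+1/b}=ab\cdot\tfrac{1}{a+b}$. However, as written there are two genuine gaps. First, your step~(ii) extension argument is incorrect: knowing that each single hyperplane $f(p)=f(q)$ is crossed inside $\mathcal{O}(P)$ does not imply that every full-dimensional chamber of the whole arrangement meets $\mathcal{O}(P)$. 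For a concrete counterexample satisfying the lemma's hypotheses, take $P=\{t,p,q,r,s\}$ with covers $t\lessdot p$, $t\lessdot r$, $p\lessdot q$, $q\lessdot s$, $r\lessdot s$; the sibling pairs $\{p,r\}$ and $\{q,r\}$ both occur in the rewriting, and the chamber $f(q)<f(r)<f(p)$ is nonempty in $\mathbb{R}^P$ but disjoint from $\mathcal{O}(P)$ since order-preservation forces $f(p)\leq f(q)$. So ``linear on each chamber plus true on $\mathcal{O}(P)$'' does not by itself extend the identity to $\mathbb{R}^P$.

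Second, and more centrally, your step~(iii) rests on the assertion that a PL identity valid on $\mathbb{R}^P$ whose maxes are over formally distinct variables is \emph{formally} derivable from $\mathrm{max}(a,b)+\mathrm{min}(a,b)=a+b$; that assertion is precisely the content that has to be proved, and proving it amounts to showing that after the rewriting the coefficients satisfy $c'_p=0$ for all $p$, $c'_{p,q}=0$ for all incomparable pairs, $c'_\alpha=-\delta$, and $c'_\omega=\delta$. Without pinning down these coefficient values there is no formal derivation to transport to the birational side, since the exponents appearing in the multiplicative rewriting are exactly these constants. This coefficient analysis is where the paper spends most of its proof: it evaluates at the order ideals $\varnothing$ and $P$ to get $c'_\alpha$ and $c'_\omega$, and then, using Lemma~\ref{lem:order_poly_toggle_eqs}, restricts to special subsets of $\mathcal{O}(P)$ on which only $f(p)$ (or only $f(p),f(q)$ for an incomparable pair) vary, realizing both orderings of each single pair, to force $c'_p=c''_p=c''_q=c'_{p,q}=0$. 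A clean way to repair your write-up is to replace the chamber argument by exactly such a coefficient-extraction step (equivalently, a linear-independence argument for the functions $f(p)$, $\mathrm{max}(f(p),f(q))$, $\alpha$, $\omega$ on $\mathcal{O}(P)$, using that each hyperplane $f(p)=f(q)$ for incomparable $p,q$ meets the interior of the order polytope); this simultaneously fixes step~(ii) and supplies the missing justification in step~(iii), at which point the parallel multiplicative rewriting concludes as you and the paper both intend.
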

\begin{proof}
Let $P$ and $c_{p^+},c_{p^-},\delta \in \mathbb{Q}$ be as in the statement of the lemma. First we will prove the piecewise-linear version of the result that we want. By repeatedly applying the rule $\mathrm{min}(a,b)=a+b-\mathrm{max}(a,b)$, we can write an equality of functions $\mathbb{R}^P\to \mathbb{R}$:
\[ \sum_{p\in P} c_{p^+}\mathcal{T}^{\mathrm{PL}}_{p^+}(f) + c_{p^-}\mathcal{T}^{\mathrm{PL}}_{p^-}(f)= \sum_{p \in P}c'_p f(p) + \hspace{-0.6cm} \sum_{\substack{p,q \in P \\ \textrm{$p$ and $q$}\\ \textrm{incomparable}}} \hspace{-0.6cm} c'_{p,q} \, \mathrm{max}(f(p),f(q)) + c'_{\alpha} \, \alpha^{\mathrm{PL}} + c'_{\omega}\omega^{\mathrm{PL}}, \]
where $c'_{p}, c'_{p,q}, c'_{\alpha}, c'_{\omega}\in \mathbb{Q}$ are constants. (This is the step where we use the fact that each element covers, and is covered by, at most two elements in an essential way.) We want to show that $c'_{p}=0$ for all $p$, $c'_{p,q}=0$ for all $p,q$, and $c'_{\alpha}=-\delta$ and $c'_{\omega}=\delta$. 

First we address $c'_{\alpha}$ and $c'_{\omega}$. By considering the empty order ideal~$\varnothing \in J(P)$, we see that
\[  \sum_{p\in P} c_{p^+}\mathcal{T}_{p^+}(\varnothing) + c_{p^-}\mathcal{T}_{p^-}(\varnothing) =\delta \]
implies $\sum_{\textrm{$p$ minimal in $P$}} c_{p^+}=-\delta$, which in turn means $c'_{\alpha}=-\delta$. Similarly, by considering the full order ideal $P\in J(P)$, we see that
\[  \sum_{p\in P} c_{p^+}\mathcal{T}_{p^+}(P) + c_{p^-}\mathcal{T}_{p^-}(P) =\delta \]
implies $\sum_{\textrm{$p$ maximal in $P$}} c_{p^-}=\delta$, which in turn means $c'_{\omega}=\delta$. 

From now on we specialize $\omega^{\mathrm{PL}}\coloneqq 1$ and $\alpha^{\mathrm{PL}}\coloneqq 0$.

Next we will prove $c'_{p,q}=0$ for all $p,q$. So let $p$ and $q$ be incomparable elements of~$P$. Consider the subset $X\subseteq \mathcal{O}(P)$ of the order polytope of $P$ where:
\begin{itemize}
\item $f(x)$ can be anything between $0$ and $1$ if $x=p$ or $x=q$;
\item $f(x) = 0$ if $x < p$ or $x < q$;
\item $f(x) = 1$ if $x \not \leq p$ and $x \not \leq q$.
\end{itemize}
Then we have the equality of functions $X \to \mathbb{R}$:
\[ \sum_{x\in P} c_{x^+}\mathcal{T}^{\mathrm{PL}}_{x^+}(f) + c_{x^-}\mathcal{T}^{\mathrm{PL}}_{x^-}(f)= c'_{p,q}\, \mathrm{max}(f(p),f(q)) + c''_p \, f(p) + c''_q \, f(q) +\delta'\]
for some constants $c''_p, c''_q, \delta' \in \mathbb{Q}$. But by Lemma~\ref{lem:order_poly_toggle_eqs} we also have an equality of functions $\mathcal{O}(P) \to \mathbb{R}$:
\[\sum_{x\in P} c_{x^+}\mathcal{T}^{\mathrm{PL}}_{x^+}(f) + c_{x^-}\mathcal{T}^{\mathrm{PL}}_{x^-}(f)=\delta.\]
Then by considering in turn the subsets of $X$ where $f(p)\geq f(q)$ and where $f(q)\geq f(p)$, we see that we must have $c''_p= c''_q=c'_{p,q}=0$.

We can similarly prove $c'_p=0$ for all $p$. So let $p\in P$. And consider the subset $X\subseteq \mathcal{O}(P)$ of the order polytope of $P$ where:
\begin{itemize}
\item $f(x)$ can be anything between $0$ and $1$ if $x=p$;
\item $f(x) = 0$ if $x < p$ ;
\item $f(x) = 1$ if $x \not \leq p$.
\end{itemize}
Since we've already proved $c'_{p,q}=0$, we have the equality of functions $X\to \mathbb{R}$:
\[ \sum_{x\in P} c_{x^+}\mathcal{T}^{\mathrm{PL}}_{x^+}(f) + c_{x^-}\mathcal{T}^{\mathrm{PL}}_{x^-}(f)=c'_p \, f(p) +\delta'\]
for some constant $\delta' \in \mathbb{Q}$. Then Lemma~\ref{lem:order_poly_toggle_eqs} again easily implies that $c'_p=0$.

So we have now proven the piecewise-linear version of the result we want, namely, the equality of functions $\mathbb{R}^P\to \mathbb{R}$:
\[ \sum_{p\in P} c_{p^+}\mathcal{T}^{\mathrm{PL}}_{p^+} + c_{p^-}\mathcal{T}^{\mathrm{PL}}_{p^-}=\delta(\omega^{\mathrm{PL}}-\alpha^{\mathrm{PL}}).\]
Deducing the birational result is actually now very easy. By repeatedly applying the rule $\frac{1}{\frac{1}{a}+\frac{1}{b}}=ab \cdot \frac{1}{a+b}$, we can write an equality of functions $\mathbb{R}_{>0}^P\to \mathbb{R}_{>0}$:
\[  \prod_{p\in P} (\mathcal{T}^{\mathrm{B}}_{p^+}(f))^{c_{p^+}} \cdot (\mathcal{T}^{\mathrm{B}}_{p^-}(f))^{c_{p^-}} = \prod_{p\in P} f(p)^{c'_p} \cdot \hspace{-0.6cm}  \prod_{\substack{p,q \in P \\ \textrm{$p$ and $q$}\\ \textrm{incomparable}}} \hspace{-0.6cm} (f(p)+f(q))^{c'_{p,q}} \cdot (\alpha^{\mathrm{B}})^{c'_{\alpha}} \cdot (\omega^{\mathrm{B}})^{c'_{\omega}}, \]
where $c'_{p}, c'_{p,q}, c'_{\alpha}, c'_{\omega}\in \mathbb{Q}$ are constants. Importantly, these must the same constants as above because they are obtained in the same way. Therefore we get the equality of functions $\mathbb{R}_{>0}^{P} \to \mathbb{R}_{>0}$:
\[ \prod_{p\in P} (\mathcal{T}^{\mathrm{B}}_{p^+})^{c_{p^+}} \cdot (\mathcal{T}^{\mathrm{B}}_{p^-})^{c_{p^-}} = \left( \frac{\omega^{\mathrm{B}}}{\alpha^{\mathrm{B}}}\right)^{\delta},\]
as claimed.
\end{proof}

\begin{lemma} \label{lem:bi_striker}
Let $\mathcal{O}$ be a finite orbit of $\mathrm{row}^{\mathrm{B}}\colon \mathbb{R}^{P}_{>0} \to \mathbb{R}^{P}_{>0}$. Then for any $p \in P$ we have
\[\prod_{f \in \mathcal{O}} \mathcal{T}^{\mathrm{B}}_p(f) = 1.\]
\end{lemma}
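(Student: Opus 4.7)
The plan is to mimic the proof of Lemma~\ref{lem:pl_striker} step by step, detropicalizing each ingredient: sums become products, differences become ratios, and telescoping over a finite sequence becomes a cancelling product around a finite orbit.

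First I would verify, directly from the definitions, the key local identity $\mathcal{T}^{\mathrm{B}}_{p^-}(\tau^{\mathrm{B}}_p(f)) = \mathcal{T}^{\mathrm{B}}_{p^+}(f)$. Writing $\tau^{\mathrm{B}}_p(f)(p) = \frac{1}{f(p)} \cdot \frac{\sum_{x \lessdot p} f(x)}{\sum_{x \gtrdot p} 1/f(x)}$, and plugging this into the definition of $\mathcal{T}^{\mathrm{B}}_{p^-}$, the sum $\sum_{x \gtrdot p} 1/f(x)$ cancels and one is left with $f(p) / \sum_{x \lessdot p} f(x) = \mathcal{T}^{\mathrm{B}}_{p^+}(f)$ (here $\lessdot,\gtrdot$ range over covers in $\widehat{P}$). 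This is the birational analog of the identity $\mathcal{T}^{\mathrm{PL}}_{p^-}(\tau^{\mathrm{PL}}_p(f))=\mathcal{T}^{\mathrm{PL}}_{p^+}(f)$ used in Lemma~\ref{lem:pl_striker}.

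Next I would observe the analogs of the ``locality'' properties of the toggles: if $q \in P$ is greater than or incomparable to $p$ then $\mathcal{T}^{\mathrm{B}}_{p^+}(\tau^{\mathrm{B}}_q(f)) = \mathcal{T}^{\mathrm{B}}_{p^+}(f)$, because $\tau^{\mathrm{B}}_q$ changes only the value at $q$, and $q$ does not appear among $p$ or its lower covers in the defining formula for $\mathcal{T}^{\mathrm{B}}_{p^+}$; dually, if $q$ is less than or incomparable to $p$ then $\mathcal{T}^{\mathrm{B}}_{p^-}(\tau^{\mathrm{B}}_q(f)) = \mathcal{T}^{\mathrm{B}}_{p^-}(f)$. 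Combining these with the local identity and the factorization $\mathrm{row}^{\mathrm{B}} = \tau^{\mathrm{B}}_{p_1}\circ\cdots\circ\tau^{\mathrm{B}}_{p_n}$ for a linear extension $p_1,\ldots,p_n$ of $P$ (reading top to bottom), we peel off the toggles to the right of $\tau^{\mathrm{B}}_p$ without affecting $\mathcal{T}^{\mathrm{B}}_{p^+}$, apply the local identity at $p$, and then peel off the toggles to the left of $\tau^{\mathrm{B}}_p$ without affecting $\mathcal{T}^{\mathrm{B}}_{p^-}$. The net conclusion is
\[ \mathcal{T}^{\mathrm{B}}_{p^-}(\mathrm{row}^{\mathrm{B}}(f)) = \mathcal{T}^{\mathrm{B}}_{p^+}(f) \quad \text{for all } f \in \mathbb{R}^P_{>0}. \]

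Finally, let $\mathcal{O}$ be a finite $\mathrm{row}^{\mathrm{B}}$-orbit. Since $\mathcal{T}^{\mathrm{B}}_p(f) = \mathcal{T}^{\mathrm{B}}_{p^+}(f)/\mathcal{T}^{\mathrm{B}}_{p^-}(f)$, we compute
\[ \prod_{f \in \mathcal{O}} \mathcal{T}^{\mathrm{B}}_p(f) = \prod_{f \in \mathcal{O}} \frac{\mathcal{T}^{\mathrm{B}}_{p^+}(f)}{\mathcal{T}^{\mathrm{B}}_{p^-}(f)} = \prod_{f \in \mathcal{O}} \frac{\mathcal{T}^{\mathrm{B}}_{p^-}(\mathrm{row}^{\mathrm{B}}(f))}{\mathcal{T}^{\mathrm{B}}_{p^-}(f)} = 1, \]
where the last equality is the cyclic telescoping around $\mathcal{O}$ (every factor in the numerator appears exactly once in the denominator since $\mathrm{row}^{\mathrm{B}}$ permutes the elements of $\mathcal{O}$). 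The main ``obstacle'' is really just the first step --- bookkeeping the cancellation in $\mathcal{T}^{\mathrm{B}}_{p^-}(\tau^{\mathrm{B}}_p(f))$ carefully --- but this is a direct algebraic manipulation and not a conceptual difficulty, so the proof should be quite short.
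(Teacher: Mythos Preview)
Your proposal is correct and follows exactly the same route as the paper's proof: establish $\mathcal{T}^{\mathrm{B}}_{p^-}(\mathrm{row}^{\mathrm{B}}(f)) = \mathcal{T}^{\mathrm{B}}_{p^+}(f)$ by detropicalizing the argument of Lemma~\ref{lem:pl_striker}, then telescope around the finite orbit. The paper's proof is simply a two-sentence pointer to that earlier lemma, whereas you have spelled out the cancellation and locality steps explicitly, which is fine.
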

\begin{proof}
The same argument as in the proof of Lemma~\ref{lem:pl_striker} proves that for any $f \in \mathbb{R}^{P}_{>0}$, $\mathcal{T}^{\mathrm{B}}_{p^-}(\mathrm{row}^{\mathrm{B}}(f)) = \mathcal{T}^{\mathrm{B}}_{p^+}(f)$. Thus multiplying $\mathcal{T}^{\mathrm{B}}_p(f)$ along a finite rowmotion orbit leads to all terms cancelling.
\end{proof}

\begin{thm} \label{thm:row_bi_homo}
Let $P$ be a poset for which each element covers, and is covered by, at most two elements, and for which $J(P)$ is tCDE with edge density~$\delta$. Let $\mathcal{O}$ be a finite orbit of $\mathrm{row}^{\mathrm{B}}\colon \mathbb{R}^{P}_{>0} \to \mathbb{R}^{P}_{>0}$. Then
\[ \prod_{f \in \mathcal{O}} \mathrm{ddeg}^{\mathrm{B}}(f) = \left( \frac{\omega^{\mathrm{B}}}{\alpha^{\mathrm{B}}}\right)^{\delta\cdot \#\mathcal{O}}.\]
\end{thm}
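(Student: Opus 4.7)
The plan is to assemble this result by chaining together Proposition~\ref{prop:tcde_eq}, Lemma~\ref{lem:bi_toggle_eqs}, and Lemma~\ref{lem:bi_striker}; essentially all of the work has already been done, and the theorem itself should just be a few lines.

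First I would invoke Proposition~\ref{prop:tcde_eq} to extract, from the tCDE hypothesis with edge density $\delta$, an equality of functions $J(P) \to \mathbb{Q}$ of the form $\mathrm{ddeg} + \sum_{p \in P} c_p \mathcal{T}_p = \delta$ for rational coefficients $c_p$. Writing $\mathrm{ddeg} = \sum_{p} \mathcal{T}_{p^-}$ and $\mathcal{T}_p = \mathcal{T}_{p^+} - \mathcal{T}_{p^-}$, this is a linear relation in the $\mathcal{T}_{p^\pm}$ with rational coefficients, so it falls into the hypothesis of Lemma~\ref{lem:bi_toggle_eqs}. The hypothesis that each element of $P$ covers, and is covered by, at most two elements is exactly what that lemma needs.

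Next, applying Lemma~\ref{lem:bi_toggle_eqs} lifts this combinatorial identity to an equality of functions $\mathbb{R}^P_{>0} \to \mathbb{R}_{>0}$ of the form
\[ \prod_{p \in P} (\mathcal{T}^{\mathrm{B}}_{p^+})^{c_{p^+}} \cdot (\mathcal{T}^{\mathrm{B}}_{p^-})^{c_{p^-}} = \left( \frac{\omega^{\mathrm{B}}}{\alpha^{\mathrm{B}}} \right)^{\delta}, \]
which (using $\mathrm{ddeg}^{\mathrm{B}} = \prod_p \mathcal{T}^{\mathrm{B}}_{p^-}$ and $\mathcal{T}^{\mathrm{B}}_p = \mathcal{T}^{\mathrm{B}}_{p^+}/\mathcal{T}^{\mathrm{B}}_{p^-}$) I would rearrange into
\[ \mathrm{ddeg}^{\mathrm{B}}(f) \cdot \prod_{p \in P} \mathcal{T}^{\mathrm{B}}_p(f)^{c_p} = \left( \frac{\omega^{\mathrm{B}}}{\alpha^{\mathrm{B}}} \right)^{\delta} \]
for every $f \in \mathbb{R}^P_{>0}$.

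Finally I would take the product of this identity over $f \in \mathcal{O}$ and invoke Lemma~\ref{lem:bi_striker} (the birational Striker observation), which says that $\prod_{f \in \mathcal{O}} \mathcal{T}^{\mathrm{B}}_p(f) = 1$ for each $p \in P$ along any finite birational rowmotion orbit. The correction factor collapses to $1$, and what remains is exactly
\[ \prod_{f \in \mathcal{O}} \mathrm{ddeg}^{\mathrm{B}}(f) = \left( \frac{\omega^{\mathrm{B}}}{\alpha^{\mathrm{B}}} \right)^{\delta \cdot \#\mathcal{O}}. \]
There is no genuine obstacle here: the conceptual work is in the preceding lemmas. The only subtle point to be careful about is keeping track of the rational exponents (interpreted via principal roots, as in the statement of Lemma~\ref{lem:bi_toggle_eqs}) and verifying that the algebraic manipulation $\mathrm{ddeg}^{\mathrm{B}} \cdot \prod_p (\mathcal{T}^{\mathrm{B}}_p)^{c_p} = \prod_p (\mathcal{T}^{\mathrm{B}}_{p^+})^{c_p} (\mathcal{T}^{\mathrm{B}}_{p^-})^{1-c_p}$ matches the shape given by Lemma~\ref{lem:bi_toggle_eqs}, which is a routine check.
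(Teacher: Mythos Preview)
Your proposal is correct and follows essentially the same approach as the paper's own proof: invoke Proposition~\ref{prop:tcde_eq} to get the combinatorial identity, lift it via Lemma~\ref{lem:bi_toggle_eqs} to the birational level in the form $\mathrm{ddeg}^{\mathrm{B}} \cdot \prod_p (\mathcal{T}^{\mathrm{B}}_p)^{c_p} = (\omega^{\mathrm{B}}/\alpha^{\mathrm{B}})^{\delta}$, then take the product over the orbit and kill the correction factor with Lemma~\ref{lem:bi_striker}. The paper's proof is organized identically, differing only in trivial algebraic rearrangement (it writes the lifted identity as $\mathrm{ddeg}^{\mathrm{B}} = (\omega^{\mathrm{B}}/\alpha^{\mathrm{B}})^{\delta} \cdot \prod_p (\mathcal{T}^{\mathrm{B}}_p)^{-c_p}$).
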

\begin{proof}

Let $P$ be as in the statement of the theorem. By Proposition~\ref{prop:tcde_eq} we have an equality of functions~$J(P)\to\mathbb{R}$:
\[ \mathrm{ddeg} + \sum_{p\in P}c_p \mathcal{T}_p  = \delta,\]
for some coefficients $c_p \in \mathbb{Q}$. By Lemma~\ref{lem:bi_toggle_eqs} that means we have the following equality of functions $\mathbb{R}^{P}_{>0}\to\mathbb{R}_{>0}$:
\[ \mathrm{ddeg}^{\mathrm{B}} =  \left( \frac{\omega^{\mathrm{B}}}{\alpha^{\mathrm{B}}}\right)^{\delta} \cdot \prod_{p\in P} (\mathcal{T}^{\mathrm{B}}_p)^{-c_p}.\]
Let $\mathcal{O}$ be a finite orbit of $\mathrm{row}^{\mathrm{B}}\colon \mathbb{R}^{P}_{>0} \to \mathbb{R}^{P}_{>0}$. Then by Lemma~\ref{lem:bi_striker} we have
\[ \prod_{f \in \mathcal{O}} \mathrm{ddeg}^{\mathrm{B}}(f) = \left( \frac{\omega^{\mathrm{B}}}{\alpha^{\mathrm{B}}}\right)^{\delta\cdot \#\mathcal{O}},\]
as claimed.
\end{proof}

Of course, Theorem~\ref{thm:row_bi_homo} applies when $P$ is a minuscule poset or a root poset of coincidental type. (It also applies to the other posets $P$ for which $J(P)$ is tCDE mentioned in Remark~\ref{rem:other_tcde}.)

\begin{remark} \label{rem:refined}
Let $P\coloneqq [a]\times [b]$ be the rectangle. For this poset, Propp and Roby~\cite{propp2015homomesy} proved some refinements of the antichain cardinality homomesy. These refinements also hold at the birational level, as we now explain. For fixed $1 \leq i \leq a$, let $X_i \subseteq P$ consist of the elements of the form $(i,j)$ for $1 \leq j \leq b$. Propp-Roby~\cite[Theorem 27]{propp2015homomesy} proved that for any $1\leq i \leq a$, the statistic $I \mapsto \#(\mathrm{max}(I) \cap X_i)$ is $\frac{b}{a+b}$-mesic with respect to the action of rowmotion acting on $J(P)$. This statistic can also be written as $\sum_{p \in X_i} \mathcal{T}_{p^-}$. As explained in~\cite[p.~23]{chan2017expected}, it is easy to see that for any $1 \leq i \leq a-1$, we have the following equality of functions $J(P) \to \mathbb{R}$:
\[\sum_{p \in X_i} \mathcal{T}_{p^-} = \sum_{p \in X_{i+1}} \mathcal{T}_{p^+}.\]
Lemma~\ref{lem:bi_toggle_eqs} thus gives the following equality of functions $\mathbb{R}^P_{>0}\to\mathbb{R}_{>0}$:
\[ \prod_{p \in X_i} \mathcal{T}^{\mathrm{B}}_{p^-} = \prod_{p \in X_{i+1}} \mathcal{T}^{\mathrm{B}}_{p^+}.\]
And Lemma~\ref{lem:bi_striker} then says that for any orbit $\mathcal{O}$ of $\mathrm{row}^{\mathrm{B}}\colon \mathbb{R}^P_{>0}\to \mathbb{R}^P_{>0}$, we have
\[ \prod_{f\in\mathcal{O}} \prod_{p \in X_i} \mathcal{T}^{\mathrm{B}}_{p^-}(f) =\prod_{f\in\mathcal{O}} \prod_{p \in X_{i+1}} \mathcal{T}^{\mathrm{B}}_{p^-}(f).\]
Together with Theorem~\ref{thm:row_bi_homo}, this means that for any $\mathrm{row}^{\mathrm{B}}$-orbit $\mathcal{O}$ and any $1\leq i \leq a$ we have
\[ \prod_{f\in\mathcal{O}} \prod_{p \in X_i} \mathcal{T}^{\mathrm{B}}_{p^-}(f) =  \left( \frac{\omega^{\mathrm{B}}}{\alpha^{\mathrm{B}}}\right)^{\frac{b}{a+b}\cdot \#\mathcal{O}},\]
which is the birational analog of the refined homomesy. Of course, the same works for the subsets $Y_j\subseteq P$ consisting of elements of the form $(i,j)$ for fixed $j$ as well.
\end{remark}

\begin{remark} \label{rem:darij}
For the specific case of the rectangle, the birational antichain cardinality homomesy (as well as the refined homomesies mentioned in Remark~\ref{rem:refined}) had been obtained independently in prior unpublished work of Darij Grinberg~\cite{grinberg2018homomesy}. Grinberg's arguments used the techniques of~\cite{grinberg2016birational1, grinberg2015birational2} and hence relied on the strong integrability properties exhibited by birational rowmotion of the rectangle poset. Also, after the first version of this paper was posted online, Joseph and Roby~\cite{joseph2020stanleythomas} extended ``Stanley--Thomas word'' approach of Propp-Roby~\cite{propp2015homomesy} from the combinatorial to the birational setting. They obtained these same rectangle homomesies in the process. Finally, in a paper in preparation, Okada~\cite{okada2019birational} establishes many homomesies for birational rowmotion of minuscule posets, including the antichain cardinality homomesy. He does this by adapting the coordinate system and lattice paths interpretation of Musiker-Roby~\cite{musiker2018paths} from the rectangle to the other minuscule posets. All of these other approaches require exact formulas for birational rowmotion of special posets. Our arguments don't rely on any such integrability assumptions and apply more generally.
\end{remark}

Finally, let's end our consideration of birational rowmotion by bringing the minuscule doppelg\"{a}ngers back into the picture. We offer the following variant of Conjecture~\ref{conj:min_dop_row} (note that it is a \emph{variant} and not a direct generalization):

\begin{conj} \label{conj:min_dop_bi_row}
Let $(P,Q) \in \{(\Lambda_{\mathrm{Gr}(k,n)},T_{k,n}),(\Lambda_{\mathrm{OG}(6,12)},\Phi^+(H_3)),(\Lambda_{\mathbb{Q}^{2n}},\Phi^+(I_2(2n)))\}$ be a minuscule doppelg\"{a}nger pair. Then,
\begin{itemize}
\item both $\mathrm{row}^{\mathrm{B}}\colon \mathbb{R}^{P}_{>0}\to \mathbb{R}^{P}_{>0}$ and $\mathrm{row}^{\mathrm{B}}\colon \mathbb{R}^{Q}_{>0}\to \mathbb{R}^{Q}_{>0}$ have order~$r(P)+2=r(Q)+2$;
\item for any $f \in \mathbb{R}^{P}_{>0}, g \in \mathbb{R}^{Q}_{>0}$, we have 
\[\prod_{i=0}^{r(P)+1} \mathrm{ddeg}^{\mathrm{B}}( (\mathrm{row}^{\mathrm{B}})^i(f)) = \left( \frac{\omega^{\mathrm{B}}}{\alpha^{\mathrm{B}}}\right)^{\#P} = \left( \frac{\omega^{\mathrm{B}}}{\alpha^{\mathrm{B}}}\right)^{\#Q} = \prod_{i=0}^{r(Q)+1} \mathrm{ddeg}^{\mathrm{B}}( (\mathrm{row}^{\mathrm{B}})^i(g)).\]
\end{itemize}
\end{conj}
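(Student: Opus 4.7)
The plan is to split Conjecture~\ref{conj:min_dop_bi_row} into the order claim (that $\mathrm{row}^{\mathrm{B}}$ has order exactly $h := r(P)+2 = r(Q)+2$ on both $\mathbb{R}_{>0}^{P}$ and $\mathbb{R}_{>0}^{Q}$) and the homomesy claim (the $h$-fold product identity). The middle equality $(\omega^{\mathrm{B}}/\alpha^{\mathrm{B}})^{\#P} = (\omega^{\mathrm{B}}/\alpha^{\mathrm{B}})^{\#Q}$ is automatic since $\#P = \#Q$ by Proposition~\ref{prop:doppelganger_basics}.

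For every case except the trapezoid, the strategy is ``tCDE plus finite order.'' The order claim for the minuscule member $P$ is (up to the one outstanding type-$E_7$ minuscule poset, which Okada's forthcoming work handles uniformly via the Musiker--Roby lattice paths) Conjecture~\ref{conj:min_bi_row_ord}, established case-by-case by Grinberg and Roby. For $Q \in \{\Phi^{+}(H_3),\Phi^{+}(I_2(2n))\}$, the same authors' work gives order exactly $h$ (this is consistent with their Conjecture~\ref{conj:root_bi_row_ord} and with the piecewise-linear order already being $h$ for these types). Granting these, the homomesy claim follows immediately from Theorem~\ref{thm:row_bi_homo}: by Theorems~\ref{thm:minuscule_cde} and~\ref{thm:root_poset_cde}, the lattices $J(P)$ and $J(Q)$ are tCDE; the edge density of each is $\#P/h = \#Q/h$; and each poset satisfies the ``at-most-two-covers'' hypothesis. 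Thus every orbit $\mathcal{O}$ of $\mathrm{row}^{\mathrm{B}}$ satisfies $\prod_{f \in \mathcal{O}} \mathrm{ddeg}^{\mathrm{B}}(f) = (\omega^{\mathrm{B}}/\alpha^{\mathrm{B}})^{(\#P/h)\,\#\mathcal{O}}$. Since the orbit size $\#\mathcal{O}$ divides $h$, iterating $h$ times traverses the orbit $h/\#\mathcal{O}$ times and the $h$-fold product in the conjecture evaluates to $(\omega^{\mathrm{B}}/\alpha^{\mathrm{B}})^{\#P}$, as desired.

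The genuine obstacle is $Q = T_{k,n}$ with $1 < k < n/2$: here finite order of $\mathrm{row}^{\mathrm{B}}$ is not known, and $J(T_{k,n})$ is \emph{not} tCDE, so Theorem~\ref{thm:row_bi_homo} is unavailable. Two complementary strategies suggest themselves. Strategy~(A), the doppelg\"{a}nger strategy: produce a birational, $\mathrm{row}^{\mathrm{B}}$-equivariant map $\mathbb{R}_{>0}^{\Lambda_{\mathrm{Gr}(k,n)}} \to \mathbb{R}_{>0}^{T_{k,n}}$ intertwining $\mathrm{ddeg}^{\mathrm{B}}$ and transfer both finite order and the homomesy from the minuscule side. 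Dao--Wellman--Yost-Wolff--Zhang (Remark~\ref{rem:reu}) showed the Hamaker--Patrias--Pechenik--Williams bijection commutes with combinatorial rowmotion on order ideals; the natural next step is to lift this bijection to the piecewise-linear and then birational level by reformulating the $K$-theoretic jeu-de-taquin moves as (de)tropicalizations of explicit subtraction-free rational expressions, along the lines hinted at in~\cite{monical2018crystal}. Strategy~(B), the weak-tCDE lift: inspect the linear identity among the $\mathcal{T}_{p^{\pm}}$ powering the Dao et al.\ proof of the combinatorial antichain-cardinality homomesy for $T_{k,n}$ with $k \le 3$; if this identity is an equality of functions on $J(T_{k,n})$ (rather than merely an orbit-averaged identity), Lemma~\ref{lem:bi_toggle_eqs} transports it verbatim to $\mathbb{R}_{>0}^{T_{k,n}}$ --- the trapezoid inherits the at-most-two-covers hypothesis from its embedding in $\Phi^{+}(B_{n-k})$ --- and combining with Lemma~\ref{lem:bi_striker} delivers the $h$-fold homomesy on any finite orbit.

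I expect the main obstacle to be establishing finite order of $\mathrm{row}^{\mathrm{B}}$ on $\mathbb{R}_{>0}^{T_{k,n}}$: this is unresolved even at the piecewise-linear level, and both strategies above presuppose it in order for the ``$h$-fold product'' in the conjecture to coincide with a genuine product around an orbit. Strategy~(A) seems the more promising line of attack, since a rowmotion-equivariant birational doppelg\"{a}nger map would deliver finite order and the homomesy simultaneously, and would upgrade Conjecture~\ref{conj:min_dop_row_ppart} to its birational form along the way. The decisive technical step is realizing the Hamaker et al.\ jeu-de-taquin bijection as the specialization of a subtraction-free rational map of the appropriate parameter spaces; everything downstream of that would be a routine tropicalize-then-specialize chase combined with Lemmas~\ref{lem:bi_toggle_eqs} and~\ref{lem:bi_striker}.
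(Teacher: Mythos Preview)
This statement is presented in the paper as a \emph{conjecture}, not a theorem; the paper does not prove it. Your proposal is accordingly not a proof either, but rather an analysis of partial results together with suggested strategies. Your analysis of what is known---that the minuscule member $P$ and the coincidental root posets $\Phi^{+}(H_3)$, $\Phi^{+}(I_2(2n))$ are handled by the combination of Grinberg--Roby's finite-order results, the tCDE property (Theorems~\ref{thm:minuscule_cde} and~\ref{thm:root_poset_cde}), and Theorem~\ref{thm:row_bi_homo}, leaving only the trapezoid $T_{k,n}$ genuinely open---matches the paper's own discussion following the conjecture essentially exactly.

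Where you go beyond the paper is in proposing Strategies~(A) and~(B). There is a concrete obstruction to Strategy~(A) as you have stated it: the paper explicitly notes (in the paragraph following Conjecture~\ref{conj:min_dop_row_ppart}) that the Hamaker--Patrias--Pechenik--Williams bijection \emph{does not} commute with rowmotion of $P$-partitions of height $\ell > 1$, citing~\cite{dao2019trapezoid}. Since piecewise-linear rowmotion restricted to $\frac{1}{\ell}\mathbb{Z}^{P}\cap\mathcal{O}(P)$ is exactly rowmotion on $\mathrm{PP}^{\ell}(P)$, any birational lift of that bijection would have to commute with piecewise-linear rowmotion upon tropicalization, and it cannot. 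So the ``natural next step'' you describe---detropicalizing the jeu-de-taquin moves of the existing bijection---is blocked; a birational doppelg\"{a}nger map, if one exists, would have to come from a genuinely different combinatorial bijection. Strategy~(B) is more in line with the paper's philosophy (cf.\ Remark~\ref{rem:reu}), but note that the Dao et al.\ condition for $k\le 3$ is described there as ``weaker than tCDE,'' and Lemma~\ref{lem:bi_toggle_eqs} requires a pointwise identity on all of $J(P)$; whether their identity is of that form would need to be checked.
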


Recall that the equalities $\#P=\#Q$ and $r(P)=r(Q)$ are forced by $P$ and $Q$ being doppelg\"{a}ngers (see Proposition~\ref{prop:doppelganger_basics}). The only poset appearing in Conjecture~\ref{conj:min_dop_bi_row} for which the results discussed above are not enough to resolve the conjecture is the trapezoid $T_{k,n}$. And in fact, Nathan Williams had previously conjectured (see~\cite[Conjecture 75]{grinberg2015birational2}) that birational rowmotion should have finite order for the trapezoid. So the only really ``new'' part of this conjecture is the assertion that $T_{k,n}$ exhibits the birational antichain cardinality homomesy. As for partial results: Grinberg and Roby~\cite[Theorem~74]{grinberg2015birational2} proved that birational rowmotion has order $r(P)+2$ for $P=T_{k,2k+1}$; and of course Theorem~\ref{thm:row_bi_homo} applies to $P=T_{k,2k}\simeq\Phi^+(B_k)$. We have checked Conjecture~\ref{conj:min_dop_bi_row} by computer for $T_{k,n}$ for $n\leq 7$ and $k \leq n/2$.

\begin{remark}
Even though Conjecture~\ref{conj:min_dop_bi_row} does not directly imply either Conjecture~\ref{conj:min_dop_row} or Conjecture~\ref{conj:min_dop_row_ppart} (because it says nothing about orbit structure), it actually does still imply Conjecture~\ref{conj:doppelganger_mcde}. Indeed, via tropicalization and by restricting to the rational points in the order polytope, the birational analog of the antichain cardinality homomesy implies that $\mathrm{ddeg}$ is homomesic with respect to the action of $\mathrm{row}$ on $\mathrm{PP}^{\ell}(P)$ and on $\mathrm{PP}^{\ell}(Q)$ for a minuscule doppelg\"{a}nger pair $(P,Q)$. This in turn implies
\[\sum_{T \in \mathrm{PP}^{\ell}(P)}\mathrm{ddeg}(T)=\sum_{T \in \mathrm{PP}^{\ell}(Q)}\mathrm{ddeg}(T),\]
which as explained in Remark~\ref{rem:ppart_homo_implies_mcde} above is equivalent to Conjecture~\ref{conj:doppelganger_mcde}.
\end{remark}

Something like Conjecture~\ref{conj:min_dop_bi_row} should be true for posets with isomorphic comparability graphs, but because most posets do not have finite birational rowmotion order it becomes slightly harder to formulate the corresponding statement in this case. It does appear that, e.g., if $P$ has finite birational rowmotion order and $Q$ satisfies $\mathrm{com}(P)\simeq \mathrm{com}(Q)$, then $Q$ has finite birational rowmotion order as well.

\bibliography{minuscule_cde}{}

\begin{thebibliography}{100}

\bibitem{armstrong2009generalized}
Drew Armstrong.
\newblock Generalized noncrossing partitions and combinatorics of {C}oxeter
  groups.
\newblock {\em Mem. Amer. Math. Soc.}, 202(949):x+159, 2009.

\bibitem{armstrong2013uniform}
Drew Armstrong, Christian Stump, and Hugh Thomas.
\newblock A uniform bijection between nonnesting and noncrossing partitions.
\newblock {\em Trans. Amer. Math. Soc.}, 365(8):4121--4151, 2013.

\bibitem{bessis2011cyclic}
David Bessis and Victor Reiner.
\newblock Cyclic sieving of noncrossing partitions for complex reflection
  groups.
\newblock {\em Ann. Comb.}, 15(2):197--222, 2011.

\bibitem{billera2006decomposable}
Louis~J. Billera, Hugh Thomas, and Stephanie van Willigenburg.
\newblock Decomposable compositions, symmetric quasisymmetric functions and
  equality of ribbon {S}chur functions.
\newblock {\em Adv. Math.}, 204(1):204--240, 2006.

\bibitem{bjorner2005coxeter}
Anders Bj\"orner and Francesco Brenti.
\newblock {\em Combinatorics of {C}oxeter groups}, volume 231 of {\em Graduate
  Texts in Mathematics}.
\newblock Springer, New York, 2005.

\bibitem{bourbaki2002lie}
Nicolas Bourbaki.
\newblock {\em Lie groups and {L}ie algebras. {C}hapters 4--6}.
\newblock Elements of Mathematics (Berlin). Springer-Verlag, Berlin, 2002.
\newblock Translated from the 1968 French original by Andrew Pressley.

\bibitem{brouwer1974period}
A.~E. Brouwer and A.~Schrijver.
\newblock {\em On the period of an operator, defined on antichains}.
\newblock Mathematisch Centrum, Amsterdam, 1974.
\newblock Mathematisch Centrum Afdeling Zuivere Wiskunde ZW 24/74.

\bibitem{browning2017doppelgangers}
Thomas Browning, Max Hopkins, and Zander Kelley.
\newblock Doppelgangers: the {U}r-operation and posets of bounded height.
\newblock Preprint, \arxiv{1710.10407}, 2017.

\bibitem{buch2002littlewood}
Anders~Skovsted Buch.
\newblock A {L}ittlewood-{R}ichardson rule for the {$K$}-theory of
  {G}rassmannians.
\newblock {\em Acta Math.}, 189(1):37--78, 2002.

\bibitem{cameron1995orbits}
P.~J. Cameron and D.~G. Fon-Der-Flaass.
\newblock Orbits of antichains revisited.
\newblock {\em European J. Combin.}, 16(6):545--554, 1995.

\bibitem{ceballos2014subword}
Cesar Ceballos, Jean-Philippe Labb\'{e}, and Christian Stump.
\newblock Subword complexes, cluster complexes, and generalized
  multi-associahedra.
\newblock {\em J. Algebraic Combin.}, 39(1):17--51, 2014.

\bibitem{chan2017expected}
Melody Chan, Shahrzad Haddadan, Sam Hopkins, and Luca Moci.
\newblock The expected jaggedness of order ideals.
\newblock {\em Forum Math. Sigma}, 5:e9, 27, 2017.

\bibitem{chan2018genera}
Melody Chan, Alberto L\'{o}pez~Mart\'{i}n, Nathan Pflueger, and Montserrat
  Teixidor~i Bigas.
\newblock Genera of {B}rill-{N}oether curves and staircase paths in {Y}oung
  tableaux.
\newblock {\em Trans. Amer. Math. Soc.}, 370(5):3405--3439, 2018.

\bibitem{cuntz2015root}
Michael Cuntz and Christian Stump.
\newblock On root posets for noncrystallographic root systems.
\newblock {\em Math. Comp.}, 84(291):485--503, 2015.

\bibitem{dao2019trapezoid}
Quang Dao, Julian Wellman, Calvin Yost-Wolff, and Sylvester Zhang.
\newblock Rowmotion orbits of trapezoid posets.
\newblock Preprint, \arxiv{2002.04810}, 2020.

\bibitem{ding2019antichain}
Jian Ding and Chao-Ping Dong.
\newblock Antichain generating polynomials of posets.
\newblock Preprint, \arxiv{1905.06692}, 2019.

\bibitem{dreesen1985comparability}
B.~Dreesen, W.~Poguntke, and P.~Winkler.
\newblock Comparability invariance of the fixed point property.
\newblock {\em Order}, 2(3):269--274, 1985.

\bibitem{du1992canonical}
Jie Du.
\newblock Canonical bases for irreducible representations of quantum {${\rm
  GL}_n$}.
\newblock {\em Bull. London Math. Soc.}, 24(4):325--334, 1992.

\bibitem{einstein2016noncrossing}
David Einstein, Miriam Farber, Emily Gunawan, Michael Joseph, Matthew Macauley,
  James Propp, and Simon Rubinstein-Salzedo.
\newblock Noncrossing partitions, toggles, and homomesies.
\newblock {\em Electron. J. Combin.}, 23(3):Paper 3.52, 26, 2016.

\bibitem{einstein2013combinatorial}
David Einstein and James Propp.
\newblock Combinatorial, piecewise-linear, and birational homomesy for products
  of two chains.
\newblock Preprint, \arxiv{1310.5294}, 2013.

\bibitem{einstein2014piecewise}
David Einstein and James Propp.
\newblock Piecewise-linear and birational toggling.
\newblock In {\em 26th {I}nternational {C}onference on {F}ormal {P}ower
  {S}eries and {A}lgebraic {C}ombinatorics ({FPSAC} 2014)}, Discrete Math.
  Theor. Comput. Sci. Proc., AT, pages 513--524. Assoc. Discrete Math. Theor.
  Comput. Sci., Nancy, 2014.

\bibitem{elizalde2015bijections}
Sergi Elizalde.
\newblock Bijections for pairs of non-crossing lattice paths and walks in the
  plane.
\newblock {\em European J. Combin.}, 49:25--41, 2015.

\bibitem{eu2008cyclic}
Sen-Peng Eu and Tung-Shan Fu.
\newblock The cyclic sieving phenomenon for faces of generalized cluster
  complexes.
\newblock {\em Adv. in Appl. Math.}, 40(3):350--376, 2008.

\bibitem{fan2019proof}
Neil J.~Y. Fan, Peter~L. Guo, and Sophie C.~C. Sun.
\newblock Proof of a {C}onjecture of {R}einer--{T}enner--{Y}ong on {B}arely
  {S}et-{V}alued {T}ableaux.
\newblock {\em SIAM J. Discrete Math.}, 33(1):189--196, 2019.

\bibitem{fock2009cluster}
Vladimir~V. Fock and Alexander~B. Goncharov.
\newblock Cluster ensembles, quantization and the dilogarithm.
\newblock {\em Ann. Sci. \'{E}c. Norm. Sup\'{e}r. (4)}, 42(6):865--930, 2009.

\bibitem{fomin2005generalized}
Sergey Fomin and Nathan Reading.
\newblock Generalized cluster complexes and {C}oxeter combinatorics.
\newblock {\em Int. Math. Res. Not.}, (44):2709--2757, 2005.

\bibitem{fonderflaass1993orbits}
D.~G. Fon-Der-Flaass.
\newblock Orbits of antichains in ranked posets.
\newblock {\em European J. Combin.}, 14(1):17--22, 1993.

\bibitem{fontaine2014cyclic}
Bruce Fontaine and Joel Kamnitzer.
\newblock Cyclic sieving, rotation, and geometric representation theory.
\newblock {\em Selecta Math. (N.S.)}, 20(2):609--625, 2014.

\bibitem{galashin2017rsystems}
Pavel Galashin and Pavlo Pylyavskyy.
\newblock {$R$}-systems.
\newblock {\em Selecta Math. (N.S.)}, 25(2):Art. 22, 63, 2019.

\bibitem{garver2018minuscule}
Alexander Garver, Rebecca Patrias, and Hugh Thomas.
\newblock Minuscule reverse plane partitions via quiver representations.
\newblock Preprint, \arxiv{1812.08345}, 2018.

\bibitem{green2013minuscule}
R.~M. Green.
\newblock {\em Combinatorics of minuscule representations}, volume 199 of {\em
  Cambridge Tracts in Mathematics}.
\newblock Cambridge University Press, Cambridge, 2013.

\bibitem{grinberg2018homomesy}
Darij Grinberg.
\newblock Personal communication, 2018.

\bibitem{grinberg2015birational2}
Darij Grinberg and Tom Roby.
\newblock Iterative properties of birational rowmotion {II}: rectangles and
  triangles.
\newblock {\em Electron. J. Combin.}, 22(3):Paper 3.40, 49, 2015.

\bibitem{grinberg2016birational1}
Darij Grinberg and Tom Roby.
\newblock Iterative properties of birational rowmotion {I}: generalities and
  skeletal posets.
\newblock {\em Electron. J. Combin.}, 23(1):Paper 1.33, 40, 2016.

\bibitem{gross2018canonical}
Mark Gross, Paul Hacking, Sean Keel, and Maxim Kontsevich.
\newblock Canonical bases for cluster algebras.
\newblock {\em J. Amer. Math. Soc.}, 31(2):497--608, 2018.

\bibitem{haiman1989mixed}
Mark~D. Haiman.
\newblock On mixed insertion, symmetry, and shifted {Y}oung tableaux.
\newblock {\em J. Combin. Theory Ser. A}, 50(2):196--225, 1989.

\bibitem{haiman1992dual}
Mark~D. Haiman.
\newblock Dual equivalence with applications, including a conjecture of
  {P}roctor.
\newblock {\em Discrete Math.}, 99(1-3):79--113, 1992.

\bibitem{hamaker2018doppelgangers}
Zachary Hamaker, Rebecca Patrias, Oliver Pechenik, and Nathan Williams.
\newblock Doppelg\"{a}ngers: Bijections of plane partitions.
\newblock {\em International Mathematics Research Notices}, page rny018, 2018.

\bibitem{hamaker2015subwords}
Zachary Hamaker and Nathan Williams.
\newblock Subwords and plane partitions.
\newblock In {\em Proceedings of {FPSAC} 2015}, Discrete Math. Theor. Comput.
  Sci. Proc., pages 241--252. Assoc. Discrete Math. Theor. Comput. Sci., Nancy,
  2015.

\bibitem{henriques2006crystals}
Andr\'{e} Henriques and Joel Kamnitzer.
\newblock Crystals and coboundary categories.
\newblock {\em Duke Math. J.}, 132(2):191--216, 2006.

\bibitem{hopkins2017cde}
Sam Hopkins.
\newblock The {CDE} property for minuscule lattices.
\newblock {\em J. Combin. Theory Ser. A}, 152:45--103, 2017.

\bibitem{hopkins2018vexillary}
Sam Hopkins.
\newblock The {CDE} property for skew vexillary permutations.
\newblock {\em J. Combin. Theory Ser. A}, 168:164--218, 2019.

\bibitem{hopkins2019cyclic}
Sam Hopkins.
\newblock Cyclic sieving for plane partitions and symmetry.
\newblock Preprint, \arxiv{1907.09337}, 2019.

\bibitem{humphreys1972lie}
James~E. Humphreys.
\newblock {\em Introduction to {L}ie algebras and representation theory}.
\newblock Springer-Verlag, New York-Berlin, 1972.
\newblock Graduate Texts in Mathematics, Vol. 9.

\bibitem{humphreys1990reflection}
James~E. Humphreys.
\newblock {\em Reflection groups and {C}oxeter groups}, volume~29 of {\em
  Cambridge Studies in Advanced Mathematics}.
\newblock Cambridge University Press, Cambridge, 1990.

\bibitem{johnson1971real}
Kenneth~Leon Johnson.
\newblock {\em Real representations of finite directed graphs}.
\newblock ProQuest LLC, Ann Arbor, MI, 1971.
\newblock Thesis (Ph.D.)--The University of Alabama.

\bibitem{joseph2000brylinski}
Anthony Joseph, Gail Letzter, and Shmuel Zelikson.
\newblock On the {B}rylinski-{K}ostant filtration.
\newblock {\em J. Amer. Math. Soc.}, 13(4):945--970, 2000.

\bibitem{joseph2017antichain}
Michael Joseph.
\newblock Antichain toggling and rowmotion.
\newblock {\em Electron. J. Combin.}, 26(1):Paper 1.29, 43, 2019.

\bibitem{joseph2018toggling}
Michael Joseph and Tom Roby.
\newblock Toggling independent sets of a path graph.
\newblock {\em Electron. J. Combin.}, 25(1):Paper 1.18, 31, 2018.

\bibitem{joseph2019birationalantichain}
Michael Joseph and Tom Roby.
\newblock Birational and noncommutative lifts of antichain toggling and
  rowmotion.
\newblock Preprint, \arxiv{1909.09658}, 2019.

\bibitem{joseph2020stanleythomas}
Michael Joseph and Tom Roby.
\newblock A birational lifting of the {S}tanley--{T}homas word on products of
  two chains.
\newblock Preprint, \arxiv{2001.03811}, 2020.

\bibitem{kelly1986invariants}
David Kelly.
\newblock Invariants of finite comparability graphs.
\newblock {\em Order}, 3(2):155--158, 1986.

\bibitem{kirillov1995groups}
A.~N. Kirillov and A.~D. Berenstein.
\newblock Groups generated by involutions, {G}el'fand-{T}setlin patterns, and
  combinatorics of {Y}oung tableaux.
\newblock {\em Algebra i Analiz}, 7(1):92--152, 1995.

\bibitem{kirillov2001tropical}
Anatol~N. Kirillov.
\newblock Introduction to tropical combinatorics.
\newblock In {\em Physics and combinatorics, 2000 ({N}agoya)}, pages 82--150.
  World Sci. Publ., River Edge, NJ, 2001.

\bibitem{kostant1959principal}
Bertram Kostant.
\newblock The principal three-dimensional subgroup and the {B}etti numbers of a
  complex simple {L}ie group.
\newblock {\em Amer. J. Math.}, 81:973--1032, 1959.

\bibitem{lam2007combinatorial}
Thomas Lam and Pavlo Pylyavskyy.
\newblock Combinatorial {H}opf algebras and {$K$}-homology of {G}rassmannians.
\newblock {\em Int. Math. Res. Not. IMRN}, (24):Art. ID rnm125, 48, 2007.

\bibitem{liu2018ppartition}
Ricky~Ini Liu and Michael Weselcouch.
\newblock {$P$}-partition generating function equivalence of naturally labeled
  posets.
\newblock {\em J. Combin. Theory Ser. A}, 170:105136, 31, 2020.

\bibitem{lusztig1990canonical}
G.~Lusztig.
\newblock Canonical bases arising from quantized enveloping algebras.
\newblock {\em J. Amer. Math. Soc.}, 3(2):447--498, 1990.

\bibitem{lusztig1983singularities}
George Lusztig.
\newblock Singularities, character formulas, and a {$q$}-analog of weight
  multiplicities.
\newblock In {\em Analysis and topology on singular spaces, {II}, {III}
  ({L}uminy, 1981)}, volume 101 of {\em Ast\'{e}risque}, pages 208--229. Soc.
  Math. France, Paris, 1983.

\bibitem{macmahon2004combinatory}
Percy~A. MacMahon.
\newblock {\em Combinatory analysis. {V}ol. {I}, {II} (bound in one volume)}.
\newblock Dover Phoenix Editions. Dover Publications, Inc., Mineola, NY, 2004.
\newblock Reprint of {\it An introduction to combinatory analysis} (1920) and
  {\it Combinatory analysis. Vol. I, II} (1915, 1916).

\bibitem{mandel2018orbits}
Holly Mandel and Oliver Pechenik.
\newblock Orbits of plane partitions of exceptional {L}ie type.
\newblock {\em European J. Combin.}, 74:90--109, 2018.

\bibitem{mcnamara2014comparing}
Peter R.~W. McNamara.
\newblock Comparing skew {S}chur functions: a quasisymmetric perspective.
\newblock {\em J. Comb.}, 5(1):51--85, 2014.

\bibitem{mcnamara2009towards}
Peter R.~W. McNamara and Stephanie van Willigenburg.
\newblock Towards a combinatorial classification of skew {S}chur functions.
\newblock {\em Trans. Amer. Math. Soc.}, 361(8):4437--4470, 2009.

\bibitem{mcnamara2014equality}
Peter R.~W. McNamara and Ryan~E. Ward.
\newblock Equality of {$P$}-partition generating functions.
\newblock {\em Ann. Comb.}, 18(3):489--514, 2014.

\bibitem{miller2015foulkes}
Alexander~R. Miller.
\newblock Foulkes characters for complex reflection groups.
\newblock {\em Proc. Amer. Math. Soc.}, 143(8):3281--3293, 2015.

\bibitem{miller2018walls}
Alexander~R. Miller.
\newblock Walls in {M}ilnor fiber complexes.
\newblock {\em Doc. Math.}, 23:1247--1261, 2018.

\bibitem{monical2018crystal}
Cara Monical, Oliver Pechenik, and Travis Scrimshaw.
\newblock Crystal structures for symmetric {G}rothendieck polynomials.
\newblock Preprint, \arxiv{1807.03294}, 2018.

\bibitem{musiker2018paths}
Gregg Musiker and Tom Roby.
\newblock Paths to understanding birational rowmotion on products of two
  chains.
\newblock {\em Algebr. Comb.}, 2(2):275--304, 2019.

\bibitem{okada2019birational}
Soichi Okada.
\newblock Birational rowmotion and {C}oxeter-motion on minuscule posets.
\newblock In preparation, 2020.

\bibitem{panyushev2004adnilpotent}
Dmitri~I. Panyushev.
\newblock ad-nilpotent ideals of a {B}orel subalgebra: generators and duality.
\newblock {\em J. Algebra}, 274(2):822--846, 2004.

\bibitem{panyushev2006poset}
Dmitri~I. Panyushev.
\newblock The poset of positive roots and its relatives.
\newblock {\em J. Algebraic Combin.}, 23(1):79--101, 2006.

\bibitem{panyushev2009orbits}
Dmitri~I. Panyushev.
\newblock On orbits of antichains of positive roots.
\newblock {\em European J. Combin.}, 30(2):586--594, 2009.

\bibitem{pfannerer2018promotion}
Stephan Pfannerer, Martin Rubey, and Bruce~W. Westbury.
\newblock Promotion on oscillating and alternating tableaux and rotation of
  matchings and permutations.
\newblock {\em Algebr. Comb.}, 3(1):107--141, 2020.

\bibitem{proctor1983shifted}
Robert~A. Proctor.
\newblock Shifted plane partitions of trapezoidal shape.
\newblock {\em Proc. Amer. Math. Soc.}, 89(3):553--559, 1983.

\bibitem{proctor1984bruhat}
Robert~A. Proctor.
\newblock Bruhat lattices, plane partition generating functions, and minuscule
  representations.
\newblock {\em European J. Combin.}, 5(4):331--350, 1984.

\bibitem{proctor1990new}
Robert~A. Proctor.
\newblock New symmetric plane partition identities from invariant theory work
  of {D}e {C}oncini and {P}rocesi.
\newblock {\em European J. Combin.}, 11(3):289--300, 1990.

\bibitem{propp2015homomesy}
James Propp and Tom Roby.
\newblock Homomesy in products of two chains.
\newblock {\em Electron. J. Combin.}, 22(3):Paper 3.4, 29, 2015.

\bibitem{purbhoo2018marvellous}
Kevin Purbhoo.
\newblock A marvellous embedding of the {L}agrangian {G}rassmannian.
\newblock {\em J. Combin. Theory Ser. A}, 155:1--26, 2018.

\bibitem{reiner2004cyclic}
V.~Reiner, D.~Stanton, and D.~White.
\newblock The cyclic sieving phenomenon.
\newblock {\em J. Combin. Theory Ser. A}, 108(1):17--50, 2004.

\bibitem{reiner1997noncrossing}
Victor Reiner.
\newblock Non-crossing partitions for classical reflection groups.
\newblock {\em Discrete Math.}, 177(1-3):195--222, 1997.

\bibitem{reiner2007coincidences}
Victor Reiner, Kristin~M. Shaw, and Stephanie van Willigenburg.
\newblock Coincidences among skew {S}chur functions.
\newblock {\em Adv. Math.}, 216(1):118--152, 2007.

\bibitem{reiner2018poset}
Victor Reiner, Bridget~Eileen Tenner, and Alexander Yong.
\newblock Poset edge densities, nearly reduced words, and barely set-valued
  tableaux.
\newblock {\em J. Combin. Theory Ser. A}, 158:66--125, 2018.

\bibitem{rhoades2010cyclic}
Brendon Rhoades.
\newblock Cyclic sieving, promotion, and representation theory.
\newblock {\em J. Combin. Theory Ser. A}, 117(1):38--76, 2010.

\bibitem{roby2016dynamical}
Tom Roby.
\newblock Dynamical algebraic combinatorics and the homomesy phenomenon.
\newblock In {\em Recent trends in combinatorics}, volume 159 of {\em IMA Vol.
  Math. Appl.}, pages 619--652. Springer, [Cham], 2016.

\bibitem{rubey2015symplectic}
Martin Rubey and Bruce~W. Westbury.
\newblock Combinatorics of symplectic invariant tensors.
\newblock In {\em Proceedings of {FPSAC} 2015}, Discrete Math. Theor. Comput.
  Sci. Proc., pages 285--296. Assoc. Discrete Math. Theor. Comput. Sci., Nancy,
  2015.

\bibitem{rush2016minuscule}
David~B. Rush.
\newblock On order ideals of minuscule posets {III}: The {CDE} property.
\newblock Preprint, \arxiv{1607.08018}, 2016.

\bibitem{rush2013orbits}
David~B. Rush and XiaoLin Shi.
\newblock On orbits of order ideals of minuscule posets.
\newblock {\em J. Algebraic Combin.}, 37(3):545--569, 2013.

\bibitem{rush2015orbits}
David~B. Rush and Kelvin Wang.
\newblock On orbits of order ideals of minuscule posets {II}: Homomesy.
\newblock Preprint, \arxiv{1509.08047}, 2015.

\bibitem{Sage-Combinat}
The {S}age-{C}ombinat community.
\newblock {S}age-{C}ombinat: enhancing {S}age as a toolbox for computer
  exploration in algebraic combinatorics, 2019.
\newblock \href{http://combinat.sagemath.org}{\tt
  http://combinat.sagemath.org}.

\bibitem{sagemath}
The {S}age {D}evelopers.
\newblock {\em {S}ageMath, the {S}age {M}athematics {S}oftware {S}ystem
  ({V}ersion 8.4)}, 2019.
\newblock \href{http://www.sagemath.org}{\tt http://www.sagemath.org}.

\bibitem{seshardri1978geometry}
C.~S. Seshadri.
\newblock Geometry of {$G/P$}. {I}. {T}heory of standard monomials for
  minuscule representations.
\newblock In {\em C. {P}. {R}amanujam---a tribute}, volume~8 of {\em Tata Inst.
  Fund. Res. Studies in Math.}, pages 207--239. Springer, Berlin-New York,
  1978.

\bibitem{sheats1999symplectic}
Jeffrey~T. Sheats.
\newblock A symplectic jeu de taquin bijection between the tableaux of {K}ing
  and of {D}e {C}oncini.
\newblock {\em Trans. Amer. Math. Soc.}, 351(9):3569--3607, 1999.

\bibitem{shen2018cyclic}
Linhui Shen and Daping Weng.
\newblock Cyclic sieving and cluster duality for {G}rassmannian.
\newblock Preprint, \arxiv{1803.06901}, 2018.

\bibitem{stanley1972ordered}
Richard~P. Stanley.
\newblock {\em Ordered structures and partitions}.
\newblock American Mathematical Society, Providence, R.I., 1972.
\newblock Memoirs of the American Mathematical Society, No. 119.

\bibitem{stanley1986two}
Richard~P. Stanley.
\newblock Two poset polytopes.
\newblock {\em Discrete Comput. Geom.}, 1(1):9--23, 1986.

\bibitem{stanley2009promotion}
Richard~P. Stanley.
\newblock Promotion and evacuation.
\newblock {\em Electron. J. Combin.}, 16(2, Special volume in honor of Anders
  Bj\"{o}rner):Research Paper 9, 24, 2009.

\bibitem{stanley2012ec1}
Richard~P. Stanley.
\newblock {\em Enumerative combinatorics. {V}olume 1}, volume~49 of {\em
  Cambridge Studies in Advanced Mathematics}.
\newblock Cambridge University Press, Cambridge, second edition, 2012.

\bibitem{stembridge1986trapezoidal}
John~R. Stembridge.
\newblock Trapezoidal chains and antichains.
\newblock {\em European J. Combin.}, 7(4):377--387, 1986.

\bibitem{stembridge1994minuscule}
John~R. Stembridge.
\newblock On minuscule representations, plane partitions and involutions in
  complex {L}ie groups.
\newblock {\em Duke Math. J.}, 73(2):469--490, 1994.

\bibitem{stembridge1996fully}
John~R. Stembridge.
\newblock On the fully commutative elements of {C}oxeter groups.
\newblock {\em J. Algebraic Combin.}, 5(4):353--385, 1996.

\bibitem{striker2011unifying}
Jessica Striker.
\newblock A unifying poset perspective on alternating sign matrices, plane
  partitions, {C}atalan objects, tournaments, and tableaux.
\newblock {\em Adv. in Appl. Math.}, 46(1-4):583--609, 2011.

\bibitem{striker2015toggle}
Jessica Striker.
\newblock The toggle group, homomesy, and the {R}azumov-{S}troganov
  correspondence.
\newblock {\em Electron. J. Combin.}, 22(2):Paper 2, 57, 2015.

\bibitem{striker2017dynamical}
Jessica Striker.
\newblock Dynamical algebraic combinatorics: promotion, rowmotion, and
  resonance.
\newblock {\em Notices Amer. Math. Soc.}, 64(6):543--549, 2017.

\bibitem{striker2018rowmotion}
Jessica Striker.
\newblock Rowmotion and generalized toggle groups.
\newblock {\em Discrete Math. Theor. Comput. Sci.}, 20(1):Paper No. 17, 26,
  2018.

\bibitem{striker2012promotion}
Jessica Striker and Nathan Williams.
\newblock Promotion and rowmotion.
\newblock {\em European J. Combin.}, 33(8):1919--1942, 2012.

\bibitem{thomas2018independence}
Hugh Thomas and Nathan Williams.
\newblock Independence posets.
\newblock {\em J. Comb.}, 10(3):545--578, 2019.

\bibitem{thomas2009combinatorial}
Hugh Thomas and Alexander Yong.
\newblock A combinatorial rule for (co)minuscule {S}chubert calculus.
\newblock {\em Adv. Math.}, 222(2):596--620, 2009.

\bibitem{thomas2009jeu}
Hugh Thomas and Alexander Yong.
\newblock A jeu de taquin theory for increasing tableaux, with applications to
  {$K$}-theoretic {S}chubert calculus.
\newblock {\em Algebra Number Theory}, 3(2):121--148, 2009.

\bibitem{volkov2007periodicity}
Alexandre~Yu. Volkov.
\newblock On the periodicity conjecture for {$Y$}-systems.
\newblock {\em Comm. Math. Phys.}, 276(2):509--517, 2007.

\bibitem{westbury2016invariant}
Bruce~W. Westbury.
\newblock Invariant tensors and the cyclic sieving phenomenon.
\newblock {\em Electron. J. Combin.}, 23(4):Paper 4.25, 40, 2016.

\bibitem{williams2013cataland}
Nathan Williams.
\newblock {\em Cataland}.
\newblock ProQuest LLC, Ann Arbor, MI, 2013.
\newblock Thesis (Ph.D.)--University of Minnesota.

\end{thebibliography}
\bibliographystyle{plain}

\end{document}